\theoremstyle{plain}
\newtheorem{theorem}{Theorem}[section]
\newtheorem{lemma}[theorem]{Lemma}
\newtheorem{proposition}[theorem]{Proposition}
\newtheorem{conjecture}{Conjecture}
\newtheorem{corollary}[theorem]{Corollary}
\newtheorem{definition}[theorem]{Definition}
\newtheorem{claim}{Claim}
\newtheorem*{Graal1*}{Special Case of Problem~\ref{Graal1}}
\newtheorem*{Graal2*}{Special Case of Problem~\ref{Graal2}}
\newtheorem*{theorem*}{Theorem}
\newenvironment{customthm}[1]
  {\innercustomthm}
  {\endinnercustomthm}
\theoremstyle{remark}
\newtheorem*{remark*}{{\bf Remark}}
\newtheorem*{remarks*}{{\bf Remarks}}
\newtheorem*{comment*}{{\bf Comment}}
\newcommand{\ue}{\underline{\varepsilon}}
\newcommand{\nnorm}[1]{\lvert\!|\!| #1|\!|\!\rvert}
\newcommand{\bignorm}[1]{\big\lVert #1 \big\rVert}
\newcommand{\norm}[1]{\lVert #1 \rVert}
\newcommand{\N}{\mathbb{N}}
\newcommand{\R}{\mathbb{R}}
\newcommand{\Z}{\mathbb{Z}}
\newcommand{\A}{\mathcal{A}}
\newcommand{\E}{\mathbb{E}}
\newcommand{\F}{\mathcal{F}}
\newcommand{\m}{\mu}
\newcommand{\bh}{{\bf h}}
\newcommand{\bm}{{\bf m}}
\newcommand{\bW}{{\bf W}}
\newcommand{\e}{\varepsilon}
\def \colon{{:}\;}
\DeclarePairedDelimiter\floor{\lfloor}{\rfloor}
\DeclarePairedDelimiter\bigfloor{\big\lfloor}{\big\rfloor}
\title{Joint ergodicity of Hardy field sequences}
\thanks{The author was supported  by the Research Grant - ELIDEK HFRI-FM17-1684.}
\author{Konstantinos Tsinas}
\address[Konstantinos Tsinas]{University of Crete, Department of mathematics and applied mathematics, Voutes University Campus, Heraklion 71003, Greece} \email{kon.tsinas@gmail.com}
\subjclass[2020]{Primary: 37A44; Secondary:    28D05, 05D10, 11B30.}
\keywords{Ergodic averages, joint ergodicity, Hardy fields}
\begin{document}

\maketitle

\begin{abstract}
    We study mean convergence of multiple ergodic averages, where the iterates arise from smooth functions of polynomial growth that belong to a Hardy field.  Our results include all logarithmico-exponential functions of polynomial growth, such as the functions $t^{3/2}, t\log t$ and $e^{\sqrt{\log t}}$. We show that if all non-trivial linear combinations of the functions $a_1,...,a_k$ stay logarithmically away from rational polynomials, then the $L^2$-limit of the ergodic averages $\frac{1}{N} \sum_{n=1}^{N}f_1(T^{\floor{a_1(n)}}x)\cdot ...\cdot f_k(T^{\floor{a_k(n)}}x)$ exists and is equal to the product of the integrals of the functions $f_1,...,f_k$ in ergodic systems, which establishes a conjecture of Frantzikinakis. Under some more general conditions on the functions $a_1,...,a_k$, we also find characteristic factors for convergence of the above averages and deduce a convergence result for weak-mixing systems.
\end{abstract}

\tableofcontents

\section{Introduction and main results}

The study of the multiple ergodic averages \begin{equation}\label{0}
    \frac{1}{N} \sum_{n=1}^{N}f_1(T^{a_1(n)}x)\cdot ... \cdot f_k(T^{a_k(n)}x)
\end{equation}for general sequences $a_1(n),...,a_k(n)$ of integers, where $T$ is an invertible measure preserving map on a probability space $(X,\mathcal{X},\m)$, has been an active field of research in recent years. Born from Furstenberg's proof of Szemer\'{e}di's theorem \cite{Fu1} with ergodic theoretic tools, mean convergence of the averages \eqref{0} has been established for a wide variety of sequences. In this article, our main result is that if the sequences $a_1,...,a_k$ arise from smooth functions of polynomial growth\footnote{A function $f$ is said to have polynomial growth, if there exists a positive integer $d$, such that the ratio $\frac{f(t)}{t^d}$ converges to 0, as $t\to+\infty$.} belonging to a Hardy field \cite{Hardy 1,Hardy 2} and satisfy certain independence assumptions, then they are jointly ergodic, that is the $L^2$-limit of the averages in \eqref{0} exists and is equal to the product of the integrals of the functions $f_1,...,f_k$, whenever the underlying system $(X,\mathcal{X},\m,T)$ is ergodic. Some typical examples of sequences that we study are the polynomial sequences with real coefficients, the sequences $\floor{n^{3/2}}$, $\floor{n\log n}$, $\floor{n\log\log n + \exp(\sqrt{\log(n^2+1))}}$, $\floor{n^{\sqrt{2}}/\log^2 n}$ and, in general, sequences arising from functions in the Hardy field $\mathcal{LE}$ of logarithmico-exponential functions.
Our main results also establish a conjecture of Frantzikinakis, namely \cite[Problem 23]{Fraopen} (first appearing in \cite{FraHardy1}), which is the content of Theorem \ref{problem}. Furthermore, it gives a partial answer to \cite[Problem 22]{Fraopen}, which asks for general convergence of averages of functions from a Hardy field and generalizes several known results. In the case of weak mixing systems, we can relax our assumptions on the functions $a_1,...,a_k$ even further and establish a Furstenberg type weak-mixing theorem (generalizing the results in \cite{Bergelson}), which gives a positive answer to \cite[Problem 3]{FraHardy1}.

\subsection{Statement of the problem and main results}\label{mainresults}

In order to state our theorems below, we shall work with Hardy fields $\mathcal{H}$ that contain the Hardy field $\mathcal{LE}$ of logarithmico-exponential functions and are closed under composition and compositional inversion of functions. All the subsequent results in this section will be stated under the above assumption. More background on Hardy fields will be presented in Section \ref{background}, where we also present a Hardy field that satisfies the above property. Before we begin, we present below a theorem concerning the case of single ergodic averages. This is a consequence of Theorems 3.2 and 3.3 in
 \cite{Boshernitzan2}. More precisely, those two theorems handle the case that the function $a$ below has at least linear growth, but the case that $a$ has sub-linear growth rate follows using the same arguments and  the equidistribution results in \cite{Boshernitzan1}. The notions of a (measure-preserving) system and ergodicity are defined in Section \ref{background}.
 
 We denote by $C\Z[t]$ the collection of all real multiples of integer polynomials on some variable $t$.

\begin{customthm}{1}\cite{Boshernitzan2}\label{ergodic}
 Let $a\in\mathcal{H} $ be a function of polynomial growth that satisfies the following condition:  \begin{equation}
   \label{P1}  \tag{{\bf A}}\ \lim\limits_{t\to +\infty}\frac{|a(t)-p(t)|}{\log t}=+\infty \text{ for any polynomial}\  p\in C\Z[t].
 \end{equation}
Then, for any measure preserving system $(X,\m,T)$ and function $f\in L^{2}(\m)$, the averages \begin{equation*}
    \frac{1}{N}\sum_{n=1}^{N} T^{\floor{a(n)}} f
\end{equation*}converge in mean to the conditional expectation $\mathcal{E_{\m}}(f|I(T))$, where $I(T)$ is the invariant factor of the system $(X,\m,T)$.
\end{customthm}

\begin{remark*}The above condition is sufficient, but not necessary for convergence in the single iterate case. However, it encompasses most typical functions in $\mathcal{H}$ that are not rational polynomials.
\end{remark*}

We will give the following definition.
\begin{definition}
We will call a function $a\in\mathcal{H}$ 1-good, if it satisfies \eqref{P1}.
\end{definition}The term 1-good indicates the relation of these functions with single ergodic averages averages.

 We show that a natural extension of the above condition implies norm convergence in the case of multiple averages. If $a_1,...,a_k$ are general sequences or functions, we will denote by $\mathcal{L}(a_1,...,a_k)\subseteq\mathcal{H}$ the set of non-trivial linear combinations of the functions  $a_1,...,a_k$ (here $\mathcal{H}$ is a vector space over $\R$). The following theorem is the main result of this article:

\begin{theorem}\label{problem}
Let $a_1,...,a_k\in\mathcal{H}$ have polynomial growth and assume that every function in $ \mathcal{L}(a_1,...,a_k)$ is 1-good. Then, for any ergodic measure preserving system $(X,\m,T)$ and functions $f_1,...,f_k\in L^{\infty}(\m)$, the averages \begin{equation}\label{multiple}
     \frac{1}{N}\sum_{n=1}^{N} T^{\floor{a_1(n)}} f_1\cdot ...\cdot T^{\floor{a_k(n)}}f_k
\end{equation}converge in mean to the product of the integrals $\int f_1\   d\m \cdot...\cdot \int f_k\   d\m.$
\end{theorem}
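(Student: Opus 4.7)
The plan is to combine three ingredients: a PET-type van der Corput induction that yields Host--Kra seminorm control on the averages, an appeal to the Host--Kra structure theorem to reduce the problem to nilsystems, and a Hardy field equidistribution statement on nilmanifolds that is driven by hypothesis~\eqref{P1} for every non-trivial linear combination in $\mathcal{L}(a_1,\dots,a_k)$.

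First, I would prove a seminorm control estimate of the form
\begin{equation*}
\limsup_{N\to\infty}\Bigl\| \frac{1}{N}\sum_{n=1}^{N} T^{\floor{a_1(n)}}f_1\cdots T^{\floor{a_k(n)}}f_k \Bigr\|_{L^2(\mu)} \le C\min_{1\le j\le k} \nnorm{f_j}_{s},
\end{equation*}
for some finite integer $s$ depending on the tuple, where $\nnorm{\cdot}_{s}$ is a Host--Kra uniformity seminorm on the system. The derivation iterates van der Corput's inequality and at each step replaces the tuple $(a_1,\dots,a_k)$ by the differenced tuple $b_i^h(n):=a_i(n+h)-a_i(n)$, absorbing the errors arising from the floor function into lower-order terms. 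The crucial point is that a pointwise Hardy field difference strictly decreases the growth rate of the dominant function, and one can set up a well-ordering on the complexity of such tuples (an analogue of the Bergelson--Leibman PET weight vector adapted to Hardy fields) that terminates at the single-function case. There, seminorm control reduces by a final van der Corput step to Theorem~\ref{ergodic} applied to an appropriate element of $\mathcal{L}(a_1,\dots,a_k)$.

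Second, with seminorm control in hand, the Host--Kra structure theorem provides a characteristic factor $\cZ_s$ that is an inverse limit of nilsystems. A standard approximation argument then reduces the theorem to the case that the system $X$ is an ergodic nilsystem $G/\Gamma$ and the functions $f_i$ are continuous. In that setting I would prove the pointwise convergence
\begin{equation*}
\lim_{N\to\infty} \frac{1}{N}\sum_{n=1}^{N} F_1(T^{\floor{a_1(n)}}x)\cdots F_k(T^{\floor{a_k(n)}}x) = \int F_1\, d\mu \cdots \int F_k\, d\mu
\end{equation*}
for every $x\in X$, by establishing that the sequence of $k$-tuples $(T^{\floor{a_1(n)}}x,\dots,T^{\floor{a_k(n)}}x)$ is equidistributed in $X^k$. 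By Leibman's equidistribution criterion on nilmanifolds and its Hardy field version, this reduces to equidistribution on the maximal horizontal torus factor of $X^k$, and Weyl's criterion in turn reduces it to showing that $\sum_{i=1}^{k} m_i\, a_i(n) \bmod 1$ is equidistributed for every non-zero $(m_1,\dots,m_k)\in\Z^k$. Since $\sum_i m_i a_i \in \mathcal{L}(a_1,\dots,a_k)$ is 1-good, this equidistribution is precisely the content of the Boshernitzan result underlying Theorem~\ref{ergodic}.

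The main obstacle I anticipate is the PET-type induction producing the seminorm bound for genuinely transcendental Hardy functions such as $t\log t$ or $e^{\sqrt{\log t}}$. Differencing does not drop any clean degree, so one must track a finer complexity invariant (for instance the leading asymptotics of the iterated derivatives of the $a_i$) while ensuring that the derived tuples continue to satisfy a version of the 1-good hypothesis so that the induction may be iterated without collapsing. The assumption that \emph{every} non-trivial linear combination of the $a_i$ is 1-good, and not merely the $a_i$ themselves, is exactly what prevents the induction from stalling at a configuration in which some combination secretly behaves like a rational polynomial; it is also precisely the hypothesis needed for the nilmanifold equidistribution in the second step.
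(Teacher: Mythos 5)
There is a genuine gap, and it sits exactly where you locate your ``main obstacle'': the direct PET induction on the Hardy tuple itself. Your plan differences the tuple, replacing $a_i$ by $a_i(n+h)-a_i(n)\approx h\,a_i'(n)$, and asserts that the hypothesis that every element of $\mathcal{L}(a_1,\dots,a_k)$ is 1-good keeps the induction alive down to the single-function base case. It does not: the 1-good condition \eqref{P1} is a hypothesis on linear combinations of the \emph{original} functions and is not inherited by the differenced families. For example, for the admissible pair $(t\log t,\ t\log\log t)$ the van der Corput operation produces functions growing like $h\log\log t$, which stay \emph{below} $\log t$, so condition \eqref{P1} fails for the derived tuple and Theorem~\ref{ergodic} cannot be invoked at the base case; the same obstruction appears already for $(t\log t, t^2\log t)$, one of the new cases the theorem is meant to cover. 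This is precisely why the paper abandons the global PET route (used in the earlier, more restrictive results it cites) and instead works locally: by Lemma~\ref{mainlemma} the average is replaced by a double average over short intervals $[r,r+L(r)]$, on which each $a_i$ is Taylor-expanded into a \emph{variable} polynomial; PET is then run on these nice variable polynomial families (Proposition~\ref{PET}), the resulting sub-linear-plus-polynomial configurations are treated separately (Propositions~\ref{sublinearseminorm} and \ref{sublinearonly}), and only then are the pieces reassembled in Section~\ref{reductionestimates}. Your ``finer complexity invariant'' is exactly the missing idea, and without the local polynomial approximation there is no known way to make the induction terminate.

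Your second step is also not the paper's route and carries its own unproved load. After seminorm control, you propose passing to nilsystems via Host--Kra and proving equidistribution of $(T^{\floor{a_1(n)}}x,\dots,T^{\floor{a_k(n)}}x)$ in $X^k$, reducing via ``Leibman's criterion and its Hardy field version'' to the horizontal torus. Leibman's criterion applies to polynomial orbits; for integer parts of Hardy field functions in this generality no off-the-shelf analogue is available, and establishing one is comparable in difficulty to the theorem itself (the paper explicitly defers nilsystem convergence questions to a later article). The paper avoids this entirely by invoking Frantzikinakis's joint ergodicity criterion (Theorem~\ref{jointlyergodic}): once the sequences are good for seminorm estimates (Theorem~\ref{generalfactors}/Proposition~\ref{factors}), it only remains to check the Weyl-type condition that $\underset{1\leq n\leq N}{\E}\, e(t_1\floor{a_1(n)}+\cdots+t_k\floor{a_k(n)})\to 0$ for $(t_1,\dots,t_k)\neq 0$, which follows from Boshernitzan's equidistribution results and is already recorded in the literature. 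So your final Weyl-sum observation is sound, but the intermediate nilmanifold machinery you interpose is both unnecessary for this statement and unavailable at the level of generality you would need.
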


\begin{remark*}It is a consequence of our proof that the condition on the linear combinations of the functions $a_1,...,a_k$ can be substituted by the following more general assumption: for any real numbers $t_1,...,t_k\in [0,1)$, not all of them zero, we have \begin{equation*}
    \lim\limits_{N\to+\infty} \frac{1}{N}\sum_{n=1}^{N}e(t_1\floor{a_1(n)}+\cdots + t_k\floor{a_k(n)})=0.
\end{equation*}Actually, this is a necessary and sufficient condition in order to have convergence to the product of the integrals in every ergodic system. This is a consequence of Theorem \ref{jointlyergodic} in the next section.
\end{remark*}

 If we do not impose an ergodicity assumption on the system $(X,\m,T)$, then we can show that the averages in the above theorem converge to the product \begin{equation*}
    \mathcal{E}_{\m}(f_1|\mathcal{I}_T)\cdot...\cdot \mathcal{E}_{\m}(f_k|\mathcal{I}_T),
\end{equation*}where $\mathcal{E}_{\m}(f|\mathcal{I}_T)$ is again the projection of $f$ to the invariant factor of the system. This follows from a standard ergodic decomposition argument, and thus, we will usually assume below that the system $(X,\m,T)$ is ergodic.

This theorem extends known results about ergodic averages of functions from a Hardy field. In the case of real polynomials, Theorem \ref{problem} was established in \cite{Koutsogianniscorrelation}.
Theorem \ref{problem} was also proven in \cite{FraHardy1} when all functions $a_1,...,a_k$ have different growth rates and satisfy $t^{N_i+\e}\ll a_i(t)\prec t^{N_i+1}$ for non-negative integers $N_i$ and some $\e>0$. In addition, Theorem \ref{problem} was established in \cite{BerMorRic} under a variant of our condition. More precisely, an independence condition on the functions $a_1,...,a_k$ and on all of their derivatives was imposed. It was proven, however, that if we use a weaker averaging scheme than Ces\'{a}ro averages, we can establish uniform convergence results for the corresponding multiple ergodic averages\footnote{ In our setting, if we substitute the standard Ces\'{a}ro averages in \eqref{multiple} with uniform ones, then Theorem \ref{problem} is known to fail. This is because of  the fact that, if a function $f\in\mathcal{H}$ satisfies $t^k\prec f(t)\prec t^{k+1}$ for some non-negative integer $k$,  we can find arbitrarily large intervals, such that $\floor{f(n)}$ takes only odd (or only even) values. Then, this assertion fails for the rotation by $1/2$ on the torus $\R/\Z$.} (cf. \cite{BerMorRic2} for similar arguments and some nice multiple recurrence and combinatorial results). Finally, Theorem \ref{problem} was established recently for linear combinations of tempered functions from a Hardy field and real polynomials in \cite{Frajoint} (for functions $f$ belonging to $\mathcal{H}$, the tempered condition is equivalent to the relation $t^k\log t\prec f(t)\ll t^{k+1}, $ for some non-negative integer $k$). Our result is more general, since for example we can see that it covers even simple collections of functions like $\{t\log t,t^2\log t\}$, for which convergence has not been established in the literature.

A variant of Theorem \ref{problem} for commuting transformations was proven in \cite{FraHardy2} (under more restrictive conditions). Our methods fail to extend Theorem \ref{problem} to this case, the main reason being that we cannot establish seminorm estimates for convergence of the averages \eqref{multiple}. Indeed, even in the case when the iterates are integer polynomials, characteristic factors have only been described in some special cases like \cite{Chu} or \cite{Host} and more recently in \cite{DFK} and \cite{DFKS}, where joint ergodicity of commuting transformations along polynomials was studied. Finally, we also remark that a similar problem regarding tempered functions of different growth that do not necessarily belong to some Hardy field was handled in \cite{Koutsogiannistempered}.

\subsection{Characteristic factors and the case of weak-mixing systems}

If our only objective is to find characteristic factors for the averages in \eqref{multiple}, we can relax the conditions of Theorem \ref{problem} considerably. More precisely, we have the following theorem which appeared as a conjecture in \cite[Problem 3]{FraHardy1}. The notion of the Host-Kra factor of a system is defined in the following section.

\begin{theorem}\label{generalfactors}
Assume that the functions $a_1,...,a_k\in\mathcal{H}$ have polynomial growth and satisfy \begin{equation*}
    \lim\limits_{t\to+\infty}\frac {|a_i(t)|}{\log t}= +\infty \ \ \ \text{      for all }\ 1\leq i\leq k 
\end{equation*}and \begin{equation*}
     \lim\limits_{t\to+\infty}\frac {|a_i(t)-a_j(t)|}{\log t}= +\infty \ \ \ \text{      for all }\ \ i\neq j .
\end{equation*}Then, there exists a positive integer $s$ such that, for any measure preserving system $(X,\m,T)$, we have \begin{equation*}
   \lim\limits_{N\to\infty} \bignorm{\frac{1}{N}\sum_{n=1}^{N}T^{\floor{a_1(n)}} f_1\cdot ...\cdot T^{\floor{a_k(n)}}f_k-\frac{1}{N}\sum_{n=1}^{N} T^{\floor{a_1(n)}} \widehat{f}_1\cdot ...\cdot T^{\floor{a_k(n)}}\widehat{f}_k}_{L^2(\m)}=0,
\end{equation*}where $\widehat{f}_i:=\mathcal{E}_{\m}(f_i|Z_s(X))$ is the projection of $f_i$ to the $s$-step Host-Kra factor of the system.
\end{theorem}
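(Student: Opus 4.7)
The plan is to prove Theorem \ref{generalfactors} via a PET-style van der Corput induction on families of Hardy field functions, along the lines of the arguments in \cite{FraHardy1} and \cite{BerMorRic}. The goal is to produce, for each $1 \leq j \leq k$, a bound of the form
\[
  \limsup_{N\to\infty}\bignorm{\frac{1}{N}\sum_{n=1}^{N}\prod_{i=1}^{k}T^{\floor{a_i(n)}}f_i}_{L^2(\m)} \ll_{a_1,\ldots,a_k} \nnorm{f_j}_{s+1}
\]
for a common integer $s = s(a_1,\ldots,a_k)$, where $\nnorm{\cdot}_{s+1}$ is the Host-Kra seminorm. Since this seminorm controls precisely the $s$-step factor $Z_s(X)$, such bounds yield the asserted replacement of each $f_i$ by $\widehat{f}_i$ via a standard one-by-one exchange argument.

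The induction step is a single application of van der Corput's lemma. After expanding the square of the $L^2$-norm, reindexing, and composing with $T^{-\floor{a_1(n)}}$ (relabelling so that $a_1$ is the function designated for elimination at this stage), we reduce to a family of $2k-1$ iterates of the shape
\[
  \floor{a_i(n+h)} - \floor{a_1(n)}\ \ (1 \leq i \leq k), \qquad \floor{a_i(n)} - \floor{a_1(n)}\ \ (2 \leq i \leq k),
\]
applied to $f_i$ and $\overline{f_i}$ respectively, while $\overline{f_1}$ sits unshifted. Because we are in a Hardy field, $a_1(n+h) - a_1(n)$ has strictly smaller growth rate than $a_1$ as soon as $a_1$ grows at least like $\log t$, which is guaranteed by hypothesis; the pairwise-separation assumptions on $a_i - a_j$ and the logarithmic lower bounds persist under this reduction for almost every $h$, so none of the new iterates collapses to something close to a rational polynomial. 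To make the induction terminate, I would attach to each tuple a PET-weight: a finite multiset recording the growth rates of the surviving iterates (and, when needed, of their pairwise differences), ordered lexicographically using the total order on germs in $\mathcal{H}$. Finite multisets in a well-ordered set themselves form a well-order, so the procedure must halt in finitely many steps at a single-iterate average $\frac{1}{N}\sum_n T^{\floor{b(n)}}F$. The separation hypotheses force $b$ to satisfy \eqref{P1}, so Theorem \ref{ergodic} collapses this average to $\int F\, d\m$ in $L^2$, and the number of van der Corput iterations needed delivers the integer $s+1$.

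The main obstacle will be the combinatorial bookkeeping of the PET reduction: unlike the polynomial setting, where complexity is captured by a single degree, here one must verify at every stage that the new family still consists of Hardy field functions of polynomial growth, that the PET-weight drops strictly, and that the remaining faster-than-$\log$ separation conditions between all surviving functions (and between distinct pairs) are preserved. This relies on standard Hardy field calculus, in particular closure under differentiation, L'H\^opital-type asymptotic comparisons, and the precise effect of discrete differencing on growth rates for functions dominated by polynomials. Once the setup is in place, the conclusion follows from the familiar unwinding of van der Corput inequalities into Host-Kra seminorm bounds that has been worked out for analogous Hardy field settings in \cite{FraHardy1,BerMorRic}.
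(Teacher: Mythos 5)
Your plan hinges on the claim that after the van der Corput step the surviving iterates still dominate $\log t$ (and stay suitably separated), so that the induction can terminate in a single average handled by Theorem \ref{ergodic}. This is exactly the point that fails, and it is why the paper does not run a PET induction directly on the Hardy field functions. Discrete differencing replaces $a(n)$ by roughly $h\,a'(n)$, and for admissible functions this can fall below the logarithmic threshold: for the pair $(t\log t,\ t\log\log t)$, which satisfies all hypotheses of Theorem \ref{generalfactors}, the vdC operation produces iterates of growth comparable to $h\log\log n \prec \log n$, for every $h$, not just exceptional ones. Once an iterate grows slower than $\log t$ neither condition \eqref{P1} nor your separation conditions can be restored, and the terminal step of your induction has no content. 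A second, related gap: the hypotheses of Theorem \ref{generalfactors} only require domination of $\log t$, not condition \eqref{P1}, so even when the induction does terminate the remaining function $b$ need not satisfy \eqref{P1} (take $a_1(t)=t^2$); the asserted appeal to Theorem \ref{ergodic} is therefore unjustified, and in any case that theorem yields the projection to the invariant factor, not the Host--Kra seminorm bound you need without further argument.

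The paper circumvents precisely this obstruction by a different mechanism: it first passes to a double average over short intervals $[r,r+L(r)]$ (Lemma \ref{mainlemma}), where each $a_i$ is replaced by its Taylor polynomial; the PET induction is then carried out for \emph{variable polynomial} families whose leading coefficients are good sequences (Proposition \ref{PET}), and the resulting correlation bounds involve iterates that are sums of sub-linear Hardy functions and polynomials, which are treated separately in Section \ref{sublinearsection} before everything is assembled in Section \ref{reductionestimates}. If you want to salvage your approach you would need an argument that controls iterates of sub-logarithmic growth produced by differencing, which is not supplied by the separation hypotheses; absent that, the local polynomial approximation route is not an optional refinement but the essential idea.
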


The conditions above are necessary (one can consider some weakly-mixing systems that are not strongly-mixing to see this).
Since for weak-mixing systems, the Host-Kra factors of any order are trivial, we get the following corollary, which extends the results in \cite[Theorem 1.2]{Bergelson} where the iterates are polynomials taking integer values on the integers, as well as some of the results in \cite{BergelsonHaland} involving tempered functions.

\begin{corollary}
Assume that the functions $a_1,...,a_k\in\mathcal{H}$ have polynomial growth and satisfy \begin{equation*}
    \lim\limits_{t\to+\infty}\frac {|a_i(t)|}{\log t}= +\infty \ \ \ \text{      for all }\ 1\leq i\leq k 
\end{equation*}and \begin{equation*}
     \lim\limits_{t\to+\infty}\frac {|a_i(t)-a_j(t)|}{\log t}= +\infty \ \ \ \text{      for all }\ \ i\neq j .
     \end{equation*}
     Then, for any weak-mixing system $(X,\m,T)$, we have \begin{equation*}
          \lim\limits_{N\to\infty} \frac{1}{N}\sum_{n=1}^{N} T^{\floor{a_1(n)}} f_1\cdot ...\cdot T^{\floor{a_k(n)}}f_k=\int f_1\ d\m \cdot...\cdot\int f_k\  d\m ,
     \end{equation*}where convergence takes place in $L^2(\m)$.
\end{corollary}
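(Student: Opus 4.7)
The plan is to derive this corollary as a direct consequence of Theorem \ref{generalfactors} together with the classical fact that for weak-mixing systems the Host-Kra factors $Z_s(X)$ are trivial for every $s\geq 1$. Since the hypotheses on $a_1,\dots,a_k$ are identical to those of Theorem \ref{generalfactors}, that theorem supplies an integer $s\geq 1$ such that
\begin{equation*}
\lim_{N\to\infty}\bignorm{\frac{1}{N}\sum_{n=1}^{N}T^{\floor{a_1(n)}}f_1\cdots T^{\floor{a_k(n)}}f_k-\frac{1}{N}\sum_{n=1}^{N}T^{\floor{a_1(n)}}\widehat{f}_1\cdots T^{\floor{a_k(n)}}\widehat{f}_k}_{L^2(\m)}=0,
\end{equation*}
where $\widehat{f}_i=\mathcal{E}_\m(f_i\mid Z_s(X))$.

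Next, I would invoke the characterization of weak-mixing in terms of Host-Kra seminorms: a system $(X,\m,T)$ is weak-mixing if and only if the $s$-step Host-Kra factor $Z_s(X)$ coincides with the trivial $\sigma$-algebra for every $s\geq 1$ (equivalently, $\nnorm{f}_s=0$ whenever $\int f\,d\m=0$). Consequently, for each $1\leq i\leq k$, the conditional expectation $\widehat{f}_i$ is the constant function $\int f_i\,d\m$. Substituting these constants back into the second average, each term of that average equals the product of constants $\int f_1\,d\m\cdots\int f_k\,d\m$, and hence the whole Ces\`aro average is identically equal to this product.

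Combining these two observations, the difference between the original average and the constant $\prod_{i=1}^k\int f_i\,d\m$ tends to $0$ in $L^2(\m)$, which is the desired conclusion. There is essentially no obstacle here: the whole content lies in Theorem \ref{generalfactors}, and the passage to weak-mixing systems is a one-line application of the triviality of Host-Kra factors in that setting. The only minor point to verify is that constants are allowed as bounded functions in the statement of Theorem \ref{generalfactors}, which is immediate since $L^\infty(\m)$ contains all constants.
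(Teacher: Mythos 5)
Your proposal is correct and is exactly the paper's own deduction: the corollary follows from Theorem \ref{generalfactors} combined with the triviality of the Host--Kra factors $Z_s(X)$ for weak-mixing systems, so the projections $\widehat{f}_i$ are the constants $\int f_i\,d\m$ and the comparison average is identically the product of integrals. No gaps; the argument matches the paper's one-line justification.
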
 

\begin{remark*}The proof of this restricted form of Theorem \ref{generalfactors} still requires a large portion of the arguments that are used in this article.
\end{remark*}

\subsection{Combinatorial Applications}

As a corollary of Theorem \ref{problem}, we get the following multiple recurrence result.

\begin{corollary}
Let $a_1,...,a_k$ be functions from a Hardy field $\mathcal{H}$ such that every non-trivial linear combination of the functions is 1-good. Then, for any measure preserving system $(X,\m,T)$ and any set $A\subset X$ with $\m(A)>0$, we have \begin{equation*}
   \lim\limits_{N\to\infty} \frac{1}{N}\sum_{n=1}^{N}\  \m(A\cap T^{-\floor{a_1(n)}}A\cap\cdots\cap T^{-\floor{a_k(n)}}A)\geq\m(A)^{k+1}.
\end{equation*}
\end{corollary}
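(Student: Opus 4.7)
The plan is to derive this as an immediate consequence of Theorem \ref{problem} applied to $f_1 = \cdots = f_k = \mathbf{1}_A$, followed by a single application of Jensen's inequality. Since the system $(X,\m,T)$ is not assumed ergodic here, I first invoke the non-ergodic extension noted in the paragraph following Theorem \ref{problem} (obtained via a standard ergodic decomposition argument), which asserts that the averages \eqref{multiple} converge in $L^2(\m)$ to $\mathcal{E}_{\m}(f_1|\mathcal{I}_T)\cdots\mathcal{E}_{\m}(f_k|\mathcal{I}_T)$. With every $f_i = \mathbf{1}_A$, this limit equals $g^k$, where $g := \mathcal{E}_{\m}(\mathbf{1}_A|\mathcal{I}_T)$.

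Next, $L^2(\m)$-convergence implies convergence of inner products against the bounded function $\mathbf{1}_A$, so taking the integral of $\mathbf{1}_A$ against the averages transforms the left-hand side of the desired inequality into
\[
\lim_{N\to\infty}\frac{1}{N}\sum_{n=1}^{N}\m\bigl(A\cap T^{-\floor{a_1(n)}}A\cap\cdots\cap T^{-\floor{a_k(n)}}A\bigr) \;=\; \int \mathbf{1}_A \cdot g^k\, d\m.
\]
Since $g^k$ is $\mathcal{I}_T$-measurable, the defining property of conditional expectation lets me replace the factor $\mathbf{1}_A$ by $g$, so the right-hand side equals $\int g^{k+1}\, d\m$.

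Finally, Jensen's inequality for the convex function $t \mapsto t^{k+1}$ on $[0,1]$ (which contains the range of $g$) yields
\[
\int g^{k+1}\, d\m \;\geq\; \Bigl(\int g\, d\m\Bigr)^{k+1} \;=\; \m(A)^{k+1},
\]
using that $\int g\, d\m = \int \mathbf{1}_A\, d\m = \m(A)$. This establishes the claimed lower bound. There is no real obstacle at this stage; this is the standard Furstenberg-type passage from joint ergodicity to multiple recurrence, with all the substantive difficulty already absorbed into the proof of Theorem \ref{problem}.
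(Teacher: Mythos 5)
Your proposal is correct and is exactly the argument the paper has in mind: the paper derives this corollary directly from Theorem \ref{problem} (via its non-ergodic form with conditional expectations on the invariant factor), integrating against $\mathbf{1}_A$ and applying Jensen's inequality to $\int \mathcal{E}_{\m}(\mathbf{1}_A|\mathcal{I}_T)^{k+1}\, d\m$. No gaps; the substantive work indeed lies entirely in Theorem \ref{problem}.
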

A similar result was established in \cite{BerMorRic} with $\limsup$ in place of the limit, but under more general conditions on the functions $a_1,...,a_k$. 

 Utilizing Furstenberg's correspondence principle, we can deduce a combinatorial result about large sets of integers. First of all, we give a definition of the asymptotic density of a set.

Assume $\Lambda\subset \N$. Then, we define the upper density of the set $\Lambda$ as the limit \begin{equation*}
    {\bar{d}}(\Lambda):= \limsup\limits_{N\to\infty} \frac{|\Lambda\cap [1,N]|}{N}
\end{equation*}and the lower density $\underline{d}$ is defined similarly with $\liminf$ instead of $\limsup$. If those limits coincide, then we say that the set $\Lambda$ has natural density $d$ equal to the limit.

\begin{customthm}{}[Furstenberg's correspondence principle]
For any set $\Lambda\subset \N$ with positive upper density, there exists an invertible measure preserving system $(X,\m,T)$ and a measurable set $A\subset X$, such that $\overline{d}(\Lambda)=\m(A)$ and for any $r_1,...,r_k\in \Z$, we have \begin{equation*}
    \bar{d}(\Lambda \cap (\Lambda -r_1)\cap\cdots\cap (\Lambda -r_k))\geq \m(A\cap T^{-r_1}A\cap\cdots\cap T^{-r_k}A).
\end{equation*}
 
\end{customthm}
\begin{corollary}
Let $\Lambda\subset\N$ have positive upper density and let $a_1,...,a_k$ be functions from a Hardy field $\mathcal{H}$, such that every non-trivial linear combination of these functions is 1-good. Then,\begin{equation*}
     \liminf\limits_{N\to\infty} \frac{1}{N}\sum_{n=1}^{N} \bar{d}(\Lambda \cap (\Lambda -\floor{a_1(n)})\cap\cdots\cap (\Lambda -\floor{a_k(n)}))\geq (\bar{d}(\Lambda))^{k+1}.
\end{equation*} 
\end{corollary}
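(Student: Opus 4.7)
The plan is to combine Furstenberg's correspondence principle with the previous multiple recurrence corollary in a completely standard manner. Since $\Lambda\subset\N$ has $\bar{d}(\Lambda)>0$, the correspondence principle supplies an invertible measure preserving system $(X,\m,T)$ and a measurable set $A\subset X$ with $\m(A)=\bar{d}(\Lambda)$ such that, for every choice of integers $r_1,\dots,r_k$,
\begin{equation*}
    \bar{d}\bigl(\Lambda\cap (\Lambda-r_1)\cap\cdots\cap(\Lambda-r_k)\bigr)\geq \m(A\cap T^{-r_1}A\cap\cdots\cap T^{-r_k}A).
\end{equation*}

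First, I would specialize this inequality to $r_i=\floor{a_i(n)}$, which is a legitimate choice because each $a_i$ has polynomial growth and $\floor{a_i(n)}\in\Z$. Summing over $n=1,\dots,N$ and dividing by $N$ yields
\begin{equation*}
    \frac{1}{N}\sum_{n=1}^{N}\bar{d}\bigl(\Lambda\cap(\Lambda-\floor{a_1(n)})\cap\cdots\cap(\Lambda-\floor{a_k(n)})\bigr)\;\geq\;\frac{1}{N}\sum_{n=1}^{N}\m\bigl(A\cap T^{-\floor{a_1(n)}}A\cap\cdots\cap T^{-\floor{a_k(n)}}A\bigr)
\end{equation*}
for every $N\in\N$.

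Next, I would invoke the preceding multiple recurrence corollary (which itself is an immediate consequence of Theorem \ref{problem} applied to $f_1=\cdots=f_k=\mathbf{1}_A$, together with the Cauchy--Schwarz-type lower bound $\int \mathbf{1}_A\cdot\prod_{i}T^{\floor{a_i(n)}}\mathbf{1}_A\, d\m$ averaging to at least $\m(A)^{k+1}$). This gives
\begin{equation*}
    \lim_{N\to\infty}\frac{1}{N}\sum_{n=1}^{N}\m\bigl(A\cap T^{-\floor{a_1(n)}}A\cap\cdots\cap T^{-\floor{a_k(n)}}A\bigr)\;\geq\;\m(A)^{k+1}=\bar{d}(\Lambda)^{k+1}.
\end{equation*}
Taking $\liminf_{N\to\infty}$ of the earlier inequality and applying the fact that $\liminf$ dominates the limit on the right-hand side finishes the proof.

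There is no real obstacle here: once Theorem \ref{problem} and hence the preceding multiple recurrence corollary are granted, this is a direct two-line deduction, with the only bookkeeping being to check that the correspondence principle can be applied integer-wise to the floors $\floor{a_i(n)}$, which is immediate.
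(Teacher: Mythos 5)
Your proposal is correct and is exactly the deduction the paper intends: apply the stated correspondence principle with $r_i=\floor{a_i(n)}$, average over $n$, and invoke the preceding multiple recurrence corollary (whose limit exists by the non-ergodic form of Theorem \ref{problem} plus a Jensen/H\"older lower bound) before passing to the $\liminf$. No gaps; the bookkeeping about integer shifts, including possibly negative ones, is indeed immediate since the correspondence principle holds for all $r_1,\dots,r_k\in\Z$.
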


\subsection{General overview of the proof and organization of the paper}

Similarly to the work in \cite{FraHardy2,FraHardy1,BerMorRic}, our approach is to show that the Host-Kra factor introduced in \cite{Host-Kra1} (see also \cite{Host-Kra structures} for a presentation of the general theory) is characteristic for convergence of our averages. This is a technique used extensively in the literature to reduce the problem of convergence in general measure preserving systems to the case where the system is a rotation in a nilpotent homogeneous space. However, we shall use a recent result of Frantzikinakis \cite{Frajoint} which roughly asserts that in order to prove Theorem \ref{problem}, we only need to prove that the Host-Kra factor is characteristic for the averages in \eqref{multiple} plus some simple equidistribution results on the torus (which are simple consequences of the equidistribution results in \cite{Boshernitzan1}). This bypasses the usual hassle of proving convergence of the corresponding averages in nilmanifolds. We remark that this technique can only be used when we expect convergence of certain ergodic averages to the product of the integrals of the involved functions, which is the case in this article. 

Furthermore, there are also several differences between our methods and the methods used in \cite{BerMorRic} and \cite{FraHardy2} to establish seminorm estimates, where a standard PET induction argument was utilized to reduce to the case of functions with sub-linear growth rate. This technique restricts the cases that can be handled, because the van der Corput operation may eventually yield functions that do not satisfy the condition \eqref{P1} (a typical example in this case is the pair of functions $(t\log t, t\log\log t)$, which "drop" to functions that have growth rate smaller than $\log t$ after applications of the van der Corput inequality). In order to overcome this, we use the fact that Hardy field functions of polynomial growth behave "locally" as polynomials. This observation was used in \cite{FraHardy1} to handle the special case of the family $\{\floor{a(n)},2\floor{(a(n)},...,k\floor{a(n)}\}$, where there is only one Hardy field function. It was shown in this case that the corresponding multiple ergodic averages over small intervals converge to 0, provided that $f_1$ is orthogonal to one particular Host-Kra factor of the system. This technique does not extend to the more general case that we wish to cover here. However, we can prove instead that these averages can be bounded by finite ergodic averages, where the iterates are polynomials. We use then a double averaging trick and the asserted asymptotic bounds to show that the Host-Kra factor is, indeed, characteristic for convergence of the multiple averages in our setting.
The price to pay is that our argument has to be somewhat finitary in nature and this makes the proof slightly more technical and cumbersome. We also note here that our work concerns finding bounds for ergodic averages involving families of variable polynomials. Some general convergence results regarding multiple ergodic averages of variable polynomials were recently established in \cite{Koutsogiannisvariable}. 

A general difficulty in the proofs is that functions in $f  \in \mathcal{H}$ that satisfy $f(t)\ll t^{\delta}$ for all $\delta>0$ (such as the functions $(\log t)^c$, where $c>1$) behave differently from functions that dominate some fractional power. We describe this difference more clearly in the Appendix, where we also provide several propositions and lemmas that will be used extensively in Sections \ref{sectionfactors} through \ref{reductionestimates}. In addition, we will revisit the ideas discussed above in Section \ref{sectionfactors} and also present some examples that we believe help illustrate the argument of the main proofs.

Our results do not cover the case of general convergence (not necessarily to the product of the integrals) of the averages in \eqref{multiple}. In this case, the 1-good assumption on the functions can be relaxed further to include more functions, like the polynomials with integer coefficients. In order to establish this, we need to deal with the case of convergence in nilsystems, which will be done in a subsequent article.

\subsection{Some open problems }

An interesting problem that arose when trying to prove our main result is whether sequences of the form $\floor{a(n)}^{\ell}$ are good for the multiple ergodic theorem, where $\ell$ is a natural number. In the special case $\ell=2$, we present the following problem:

\begin{conjecture}
Let $c_1,...,c_k$ be distinct positive non-integers. Do the averages
\begin{equation*}
    \frac{1}{N}\sum_{n=1}^{N} T^{\floor{n^{c_1}}^2} f_1\cdot...\cdot T^{\floor{n^{c_k}}^2} f_k
\end{equation*}converge in mean?
\end{conjecture}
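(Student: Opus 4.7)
My plan is to follow the paper's two-stage strategy: first establish that some Host--Kra factor is characteristic for these averages, and then prove convergence on nilsystems by verifying equidistribution of the iterates on the associated nilmanifolds. Set $b_i(n) := \floor{n^{c_i}}^2$ and note the decomposition
\[b_i(n) = n^{2c_i} - 2n^{c_i}\{n^{c_i}\} + \{n^{c_i}\}^2,\]
which exhibits $b_i$ as the Hardy iterate $n^{2c_i}$ plus a non-Hardy fractional-part error of size $O(n^{c_i})$. When $2c_i \notin \Z$ the leading term $n^{2c_i}$ already satisfies \eqref{P1}, and the leading growth rates $n^{2c_i}$ are distinct; in the absence of the fractional-part error, joint ergodicity would then follow directly from Theorem~\ref{problem}.

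For the characteristic factor step I would attempt a PET / van der Corput induction on the tuple $(b_1,\ldots,b_k)$. The differences
\[b_i(n+h) - b_i(n) = \bigl(\floor{(n+h)^{c_i}} - \floor{n^{c_i}}\bigr)\bigl(\floor{(n+h)^{c_i}} + \floor{n^{c_i}}\bigr)\]
have leading growth $\sim 2c_i h\, n^{2c_i-1}$, so the van der Corput scheme should terminate after a bounded number of steps and produce a Host--Kra factor of finite order as characteristic, in the spirit of Theorem~\ref{generalfactors}. For the subsequent nilsystem step, one needs Weyl-type equidistribution of orbits $(g_1^{b_1(n)}\cdots g_k^{b_k(n)}\Gamma)_n$ on a nilmanifold $G/\Gamma$; by the local polynomial approximation described in the Appendix this should reduce, outside of the fractional-part error, to the equidistribution of finitely many polynomial orbits.

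The hard part will be controlling the non-Hardy fractional-part errors throughout, since the local polynomial approximation lemmas of the Appendix rely on smoothness and do not apply to $\{n^{c_i}\}$. In the PET step the error of size $O(n^{c_i})$ propagates through the van der Corput differences and does not simplify to a Hardy sequence; on the nilmanifold side it perturbs the polynomial orbit by an amount that is not evidently negligible at the level of uniformity required. A promising workaround is to use the joint equidistribution of $\bigl(\{n^{c_i}\}\bigr)_i$ on $[0,1)^k$, which holds when the $c_i$ are distinct non-integers, in order to integrate out the fractional parts and reduce the problem to a uniform family of Hardy iterates parametrised by $[0,1)^k$; extending the paper's machinery uniformly in such a parameter is, I expect, the principal technical obstacle and the reason the question is posed here as a conjecture.
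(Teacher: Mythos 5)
The statement you are addressing is Conjecture 1 of the paper, which is posed as an open problem: the paper contains no proof of it, and in fact explicitly explains why the strategy you outline is expected to fail ("if we apply the van der Corput inequality, the resulting sequences at each step become very complicated and may oscillate substantially; as a consequence, the classical methods of finding characteristic factors do not seem to yield a result in this case"). Your proposal is a plan rather than a proof, and you concede as much in your final paragraph, so there is nothing here that settles the conjecture.

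The concrete gap is exactly the one you gesture at but do not close. In the decomposition $\floor{n^{c_i}}^2 = n^{2c_i} - 2n^{c_i}\{n^{c_i}\} + \{n^{c_i}\}^2$ the term $2n^{c_i}\{n^{c_i}\}$ is an \emph{unbounded}, non-Hardy perturbation; the paper's only tool for discarding errors in the iterates, Lemma \ref{errors}, requires the error sequence to take values in a finite set, so none of the machinery of Sections \ref{PETsection}--\ref{reductionestimates} applies after your decomposition. Likewise, in your PET step the difference $\bigl(\floor{(n+h)^{c_i}}-\floor{n^{c_i}}\bigr)\bigl(\floor{(n+h)^{c_i}}+\floor{n^{c_i}}\bigr)$ equals its smooth leading term only up to an oscillating error of size $O(n^{c_i})$, which does not become a nice (variable-)polynomial family, so the induction does not terminate in a form covered by Proposition \ref{PET}; this is precisely the obstruction the paper records. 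Two further points: the Frantzikinakis joint-ergodicity criterion (Theorem \ref{jointlyergodic}) is unavailable here because convergence to the product of integrals is only conjectured for \emph{totally} ergodic systems (Conjecture 2), so one cannot bypass convergence on nilsystems, and equidistribution of $(g_1^{\floor{n^{c_1}}^2}\cdots g_k^{\floor{n^{c_k}}^2}\Gamma)$ on nilmanifolds is itself open; and your proposed workaround of "integrating out" the fractional parts via equidistribution of $(\{n^{c_i}\})_i$ does not decouple anything, since $\{n^{c_i}\}$ is multiplied by $n^{c_i}$ at the same value of $n$ inside the exponent, so the iterate is not a family of Hardy iterates parametrised measurably by $[0,1)^k$. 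The only piece of the problem currently known is the case $k=1$, which the paper attributes to the spectral theorem together with the equidistribution results of \cite{Fraeq}, not to seminorm estimates.
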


Conjecture 1 seems non-trivial even in the case where the fractional powers $n^{c_i}$ are replaced by general (non-integer) real polynomials.
In the case $k=1$, we can use the spectral theorem and the equidistribution results presented in \cite{Fraeq} to give a positive answer. In particular, it was proven in the same article that $\floor{a(n)}^k$ is good for the ergodic theorem when $a$ stays logarithmically far from real multiples of integer polynomials (this is condition \eqref{P1}). If we apply the van der Corput inequality, the resulting sequences at each step become very complicated and may oscillate substantially. As a consequence, the classical methods of finding characteristic factors do not seem to yield a result in this case. 

Furthermore, we expect that the above averages are jointly ergodic for totally ergodic systems: \begin{conjecture}
Let $c_1,...,c_k$ be distinct positive non-integers and let $(X,\m,T)$ be a totally ergodic system. Show that the averages
\begin{equation*}
    \frac{1}{N}\sum_{n=1}^{N} T^{\floor{n^{c_1}}^2} f_1\cdot...\cdot T^{\floor{n^{c_k}}^2} f_k
\end{equation*}converge to the product of the integrals $\int f_1\ d\m \cdot...\cdot \int f_k\ d\m$ for any functions $f_1,...,f_k\in L^{\infty}(\m)$.
\end{conjecture}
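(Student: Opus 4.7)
The plan is to attempt Conjecture 2 via the joint ergodicity criterion of Frantzikinakis \cite{Frajoint}, in the spirit of the proof of Theorem \ref{problem}, but now contending with the extra complication that squaring the floor function introduces a non-smooth, integer-valued correction. In the totally ergodic setting this criterion reduces matters to two ingredients: (i) showing that some Host-Kra factor $Z_s(X)$ is characteristic for the averages in question; and (ii) verifying, for every nontrivial $(t_1,\dots,t_k)\in [0,1)^k$ (subject to a rationality constraint tailored to total ergodicity, so as to rule out the obstructions pointed out in the footnote of Section \ref{mainresults}), the Weyl estimate
\begin{equation*}
\lim_{N\to\infty}\frac{1}{N}\sum_{n=1}^{N} e\!\left(t_1 \floor{n^{c_1}}^2 + \cdots + t_k \floor{n^{c_k}}^2\right) = 0.
\end{equation*}

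For (ii), I would decompose $\floor{n^{c_i}}^2 = n^{2c_i} - 2n^{c_i}\{n^{c_i}\} + \{n^{c_i}\}^2$. The principal terms $\sum_i t_i n^{2c_i}$ form a Hardy field function with distinct positive exponents $2c_i$, so that when all $2c_i$ are non-integers any nontrivial linear combination stays logarithmically far from $C\Z[t]$ and the equidistribution results of \cite{Boshernitzan1} apply directly; in the case where some $2c_i$ is an integer, total ergodicity of $(X,\m,T)$ forces the corresponding $t_i$ to be irrational, and the equidistribution results of \cite{Fraeq} still yield the required cancellation. The cross terms $n^{c_i}\{n^{c_i}\}$ would be controlled by a two-scale joint equidistribution argument for $(n^{c_i},n^{2c_i})$ on $\T^2$ combined with summation by parts to disentangle the product.

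For (i), I would run a finitary van der Corput / PET reduction modeled on the arguments of Section \ref{sectionfactors}. For fixed $h$, the difference
\begin{equation*}
\floor{(n+h)^{c_i}}^2 - \floor{n^{c_i}}^2
\end{equation*}
splits as a smooth Hardy field part of size $\approx 2hc_i\, n^{2c_i-1}$ plus a bounded, integer-valued jump coming from floor discrepancies. When $c_i>1/2$ the smooth part grows, lies in a Hardy field satisfying the hypotheses of Theorem \ref{problem} after further differencing, and the variable-polynomial machinery of \cite{Koutsogiannisvariable} can be used to obtain bounds uniform in $h$; the jump term in this range is a lower-order error that can be absorbed through the equidistribution analysis of step (ii).

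The main obstacle I anticipate is the regime where some $c_i<1/2$: a single van der Corput step sends the smooth part $n^{2c_i-1}$ to zero, so the bounded jump is no longer a negligible perturbation but sits on the same scale as---or dominates---the leading Hardy field contribution, and the reduction to Theorem \ref{problem} breaks down. A plausible but speculative remedy is to establish a refined joint equidistribution theorem showing that, conditional on the Host-Kra structure of $(X,\m,T)$, the floor discrepancies $\{n^{c_i}\}$ behave as independent noise against nilpotent test functions. Such a statement appears to go well beyond the current equidistribution theory of \cite{Boshernitzan1,Fraeq} for Hardy field sequences, and I expect its proof to be the central technical hurdle in resolving Conjecture 2.
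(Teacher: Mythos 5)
This statement is not proved in the paper at all: it is Conjecture 2, listed explicitly among the open problems, and the paper even explains why the standard route fails ("if we apply the van der Corput inequality, the resulting sequences at each step become very complicated and may oscillate substantially," so the classical methods of finding characteristic factors do not seem to apply). Your proposal is likewise not a proof: you yourself flag that the case of small $c_i$ requires a "refined joint equidistribution theorem" that you cannot supply, so at best you have outlined a strategy with an acknowledged missing central ingredient. That alone means the attempt cannot be accepted.

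Beyond the admitted gap, there is a concrete error in the reduction you describe for step (i). You claim that for fixed $h$ the difference $\floor{(n+h)^{c_i}}^2-\floor{n^{c_i}}^2$ is a smooth Hardy field term of size about $2hc_i n^{2c_i-1}$ plus a \emph{bounded} integer-valued jump. Writing the difference as $\bigl(\floor{(n+h)^{c_i}}-\floor{n^{c_i}}\bigr)\bigl(\floor{(n+h)^{c_i}}+\floor{n^{c_i}}\bigr)$, the floor discrepancy in the first factor is indeed $O(1)$, but it is multiplied by the second factor, which is $\sim 2n^{c_i}$; hence the non-smooth part of the differenced iterate has size $n^{c_i}$, is unbounded, and oscillates according to the distribution of $\{n^{c_i}\}$. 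For $c_i<1$ it in fact \emph{dominates} the smooth term $2hc_i n^{2c_i-1}$, and even for $c_i>1$ it is far from a negligible perturbation that Lemma \ref{errors} or the equidistribution input of step (ii) could absorb. So the difficulty is not confined to the regime $c_i<1/2$ as you suggest; it is present for every non-integer $c_i$, and it is exactly the obstruction the paper points to. Consequently the appeal to Theorem \ref{problem} and to the variable-polynomial machinery after one van der Corput step does not go through as written, and the conjecture remains open.
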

The above problem is interesting even for weak-mixing systems.
 In addition, the problem of multiple recurrence is also open. In its simplest form, we have the following open question. 

\begin{conjecture}
Show that any set of positive upper density contains patterns of the form \begin{equation*}
    \{m,m+\floor{n^a}^2,m+\floor{n^b}^2, \ m,n\in\N\},
\end{equation*}where $a,b>1$ are distinct non-integers.
\end{conjecture}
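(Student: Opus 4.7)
The plan is to start with Furstenberg's correspondence principle, which reduces the conjecture to positivity of
\begin{equation*}
\liminf_{N\to\infty} \frac{1}{N}\sum_{n=1}^{N} \m\bigl(A\cap T^{-\floor{n^a}^2}A \cap T^{-\floor{n^b}^2}A\bigr)
\end{equation*}
for every invertible measure preserving system $(X,\m,T)$ and every set $A$ with $\m(A)>0$. I would pursue this by aiming for the stronger joint ergodicity statement of Conjecture 2, because a standard ergodic decomposition and rational Kronecker reduction let one pass from an arbitrary ergodic system to the totally ergodic case once the rotational part is handled separately via quantitative equidistribution of $\floor{n^a}^2$ in residue classes modulo $q$, which follows from the tools in \cite{Boshernitzan1}.

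To attack joint ergodicity I would follow the philosophy of the present paper and invoke the criterion of Frantzikinakis from \cite{Frajoint}: it suffices to identify some Host-Kra factor $Z_s(X)$ as characteristic for the averages in \eqref{multiple} (with our sequences in place of $a_i$) and to verify the torus equidistribution
\begin{equation*}
\lim_{N\to\infty} \frac{1}{N}\sum_{n=1}^{N} e\bigl(t_1 \floor{n^a}^2 + t_2 \floor{n^b}^2\bigr) = 0
\end{equation*}
for all $(t_1,t_2)\in [0,1)^2\setminus\{(0,0)\}$. The expansion $\floor{n^a}^2 = n^{2a} - 2n^a\{n^a\} + \{n^a\}^2$ isolates a smooth Hardy field main term with irrational exponent $2a$, to which the equidistribution theorems of \cite{Boshernitzan1} apply, while the fractional-part contributions should yield to a two-dimensional Weyl-type argument combined with the quantitative estimates of \cite{Fraeq}.

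The hard part is showing that a Host-Kra factor is characteristic. The sequences $\floor{n^a}^2$ do not lie in any Hardy field because of the outer squaring, so the PET-based seminorm estimates in this paper do not apply directly, and a single application of the van der Corput inequality produces the oscillatory expression $2\floor{n^a}\bigl(\floor{(n+h)^a}-\floor{n^a}\bigr) + \bigl(\floor{(n+h)^a}-\floor{n^a}\bigr)^2$, whose behaviour is far from polynomial. My plan would be a local linearization in the spirit of Section \ref{sectionfactors}: on short windows $[N,N+H]$ with $H\ll N^{1-\e}$ one has $\floor{(n+h)^a} = \floor{n^a} + \floor{ah n^{a-1}} + O(1)$, so that $\floor{(n+h)^a}^2 - \floor{n^a}^2$ becomes a variable polynomial in $h$ whose coefficients depend on $n$ through the essentially smooth sequence $\floor{n^a}$, up to controlled floor-function errors. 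A double averaging argument, combined with seminorm bounds for variable polynomial averages of the type developed in \cite{Koutsogiannisvariable}, should then compare the averages of interest to standard polynomial averages for which Host-Kra factors are already known to be characteristic. The principal obstacle is that when $a$ is close to $1$ the linearization error on $[N,N+H]$ is of order $H^2 N^{a-2}$, forcing $H$ to be taken quite small and making the passage from short averages back to the Ces\`aro average delicate; calibrating the short scale $H$ against the available equidistribution rate, and propagating this control through the higher-order PET steps that the variable polynomial framework requires, is where I would expect the bulk of the technical difficulty to lie.
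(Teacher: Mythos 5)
You should first note that this statement is posed in the paper as an open problem (Conjecture 3); the paper gives no proof, and in fact it explicitly records the obstruction: after van der Corput the sequences arising from $\floor{n^a}^2$ "become very complicated and may oscillate substantially," so that the classical methods of finding characteristic factors do not seem to apply. Your proposal is a research plan rather than a proof, and its central step --- establishing that some Host--Kra factor is characteristic for averages with iterates $\floor{n^a}^2,\floor{n^b}^2$ so that the criterion of \cite{Frajoint} can be invoked --- is precisely the point that is open. Nothing in the paper, nor in the references you cite, supplies that step.

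The concrete gap is in your local linearization. On a window $[N,N+H]$ one can indeed write $\floor{(n+h)^a}=\floor{n^a}+\floor{ahn^{a-1}}+e_{n,h}$ with $e_{n,h}$ in a finite set, and for the unsquared iterates this is exactly the mechanism of Section \ref{sectionfactors}, where Lemma \ref{errors} absorbs the bounded errors. But after the outer squaring,
\begin{equation*}
\floor{(n+h)^a}^2=\floor{n^a}^2+2\floor{n^a}\bigl(\floor{ahn^{a-1}}+e_{n,h}\bigr)+\bigl(\floor{ahn^{a-1}}+e_{n,h}\bigr)^2,
\end{equation*}
so the bounded error $e_{n,h}$ is multiplied by $2\floor{n^a}$ and becomes an error of size comparable to $N^a$ in the exponent of $T$, varying irregularly with $n$ and $h$; no partition-into-finitely-many-sets argument of the Lemma \ref{errors} type can remove it, and the same problem reappears if you expand $\floor{ahn^{a-1}}=ahn^{a-1}-\{ahn^{a-1}\}$, since the fractional part is again multiplied by $\floor{n^a}$. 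Hence the short-window averages are not variable polynomial averages with good-sequence coefficients plus controlled errors, and the reduction to Proposition \ref{PET}-type bounds or to the variable-polynomial results of \cite{Koutsogiannisvariable} never gets off the ground. Calibrating $H$ does not help: the difficulty is not the Taylor remainder $H^2N^{a-2}$ you flag, but the amplification of the floor-function discrepancies by the factor $\floor{n^a}$, which is independent of the choice of scale. Unless you find a genuinely new idea for this step (or bypass convergence altogether and argue recurrence directly, as the paper suggests via the methods of \cite{Wierdl}), the proposal does not yield the conjecture.
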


In order to establish this, it may be possible to sidestep the more difficult problem of proving convergence of the corresponding ergodic averages (see, for example, the arguments in \cite{Wierdl}). We do not concern ourselves with this here, however.

\subsection{Acknowledgements}
I would like to thank my advisor Nikos Frantzikinakis for helpful discussions. I thank the anonymous referee for useful remarks and corrections on the previous version of this article. Finally, I would like to thank Dibyendu De for pointing out corrections in several parts of the previous version.

\subsection*{Notational conventions}We use $\N$ to denote the set of natural numbers, while $\Z^{+}$ denotes the non-negative integers.
For two sequences $a_N$ and $b_N$, we say that $b_N$ {\em dominates} $a_N$ and write $a_N\prec b_N$ or $a_N=o(b_N)$, if and only if the fraction $\Big|\frac{a_N}{b_N}\Big|$ tends to 0 as $N\to\infty$ and we write $a_N\sim b_N$, when this limit is is a finite non-zero real number. In the latter case, we say that the sequences $a_N$ and $b_N$ have the same growth rate. In addition, we write $a_N\ll b_N$ or $a_N=O(b_N)$ if there exists a constant $C$ such that, $|a_N|\leq C|b_N|$ for all $N\in \N$. When we want to express dependence on some parameters $c_1,...,c_k$ in the above bounds, we will use the notation $a_N=O_{c_1,..,c_k}(b_N)$ instead. We use similar asymptotic notation when we compare growth rates of functions in some real variable $t$. Furthermore, for a real valued function $f$ we will denote by $f^{(k)}$ the $k$-th order derivative of $f$, assuming it is well defined.

We will sometimes use bold letters to distinguish between scalar and vector valued quantities. For a positive integer $M$, we will use $[M]$ to denote the set $\{1,2,...,M\}$. Given a sequence $a(n)$ and a real number $x\geq 1$, we will use the averaging notation \begin{equation*}
    \underset{1\leq n\leq x}{\E}a(n):=\frac{1}{\floor{x}}\sum_{n=1}^{\floor{x}}a(n).
\end{equation*}

Consider a positive integer $s$. We will denote by $[[s]]$ the set $\{0,1\}^s$ of ordered $s$-tuples of zeroes and ones, which contains $2^s$ elements.  For elements of the set $[[s]]$, we will use the notation $\underline{\e}$ instead of bold letters. For convenience, we will write $\underline{0}, \underline{1}$ for the elements $(0,0,...0)$ and $(1,1,...,1)$ of $[[s]]$ respectively.
We will also define $|\underline{\e}|$ to be the sum of elements of $\ue$. For a finite set $Y$, we will similarly use the notation $Y^{[[s]]}$ to denote the set $Y^{2^s}$. Each element ${\bf h}\in Y^{[[s]]}$ can be represented as ${\bf h}=(h_{\underline{\e}},\underline{\e}\in [[s]])$ where each $h_{\underline{\e}}$ belongs to $Y$.

For complex numbers $z$, we define the operator $\mathcal{C}^kz$, where $\mathcal{C}^kz:=z$, if $k$ is an even number and $\mathcal{C}^kz:=\bar{z}$ otherwise. Finally, we use the notation $e(t):=e^{2\pi it}$ for $t\in \R$.

\section{Background material}\label{background}
 
\subsection{Preliminaries on Hardy fields}

Let $\mathcal{B}$ denote the set of germs at infinity of real valued functions defined on a half-line $[x,+\infty]$. Then, $(\mathcal{B},+,\cdot)$ is a ring. A sub-field $\mathcal{H}$ of $\mathcal{B}$ that is closed under differentiation is called a Hardy field. We will say  that $a(n)$ is a Hardy sequence, if for $n\in \N$ large enough we have $a(n)=f(n)$ for some function $f\in\mathcal{H}$. We will make some small abuse of language and sometimes also refer to sequences of the form $\floor{f(n)}$ as Hardy sequences. 

An example of a Hardy field is the field $\mathcal{LE}$ of logarithmico-exponential functions. These are the functions defined on some half line of $\R$ by a finite combination of the operations $+,-,\cdot,\div,\  \exp$, $\log$ and composition of functions acting on a real variable $t$ and real constants. The set $\mathcal{LE}$ contains functions such as the polynomials $p(t)$, $t^c$ for all real $c>0$, $t\log t $, $t^{(\log t)^2}$ and $e^{\sqrt{t}}/t^2$. 

The main advantage when working with functions in a Hardy field (instead of just the $C^{\infty}$ functions) is that any two functions $f,g\in\mathcal{H}$ are comparable. That means that the limit \begin{equation*}
    \lim\limits_{t\to\infty} \frac{f(t)}{g(t)}
\end{equation*}exists and thus it makes sense to talk about and compare their growth rates. In addition, since every function in our Hardy field has a multiplicative inverse, we can easily infer that every function in $\mathcal{H}$ is eventually monotone (and, therefore, has constant sign eventually). 

It will be crucial in the proof to assume that $\mathcal{H}$ is closed under composition and compositional inversion of functions, when defined. More precisely, if $f,g\in \mathcal{H}$ are such that $\lim\limits_{t\to+\infty}g(t)= +\infty$, then we have that $f\circ g\in\mathcal{H}$ and $g^{-1}\in\mathcal{H}$. The Hardy field $\mathcal{LE}$ does not have this property. This can be achieved by working with the Hardy field $\mathcal{P}$ of Pfaffian functions \cite{Kovanski}, which contains $\mathcal{LE}$ and satisfies the previously mentioned assumptions. This field can be defined inductively as follows:\\
i) Let $\mathcal{P}_1$ be the set of the smooth functions satisfying the differential equation $f'=p(t,f)$ for some polynomial $p$ with integer coefficients.\\
ii) Let $\mathcal{P}_k$ be the set of the smooth functions satisfying the differential equation $f'=p(t,f_1,...,f_k)$ for some polynomial $p$ with integer coefficients and $f_i\in P_i$ for $1\leq i\leq k-1$.
Then $\mathcal{P}$ contains all germs at infinity of the set $\cup_{i=1}^{\infty} \mathcal{P}_i$.

From now on, we will assume that $\mathcal{H}$ has all the above properties. In the appendix, we have gathered some lemmas regarding growth rates of functions in $\mathcal{H}$, which will play a crucial role in the approximations in the following sections.

Finally, we give some definitions for functions whose growth rate is of particular interest.

\begin{definition}We say that a function $f\in\mathcal{H}$ has sub-linear growth rate (or is sub-linear), if $f(t)\prec t$.
We say that a function $f\in\mathcal{H}$ has sub-fractional growth rate (or is sub-fractional), if for all $\delta>0$, we have $f(t)\ll t^{\delta}$.
\end{definition}

Typical examples of sub-linear functions are $\sqrt{t}$, $e^{\sqrt{\log t}}$ and $\log^3(t)$. Among these, the functions $e^{\sqrt{\log t}}$ and $\log^3(t)$ are also sub-fractional, while the first one is not sub-fractional.

\begin{definition}
We will call a function $f\in\mathcal{H}$ of polynomial growth strongly non-polynomial, if there exists a non-negative integer $d$, such that \begin{equation*}
    t^{d}\prec f(t)\prec t^{d+1}.
\end{equation*}
\end{definition}
For example, the functions $t^{3/2}$ and $\log^3(t)$ are strongly non-polynomial, while the function $t^2+\sqrt{t}$ is not.

\subsection{Background in ergodic theory}

\subsubsection{Ergodicity and factors}

A measure preserving system is a probability space $(X,\mathcal{X},\m)$ equipped with an invertible measure preserving transformation $T$. We call a system ergodic, if the only $T$-invariant functions in $L^{\infty}(\m)$ are the constant ones. The system $(X,\mathcal{X},\m,T)$ is called weak-mixing, if the product system $(X\times X,\mathcal{X}\times \mathcal{X},\m\times\m,T\times T)$ is ergodic.
We say the system $(Y,\mathcal{Y},\nu,S)$ is a factor of $(X,\mathcal{X},\m,T)$, if there exist $X'\subset X$, $Y'\subset Y$ of full measure that are invariant under $T$ and $S$ respectively and a map $p:X'\to Y'$ such that $\nu=\m\circ p^{-1}$ and $p\circ T(x)=S\circ p(x)$ for all $x\in X'$. If $p$ is a bijection, we say that the two systems are isomorphic. A factor of the system $(X,\mathcal{X},\m,T)$ corresponds to a $T$-invariant sub-$\sigma$-algebra of $\mathcal{X}$ (in the above example this $\sigma$-algebra is $p^{-1}(\mathcal{Y})$). From now on, we will omit the $\sigma$-algebra $\mathcal{X}$ from the quadruple $(X,\mathcal{X},\m,T)$.

\subsubsection{Host-Kra seminorms and factors}
Let $(X,\m,T)$ be an invertible measure preserving system and let $f\in L^{\infty}(\m)$. We define the Host-Kra uniformity seminorms inductively as follows:\begin{equation*}
    \nnorm{f}_{0,T}:=\int f \ d\m
\end{equation*}and, for $s\in \Z^{+}$, \begin{equation}\label{uniformitynorms}
    \nnorm{f}_{s+1,T}^{2^{s+1}}:=\lim\limits_{H\to\infty}\underset{0\leq h\leq H}{\E} \nnorm{\bar{f}\cdot T^h f}_{s,T}^{2^s}.
\end{equation}

The existence of the limits above was proven in \cite{Host-Kra1} in the ergodic case (for the non-ergodic case, see \cite{Host-Kra structures} for a proof) and it was also established that the $\nnorm{\cdot}_s$ are indeed seminorms (for $s\neq 0)$. More importantly, it was also shown in the same article that the seminorms $\nnorm{f}_{s,T}$ define a factor $Z_{s-1}(X)$ of $X$, which is characterized by the following property:\begin{equation*}
    f\perp L^2(Z_{s-1}(X))\iff \nnorm{f}_{s,T}=0.
\end{equation*}
It can be shown that the factors $Z_s(X)$ form an increasing sequence of factors. This follows from the inequality $\nnorm{f}_{s,T}\leq \nnorm{f}_{s+1,T}$, for all non-negative integers $s$. For weak-mixing systems, it can be shown that all the factors $Z_s(X)$ are trivial.

Furthermore, it is easy to prove that $\nnorm{\bar{f}\otimes f}_{s,T\times T}\leq \nnorm{f}_{s+1,T}^2$, where $\bar{f}\otimes f$ denotes the function $(x,y)\to \overline{f(x)}f(y)$ on $(X\times X,\m\times \m,T\times T)$. Finally, when there is no danger of confusion, we will omit the subscript $T$ in the seminorms and write simply $\nnorm{f}_s$.

\subsubsection{Joint ergodicity of sequences } Let $a_1(n),...,a_k(n)$ be sequences of integers. Following the terminology in \cite{Frajoint}, we call these sequences {\em jointly ergodic}, if for any ergodic measure preserving system $(X,\m,T)$ and functions $f_1,...,f_k\in L^{\infty}(\m)$, we have \begin{equation*}
   \lim\limits_{N\to+\infty} \frac{1}{N} \sum_{n=1}^{N}T^{a_1(n)}f_1\cdot ... \cdot T^{a_k(n)}f_k =\int f_1\ d\m \cdot ....\cdot \int f_k\ d\m,
\end{equation*}where convergence takes place in $L^2(\m)$. We also give the following definitions:

\begin{definition}We say that a collection of sequences $a_1,...,a_{k}$ of integers:\\
i) is {\em good for seminorm estimates}, if for every ergodic system $(X,\m,T)$ there exists an $s\in \N$, such that if $f_1,...,f_k\in L^{\infty}(\m)$ and $\nnorm{f_{\ell}}_s=0$ for some $\ell\in\{1,...,k\}$, then\footnote{In \cite{Frajoint}, this property is called "{\em very good for seminorm estimates}".}\begin{equation*}
    \lim\limits_{N\to+\infty }\frac{1}{N} \sum_{n=1}^{N}T^{a_1(n)}f_1\cdot ... \cdot T^{a_k(n)}f_k=0
\end{equation*}in $L^2(\m)$.\\
ii) is {\em good for equidistribution}, if for all $t_1,...,t_{k}\in [0,1)$, not all of them zero, we have \begin{equation*}
    \lim\limits_{N\to+\infty} \frac{1}{N} \sum_{n=1}^{N} e(t_1a_1(n)+\cdots+t_ka_k(n))=0.
\end{equation*}
\end{definition}

The main result in \cite{Frajoint}, which we are also going to use is the following:
\begin{customthm}{2}\cite[Theorem 1.1]{Frajoint}   \label{jointlyergodic}
 Let $a_1,...,a_k$ be a collection of sequences of integers. Then, the following are equivalent:\\
 i) The sequences $a_1,...,a_k$ are jointly ergodic.\\
 ii) The sequences $a_1,...,a_k$ are good for seminorm estimates and good for equidistribution.
\end{customthm}

\begin{proof}[Proof that Theorem \ref{problem} follows from Theorem \ref{generalfactors}]
Note that every 1-good function dominates the logarithmic function $\log t$. Therefore, if the functions $a_1,...,a_k$ are such that every non-trivial linear combination of them is 1-good,
then the hypotheses of Theorem \ref{generalfactors} are satisfied, which means that the sequences $\floor{a_1(n)},...,\floor{a_k(n)}$ are good for seminorm estimates. Therefore, due to Theorem \ref{jointlyergodic} we only need to prove that they are good for equidistribution. This, however, follows from the equidistribution results in \cite{Boshernitzan1} and has been established in \cite[Proposition 6.3]{Frajoint}.
\end{proof}

\section{Characteristic factors for Hardy sequences}\label{sectionfactors}

 In this section, we present the main proposition that asserts that the Host-Kra factors of a given system are characteristic for the convergence of the averages \eqref{multiple}. That means that if we substitute the functions $f_i$ by their projections on $Z_{s}(X)$ for some suitable $s\in \N$, then the limiting behavior of the average in \eqref{multiple} remains unchanged. We will also make some small reductions to the original problem and prove some useful lemmas. We also provide a brief overview of the proof and some examples that present the main ideas, while avoiding most of the technicalities. The following proposition will be proven in subsequent sections.

\begin{proposition}\label{factors}
Assume that the functions $a_1,a_2,...,a_k\in\mathcal{H}$ have polynomial growth and suppose that the following two conditions hold:

i) The functions $a_1,...,a_k$ dominate the logarithmic function $\log t$.

ii) The pairwise differences $a_i-a_j$ dominate the logarithmic function $\log t$ for any $i\neq j$.

Then, there exists a positive integer $s$ depending only on the functions $a_1,...,a_k$, such that for any measure preserving system $(X,\m,T)$, functions $f_1\in L^{\infty}(\m)$ and $f_{2,N}...,f_{k,N}\in L^{\infty}(\m)$, all bounded by $1$, with $f_1\perp Z_{s}(X)$, the expression \begin{equation}\label{factor}
   \sup_{|c_{n}|\leq 1} \norm{\underset{1\leq n\leq N}{\E}\ c_{n}\  T^{\floor{a_1(n)}} f_1 \cdot T^{\floor{a_2(n)}}f_{2,N}\cdot ...\cdot T^{\floor{a_k(n)}}f_{k,N}}_{L^2(\m)}
\end{equation}converges to 0, as $N\to+\infty$.
\end{proposition}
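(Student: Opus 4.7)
My plan follows the outline sketched in the introduction: a Hardy field function of polynomial growth behaves locally as a polynomial on short enough windows, so I would reduce the Hardy-field multiple ergodic average in \eqref{factor} to a double average whose inner sum is a polynomial multiple ergodic average, and then invoke Host-Kra characteristic factor bounds for (variable-coefficient) polynomial multiple ergodic averages. The $\sup$ over bounded weights $c_n$ is handled naturally by this double-averaging scheme: once the Hardy-field iterates are replaced by polynomial ones on each short block, the $c_n$'s become generic bounded weights inside an inner polynomial average, for which uniform-in-weight Host-Kra seminorm bounds are available.

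\textbf{Short-interval decomposition and Taylor approximation.} I would fix a length scale $L_N\to\infty$ with $L_N/N\to 0$, chosen slowly enough that on each window $W_{n_0}=[n_0, n_0+L_N]\subset [1,N]$ the Taylor polynomial
\[
P_{i,n_0}(m):=\sum_{j=0}^{d_i}\frac{a_i^{(j)}(n_0)}{j!}\,m^j
\]
approximates $a_i(n_0+m)$ to within an error $\eta_N=o(1)$ uniformly in $m\in[0,L_N]$, with $d_i$ the smallest non-negative integer so that $a_i(t)\prec t^{d_i+1}$. The needed quantitative bounds on $a_i^{(d_i+1)}$ come from the Hardy-field lemmas of the appendix. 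Since $\eta_N\to 0$, for all but an $o(L_N)$ fraction of $m\in[0,L_N]$ we have $\floor{a_i(n_0+m)}=\floor{P_{i,n_0}(m)}$, so after partitioning $[1,N]$ into blocks of length $L_N$ and discarding the negligible contribution of the bad $m$'s, the expression \eqref{factor} reduces, up to $o(1)$ in $L^2(\m)$, to a double average with outer index over block starting points $n_0$ and inner index $m\in[0,L_N]$ of multiple ergodic averages along the polynomial iterates $\floor{P_{i,n_0}(m)}$.

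\textbf{Reduction to polynomial Host-Kra estimates.} For each fixed $n_0$, the inner average has the form
\[
\underset{0\leq m\leq L_N}{\E}\,c_{n_0,m}\,T^{\floor{P_{1,n_0}(m)}}f_1\cdot\prod_{i=2}^{k}T^{\floor{P_{i,n_0}(m)}}f_{i,N}.
\]
The pairwise hypothesis $a_i-a_j\succ\log t$ guarantees that each difference $P_{i,n_0}-P_{j,n_0}$ is a non-constant polynomial whose degree is determined by the growth of $a_i-a_j$. By the Host-Kra characteristic factor theory for variable-coefficient polynomial multiple ergodic averages (in the spirit of Leibman's bounds, and the variable-polynomial results cited in \cite{Koutsogiannisvariable}), this inner average is bounded in $L^2(\m)$ by $C\,\nnorm{f_1}_{s+1,T}$ for some $s$ depending only on the degrees $d_1,\ldots,d_k$, with $C$ uniform in the weights $|c_{n_0,m}|\leq 1$, in the coefficients of the $P_{i,n_0}$, and in $n_0$. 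Choosing $s$ so that $f_1\perp Z_s(X)$ gives $\nnorm{f_1}_{s+1,T}=0$, and averaging over $n_0$ yields the desired vanishing as $N\to\infty$.

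\textbf{Main obstacle.} The principal difficulty is producing the polynomial Host-Kra estimate with genuine uniformity over the family $\{P_{i,n_0}\}_{n_0\leq N}$ as $N\to\infty$, with an error term that can be driven to zero by tuning $L_N$ and $N$ compatibly. Sub-fractional functions such as $\log^c t$ or $e^{\sqrt{\log t}}$ are especially delicate: on short windows their Taylor expansions degenerate to near-constants, so the corresponding iterates $T^{\floor{P_{i,n_0}(m)}}f_{i,N}$ are essentially constant in $m$ and must be folded into the weight pattern without spoiling the seminorm control of $f_1$. This is the finitary aspect of the argument flagged in the overview: one cannot pass to a single limiting polynomial family, but must carry through quantitative bounds that remain uniformly effective as $n_0$ varies with $N$.
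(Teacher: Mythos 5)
There is a genuine gap at the step where you bound the inner window average by $C\,\nnorm{f_1}_{s+1,T}$ "uniformly in the weights, in the coefficients of the $P_{i,n_0}$, and in $n_0$." No such off-the-shelf estimate exists, and it cannot hold in the generality you invoke: for a finite average of length $L_N$ with arbitrary bounded weights, any van der Corput/PET analysis produces error terms that depend on the coefficient sequences of the variable polynomials, and these degenerate when leading coefficients tend to zero or when two iterates become essentially equal at the scale of the window. This is precisely why the paper does not bound the inner average by a seminorm at all, but instead proves Proposition \ref{PET}, which bounds it by averaged correlation sequences of the first function with iterates $\floor{A_{\ue,N}(\bm)}$, and why it needs the leading coefficients to be \emph{good sequences} — something achieved only after the change-of-variables $h=w\floor{u(r)}+v$ built on the classes $S(g,k_g)$ and property $\mathcal{Q}$ (Steps 4–5 of Section \ref{reductionestimates}). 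Your truncation at degree $d_i$ (smallest integer with $a_i\prec t^{d_i+1}$) also skips this normalization, so nothing guarantees the restricted polynomials are non-degenerate relative to $L_N$.

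A second, related error: the hypothesis $a_i-a_j\succ\log t$ does \emph{not} make the differences $P_{i,n_0}-P_{j,n_0}$ non-constant. If $a_1-a_2=\log^2 t$, the difference is sub-fractional and is essentially constant on every short window, so the two polynomial iterates collide. The paper resolves this by concatenating such iterates into a single one (Step 3), which replaces $f_1$ by $\tilde f_r=f_1\cdot T^{\floor{\theta_1(r)}}h_1\cdots T^{\floor{\theta_\ell(r)}}h_\ell$; consequently the function sitting in the distinguished slot of the polynomial average is no longer $f_1$, and control by $\nnorm{f_1}_{s+1}$ does not follow — one must first run the PET bound to land on correlations of $\tilde f_r$, and then handle the sub-linear Hardy shifts $\theta_i$ via the separate "sub-linear plus polynomial" analysis of Section \ref{sublinearsection} (together with Lemma \ref{floor} to replace $\floor{u(r)}^k$ by $r^k$), before finally reducing to genuine polynomial seminorm estimates. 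Your "fold the near-constant iterates into the weight pattern" suggestion fails exactly here, because those near-constant shifts attach multiplicatively to $f_1$ itself rather than to the scalar weights; supplying the machinery to handle them is the main content of the paper's proof, and your proposal leaves it unaddressed.
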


\begin{remarks*}
i) It is possible to establish Proposition \ref{factors} under the weaker assumption that only the functions $a_1,a_1-a_2,...,a_1-a_k$ dominate the logarithmic function, but this requires a few more details in the proof and is not required for the proof of Theorem \ref{generalfactors}.\\
ii) It may be possible to establish that the number $s$ does not, in fact, depend on the functions $a_1,...,a_k$, but it can be bounded by a function involving the number $k$ of functions and the highest degree\footnote{This means the smallest integer $d$, for which $a_i(t)\ll t^{d}$ for all $1\leq i\leq k$.} $d$ of the involved functions. However, we do not concern ourselves here with the optimal value of $s$. In particular, we will use polynomial expansions of the functions $a_1,...,a_k$ with degrees very large compared to the number $d$, which means that any possible dependence on $d$ will be lost in the proof.
\end{remarks*}

It is obvious that Proposition \ref{factors} implies Theorem \ref{generalfactors} (this follows from a standard telescoping argument). Therefore, all of our remaining results follow if we establish this proposition.

The reason that we work with sequences of functions and the bounded sequence $c_n$ is because that will be helpful in some spots to absorb some of the error terms that will appear in the iterates and also allows us to "transform" the sequences in the iterates, so that we can reduce our problem to the case that the first sequence $a_1$ has some specific properties depending on the situation. As an example, we claim that we only need to consider the case when the function $a_1(t)$ has maximal growth in the family $\{a_1,...,a_k\}$. Indeed, suppose that this is not the case. Then, there exists a function $a_i$ for some $ i\in\{1,...,k\}$ with $a_1\prec a_i$. Without loss of generality, assume that the function $a_k$ has maximal growth rate. It is sufficient to show that for any sequence of functions $g_N$ with $||g_N||_{L^{\infty}(\m)}\leq 1$, we have \begin{equation*}
   \lim\limits_{N\to\infty} \underset{1\leq n\leq N}{\E}c_{n,N}\int g_N\  T^{\floor{a_1(n)}} f_1\cdot ...\cdot T^{\floor{a_k(n)}}f_{k,N}\  d\m =0. 
\end{equation*}Then, we can choose the function $g_N$ to be the conjugate of the average \begin{equation*}
    \underset{1\leq n\leq N}{\E}c_{n,N}\int   T^{\floor{a_1(n)}} f_1\cdot ...\cdot T^{\floor{a_k(n)}}f_{k,N} \ d\m
\end{equation*}to get our claim. Composing with $T^{-\floor{a_k(n)}}$ and applying the Cauchy-Schwarz inequality, it is sufficient to show that \begin{equation*}
   \lim\limits_{N\to+\infty} \sup_{|c_{n}|\leq 1} \norm{\underset{1\leq n\leq N}{\E}\  c_{n}\ T^{-\floor{a_{k}(n)}}g_N\cdot T^{\floor{a_1(n)}-\floor{a_k(n)}} f_1\cdot ...\cdot T^{\floor{a_{k-1}(n)}-\floor{a_k(n)}}f_{k-1,N}}_{L^2(\m)}=0.
\end{equation*}We can write $\floor{a_i(n)}-\floor{a_k(n)}=\floor{a_i(n)-a_k(n)} +e_{i,n}$, where the errors $e_{i,n}$ take values in $\{0,\pm 1\}$. Using Lemma \ref{errors} below, the errors can be absorbed by the supremum outside the average and, therefore, the function that corresponds to $f_1$ is equal to $a_1-a_k$, which now has maximal growth rate among the new family of functions. It is also easy to check that the new family satisfies the conditions of Proposition \ref{factors}.

This notion of absorbing the errors that we described above can be made more precise by the next lemma.
\begin{lemma}\label{errors}
    Assume that the integers $e_{i,n,N}$ take values in a finite set $S$. Then, for any sequences $a_{i,N}$ of integers, complex numbers $c'_{n,N}$ bounded in magnitude by 1 and any 1-bounded functions $f_{i,N}$, we have \begin{align*}
      \norm{  \underset{1\leq n\leq N}{\E}\  c'_{n,N}\ T^{a_{1,N}(n)+e_{1,n,N}}f_{1,N}\cdot...\cdot T^{a_{k,N}(n)+e_{k,n,N}}f_{k,N}}_{L^2(\m)}&\ll_{k,S} \\
      \sup_{|c_{n,N}|\leq 1}\ \sup_{||f_2||_{\infty}\leq 1,...,||f_k||_{\infty}\leq 1}^{} \ \norm{\underset{1\leq n\leq N}{\E}\  c_{n,N}\  T^{a_{1,N}(n)}f_{1,N}\cdot T^{a_{2,N}(n)}f_{2} \cdot ...\cdot T^{a_{k,N}(n)}f_{k}}_{L^2(\m)}&.
    \end{align*} As a consequence, there exist 1-bounded functions $f'_{i,N}$, such that the original expression is bounded by a constant multiple of the quantity \begin{equation*}
        \sup_{|c_{n,N}|\leq 1}\norm{  \E_{1\leq n\leq N} \ c_{n,N}\  T^{a_{1,N}(n)}f_{1,N}\ T^{a_{2,N}(n)}f'_{2,N}\cdot ...\cdot T^{a_{k,N}(n)}f'_{k,N}}_{L^2(\m)} +o_N(1).
    \end{equation*} 
\end{lemma}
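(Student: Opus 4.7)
The plan is to reduce to the error-free case by partitioning the summation range according to the finitely many possible values of the error tuple $(e_{1,n,N},\ldots,e_{k,n,N})\in S^{k}$, and then absorbing the resulting constant shifts into the relevant functions.

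First, I would decompose
\begin{equation*}
    \mathbf{1} = \sum_{\mathbf{s}\in S^{k}} \mathbf{1}_{E_{\mathbf{s},N}}(n), \qquad E_{\mathbf{s},N}:=\{n\in[N]\colon (e_{1,n,N},\ldots,e_{k,n,N})=\mathbf{s}\},
\end{equation*}
and split the left-hand average into the $|S|^{k}$ corresponding sub-averages. For a fixed $\mathbf{s}=(s_{1},\ldots,s_{k})$, the change of variable $y=T^{s_{1}}x$ combined with the identity $T^{a_{i,N}(n)+s_{i}}f_{i,N}=T^{s_{1}}\bigl(T^{a_{i,N}(n)+s_{i}-s_{1}}f_{i,N}\bigr)$ (valid on each factor, then collected by measure-preservation of $T^{s_{1}}$ on the product) lets me rewrite the sub-average as
\begin{equation*}
    T^{s_{1}}\Bigl(\underset{1\leq n\leq N}{\E}\ c'_{n,N}\mathbf{1}_{E_{\mathbf{s},N}}(n)\,T^{a_{1,N}(n)}f_{1,N}\cdot T^{a_{2,N}(n)}g_{2,\mathbf{s}}\cdots T^{a_{k,N}(n)}g_{k,\mathbf{s}}\Bigr),
\end{equation*}
where $g_{i,\mathbf{s}}:=T^{s_{i}-s_{1}}f_{i,N}$ is still $1$-bounded. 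Since $T^{s_{1}}$ is an isometry of $L^{2}(\m)$, the $L^{2}$-norm of each piece is unchanged.

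Next, observe that $c_{n,N}:=c'_{n,N}\mathbf{1}_{E_{\mathbf{s},N}}(n)$ is bounded by $1$ in magnitude, so each sub-average is dominated by the supremum on the right-hand side of the asserted inequality. Summing the $|S|^{k}$ contributions via the triangle inequality gives the first bound, with implicit constant $|S|^{k}\ll_{k,S}1$.

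For the second assertion, a standard selection argument produces the functions $f'_{i,N}$: for each $N$, the supremum on the right-hand side of the first inequality is a finite nonnegative number, so I can pick $f'_{2,N},\ldots,f'_{k,N}\in L^{\infty}(\m)$, each bounded by $1$, and the supremum over $c_{n}$ is then still attained (or approximated to within, say, $1/N$) by some choice; collecting the approximation error across $N$ yields the $o_{N}(1)$ term. There is no real obstacle here—the only point requiring care is making sure the shifts $T^{s_{i}-s_{1}}$ are absorbed only into the functions with index $i\geq 2$, so that $f_{1,N}$ appears unshifted on the right-hand side, which is exactly what the statement demands.
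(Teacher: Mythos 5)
Your proposal is correct and follows essentially the same route as the paper: partition $[N]$ according to the (finitely many) values of the error tuple, compose with the constant shift $T^{-e_{1,n,N}}$ on each piece so that $f_{1,N}$ stays unshifted, absorb the remaining constant shifts into the $1$-bounded functions $f_2,\ldots,f_k$ and the indicator into the weights $c_{n,N}$, and sum the $|S|^k$ pieces. The concluding selection of near-optimal $f'_{i,N}$ with an $o_N(1)$ loss matches the paper's (implicit) argument as well.
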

\begin{proof}
   We partition the integers $n$ into a finite number of sets, in which all the quantities $e_{i,n,N} $ are constant (as $n$ varies). There are at most $|S|^{k}$ such sets. If $A_1,...,A_{|S|^{k}}$ are these sets, then we have \begin{align*}
      &\norm{  \underset{1\leq n\leq N}{\E} c'_{n,N}\  T^{a_{1,N}(n)+e_{1,n,N}}f_{1,N}\cdot...\cdot T^{a_{k,N}(n)+e_{k,n,N}}f_{k,N}}_{L^2(\m)}\leq \\
       &\sum_{i=1}^{|S|^k} \bignorm{\frac{1}{N}\sum_{n\in A_i}^{} \ c'_{n,N}\  T^{a_{1,N}(n)+e_{1,n,N}}f_{1,N}\cdot...\cdot T^{a_{k,N}(n)+e_{k,n,N}}f_{k,N} }_{L^2(\m)}\leq \\
       &|S|^{k} \max_{1\leq i\leq |S|^k}  \bignorm{\frac{1}{N}\sum_{1\leq n\leq N}^{} \ c'_{n,N} \mathbbm{1}_{A_i}(n)\ T^{a_{1,N}(n)}f_{1,N}\cdot...\cdot T^{a_{k,N}(n)+e_{k,n,N}-e_{1,n,N}}f_{k,N} } _{L^2(\m)}   \leq \\
       &|S|^{k}  \sup_{|c_{n,N}|\leq 1}\ \sup_{||f_2||_{\infty}\leq 1,...,||f_k||_{\infty}\leq 1}^{} \norm{\underset{1\leq n\leq N}{\E} c_{n,N}\  T^{a_{1,N}(n)}f_{1,N}\ T^{a_{2,N}(n)}f_{2}\cdot...\cdot T^{a_{k,N}(n)}f_{k}}_{L^2(\m)},
  \end{align*}which is the required result. In the second to last relation, we composed with $T^{-e_{1,n,N}}$, because $e_{1,n,N}$ is  constant when $n$ is restricted to the set $A_i$.
\end{proof}

\begin{remark*}In the following sections, we will encounter situations where we have some error terms in the iterates. The above lemma is not applied verbatim to all cases below. However, the reasoning presented above (i.e. partitioning into sets where the error sequences are constant) can be applied directly every time to remove these error terms. In particular, we can also show (using the same arguments) that a similar statement holds for double averages, that is, if $I_r$ are a sequence of intervals with lengths going to infinity, $d$ is a natural number and the error terms $e_{i,n,R}$ take values on a finite set $S$ of integers, then \begin{multline*}
    \underset{1\leq r\leq R}{\E}\bignorm{\underset{n\in I_r}{\E} c'_{n,R}\  T^{a_{1,R}(n)+e_{1,n,R}}f_{1,R}\cdot...\cdot T^{a_{k,R}(n)+e_{k,n,R}}f_{k,R}}_{L^2(\m)}^d\ll_{S,k,d} \\
  \sup_{||f_2||_{\infty}\leq 1,...,||f_k||_{\infty}\leq 1}^{}  \ \underset{1\leq r\leq R}{\E}\ \sup_{|c_{n,R}|\leq 1}\ \bignorm{\underset{n\in I_r}{\E} c_{n,R}\  T^{a_{1,R}(n)}f_{1,R}\cdot T^{a_{2,R}(n)}f_{2}...\cdot T^{a_{k,R}(n)}f_{k}}_{L^2(\m)}^d,
    \end{multline*}where we also use the H\"{o}lder inequality (which gives dependence on the exponent $d$ in the implicit constants). Therefore, instead of using the same argument repeatedly, we will cite this lemma in such instances and add a comment when a modified version is required.
\end{remark*}

\subsection{Overview of the proof} 
Our main objective is to reduce our problem to the study of ergodic averages of some variable polynomials. Therefore, we will first study asymptotic bounds for certain polynomial families in Section \ref{PETsection}, since they will be required for the proof of Proposition \ref{factors}. This will rely on the van der Corput inequality and an induction argument on the complexity of the family. In Section \ref{sublinearsection}, we will establish bounds for Hardy sequences of a specific form, namely when the involved functions are a sum of a sub-linear function and a polynomial. This will also be required for the general case. In Section \ref{reductionestimates}, we shall finish the proof.

The main idea is that we can approximate the given Hardy functions by Taylor polynomials (possibly constant) in suitable smaller intervals (with lengths going to infinity). We shall reduce our problem to proving a statement of the form \begin{equation}\label{expansion0}
   \lim\limits_{R\to+\infty} \underset{1\leq r\leq R}{\E}   \norm{\underset{n\in I_r}{\E}\  c_{n,R}\ T^{\floor{p_{1,r}(n)}} f_{1,r}\cdot ...\cdot T^{\floor{p_{k,r}(n)}}f_{k,r}}_{L^2(\m)}^{2^t} =0,
\end{equation}where the iterates are variable polynomials and $f_{1,r}$ has the form\begin{equation*}
    f_{1,r}=f_1\cdot T^{\floor{b_1(r)}}h_1\cdot ...\cdot T^{\floor{b_{\ell}(r)}}h_{\ell}
\end{equation*}for sub-linear functions $b_1,...,b_{\ell}$ and $h_1,...,h_{\ell}\in L^{\infty}(\mu)$.

After this reduction, we bound the innermost average using the results from Section \ref{PETsection}. More precisely, we claim that the inner average can be bounded by a quantity of the form \begin{equation*}
    \underset{ \bm\in  [-M,M]^{t}}{\E} \Big|\int T^{\floor{q_1(r,\bm)}}g_{r,1}\cdot...\cdot T^{\floor{q_{\ell}(r,\bm)}} g_{r,\ell} \  d\m  \Big|
\end{equation*}plus some small error terms, where $M$ is a finite integer (independent from the rest of our parameters) and all the functions $g_{r,i}$ are  either $\tilde{f}_r$ or $\overline{\tilde{f}_r}$. In addition, the functions $q_{i}(r,\bm)$ in the iterates are such that, for (almost all) $\bm\in \Z^{l}$, they can be written as a sum of a sublinear function plus a polynomial, which is the special case that we discussed above. Thus, taking first the limit $R\to+\infty$ to use the bounds established in the special case and then taking the limits $M\to+\infty$, we shall reach our conclusion.

The fact that we can reduce our original problem to \eqref{expansion0} is based on the following elementary lemma.

\begin{lemma}\label{mainlemma}
Let $d$ be a positive integer and consider a two-parameter sequence $\big(A_{R,n}\big)_{R,n\in\N}$ in a normed space such that $\norm{A_{R,n}}\leq 1 $ for all possible choices of $R,n\in\N$. Let $L(t)\in\mathcal{H}$ be an  eventually positive function such that $1\prec L(t)\prec t$ and assume that \begin{equation*}
    \limsup\limits_{R\to+\infty} \underset{1\leq r\leq R}{\E} \ \bignorm{ \underset{r\leq n\leq r+L(r)}{\E}    A_{R,n}               }^{d} \leq C
\end{equation*}for some $C>0$. Then, we also have \begin{equation*}
     \limsup\limits_{R\to+\infty} \bignorm{ \underset{1\leq n\leq R}{\E}   A_{R,n}        }\leq C^{1/d}.
\end{equation*}
\end{lemma}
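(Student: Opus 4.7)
The plan is to compare the Cesàro average $S_R := \underset{1\leq n\leq R}{\E} A_{R,n}$ with the double average $T_R := \underset{1\leq r\leq R}{\E}\underset{r\leq n\leq r+L(r)}{\E} A_{R,n}$, to bound $\|T_R\|$ directly from the hypothesis, and to show that $\|T_R - S_R\|\to 0$ as $R\to\infty$. The first step is straightforward: applying the triangle inequality to the outer average and Jensen's inequality for the convex function $x\mapsto x^d$ on $[0,\infty)$ yields
\begin{equation*}
\|T_R\| \;\leq\; \underset{1\leq r\leq R}{\E}\bignorm{\underset{r\leq n\leq r+L(r)}{\E} A_{R,n}} \;\leq\; \Bigl(\underset{1\leq r\leq R}{\E}\bignorm{\underset{r\leq n\leq r+L(r)}{\E} A_{R,n}}^{d}\Bigr)^{1/d},
\end{equation*}
and taking $\limsup_{R\to\infty}$ together with the hypothesis gives $\limsup_R\|T_R\|\leq C^{1/d}$.

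For the comparison, set $M(r):=\floor{L(r)}+1$ and apply Fubini to write $T_R = \frac{1}{R}\sum_{n\in\N} A_{R,n}\,w_{R,n}$ with non-negative weights
\begin{equation*}
w_{R,n}\;:=\;\sum_{r=1}^{R} \frac{\one[r\leq n\leq r+M(r)-1]}{M(r)}.
\end{equation*}
Since $S_R = \frac{1}{R}\sum_n A_{R,n}\one[1\leq n\leq R]$ and a second Fubini gives $\sum_n w_{R,n} = \sum_{r=1}^R M(r)/M(r) = R$, both weighting sequences carry the same total mass $R$. Using $\|A_{R,n}\|\leq 1$, the bound $\|T_R - S_R\| \leq \frac{1}{R}\sum_{n\in\N}\bigl|w_{R,n}-\one[1\leq n\leq R]\bigr|$ reduces everything to showing that this $\ell^1$ discrepancy is $o(R)$.

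The crux is a slow-variation estimate: since $L\in\mathcal{H}$ and $L(t)\prec t$, the positive germ $L(t)/t$ is eventually monotone with limit $0$, forcing $(L/t)'\leq 0$ and hence $L'(t)\leq L(t)/t\to 0$; integrating $L'$ over an interval of length at most $L(t)$ gives $L(t+s)/L(t)=1+o(1)$ uniformly for $|s|\leq L(t)$ as $t\to\infty$, and the same for $M$. Fix $n\leq R$ large: the constraint $r\leq n\leq r+M(r)-1$ defines an interval $r\in[r^{*}(n),n]$, where $r^{*}(n)$ is the smallest integer with $r^{*}(n)+M(r^{*}(n))\geq n+1$. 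Slow variation gives $M(r^{*}(n))=(1+o(1))M(n)$, so $n-r^{*}(n)+1=(1+o(1))M(n)$, and $1/M(r)=(1+o(1))/M(n)$ uniformly on this interval. Hence $w_{R,n}=1+o(1)$ as $n\to\infty$, uniformly in $R\geq n$. The mass identity $\sum_n w_{R,n}=R$ then forces $\sum_{n>R}w_{R,n}=o(R)$, while any bounded initial range of $n$ contributes only $O(1)$; combining these gives $\sum_{n}|w_{R,n}-\one[1\leq n\leq R]|=o(R)$. Therefore $\|T_R-S_R\|\to 0$, and together with the first step we conclude $\limsup_R\|S_R\|\leq C^{1/d}$.

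The main obstacle is this slow-variation estimate, which forces the Fubini-rearranged weights $w_{R,n}$ to agree with $\one[1\leq n\leq R]$ in $\ell^1$ despite the variable window size $L(r)$; it is precisely here that the hypothesis $L\in\mathcal{H}$, rather than merely that $L$ is a smooth function with $1\prec L(t)\prec t$, is used in an essential way, through the Hardy-field L'Hôpital argument giving $L'(t)\to 0$.
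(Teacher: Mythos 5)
Your proof is correct and follows essentially the same strategy as the paper's: bound the double average using the hypothesis via the power mean (Jensen) inequality, then swap the order of summation and show that the resulting per-$n$ weight tends to $1$, so that the double average differs from $\E_{1\leq n\leq R}A_{R,n}$ by $o(1)$ in norm. The only divergence is in how the weight limit is verified: the paper identifies the weight as roughly $\sum_{r=\lceil u(n)\rceil}^{n}\frac{1}{L(r)+1}$ with $u$ the compositional inverse of $t+L(t)$ and estimates it by an integral plus the mean value theorem, whereas you use the slow-variation estimate $L(t+s)=(1+o(1))L(t)$ for $0\leq s\leq L(t)$ (coming from $0\leq L'(t)\leq L(t)/t\prec 1$ and eventual monotonicity) together with the exact mass identity $\sum_n w_{R,n}=R$ to control the tail $n>R$; this has the minor advantage of not invoking closure of $\mathcal{H}$ under compositional inversion, which the paper's argument needs.
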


\begin{proof} Combining the power mean inequality and the triangle inequality, we can easily deduce that \begin{equation*}
     \underset{1\leq r\leq R}{\E} \ \bignorm{ \underset{r\leq n\leq r+L(r)}{\E}    A_{R,n}               }^{d}\geq \bignorm{\underset{1\leq r\leq R}{\E}    \big(  \underset{r\leq n\leq r+L(r)}{\E}    A_{R,n} \big) }^d.
 \end{equation*}Therefore, our result will follow if we show that \begin{equation*}
    \bignorm{ \underset{1\leq r\leq R}{\E}    \big( \underset{r\leq n\leq r+L(r)}{\E}    A_{R,n}\big) -\underset{1\leq n\leq R}{\E}   A_{R,n}       }=o_R(1).
 \end{equation*}
 
 Let $u$ be the compositional inverse of the function $t+L(t)$. Our assumptions on the Hardy field $\mathcal{H}$ imply that $u\in\mathcal{H}$. In addition, it is easy to check that $\lim\limits_{t\to+\infty} u(t)/t=1$. Now, we have \begin{equation*}
     \underset{1\leq r\leq R}{\E}    \big(\   \underset{r\leq n\leq r+L(r)}{\E}    A_{R,n} \big) = \frac{1}{R} \big( \sum_{n=1}^{R}p_R(n)A_{R,n} +  \sum_{n=R+1}^{R+L(R)} p_R(n)A_{R,n}\big)
 \end{equation*}for some real numbers $p_R(n)$. Assuming that $n$ (and thus $R$) is sufficiently large (so that $u(n)$ is positive) we can calculate $p_R(n)$ to be equal to \begin{equation*}
     p_R(n)=   \frac{1}{L(\floor{u(n)})+1}+\cdots +\frac{1}{L(n)+1} +o_n(1),
 \end{equation*}since the number $A_{R,n}$ appears on the average $\underset{r\leq n\leq r+L(r)}{\E}$ if and only if $u(n)\leq r\leq n$. Note that $p_R(n)$ is actually independent of $R$ (for $n$ large enough) and therefore, we will denote it simply as $p(n)$ from now on.
We claim that \begin{equation}\label{p(n)limit}
    \lim_{n\to +\infty } p(n)=1.
\end{equation}Let us first see how this finishes the proof. Since for $n$ large enough we must have $p(n)\leq 2$, we can easily deduce that \begin{equation*}
    \frac{1}{R} \sum_{n=R+1}^{R+L(R)} p(n)A_{R,n}=o_R(1).
\end{equation*}Here, we used the fact that $L(t)\prec t$. In addition, we have \begin{equation*}
   \bignorm{ \frac{1}{R}\sum_{n=1}^{R}p(n)A_{R,n}-\frac{1}{R}\sum_{n=1}^{R}A_{R,n} }\leq  \frac{1}{R}\sum_{n=1}^{R}|p(n)-1|, 
\end{equation*}which is also $o_{R}(1)$. Combining the above we reach the desired conclusion. 

In order to establish \eqref{p(n)limit}, we observe that $L(t)$ is eventually strictly increasing, and therefore, we can easily get \begin{equation*}
    \int_{\floor{u(n)}}^{n+1}  \frac{1}{L(t)+1}\ dt \leq p(n)\leq \int_{\floor{u(n)}-1}^{n}\frac{1}{L(t)+1} \ dt.
\end{equation*}Thus, it suffices to show that the integrals on both sides of the above inequality converge to 1. It is straightforward to check that each of these integrals is $o_n(1)$ close to the integral \begin{equation*}
 I_n=   \int_{u(n)}^{n}  \frac{1}{L(t)+1}\ dt.
\end{equation*}Therefore, we only need to prove that $I_n\to 1$. Using the mean value theorem, we can find a real number $h_n\in[u(n),n]$ such that, \begin{equation*}
    I_n=\frac{n-u(n)}{L(h_n)+1}=\frac{L(u(n))}{L(h_n)+1}.
\end{equation*}The last equality follows easily from the definition of $u$. Since $L$ is eventually strictly increasing, we conclude that $I_n$ is smaller than $L(u(n))/(L(u(n))  +1)\leq 1$. In addition, we also have \begin{equation*}
    I_n\geq \frac{L(u(n))}{L(n)+1}.
\end{equation*}The result follows if we show (note that the function $u^{-1}$ is onto in a half line of $\R$) \begin{equation*}
    \lim\limits_{t\to +\infty} \frac{L(t)}{L(u^{-1}(t))+1}=1.
\end{equation*}However, \begin{equation*}
    \frac{L(t)}{L(u^{-1}(t))+1}=\frac{L(t)}{L(t+L(t))+1}=\frac{L(t)}{L(t+L(t))}+o_t(1).
\end{equation*}Using the mean value theorem, we can write \begin{equation*}
    L(t+L(t))=L(t)+L(t)L'(x_t)\ ,
\end{equation*}where $x_t\in [t,t+L(t)]$. Thus, \begin{equation*}
    \frac{L(t+L(t))}{L(t)}=1+L'(x_t)=1+o_t(1)\ ,
\end{equation*}since $L'(t)\ll L(t)/t\prec 1$. The result follows.
\end{proof}

\subsection{Two examples}
a) Whenever we use $\ll$ without indices in this example, we imply that the constants are absolute. Assume that $a(t)=t\log t+\log^3 t $, $b(t)=t\log t$ and $c(t)=\sqrt{t}$. We want to show that there exists $s\in \N$, such that, if $\nnorm{f}_s=0$, then \begin{equation*}
     \underset{1\leq n\leq N}{\E} T^{\floor{n\log n+\log^3 n}}f\cdot T^{\floor{n\log n}} g_1\cdot  T^{\floor{\sqrt{n}}} g_2
 \end{equation*}converges to 0 in $L^2$ as $N\to+\infty$. Here, $g_1$ and $g_2$ are arbitrary 1-bounded functions in $L^{\infty}(\m)$. In view of Lemma \ref{mainlemma}, it suffices to show that \begin{equation}\label{example0}
     \underset{1\leq r\leq R}{\E} \bignorm{ \underset{r\leq n\leq r+L(r)}{\E}   T^{\floor{n\log n+\log^3 n}}f\cdot T^{\floor{n\log n}} g_1\cdot  T^{\floor{\sqrt{n}}}g_2             }_{L^2(\m)}^{2^d}=\underset{1\leq r\leq R}{\E} A_r
 \end{equation}converges to 0 as $R\to+\infty$, for some sub-linear function $L(t)\in \mathcal{H}$ and an integer $d$, both of which we will choose later.

 \subsection*{ Step 1: Reduction to averages of variable polynomials.}
 
 We observe that \begin{equation*}
     A_r = \bignorm{ \underset{0\leq h\leq L(r)}{\E}\  T^{\floor{     (r+h)\log (r+h) +\log^3(r+h)}} f\cdot  T^{\floor{(r+h)\log (r+h)}}g_1 \cdot T^{\floor{\sqrt{r+h}}}    g_2      }_{L^2(\m)}^{2^d}.
 \end{equation*}
 Now, we can use the Taylor expansion to write \begin{equation*}
     (r+h)\log (r+h)=-\frac{h^3}{6x_h^2} + \frac{h^2}{2r}+h(\log r+1)+r\log r, \ \text{ for some} \ \ \ x_h\in[r,r+h],
 \end{equation*}and \begin{equation*}
     \sqrt{r+h}=-\frac{h^2}{8(x'_{h})^{3/2}}+\frac{h}{2\sqrt{r}} +\sqrt{r},  \ \text{ for some} \ \ \ x'_h\in[r,r+h],
 \end{equation*}
 for every $0\leq h\leq L(r)$. Since  \begin{equation*}
     \Big|\frac{h^3}{6x_h^2} \Big|\leq \frac{L(r)^3}{r^2}
 \end{equation*} and 
 \begin{equation*}
     \Big|\frac{h^2}{8(x'_h)^{3/2}}\Big|\leq \frac{L^2(r)}{8r^{3/2}},
 \end{equation*}we conclude that these two last terms are both $o_r(1)$, provided that we choose the function $L(t)$ to satisfy $L(t)\prec t^{2/3}$. We also choose $L(t)\succ t^{1/2}$, so that both the 2-degree term in the expansion of $(r+h)\log (r+h)$ and the 1-degree term in the expansion of $\sqrt{r+h}$ are not bounded (for $h $ taking values in the range $[0,L(r)]$).
 In addition, under the above assumptions, we can also show that \begin{equation*}
    \max_{0\leq h\leq L(r)} |\log^3(r+h)-\log^3(r)|=o_r(1)
 \end{equation*}using the mean-value theorem.
 Therefore, we have\footnote{In this example, we split and combine the integer parts freely, which is not true in general. In our main proof, we explain this argument using Lemma \ref{errors}.} \begin{multline}\label{example}
      A_r\simeq \bignorm{ \underset{0\leq h\leq L(r)}{\E}\  T^{\floor{\frac{h^2}{2r}+h(\log r+1) +r\log r+\log^3 r }} f\cdot  T^{\floor{\frac{h^2}{2r}+h(\log r+1) +r\log r}}g_1\cdot T^{\floor{\frac{h}{2\sqrt{r}}  +\sqrt{r}}}g_2      }_{L^2(\m)}^{2^d}=\\
      \bignorm{ \underset{0\leq h\leq L(r)}{\E}\  T^{\floor{\frac{h^2}{2r}+h(\log r+1) +r\log r }} (g_1\cdot T^{\floor{\log^3 r}}f) \cdot T^{\floor{\frac{h}{2\sqrt{r}}  +\sqrt{r}}}g_2      }_{L^2(\m)}^{2^d},
 \end{multline}which is an average where the iterates are polynomials in $h$. The fact that the $o_r(1)$ terms can be discarded follows from Lemma \ref{errors} and will be explained more thoroughly in the formal proof. Note that the iterates have now become polynomials in the variable $h$.
 
 \begin{remark*}
 In the proof of Proposition \ref{factors} in Section \ref{reductionestimates}, we will choose the function $L(t)$ in order to have a common polynomial expansion as above. Although in this example this is easily done by hand, this will be accomplished in the general case using some lemmas and propositions that are proven in the appendix.
 \end{remark*}
 
  We will use the van der Corput inequality (Lemma \ref{vdc}):\begin{equation*}
     |\underset{1\leq n\leq N}{\E}\ a_n|^{2^d}\ll_d \frac{1}{M}+ \underset{| m|\leq M}{\E} | \underset{1\leq n\leq N}{\E} \langle a_{n+m},a_m \rangle|^{2^{d-1}} +o_N(1)\ ,
 \end{equation*}which holds as long as $ M=o (N)$.
 
 We will deal with a simpler case here, since \eqref{example} requires many applications of the van der Corput inequality and the estimates are quite complicated. We shall find a bound for the average \begin{equation*}
   \underset{1\leq r\leq R}{\E} \bignorm{ \underset{0\leq h\leq L(r)}{\E} T^{\floor{\frac{h^2}{2r}}}f_r }^4=\underset{1\leq r\leq R}{\E} A_r^4\ ,
 \end{equation*}where $f_r =g_1\cdot T^{\floor{\log^3(r)} }f$.

 \subsection*{Step 2: A change of variables trick and bounds for the polynomial averages}

 First of all, we can write $h=k\floor{\sqrt{2r}}+s$, where the integers $k,s$ satisfy $0\leq k\leq L(r)/\floor{\sqrt{2r}}$ and $0\leq s\leq \floor{\sqrt{2r}}-1$. Then, we have \begin{equation*}
     \frac{h^2}{2r}=\frac{k^2\floor{\sqrt{2r}}^2}{2r}+\frac{2k\floor{\sqrt{r}}s}{2r}+\frac{s^2}{2r}.
 \end{equation*}Note that \begin{equation*}
     \Big|\frac{k^2\floor{\sqrt{2r}}^2}{2r}-k^2\Big|\leq 2k^2\frac{\{\sqrt{2r}\}}{\sqrt{2r}} \leq 2\frac{L^2(r)}{\floor{\sqrt{2r}}^2 \sqrt{2r}}.
 \end{equation*}If we choose $L(t)$ to satisfy the additional hypothesis $L(t)\prec t^{3/4}$, then we get that the above quantity is $o_r(1)$. In this example, we can take $L(t)=t^{3/5}$ as our sub-linear function (observe that all of the restrictions we imposed above are satisfied). Therefore, we can use the power mean inequality to deduce that\begin{equation}\label{poiuy}
     A_r^4\leq  \underset{0\leq s\leq \floor{\sqrt{2r}}-1}{\E} \ \bignorm { \underset{1\leq k\leq \frac{L(r)}{\floor{\sqrt{2r}}}}{\E}  T^{\floor{k^2+p_{s,r}(k)}} f_r }^4 
 \end{equation}for some linear polynomials $p_{s,r}(k)$. Denote by $A_{s,r}$ the innermost average in the above relation.

 We fix a positive integer parameter $M$. Applying the van der Corput inequality twice, we deduce that \begin{equation*}
     A_{s,r}^4\ll \frac{1}{M} +\underset{|m_1|,|m_2|\leq M}{\E}  \Big|\int \bar{f_r} \cdot T^{2m_1m_2} f_r\  d\m  \Big| +o_r(1) ,
 \end{equation*}where the implied constant is absolute (and, in particular, independent of $M$). We omitted the routine computations here (the general case is more complicated than this and is handled in Section \ref{PETsection}). This bound holds regardless of the choice of the polynomial $p_{s,r}(k)$. Using this bound in \eqref{poiuy} we deduce that  \begin{equation*}
     A_r^4\ll \frac{1}{M}+\underset{|m_1|,|m_2|\leq M}{\E}  \Big|\int \overline{(g_1\cdot T^{\floor{\log^3 r}}f )} \cdot T^{2m_1m_2} (g_1\cdot T^{\floor{\log^3 r}}f )\  d\m  \Big| +o_r(1).
 \end{equation*}Therefore, the quantity in \eqref{example0} is $\ll$
 \begin{multline}\label{afterstep2}
     \frac{1}{M}+\underset{1\leq r\leq R}{\E}\ \underset{|m_1|,|m_2|\leq M}{\E}  \Big|\int \overline{(g_1\cdot T^{\floor{\log^3 r}}f )} \cdot T^{2m_1m_2} (g_1\cdot T^{\floor{\log^3 r}}f )\ d\m  \Big| +o_R(1)=\\
     \frac{1}{M}+ \underset{|m_1|,|m_2|\leq M}{\E} \ \underset{1\leq r\leq R}{\E} \Big|\int (\bar g_1 \cdot T^{2m_1m_2}g_1)\cdot T^{\floor{\log^3 r}}(\bar{f}\cdot T^{2m_1m_2}f) \  d\m  \Big| +o_R(1).
 \end{multline}
 
 \begin{remark*}
 In the proof of the general case, instead of the sub-linear function $\floor{\log^3(r)}$ in the iterates in \eqref{afterstep2}, we may also have functions of the form $\floor{u(r)}^{k}$, where $u\in\mathcal{H}$ is a sub-linear function and $k\in \Z^{+}$ (like $\floor{\sqrt{r}}^3$ and $\floor{r^{2/3}}^5$). 
  For instance, assume we want to study the limit of the averages \begin{equation*}
     \underset{1\leq n\leq N}{\E} T^{\floor{\sqrt{n}+n^3}}f\cdot T^{\floor{\sqrt{n}}}g.
 \end{equation*}Using Lemma \ref{mainlemma}, it suffices to show that \begin{equation*}
     \underset{1\leq r\leq R}{\E}\bignorm{\underset{0\leq h\leq L(r)}{\E}   T^{\floor{\sqrt{r+h}+(r+h)^3}}f\cdot T^{\floor{\sqrt{r+h}}}g }_{L^2(\m)}^{2^d}
 \end{equation*}for some $d\in \N$ and some sub-linear function $L(t)\in \mathcal{H}$. If we choose $L(t)$ appropriately, then we can write \begin{equation*}
     \sqrt{r+h}=\sqrt{r}+\frac{h}{2\sqrt{r}}+o_r(1)
 \end{equation*}for $0\leq h\leq L(r)$. Now, using the change of variables $h=k\floor{2\sqrt{r}}+s$, we observe that the leading coefficient of the polynomial $(r+h)^3$ in the iterates
 becomes $\floor{2\sqrt{r}}^3$. If we proceed similarly as in step 2 above using repeated applications of the van der Corput inequality, we will arrive at a similar bound as the one in \eqref{afterstep2}, but now the term $\floor{2\sqrt{r}}^3$ will appear in the iterates.

 In order to combat this situation, we need another intermediate step in our proof (this is Step 7 in Section \ref{reductionestimates}). We shall use a lemma that allows us to replace the sub-linear function $2\sqrt{r}$ by the identity function $a(r)=r$. 
 As an example, suppose we want to bound the limit of the averages \begin{equation*}
     \underset{1\leq r\leq R}{\E} T^{\floor{\sqrt{r}}}f\cdot T^{\floor{\sqrt{r}}^{3}+\floor{r^{2/5}} }g
 \end{equation*}as $R\to+\infty$. We rewrite this expression as a function of $\sqrt{r}$ \begin{equation*}
     \underset{1\leq r\leq R}{\E} T^{\floor{\sqrt{r}}}f\cdot T^{\floor{\sqrt{r}}^{3}+\floor{({\sqrt{r}})^{4/5}} }g.
 \end{equation*}Then, we can prove that \begin{equation*}
     \limsup\limits_{R\to+\infty}\bignorm{\underset{1\leq r\leq R}{\E} T^{\floor{\sqrt{r}}}f\cdot T^{\floor{\sqrt{r}}^{3}+\floor{({\sqrt{r}})^{4/5}} }g}_{L^2(\mu)}\leq   C \limsup\limits_{R\to+\infty}\bignorm{\underset{1\leq r\leq R}{\E} T^{r}f\cdot T^{r^{3}+\floor{r^{4/5}} }g}_{L^2(\mu)}
 \end{equation*}for some positive real number $C$. Now the functions in the iterates are sub-linear functions and polynomials, which we are now able to handle (this is the content of Section \ref{sublinearsection}).
 \end{remark*}

 \subsection*{Step 3: Dealing with the sub-linear function.}
 
 In this step we show that the quantity in \eqref{afterstep2} goes to 0, if we take $R\to +\infty$ and then $M\to+\infty$. While steps 1 and 2 of this example correspond to parts of the proof in Sections \ref{PETsection} and \ref{reductionestimates}, this step corresponds to the proofs in Section \ref{sublinearsection}.
 
  We observe that the function $\log^3(r)$ in the iterates is a sub-linear function. We will show that \begin{equation}\label{expl}
    \lim\limits_{R\to+\infty} \underset{1\leq r\leq R}{\E} \Big|\int (\bar g \cdot T^{2m_1m_2}g) \cdot T^{\floor{\log^3 r}}(\bar{f}\cdot T^{2m_1m_2}f)   d\m  \Big|\ll \nnorm{\bar f\cdot T^{2m_1m_2} f}_3.
 \end{equation} In addition, the implicit constants do not depend on $m_1,m_2$.
 Assuming that \eqref{expl} holds, we take the limit as $M\to +\infty$ (this can be done because all implied asymptotic constants do not depend on $m_1,m_2$) and we need to show that \begin{equation*}
     \lim\limits_{M\to +\infty} \underset{|m_1|,|m_2|\leq M}{\E} \nnorm{       \bar{f}\cdot T^{2m_1m_2}f   }_3=0.
 \end{equation*}Applying the H\"{o}lder inequality, we are left with showing that \begin{equation*}
     \lim\limits_{M\to +\infty} \underset{|m_1|,|m_2|\leq M}{\E} \nnorm{       \bar{f}\cdot T^{2m_1m_2}f   }_3^8=0.
 \end{equation*}
 Using the definition of the Host-Kra seminorms, this relation reduces to an ergodic average with polynomial iterates, which is well known to converge to $0$ under our hypothesis on the function $f$ (namely, that $\nnorm{f_1}_s=0$ for some suitable $s\in \N$).
 
 We now establish \eqref{expl}. It suffices to show that \begin{equation*}
      \lim\limits_{R\to+\infty} \underset{1\leq r\leq R}{\E} \Big|\int  g\cdot   T^{\floor{\log^3 r}}f   d\m  \Big|\ll \nnorm{f}_{3,T}
 \end{equation*}for any 1-bounded functions $f \text{ and }g$, where the implied constant is absolute. We square the above expression and apply the Cauchy-Schwarz inequality to bound it by \begin{equation*}
      \underset{1\leq r\leq R}{\E} \int {G} \cdot S^{\floor{\log^3(r)}}F \ d(\m\times\m),
 \end{equation*}where $F:=\overline{f}\otimes f$, $G:=\overline{g}\otimes g$ and $S:=T\times T$.
 Then, \eqref{expl} follows if we show \begin{equation*}
     \bignorm{ \underset{1\leq r\leq R}{\E}  S^{\floor{\log^3(r)}}F }_{L^2(\m\times \m)}\ll  \nnorm{f}_{3,T}^2.
 \end{equation*}We use Lemma \ref{mainlemma} once more: it suffices to show that \begin{equation*}
     \limsup\limits_{r\to+\infty} \bignorm{\underset{r\leq n\leq r+L(r)}{\E}  S^{\floor{\log^3(n)}}F }_{L^2(\m\times \m)}\ll  \nnorm{f}_{3,T}^2\ ,
 \end{equation*}where $L(t)\in\mathcal{H}$ is sub-linear. Using the Taylor expansion, we can write \begin{equation*}
     \log^3(r+h)=\log^3(r)+\frac{3\log^2 r}{r}h-\frac{6\log x_h-3\log^2 x_h  }{2x_h^2}h^2,
 \end{equation*}where $0\leq h\leq L(r)$ and $x_h\in [r,r+h]$. If we choose the function $L(t)$ so that \begin{equation*}
     \frac{t}{\log^2 t}\prec L(t)\prec \frac{t}{\log t}, 
 \end{equation*}we can then deduce that the last term in the above expansion is $o_r(1)$. Our problem reduces to \begin{equation*}
     \limsup\limits_{r\to+\infty} \bignorm{\underset{0\leq h\leq L(r)}{\E}  S^{\floor{\log^3(r)+ \frac{3\log^2 r}{r}h }}F}_{L^2(\m\times \m)}\ll  \nnorm{f}_{3,T}^2.
 \end{equation*}
 We have again reduced our problem to finding a bound for an ergodic average with (variable) polynomials. In order to finish the proof, we work similarly as in the previous steps, using the change of variables trick and one application of the van der Corput inequality (we also need to use the inequality $\nnorm{ F}_{2,T\times T} \leq \nnorm{f}_{3,T}^2$).

 b) In this second example we describe the strategy that will be used in the special case that we discussed above, that is when our functions are sums of sublinear functions and polynomials. This case is covered in full generality in Section \ref{sublinearsection}. We consider the triplet of functions in $\mathcal{H}$ $(t+\log^3 t, t, \log^2 t) $ and we shall show that there exists $s\in \N$ so that, if $\nnorm{f}_s=0$, then \begin{equation*}
      \underset{1\leq n\leq N}{\E} T^{\floor{n+\log^3 n}}f\cdot T^{{n}} g_1\cdot  T^{\floor{\log^2 n}}g_2
 \end{equation*}converge to $0$ in mean ($g_1,g_2$ are again arbitrary 1-bounded functions).
 
 \subsection*{Step 1: Reducing to the case when all iterates have sub-linear growth.}
 We start by using Lemma \ref{mainlemma} to reduce our problem to \begin{equation}
   \limsup\limits_{R\to+\infty}\underset{1\leq r\leq R}{\E} \bignorm{ \underset{r\leq n\leq r+L(r)}{\E}   T^{\floor{n+\log^3 n}}f\cdot T^{{n}} g_1\cdot  T^{\floor{\log^2 n}}g_2         }_{L^2(\m)}^{2}=0
 \end{equation}for some sub-linear function $L(t)\in\mathcal{H}$. In this example, we will choose the function $L(t)$, so that \begin{equation*}
  \max_{r\leq n\leq r+L(r)}  |\log^3(n)-\log^3(r)|=o_r(1)   \  \ \ \ \text{ and }\  \ \ \max_{r\leq n\leq r+L(r)}   |\log^2(n)-\log^2(r)|=o_r(1).
 \end{equation*}
 For instance, the function $L(t)=\sqrt{t}$ can easily be checked to satisfy the above. Therefore, if $r$ is very large, we can write \begin{multline*}
     \bignorm{ \underset{r\leq n\leq r+L(r)}{\E}   T^{\floor{n+\log^3 n}}f\cdot T^{{n}} g_1\cdot T^{\floor{\log^2 n}}g_2 }_{L^2(\m)}=\\
     \bignorm{ \underset{r\leq n\leq r+L(r)}{\E}   T^{n+\floor{\log^3 r}+e_{1,n}}f\cdot\  T^{n} g_1\cdot\  T^{\floor{\log^2 r}+e_{2,n}}g_2         }_{L^2(\m)} ,
 \end{multline*}where $e_{1,n},e_{2,n}\in \{0,\pm 1\}$. We assume here that all the error terms are zero (in the main proof, we will invoke Lemma \ref{errors} to remove the error terms). Therefore, we want to show that \begin{equation*}
          \limsup\limits_{R\to+\infty}\underset{1\leq r\leq R}{\E}\bignorm{ \underset{r\leq n\leq r+L(r)}{\E}   T^{n+\floor{\log^3 r}}f\cdot\  T^{n} g_1\cdot \ T^{\floor{\log^2 r}}g_2         }_{L^2(\m)}^{2}=0.
 \end{equation*}Since $\norm{g_2}_{\infty}\leq 1$, we reduce our problem to \begin{equation*}
      \limsup\limits_{R\to+\infty}\underset{1\leq r\leq R}{\E}\bignorm{ \underset{r\leq n\leq r+L(r)}{\E}  T^n (g_1\cdot    T^{\floor{\log^3 r}}f   )}_{L^2(\m)}^{2}=0.
 \end{equation*}Note that the inner average is a polynomial average in the variable $n$. We fix a positive integer $M$ and use the van der Corput inequality to deduce that \begin{equation*}
     \bignorm{ \underset{r\leq n\leq r+L(r)}{\E}  T^n (g_1\cdot    T^{\floor{\log^3 r}}f   )}_{L^2(\m)}^{2}\ll \frac{1}{M} +\underset{|m|\leq M}{\E}\Big|\int \overline{(g_1\cdot  T^{\floor{\log^3(r)}})} \cdot  T^m (g_1\cdot  T^{\floor{\log^3(r)}}) \ d\m \Big| +o_r(1)\ ,
 \end{equation*}where the implied constant is absolute. Thus, we want to show that \begin{equation*}
     \frac{1}{M} +\underset{|m|\leq M}{\E}\ \underset{1\leq r\leq R}{\E} \Big|\int (\overline{g_1}\cdot T^m g_1) \cdot T^{\floor{\log^3(r)}}(\overline{f}\cdot T^m f) \ d\m \Big|+o_R(1)
 \end{equation*}goes to $0$, as $R\to+\infty$ and then as $M\to+\infty$.
 
 \subsection*{Step 2: Dealing with the sub-linear functions.}

 Our problem follows by taking the limit as $R\to +\infty$ and then using the bound \begin{equation}\label{lst}
     \limsup\limits_{R\to+\infty}\underset{1\leq r\leq R}{\E} \Big|\int (\overline{g_1}\cdot T^m g_1) \cdot T^{\floor{\log^3(r)}}(\overline{f}\cdot T^m f) \ d\m \Big|\ll \nnorm{\overline{f} \cdot T^m f}_{3,T}.
 \end{equation}This was established in the previous example. Using this relation and taking the limit $M\to+\infty$ (note that our asymptotic constants do not depend on $M$), we reach the conclusion. 

Since \eqref{lst} follows from the previous example, we will describe our arguments for a more representative case. We shall prove that \begin{equation}\label{eld}
\limsup\limits_{N\to+\infty}  \bignorm{  \underset{1\leq n\leq N}{\E} T^{\floor{\log^3 n+\log^2 n}}f\cdot T^{\floor{\log^3 n}} g_1\cdot T^{\floor{\log^2 n}}g_2 }_{L^2(\m)}\ll \nnorm{f}_4,
\end{equation}where the implied constant is absolute. 
Using Lemma \ref{mainlemma}, it suffices to show that \begin{equation*}
    \limsup\limits_{R\to+\infty}  \underset{1\leq r\leq R}{\E}\bignorm{  \underset{r\leq n\leq r+L(r)}{\E} T^{\floor{\log^3 n+\log^2 n}}f\cdot T^{\floor{\log^3 n}} g_1\cdot T^{\floor{\log^2 n}}g_2 }_{L^2(\m)}^2\ll \nnorm{f}_4^2
\end{equation*}for some sub-linear function $L(t)\in\mathcal{H}$. We choose $L(t)=t(\log t)^{-3/2} $. Using similar approximations as in the first example, we can show that for any $0\leq h\leq L(r)$ \begin{equation*}
    \log^3(r+h)=\log^3 r+h\frac{3\log^2 r}{r}+o_r(1),
\end{equation*}while  \begin{equation*}
   \log^2(r+h)=\log^2 r+o_r(1)
\end{equation*}for all $0\leq h \leq L(r)$. Disregarding the error terms $o_r(1)$ in this example, it suffices to show that \begin{equation*}
    \limsup\limits_{R\to+\infty} \underset{1\leq r\leq R}{\E} \bignorm{  \underset{0\leq h\leq L(r)}{\E} T^{\floor{\log^3 r +h\frac{3\log^2 r}{r}}}\big( T^{\floor{\log^2 r}}f\cdot g_1   \big)\cdot T^{\floor{\log^2 r}}g_2 }_{L^2(\m)}^2\ll \nnorm{f}_4^2.
\end{equation*}Since $g_2$ is bounded by 1, the above bound follows from \begin{equation*}
    \limsup\limits_{N\to+\infty}\underset{1\leq r\leq R}{\E}  \bignorm{  \underset{0\leq h\leq L(r)}{\E} T^{\floor{\log^3 r +h\frac{3\log^2 r}{r}}}\big( T^{\floor{\log^2 r}}f\cdot g_1   \big)}_{L^2(\m)}\ll \nnorm{f}_4^2.
\end{equation*} This is an average where the iterates are variable polynomials. Working similarly to the previous example, we can show that \begin{multline*}
    \bignorm{  \underset{0\leq h\leq L(r)}{\E} T^{\floor{\log^3 r +h\frac{3\log^2 r}{r}}}\big( T^{\floor{\log^2 r}}f\cdot g_1   \big)}_{L^2(\m)}^2\ll \\
    \frac{1}{M}+\underset{|m|\leq M}\E \Big| \int  \overline{\big(T^{\floor{\log^2 r}}f\cdot g_1\big)} \cdot T^m\big( T^{\floor{\log^2 r}}f\cdot g_1 \big)\  d\m  \Big|   +o_r(1).
\end{multline*}Thus, it suffices to show that \begin{equation*}
    \limsup\limits_{M\to+{\infty}} \underset{|m|\leq M}{\E} \limsup\limits_{R\to+\infty}\underset{1\leq r\leq R}{\E}\Big| \int (\bar{g_1}\cdot T^m g_1)\cdot T^{\floor{\log^2 r}} (\bar{f}\cdot T^mf) \ d\m\Big|\ll \nnorm{f}_4^2.
\end{equation*}Note that we started with three sub-linear functions in the iterates and now we have an average with only one sub-linear function (our argument in the general case is based on this induction scheme). 
The result follows by working similarly to step 3 in the previous example.

\section{Bounds of polynomial averages}\label{PETsection}

Our main goal in this section is to establish Proposition \ref{PET} below. Before stating that proposition, we will first give some definitions.

\subsection{Families of variable polynomials }

Assume we are given a family $P_N=\{p_{1,N},...,p_{k,N}\}$ of essentially distinct (i.e. their pairwise differences are non-constant polynomials) variable polynomials, such that the degrees of the polynomials in $P_N$ and of their pairwise differences are independent of $N$ (for $N$ large enough). Then, we can assign to $p_{1,N}$ its own vector $(v_{1,N},...,v_{k,N})$, where $v_{1,N}$ is the leading coefficient of $p_{1,N}$ and $v_{j,  N}$ is the leading coefficient of $p_{1,N}-p_{j,N}$ for $j\neq 1$. We symbolize this by $\mathcal{S}(p_{1,N})$ and call this the {\em leading vector} of the family $P_N$ corresponding to $p_{1,N}$. We similarly define $\mathcal{S}(p_{i,N})$ for every $i\in \{1,...,k\}$ and call it the {\em leading vector} corresponding to $p_{i,N}$. Let us remark that the leading vector has no elements equal to 0, because we have assumed that the polynomials are essentially distinct. Finally, we call $P_N$ {\em ordered}, if the degrees of the polynomials $p_{i,N}$ are non-increasing. In this case, the polynomial $p_{1,N}$ has maximal degree and we call it the {\em leading polynomial}. The {\em leading vector} of an ordered polynomial family is defined as the leading vector corresponding to its leading polynomial.

\subsection{Types of polynomial families}

We define the {\em type} $(d,w_d,...,w_1)$ of the polynomial family, where $d$ is the largest degree appearing in the polynomials of $P_N$ and $w_i$ is the number of distinct leading coefficients of the members of $P_N$ with degree exactly $i$ among all polynomials in the family. Note that for families of variable polynomials, the value of this vector may depend on the variable $N$. We order the types by the value of $d$ and then order types of same degree lexicographically. We observe that a decreasing sequence of types must eventually be constant. The type of a family is a classical quantity used in the literature when an induction scheme on polynomial families is required.

\subsection{Good sequences and nice polynomial families}

Now, we define the notion of a nice polynomial family. Namely, we will deal with polynomials whose coefficients are well-behaved sequences. Our arguments fail to work in the general case where the coefficients can be arbitrary sequences.
 \begin{definition}\label{good sequence}
 a) A sequence $(a_n)_{n\in\N}$ of real numbers is called "good", if there exists a function $f\in\mathcal{H}$ with $\lim\limits_{t\to+\infty}f(t)\neq 0$  such that \begin{equation*}
     \lim\limits_{n\to+\infty} \frac{a_n}{f(n)}=1.
 \end{equation*}
b) Let $P_N=\{p_{1,N},...,p_{k,N}\}$ be a collection of polynomials. The family $P_N$ is called nice, if all the degrees of the polynomials $p_{i,N}$ and $p_{i,N}-p_{j,N}$ are independent of $N$ for $N$ large enough and their leading coefficients are good sequences, for all admissible values of the $i,j$.\\
\end{definition}

Note that any good sequence has a limit (possibly infinite). An example of a good sequence that is not a Hardy sequence is the sequence $\frac{\floor{N^{2/3}}}{\sqrt{N}}$, which is asymptotically close to $N^{1/6}$. In particular, all sequences of the form $\floor{f(n)}$, where the function $f\in\mathcal{H}$ does not converge to $0$ (as $t\to+\infty$), are good sequences, while, for example, $\floor{\frac{1}{\log n}}$ is not a good sequence. 

\begin{lemma}\label{type}
The type of a nice polynomial family is well-defined (independent of $N$) for $N$ large enough.
\end{lemma}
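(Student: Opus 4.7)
The plan is to reduce the lemma to a stability statement about finitely many pairwise dichotomies, and then to check each dichotomy using only the definition of ``good sequence'' together with the niceness hypothesis. First, I would note that by definition of a nice family, each degree $d_j := \deg p_{j,N}$ is independent of $N$ for $N$ large, so $d = \max_j d_j$ is well-defined eventually and, for each $i \in \{1,\dots,d\}$, the index set $J_i := \{j : \deg p_{j,N} = i\}$ of degree-$i$ polynomials in the family is fixed once $N$ is large.

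Next I would fix $i$ and a pair $j, j' \in J_i$ with $j \neq j'$, and show that the truth value of ``$c_{j,N} = c_{j',N}$'' (where $c_{\ell,N}$ denotes the leading coefficient of $p_{\ell,N}$) stabilizes as $N \to \infty$. By niceness, the degree $d_{j,j'}$ of $p_{j,N} - p_{j',N}$ is eventually a fixed integer, and clearly $d_{j,j'} \leq i$. If $d_{j,j'} < i$, the coefficient of $N^i$ in the difference is identically $0$ for large $N$, and so $c_{j,N} = c_{j',N}$ eventually. If instead $d_{j,j'} = i$, the leading coefficient of the difference is exactly $c_{j,N} - c_{j',N}$, which is by niceness a good sequence; but a good sequence is asymptotic to a Hardy function whose limit at infinity is nonzero (so that function is itself eventually nonzero), hence it is eventually nonzero, and so $c_{j,N} \neq c_{j',N}$ for all large $N$.

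Since there are only finitely many pairs $\{j, j'\} \subseteq J_i$ and only finitely many values $i \leq d$, I can choose a single threshold $N_0$ past which all degrees, all index sets $J_i$, and all of the above pairwise dichotomies are frozen. For $N \geq N_0$ the equivalence relation on each $J_i$ defined by equality of leading coefficients is therefore independent of $N$, so its number of equivalence classes $w_i$ is well-defined, and with it the full type $(d, w_d, \dots, w_1)$. The only conceivable obstacle is that $c_{j,N} - c_{j',N}$ might oscillate between zero and nonzero values forever; this is precisely what the nonzero-limit clause in Definition \ref{good sequence} rules out, so the argument is really just bookkeeping driven by the nonvanishing property of Hardy functions.
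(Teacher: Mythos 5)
Your proof is correct and follows essentially the same route as the paper's: you split by whether the difference $p_{j,N}-p_{j',N}$ has the common degree or a smaller one, conclude equality of leading coefficients in the second case and eventual inequality in the first (the paper gets the latter directly from the difference having full degree, while you invoke the good-sequence/nonvanishing property of its leading coefficient — an equivalent justification), and finish by finiteness of the pairs.
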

\begin{proof}
 This is fairly straightforward. Indeed, assume that the polynomials $p_{i,N}$ and $p_{j,N}$ of the given family have the same degree $s$. Let $a_i(N),a_{j}(N), a_{ij}(N)$ be the leading coefficients of $p_{i,N},p_{j,N}$ and $p_{i,N}-p_{j,N}$, which are all good sequences. The degree of the polynomial $p_{i,N}-p_{j,N}$ does not depend on $N$ for $N$ sufficiently large. Then, we have either one of the following:\\
 i) If the polynomial $p_{i,N}-p_{j,N}$ has degree equal to $s$, then for $N$ large enough, $a_{ij}(N)=a_i(N)-a_j(N)\neq 0$ and therefore the polynomials $p_{i,N},p_{j,N}$ have distinct leading coefficients eventually.\\
 ii) If the polynomial $p_{i,N}-p_{j,N}$ has degree smaller than $s$, then that means that, for $N$ large enough, we have $a_{i}(N)-a_j(N)= 0$ and the polynomials $p_{i,N},p_{j,N}$ have equal leading coefficients eventually.\\
 The claim easily follows.
\end{proof}
 
\subsection{The van der Corput inequality}
 We shall rely heavily on the following variant of the van der Corput inequality in our proofs.

\begin{lemma}
For a sequence $u_n$ in a Hilbert space with $\norm{u_n}\leq 1$ and a quantity $M=o(N)$, we have\begin{equation*}
\bignorm{\frac{1}{N}\sum_{n=0}^{N-1} u_n}^{2^d}\ll_d \frac{1}{M} + \underset{-M\leq m\leq M}{\E}\  \Big|\underset{0\leq n\leq N-1}{\E}\langle u_{n+m},u_{n}\rangle\Big|^{2^{d-1}}+o_N(1).
\end{equation*}

\end{lemma}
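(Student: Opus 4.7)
The plan is to argue by induction on $d$. The case $d=1$ is the classical van der Corput lemma, and the general case will follow by raising the $d=1$ estimate to the power $2^{d-1}$ and applying Jensen's inequality.

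For the base case, set $S_N := \sum_{n=0}^{N-1} u_n$. The shift trick rewrites
\[
\frac{1}{N}S_N = \frac{1}{N}\sum_{n=0}^{N-1}\Big(\frac{1}{M}\sum_{m=0}^{M-1} u_{n+m}\Big) + E_N,
\]
where the edge term satisfies $\bignorm{E_N} = O(M/N) = o_N(1)$ thanks to the hypothesis $M = o(N)$. Applying Cauchy--Schwarz to the main term, expanding the square, and grouping the pairs $(m_1,m_2)$ according to $h = m_1 - m_2$ yields
\[
\bignorm{\tfrac{1}{N}S_N}^{2} \ll \frac{1}{M^{2}}\sum_{h=-(M-1)}^{M-1}(M-|h|)\,\Big|\underset{0\leq n\leq N-1}{\E}\langle u_{n+h},u_n\rangle\Big| + o_N(1),
\]
where the $o_N(1)$ absorbs both the edge error and the $O(M/N)$ contribution arising from shifting the index of summation so that all inner averages run over $0\leq n\leq N-1$. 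Bounding $(M-|h|)/M^{2} \leq 1/M$ and comparing the resulting expression with the average over $|m|\leq M$ (the two differ by $O(1/M)$) gives the case $d=1$.

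For the inductive step, raise the base inequality to the power $2^{d-1}$. Using the elementary bound $(a+b+c)^{K} \leq 3^{K}(a^{K}+b^{K}+c^{K})$ for non-negative reals, we obtain
\[
\bignorm{\tfrac{1}{N}S_N}^{2^d} \ll_d \frac{1}{M^{2^{d-1}}} + \Big(\underset{-M\leq m\leq M}{\E}|x_m|\Big)^{2^{d-1}} + o_N(1),
\]
where $x_m := \underset{0\leq n\leq N-1}{\E}\langle u_{n+m}, u_n\rangle$. Since $M\geq 1$ we have $1/M^{2^{d-1}} \leq 1/M$, and Jensen's inequality applied to the convex map $t\mapsto t^{2^{d-1}}$ gives $\big(\underset{m}{\E}|x_m|\big)^{2^{d-1}} \leq \underset{m}{\E}|x_m|^{2^{d-1}}$. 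Substituting these two bounds produces exactly the right-hand side appearing in the lemma.

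The only delicate point is the bookkeeping of the $o_N(1)$ terms: both the edge error in the shift step and the truncation error in the re-indexing are of order $M/N$, hence vanish as $N\to\infty$ by the hypothesis $M = o(N)$; they also remain $o_N(1)$ after being raised to the fixed power $2^{d-1}$. No deeper ingredient is needed beyond Cauchy--Schwarz, the shift argument, and Jensen's inequality, so there is no genuine obstacle in the proof.
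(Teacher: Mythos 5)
Your proof is correct and follows essentially the same route as the paper: the paper simply quotes the classical van der Corput inequality and obtains the stated bound "by successively squaring and applying the Cauchy--Schwarz inequality," which is exactly your step of raising the $d=1$ estimate to the power $2^{d-1}$ and pushing the power inside the average (your single application of Jensen is just the iterated Cauchy--Schwarz done in one shot). The only addition on your side is that you spell out the shift-trick proof of the base case, which the paper takes as known.
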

\begin{proof}
This follows from the basic van der Corput inequality \begin{equation*}
    \bignorm{\frac{1}{N}\sum_{n=0}^{N-1} u_n}\ll  \frac{1}{M^{1/2}} +\big(\underset{-M\leq m\leq M}{\E} \Big|\underset{0\leq n\leq N-1}{\E}\langle u_{n+m},u_{n}\rangle\Big|\big)^{1/2}+\frac{M^{1/2}}{N^{1/2}}
\end{equation*}
by successively squaring and applying the Cauchy-Schwarz inequality.
\end{proof}

We will use this inequality to derive asymptotic bounds for multiple ergodic averages involving polynomials. The above inequality holds, in particular, when $M$ is a fixed positive integer.
We state here the equivalent result for variable sequences, since this is more consistent with the notation used in the proof below.
\begin{lemma}\label{vdc}
For sequences $(u_{n,N})_{n,N\in \N}$ in a Hilbert space with $\norm{u_{n,N}}\leq 1$ and a quantity $M=o(N)$, we have\begin{equation*}
    \bignorm{\frac{1}{N}\sum_{n=0}^{N-1} u_{n,N}}^{2^d}\ll_d \frac{1}{M} + \underset{ |m|\leq M}{\E}\  \Big|\underset{0\leq n\leq N-1}{\E}\langle u_{n+m,N},u_{n,N}\rangle\Big|^{2^{d-1}}+o_N(1).
\end{equation*}
\end{lemma}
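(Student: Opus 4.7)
The plan is to follow exactly the recipe already indicated for the fixed-sequence version of the inequality stated just above, observing that the basic van der Corput inequality holds with an absolute (sequence-independent) implicit constant. In particular, applying it with $N$ fixed to the sequence $(u_{n,N})_{n}$ gives
\begin{equation*}
\bignorm{\frac{1}{N}\sum_{n=0}^{N-1}u_{n,N}} \ll \frac{1}{M^{1/2}} + \Big(\underset{|m|\leq M}{\E}\Big|\underset{0\leq n\leq N-1}{\E}\langle u_{n+m,N},u_{n,N}\rangle\Big|\Big)^{1/2}+\frac{M^{1/2}}{N^{1/2}}.
\end{equation*}
Since $M=o(N)$, the last term is $o_N(1)$. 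Squaring both sides settles the case $d=1$.

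For general $d$, I would proceed by induction on $d$. Assuming the inequality holds at level $d$, I would square both sides. The term $1/M$ then becomes $1/M^{2}$, which is still bounded by $1/M$ for $M\geq 1$; the $o_N(1)$ term squared is still $o_N(1)$; and the cross terms are handled by $(a+b+c)^2 \ll a^2+b^2+c^2$. The mean over $m$ appears squared, and I would use Jensen's inequality (power mean) in the form $(\E_m|T_m|)^2\leq \E_m|T_m|^2$ to push the square inside the average over $m$, doubling the inner exponent from $2^{d-1}$ to $2^{d}$ as required. Iterating $d-1$ times starting from the $d=1$ base case yields the claimed bound with exponent $2^d$ on the left and $2^{d-1}$ on the inner absolute value, with an implicit constant depending only on $d$ (matching the $\ll_d$ in the statement).

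No substantive obstacle is expected; the argument is purely bookkeeping, exploiting that the basic van der Corput inequality is uniform in the underlying sequence and that the standard power-mean step correctly moves powers across an averaging operator. The only minor point to keep track of is that after each squaring step the power of $1/M$ grows, but since $M$ is a positive integer parameter (eventually $M\geq 1$) these higher powers are automatically absorbed into a single $1/M$ summand on the right-hand side.
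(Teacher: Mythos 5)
Your argument is correct and matches the paper's proof, which likewise derives the bound from the basic van der Corput inequality by successively squaring and applying Cauchy--Schwarz (your power-mean/Jensen step over the average in $m$ is the same inequality), with the $M/N$ term absorbed into $o_N(1)$ since $M=o(N)$. Nothing further is needed.
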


\subsection{Bounds of polynomial averages}

The remainder of the section will be dedicated to establishing the following proposition:

\begin{proposition}\label{PET}
Let $k, d$ be positive integers and let $M$ be a positive integer parameter. Suppose $\bW=(d,w_d,...,w_1)$ is a (d+1)-tuple of positive integers that is also a type for some polynomial family. Then, there exist positive integers $t=t(d,k,\bW) $, $ s=s(d,k,\bW)$, a finite set $Y=Y(d,k,\bW)$ of integers and integer polynomials in $t$ variables $p_{\underline{\e},j},\text{ with } \underline{\e}\in [[s]]$ and $1\leq j\leq \ k$, that are at most linear in each variable\footnote{This means that when regarded as polynomials only in one variable, then they are linear. Examples are $p_1(m_1,m_2)=m_1-2m_2$ and $p_2(m_1,m_2,m_3)=m_1m_2-3m_3$.}, such that for any ordered nice family of non-constant, essentially distinct polynomials \begin{equation*}
    P_N=\{p_{1,N},...,p_{k,N}\}
\end{equation*}of degree $d$ and type $\bW$ with leading vector $\mathcal{S} (P_N) =\{u_{1,N},...,u_{k,N}\}$, any increasing sequence $L_N\to \infty $, any measure preserving system $(X,\m,T)$ and sequences of 1-bounded functions $f_{1,N},...,f_{k,N}$, we have \begin{multline}\label{asdfghjkl}
    \sup_{|c_{n,N}| \leq 1}\bignorm{ \underset{0\leq n\leq L_N}{\E}\  c_{n,N} \prod_{i=1}^{k} T^{\floor{p_{i,N}(n)}}f_{i,N}}_{L^2(\m)}^{2^t}\ll_{d,k,\bW} \\ 
     \frac{1}{M}+
  \sum_{{\bf h}\in     Y^{[[s]]}}^{} \underset{{\bf m}\in [-M,M]^t}{\E} \Big| \int \prod_{\underline{\e}\in [[s]]}^{}  T^{\floor{A_{\ue,N}(\bm)}+ h_{\underline{\e}}}(\mathcal{C}^{|\underline{\e}|}f_{1,N})  \ d\m  \Big|+
    o_N(1),
\end{multline}where \begin{equation*}
    A_{\ue,N}(\bm)=\sum_{1\leq j\leq k} \ p_{\underline{\e},j}({\bf m})u_{j,N}
\end{equation*}are real polynomials in $\bm$. In addition, we have the following:\\
i) For $\ue\neq \underline{0}$, we have that the polynomial $A_{\ue,N}(\bm)$ is non-constant.\\
ii) The polynomials $A_{\ue,N}(\bm),\ \ue\in [[s]]$ are pairwise essentially distinct.\\
iii) We have the relation $$A_{\ue,N}(\bm)+A_{\ue^c,N}(\bm)=A_{\underline{1},N}(\bm)$$ for any $\ue\in[[s]]$. More generally, if $\ue,\ue'\in [[s]]$ are such that $\ue+\ue'\in [[s]]$ \footnote{This means that the $i$-th entries of $\ue,\ue'$ cannot simultaneously be 1, for any admissible value of $i$.}, then \begin{equation*}
    A_{\ue,N}(\bm)+A_{\ue',N}(\bm) =A_{\ue+\ue',N}(\bm)
\end{equation*}
iv) For any $\ue \in [[s]]$, we have that if \begin{equation*}
    c_1p_{\ue,1}(\bm)+...+c_k p_{\ue,k}(\bm)
\end{equation*}is the zero polynomial for some $c_1,...,c_k\in\R$, then we have $c_i=0$ or $p_{\ue,i}(\bm)$ is the zero polynomial, for every $1\leq i\leq k$.

\end{proposition}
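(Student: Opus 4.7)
The proof proceeds by PET (Polynomial Exhaustion Technique) induction on the type $\bW = (d, w_d, \ldots, w_1)$ in the lexicographic order, with the van der Corput inequality (Lemma~\ref{vdc}) as the driving tool. The well-foundedness of the lexicographic order guarantees termination after a bounded number of steps depending only on $d, k, \bW$, and this number determines the parameters $s$ and $t$ claimed in the statement. Each application of van der Corput introduces one new auxiliary shift variable $m_i \in [-M, M]$ (accumulating into $t$), doubles the number of factors in the integrand (contributing to $2^s$), and produces a new polynomial family of strictly smaller type after a suitable composition by $T^{-\floor{p_{j,N}(n)}}$. The finite error set $Y$ collects the bounded discrepancies arising from floor manipulations such as $\floor{a+b}-\floor{a}-\floor{b}\in\{0,\pm 1\}$, which are absorbed at each step via Lemma~\ref{errors}.

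For the base case I would take a single linear polynomial, $k=1$ and $\bW=(1,1)$. A single application of Lemma~\ref{vdc} combined with the Cauchy--Schwarz inequality immediately produces the desired bound with $s=t=1$, $A_{\underline{0}}=0$, $A_{\underline{1}}(m_1)=u_{1,N}m_1$, and all four structural properties trivially verified; the general linear case reduces to this by factoring out the common shift. For the inductive step, given $P_N$ of type $\bW$, set $u_{n,N}:=c_{n,N}\prod_i T^{\floor{p_{i,N}(n)}}f_{i,N}$, apply Lemma~\ref{vdc} with a fresh shift variable $m$, and expand $\langle u_{n+m,N},u_{n,N}\rangle$ into an integral of $2k$ shifted factors. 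Composing with $T^{-\floor{p_{k,N}(n)}}$ (where $p_{k,N}$ is the smallest-degree polynomial of the ordered family) and applying Lemma~\ref{errors} to move all floor errors into $Y$, the resulting expression involves $2k-1$ polynomials of the form $p_{i,N}(n+m)-p_{k,N}(n)$ and $p_{j,N}(n)-p_{k,N}(n)$ together with a new $1$-bounded function sequence built from the original $f_{i,N}$. The standard PET calculation (using Lemma~\ref{type} to identify the new type) confirms that this reduced family is nice, essentially distinct, and of strictly smaller type, so the inductive hypothesis applies. To guarantee that $f_{1,N}$ is the surviving function after the full recursion, the induction is organized so that the leading polynomial $p_{1,N}$ is always preserved and $p_{k,N}$ is always the one differenced, which is automatic in the ordered-family setup.

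The principal obstacle is the verification of the four structural properties of the phase polynomials $A_{\ue,N}(\bm)=\sum_{j=1}^{k}p_{\ue,j}(\bm)u_{j,N}$. I would define the $p_{\ue,j}$ recursively along the PET: at each step one passes from the indexing set $[[s]]$ to $[[s+1]]$ by pairing every $\ue$ with either $0$ (no new shift) or $1$ (a shift by the new $m_{t+1}$), reading off the new phase from the explicit form of the composition. Property (iii), the Gowers-box additivity $A_{\ue}+A_{\ue'}=A_{\ue+\ue'}$ on disjoint supports, is the most delicate and reflects the symmetric way iterated van der Corput generates pairs $(n, n+m)$: at each level one obtains a new coordinate along which the phase shifts additively, and the invariant is preserved because the new shift is added to one side of each correlation pair independently. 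Property (iv), the non-degeneracy of the $p_{\ue,j}$ as $\R$-linear combinations of the leading coefficients $u_{j,N}$, is preserved because the leading vector of a nice ordered family has no zero entries by essential distinctness, so fresh leading coefficients always appear without linear collapse. Properties (i) and (ii), namely non-constancy of $A_{\ue,N}$ for $\ue\neq\underline{0}$ and pairwise essential distinctness of the $A_{\ue,N}$, then follow from (iv) together with the non-vanishing of the entries of $\mathcal{S}(P_N)$.
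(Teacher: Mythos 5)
Your architecture is the same as the paper's: PET induction on the type, van der Corput with the minimal-degree polynomial $p_{k,N}$ differenced, floor discrepancies absorbed into the finite set $Y$ via Lemma~\ref{errors}, and a recursive definition of the phase polynomials $p_{\ue,j}$ tracked through the transformation of the leading vector. But one step, as stated, fails: in the inductive step you claim the differenced family $\{p_{i,N}(n+m)-p_{k,N}(n),\ p_{j,N}(n)-p_{k,N}(n)\}$ is "essentially distinct, so the inductive hypothesis applies." This is false whenever the family contains linear polynomials: for $P_N=\{n^2,2n,n\}$ the vdC'd family contains $t+2m$ and $t$, which differ by a constant for every $m$, and for an all-linear family each pair $p_{i,N}(t+m)-p_{k,N}(t)$, $p_{i,N}(t)-p_{k,N}(t)$ differs by the constant $ma_{i,N}$. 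The fix is not "factoring out the common shift" (there is no common shift when the leading coefficients $a_{i,N}$ are distinct) but merging the iterates attached to polynomials differing by constants, which after flooring attaches bounded shifts to the functions — this is precisely why the paper proves the linear case by a separate induction on $k$ in which $\overline{f_{i,N}}\cdot T^{\floor{ma_{i,N}}+z_i}f_{i,N}$ becomes the new function for the polynomial $(a_{i,N}-a_{k,N})n+(b_{i,N}-b_{k,N})$, reducing $k$ by one rather than doubling the family. Without this merging your base-case reduction, and the inductive step for mixed families containing linear terms, do not go through as written.

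Two further points are asserted where arguments are needed. First, the uniformity of $t,s,Y$ in $(d,k,\bW)$: the triplet $(d',k',\bW')$ of the vdC'd family is not determined by $(d,k,\bW)$ alone (how many constant polynomials are discarded, hence $k'$, depends on the particular family), so "the number of steps determines $s$ and $t$" is not automatic; the paper handles this by taking maxima over the finite set of reachable triplets and arranging the first vdC step to realize the maximizing one (also note Corollary~\ref{typecorollary} is needed to show the type after the operation is independent of $m$ outside $O_{d,k}(1)$ bad values, which are absorbed into the $1/M$ term). Second, property (iv) is not simply "fresh leading coefficients appear without linear collapse": the new coefficients are rewritten via the three cases of Lemma~\ref{form} (each new leading-vector entry is $u_{\ell,N}$, $dmu_{1,N}$, or $dmu_{1,N}+u_{\ell,N}$), and linear independence of the nonzero $q_{\ue,j}$ is then proved by specializing the extra variable $m=0$ and invoking the inductive independence of the old $p_{\ue,j}$, as in the paper's Claim; your one-line heuristic skips the actual mechanism. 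Properties (i)--(iii) do then follow along the lines you indicate.
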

\begin{comment*}
The $\sum\limits_{{\bf h}\in Y^{[[s]]}}^{}$ means that we take the sum for all choices of ${\bf h}=(h_{\underline{\e}},\ \underline{\e} \in [[s]])$ where $h_{\underline{\e}}\in Y$. In addition, we will make a small abuse of notation and write $\underset{\bm\in[-M,M]^t}{\E}$ to denote the average over all $ \bm\in \Z^t\cap [-M,M]^t$.
\end{comment*}
\begin{remarks*}

i) The polynomials $p_{\underline{\e},j}$ are independent of the leading vector $\{u_{1,N},...,u_{k,N}\}$ and are, more importantly, independent of the variable $N$.\\
ii) The existence of the errors $h_{\underline{\e}}$ is merely technical and arises from the floor function in the last expression inside the integral, since we cannot use Lemma \ref{errors} to remove the error terms in this case. This will be more easily understood in the proof of the case of linear polynomials that follows. \\
iii) The quantity $o_N(1)$ depends of course on the values of $d $ and $k$. It also depends on the value of the fixed number $M$. However, this dependence plays no role in arguments of the following sections (where we will usually take limits first as $N\to+\infty$ and, then, as $M\to+\infty$). For ease of notation, we will omit all other subscripts for the term $o_N(1)$.\\
iv) The final condition $iv)$ above implies that, for a fixed $\ue\in[[s]]$, if we exclude all the constant polynomials among the $p_{\ue,j}$, the remaining polynomials are linearly independent.

\end{remarks*}
Ignoring the technical parts of the statement, the above proposition asserts that when working with multiple averages on some polynomials that vary with $N$, we can instead bound them by the averages of a polynomial correlation sequence of only the function $f_{1,N}$. Even though the new polynomials $A_{\ue,N}$ have several variables, they only depend on the sequences $u_{1,N},...,u_{k,N}$ and, assuming we they have good limiting behavior, we can take the limits first as $N\to+\infty$ and then as $M\to+\infty$ to get some nice bounds for the original averages. For instance, in the case where we have a fixed function $f_{1,N}=f_1$ and the sequences $u_{i,N}$ converge to non-zero real numbers, the above statement can be used to prove that the $\limsup$ of the ergodic averages in the left-hand side of \eqref{asdfghjkl} can be bounded by a power of $\nnorm{f_{1}}_s$ for some suitable positive integer $s$.
This last assertion follows from minor modifications to the argument present in \cite{Leibmanseveral} (to cover the case of real polynomials instead of just integer polynomials).

\begin{proof}[Proof in the linear case] Firstly, we shall establish Proposition \ref{PET} in the case where all the polynomials have degree 1. Thus,
 assume that $p_{i,N}(t)= a_{i,N}t+b_{i,N}$ where $a_{i,N}, b_{i,N}\in \R$ so that the variables $a_{i,N}$ are (eventually) non-zero. The assumption that our polynomials are essentially distinct implies that the  numbers $a_{i,N}$ and $a_{j,N}$ are distinct. The leading vector of $P_{N}$ is the set \begin{equation*}
  \{a_{1,N}, a_{1,N}-a_{2,N},...,a_{1,N}-a_{k,N}\}
\end{equation*}and these are good sequences.

We induct on $k$. For $k=1$, we apply the van der Corput inequality to get \begin{multline*}
    \bignorm{ \underset{0\leq n\leq L_N}{\E}\  c_{n,N}  T^{\floor{a_{1,N}n +b_{1,N}}}f_{1,N}}_{L^2(\m)}^2 \ll \\
 \frac{1}{M}+   \underset{ |m|\leq M}{\E}   \Big|  \underset{0\leq n\leq L_N}{\E} \overline{c_{n,N}}c_{n+m,N}  \int \overline{f_{1,N}} \cdot T^{\floor{a_{1,N} n+b_{1,N} +m a_{1,N}}-\floor{a_{1,N}n+b_{1,N}}} f_{1,N} \ d\m \Big|   +o_N(1).
    \end{multline*}We rewrite the last quantity as \begin{equation*}
        \frac{1}{M}+  \underset{|m|\leq M}{\E}   \Big|  \underset{0\leq n\leq L_N}{\E} \overline{c_{n,N}}c_{n+m,N}  \int \overline{f_{1,N}} \cdot T^{\floor{ma_{1,N}}+e_{n,m,N}} f_{1,N} \ d\m \Big|  +o_N(1),
    \end{equation*}
where $e_{n,m,N}\in \{0,\pm 1\}$ (the implied constant is independent of all variables in the above relation). Let $A_{z,m,N}=\{n\in\Z^{+}\colon 0\leq n\leq L_N \text{ and } e_{n,m,N}=z \}$ for $z\in\{0,\pm 1\}=Y$. Then, the innermost average can be rewritten as \begin{multline*}
    \Big|\frac{1}{L_N} \sum_{z\in Y}^{} \sum_{n\in A_{z,N,m}} \overline{c_{n,N}}c_{n+m,N}  \int \overline{f_{1,N}} \cdot T^{\floor{ma_{1,N}}+z} f_{1,N} \ d\m|
     \leq 
   \sum_{z\in Y}^{} \Big|\int \overline{f_{1,N}} \cdot T^{\floor{ma_{1,N}}+z} f_{1,N}\ d\m\Big|,
\end{multline*}which, combined with the above, gives the desired result (for constants $t=1$ and $s=1$, polynomials $p_1(m)=ma_{1,N}$ and $p_0(m)=0$ and set $Y=\{0,\pm 1\}$).

Now assume that we have proven the result for $k-1$ ($k\geq 2$), with the constants of the proposition given by $t=k-1$ and $s=k-1$. Then, we use the van der Corput inequality to get \begin{multline*}
     \bignorm{ \underset{0\leq n\leq L_N}{\E}\  c_{n,N} \prod_{i=1}^{k} T^{\floor{a_{i,N}n +b_{i,N}}}f_{i,N}}_{L^2(\m)}^{2^k}\ll_k  \frac{1}{M}+o_N(1)+\\
     \underset{| m|\leq M}{\E}   \Big|  \underset{0\leq n\leq L_N}{\E} \overline{c_{n,N}}c_{n+m,N} \int \prod_{i=1}^{k}    T^{\floor{a_{i,N}n+b_{i,N}} +\floor{m a_{i,N}}+e_{i,m,n,N}} f_{1,N}\  T^{\floor{a_{i,N}n+b_{i,N}}}\overline{f_{i,N}}  \ d\m \Big|^{2^{k-1}}, \end{multline*} 
     which is smaller than \begin{multline}\label{23}
     \underset{|m|\leq M}{\E}   \Big|  \underset{0\leq n\leq L_N}{\E}\  \overline{c_{n,N}}c_{n+m,N}\  \int \prod_{i=1}^{k}    T^{\floor{a_{i,N}n+b_{i,N}}-\floor{a_{k,N}n+b_{k,N}} +\floor{m a_{i,N}}+e_{i,m,n,N}} f_{1,N}\cdot \\ T^{\floor{a_{i,N}n+b_{i,N}}-\floor{a_{k,N}n+b_{k,N}}}\overline{f_{i,N}}  \ d\m \Big|^{2^{k-1}} +
     1/M +o_N(1),
\end{multline}where we again have $e_{i,m,,n,N}\in\{0,\pm 1\}$. In the last step, we composed with $T^{-\floor{a_{k,N}n+b_{k,N}}}$ inside the integral.

We have \begin{equation*}
    \floor{a_{i,N}n+b_{i,N}}-\floor{a_{k,N}n+b_{k,N}} =\floor{{(a_{i,N}-a_{k,N})n+b_{i,N}-b_{k,N}}}+e'_{i,n,N}
\end{equation*}where $e'_{i,n,N}\in\{0,\pm 1\}$. Therefore, we can rewrite the last expression in \eqref{23} as \begin{multline*}
\frac{1}{M}+
         \underset{|m|\leq M}{\E} \  \Big|  \underset{0\leq n\leq L_N}{\E}\  \overline{c_{n,N}}c_{n+m,N} \\ \prod_{i=1}^{k} \int   T^{\floor{(a_{i,N}-a_{k,N})n+b_{i,N}-b_{k,N}}+e'_{i,n,N}}
         \big( \overline{f_{i,N}} \cdot T^{\floor{ma_{i,N}}+e_{i,m,n,N}}f_{i,N}  \big)  \ d\m \Big|^{2^{k-1}}+o_N(1).
\end{multline*}

Then, using the Cauchy-Schwarz inequality and the argument in Lemma \ref{errors}, we can bound the innermost average in the above expression by $O_k(1)$ times the quantity \begin{equation*}
    A_{m,N}=\sup_{|c_{n,N}|\leq 1} \bignorm{ \underset{0\leq n\leq L_N}{\E}\ c_{n,N}\prod_{i=1}^{k-1}\   T^{\floor{(a_{i,N}-a_{k,N})n+(b_{i,N}-b_{k,N})}}(\overline{f_{i,N}}\cdot T^{\floor{m a_{i,N}}+e_{i,m,n,N} }f_{i,N})                                }_{L^2(\m)}^{2^{k-1}}.
\end{equation*}Now, we use the argument of Lemma \ref{errors} again to deduce that $A_{m,N}$ is bounded by $O_k(1)$ times \begin{equation*}
    \sum_{\underset{1\leq i\leq k-1}{z_i\in\{0,\pm 1\}} }^{} \sup_{|c_{n,N}|\leq 1} \bignorm{ \underset{0\leq n\leq L_N}{\E}\ c_{n,N}\prod_{i=1}^{k-1}\   T^{\floor{(a_{i,N}-a_{k,N})n+(b_{i,N}-b_{k,N})}}(\overline{f_{i,N}}\cdot T^{\floor{m a_{i,N}}+z_i }f_{i,N})                                }_{L^2(\m)}^{2^{k-1}}.
\end{equation*}

We fix some ${\bf z}=(z_1,...,z_{k-1})\in \{0,\pm 1\}^{k-1}$.
 If we take the polynomial that corresponds to $\overline{f_{1,N}}\cdot T^{\floor{m a_{1,N}}+z_1}f_{1,N}$ to be the new leading polynomial, then the new leading vector is the set \begin{equation*}
    \{a_{1,N}-a_{k,N},a_{1,N}-a_{2,N},...,a_{1,N}-a_{k-1,N}\}. 
\end{equation*}
By the induction hypothesis, there exists a finite set $Y_{k-1}$, for which\begin{multline*}
  \sup_{|c_{n,N}|\leq 1} \bignorm{ \underset{0\leq n\leq L_N}{\E}\ c_{n,N}\prod_{i=1}^{k-1}\   T^{\floor{(a_{i,N}-a_{k,N})n+(b_{i,N}-b_{k,N})}}(\overline{f_{i,N}}\cdot T^{\floor{m a_{i,N}}+z_i }f_{i,N})                                }_{L^2(\m)}^{2^{k-1}}  \ll_k   \\    \frac{1}{M} +
\sum_{{\bf h}\in [[Y_{k-1}]]}^{} \underset{|m_1|,...,|m_{k-1}|\leq M}{\E}    \Big| \int \prod_{\underline{\e}\in [[k-1]]}^{}  T^{\floor{\sum_{1\leq j\leq k-1} \ p_{\underline{\e},j}(m_1,...,m_{k-1})(a_{1,N}-a_{j,N})}+h_{\underline{\e}}}\\
   \mathcal{C}^{|\underline{\e}|}(\overline{f_{1,N}}\cdot T^{\floor{m a_{1,N}}+z_1}f_{1,N})
   \ d\m      \Big|+o_N(1).
\end{multline*}
Using the identification $[[k]]=[[k-1]]\times \{0,1\}$, we can write an $\underline{\e}\in[[k]]$ as $\underline{\e}=(\underline{\e_1},\e_2)$ where $\underline{\e_1}\in [[k-1]]$ and $\e_2\in \{0,1\}$. We also write $\bm=(m,m_1,...,m_{k-1})$. Combining the integer parts, we rewrite the last integral as \begin{equation*}
    \int \prod_{\e\in [[k]]]} T^{\floor{ \sum_{1\leq j\leq k-1} \ p'_{\underline{\e},j}(m_1,...,m_{k-1})(a_{1,N}-a_{j,N})+ p'_{\underline{\e},k}(m)a_{1,N}} +h'_{\underline{\e},\bm}}\  \mathcal{C}^{|\underline{\e}|}f_{1,N} \ d \m\  ,
\end{equation*}
where \begin{enumerate}
    \item $p'_{\underline{\e},j}$ is the polynomial $p_{\underline{\e}_1,j}$ for $1\leq j \leq k-1$,
    \item the polynomial $p'_{\underline{\e},k}$ is equal to $m$ when $\underline{\e_2}=0$  and is zero otherwise and
    \item $h'_{\underline{\e},\bm}=h_{\underline{\e}_1}+h_{2,\underline{\e},\bm}$, where \footnote{ In particular, $h_{2,\ue,\bm}$ is the sum of $z_1$ plus the error term appearing by combining $\floor{ma_{1,N}}$ with the other integer part, whenever they both appear. Otherwise, it is zero. Thus, it takes values on a finite set of integers.} $h_{2,\underline{\e},\bm}\in \{0,\pm 1,\pm 2\}$. More importantly, $h'_{\underline{\e},m}$ takes values in a finite set $Y_k$.
\end{enumerate}We observe that \begin{multline*}
    \Big|\int \prod_{\e\in [[k]]]} T^{\floor{ \sum_{1\leq j\leq k-1} \ p'_{\underline{\e},j}(m_1,...,m_{k-1})(a_{1,N}-a_{j,N})+ p'_{\underline{\e},k}(m)a_{1,N}} +h'_{\underline{\e},\bm}}\  \mathcal{C}^{|\underline{\e}|}f_{1,N} \ d \m \Big|\leq \\
    \sum_{{\bf h}\in [[Y_k]]}^{} \Big|\int \prod_{\e\in [[k]]]} T^{\floor{ \sum_{1\leq j\leq k-1} \ p'_{\underline{\e},j}(m_1,...,m_{k-1})(a_{1,N}-a_{j,N})+ p'_{\underline{\e},k}(m)a_{1,N}} +h_{\underline{\e}}}\  \mathcal{C}^{|\underline{\e}|}f_{1,N} \ d \m \Big|.
\end{multline*}

Averaging over $m,m_1,...,m_{k-1}$ and summing over ${\bf z}\in \{0,\pm 1\}^{k-1}$, we have that for the finite set $Y_k$ above, the original expression is bounded by $O_k(1)$ times \begin{equation*}
    \frac{1}{M}+
   \sum_{{\bf h}\in [[Y_k]]}^{} \ \underset{{\bf m}\in [-M,M]^k}{\E}
  \Big| \int \prod_{\underline{\e}\in [[k]]}^{}  T^{\floor{\sum_{1\leq j\leq k} \ p'_{\underline{\e},j}({\bf m})u_{j,N}}+h_{\underline{\e}}}\ (\mathcal{C}^{|\underline{\e}|}f_{1,N}) \ d \m       \Big|+
    o_N(1),
\end{equation*}where $u_{1,N}=a_{1,N}$ and $u_{j,N}=a_{1,N}-a_{j,N}$.
The conclusion follows.
\end{proof}

\begin{remark*}
It follows from the above proof that the polynomials $A_{\ue,N}$ in the statement of Proposition \ref{PET}  have the following form:
\begin{equation*}
    A_{\ue,N}(m_1,...,m_k)= \ue \cdot (m_1u_{1,N},...,m_k u_{k,N})
\end{equation*}where "$\cdot$" denotes here the standard inner product on $\R^k$.
Thus, it is straightforward to check that the polynomials $A_{\ue,N}$  satisfy the conditions $i),ii)$, $iii)$ and $iv)$ of Proposition \ref{PET}. Note that all these polynomials have degree 1. This will not be the case when working with polynomials of higher degree, where we may have higher degree terms (like products of the form $m_1m_2$), but they will be linear in each variable separately. 
\end{remark*}

\subsection{The PET induction.}

For a polynomial $p_N$, a family $P_N$ and $h\in\N$, we define the {\em van der Corput} operation (or vdC operation), where we form the family \begin{equation*}
    \{ p_{1,N}(t+h)-p_{N}(t),...,p_{k,N}(t+h)-p_N(t),\    p_{1,N}(t)-p_{N}(t),...,p_{k,N}(t)-p_N(t)                                    \}
\end{equation*}and remove polynomials of degree 0. We denote this new family by $(p_N,h)^{*}P_N$. At first glance, it is not obvious that this operation is well defined, because the constant polynomials that we discard may be different for different values of $N$. We will see that this is not the case for nice polynomial families below.
We will use the vdC operation successively to reduce the "complexity" of a polynomial family. Our main observation is that the leading vector of a polynomial family is well behaved under the vdC operation.

 Consider a family of variable polynomials  $P_N=\{p_{1,N},...,p_{k,N}\}$ and let the leading vector of $P_{N}$ corresponding to $p_{1,N}$ be \begin{equation*}
    \mathcal{S}(P_{N}) =\{u_{1,N},...,u_{k,N}\}.
\end{equation*}Fix any $1\leq i_0\leq k$, as well as the polynomial $p_{i_0,N}$, which we symbolize as $p_N$ from now on for convenience. Consider the new polynomial family $P'_{N,h}=(p_N,h)^{\star} P_N$ that arises from the van der Corput operation. Here, $h$ ranges over the non-zero integers.

\begin{lemma}\label{form}
Assume that the family $P_N$ of degree $d$ is nice and let $(u_{1,N},...,u_{k,N})$ be its leading vector corresponding to $p_{1,N}$. For every choice of polynomial $p_N$ above and the value of $h\in\Z^{*}$, we have that each element of the leading vector of  $P'_{N,h}$ corresponding to the new polynomial $p_{1,N}(t+h)-p_N(t)$ has one the following forms:\begin{itemize}
    \item They are equal to one of the $u_{i,N}$ for some $2\leq i \leq k$.
    \item They have the form $d u_{1,N}h$.
    \item They are the sum $d u_{1,N}h+u_{i,N}$ for some $u_{i,N}$ with $i\neq 1$. 
\end{itemize}
\end{lemma}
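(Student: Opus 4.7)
My plan is to verify the lemma by direct Taylor expansion, using that for any polynomial $p$ of degree $e$ with leading coefficient $a$, one has
\begin{equation*}
p(t+h) = p(t) + e\,a\,h\,t^{e-1} + \text{(terms of degree} \leq e-2\text{)}.
\end{equation*}
Write $q_{1,N}(t) := p_{1,N}(t+h) - p_N(t)$ for the new leading polynomial. The members of $P'_{N,h}$ are the non-constant polynomials among the $p_{j,N}(t+h)-p_N(t)$ and the $p_{j,N}(t)-p_N(t)$ for $1\le j\le k$, so the leading vector consists of the leading coefficient of $q_{1,N}$ together with the leading coefficient of $q_{1,N}-q$ for each other $q\in P'_{N,h}$.

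The easy cases are the pairwise differences that eliminate $p_N$. For $q(t) = p_{j,N}(t+h)-p_N(t)$ with $j\ne 1$, I get $q_{1,N}(t)-q(t) = (p_{1,N}-p_{j,N})(t+h)$, whose leading coefficient is the same as that of $p_{1,N}-p_{j,N}$, namely $u_{j,N}$ (first form). For $q(t) = p_{j,N}(t)-p_N(t)$ with $j=1$, I get $p_{1,N}(t+h)-p_{1,N}(t)$, whose leading coefficient is $d\,u_{1,N}\,h$ (second form).

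The remaining cases are $q_{1,N}$ itself and the differences $p_{1,N}(t+h) - p_{j,N}(t)$ for $j\notin\{1,i_0\}$. In both, I decompose
\begin{equation*}
p_{1,N}(t+h)-p_{\ast,N}(t) \;=\; \bigl(p_{1,N}(t)-p_{\ast,N}(t)\bigr) + \bigl(p_{1,N}(t+h)-p_{1,N}(t)\bigr),
\end{equation*}
where $p_{\ast,N}$ stands for $p_N$ or $p_{j,N}$. The first summand has leading coefficient $u_{\ast,N}$ (either $u_{i_0,N}$ or $u_{j,N}$) at some degree $e\le d$; the second has leading coefficient $d\,u_{1,N}\,h$ at degree exactly $d-1$. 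Comparing degrees gives three subcases: if $e=d$ the first summand dominates and the leading coefficient is $u_{\ast,N}$ (first form); if $e<d-1$ the second summand dominates and the leading coefficient is $d\,u_{1,N}\,h$ (second form); if $e=d-1$ the two top coefficients add, giving $d\,u_{1,N}\,h + u_{\ast,N}$ (third form).

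The main technical obstacle I anticipate is the degenerate possibility in the third subcase where $d\,u_{1,N}\,h + u_{\ast,N}$ vanishes, causing the degree to drop further. Niceness of $P_N$ (and the proof of Lemma \ref{type}) forces all relevant degrees to stabilize in $N$, so either this cancellation never occurs for large $N$ (and the third form is achieved as stated) or the two good sequences are identically zero in the limit, in which case one checks that the genuine leading coefficient comes from a lower-order term of $p_{1,N}-p_{\ast,N}$ and equals some $u_{i,N}$, again matching the first form. The same stability argument shows that the operation $(p_N,h)^\star$ is well-defined on nice families, since whether each candidate polynomial is constant is controlled by finitely many good sequences and hence decided for all large $N$.
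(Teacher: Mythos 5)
Your core computation is the same as the paper's: you reduce every entry of the new leading vector to one of the differences $p_{1,N}(t+h)-p_{1,N}(t)$ (leading coefficient $du_{1,N}h$), $(p_{1,N}-p_{j,N})(t+h)$ (leading coefficient $u_{j,N}$), or the mixed difference $p_{1,N}(t+h)-p_{\ast,N}(t)$, which you split as $\bigl(p_{1,N}(t+h)-p_{1,N}(t)\bigr)+\bigl(p_{1,N}(t)-p_{\ast,N}(t)\bigr)$ and read off the three forms; this is exactly the paper's argument, with your degree-by-degree case analysis just making explicit which of the three alternatives occurs.

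One caveat: your treatment of the degenerate subcase where $du_{1,N}h+u_{\ast,N}$ vanishes is not justified. If that cancellation occurs, the true leading coefficient is a combination of lower-order coefficients of $p_{1,N}-p_{\ast,N}$ \emph{and} of $p_{1,N}(t+h)-p_{1,N}(t)$, and there is no reason it should equal some $u_{i,N}$ (lower-order coefficients are not entries of the leading vector). The paper does not attempt to resolve this case inside the lemma at all: it simply records the three possible forms, and the possible degeneracy is disposed of afterwards in Corollary \ref{typecorollary}, where the finitely many ($O_{d,k}(1)$) ``bad'' values of $h$ for which a sequence $ru_{ii,N}h+u_{ij,N}$ tends to $0$ are excluded. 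So you should either drop that claim or defer the degenerate $h$'s to the later corollary as the paper does; with that adjustment your proof matches the paper's.
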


\begin{proof}
 Without loss of generality, we will assume that we have taken $p_N=p_{k,N}$ (the case $p_N=p_{1,N}$ is very similar). We want to study the leading vector corresponding to the polynomial $p_{1,N}(t+h)-p_{k,N}(t)$. Therefore, it is sufficient to find the leading coefficients of the polynomials \begin{align*}
     \big(p_{1,N}(t+h)-p_{k,N}(t)\big) &-\big(p_{1,N}(t)-p_{k,N}(t) \big)\\
     \big(p_{1,N}(t+h)-p_{k,N}(t)\big)&-\big( p_{i,N}(t+h)-p_{k,N}(t)  \big)\\
      \big(p_{1,N}(t+h)-p_{k,N}(t)\big)&-\big( p_{i,N}(t)-p_{k,N}(t)  \big)
 \end{align*}for $2\leq i\leq k$. The leading coefficient of the first polynomial is always $dhu_{1,N}$ and that satisfies our required property. The leading coefficient of the second polynomial is always equal to the leading coefficient of $p_{1,N}(t+h)-p_{i,N}(t+h)$ and this is always equal to the leading coefficient of $p_{1,N}(t)-p_{i,N}(t)$ which belongs to the leading vector. Finally, the leading coefficient of the third polynomial is equal to the leading coefficient of $p_{1,N}(t+h)-p_{i,N}(t)$. Note that this polynomial can be rewritten as $$(p_{1,N}(t+h)-p_{1,N}(t)) +(p_{1,N}(t)-p_{i,N}(t)).$$ The leading coefficient of the first polynomial is equal to $dhu_{1,N}$ as we established above, while the second difference has leading coefficient $u_{i,N}$ (by definition). Therefore, the leading coefficient of their sum is either $dhu_{1,N}, u_{i,N}$ or their sum $dhu_{1,N}+u_{i,N}$, which concludes the proof.
\end{proof}
 
 Observe that the particular form each element of the new leading vector has does not depend on the value of $N$ (i.e. it cannot have the first form for one value of $N$ and then the second form for some other value of $N$). This follows from the fact that the type of the original family is independent of $N$, if $N$ is large enough. We will now use this lemma to study how the van der Corput operation affects the type of the original family.
 
\begin{corollary}\label{typecorollary}
Let $P_N,p_N$ be as above and let $d$ be the degree of the family $P_N$. Then, there exists a set of integers $Y$ with at most $O_{k,d}(1)$ elements such that, for every $h\notin Y$, the polynomial family $P'_{N,h}=(p_N,h)^{\star} P_N$ that arises from the van der Corput operation is nice and its type is independent\footnote{ The type depends only on which polynomial of the initial family we choose to be the polynomial $p_N$, as well as the type of the original family.} of the value of $h$. 
\end{corollary}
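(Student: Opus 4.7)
The plan is to track, for each polynomial $q$ in $P'_{N,h}$ and each pairwise difference $q - q'$, how the degree in $t$ and the leading coefficient depend on $h$ and $N$. The unshifted polynomials $p_{i,N}(t) - p_N(t)$ are independent of $h$, and their niceness is inherited directly from $P_N$; the analysis therefore concentrates on the shifted polynomials $p_{i,N}(t+h) - p_N(t)$ and on differences mixing shifted and unshifted entries. Writing $p_{i,N}(t) = \sum_j a_{i,j,N} t^j$ and $p_N = p_{i_0,N}$, I expand
\[
p_{i,N}(t+h) - p_N(t) = \sum_{\ell} c_{i,\ell}(h,N)\, t^{\ell}, \qquad c_{i,\ell}(h,N) = \big(a_{i,\ell,N} - a_{i_0,\ell,N}\big) + \sum_{j > \ell} \binom{j}{\ell} a_{i,j,N}\, h^{j-\ell},
\]
so that the degree in $t$ equals the largest $\ell$ for which $c_{i,\ell}(h,N)$ is eventually non-zero in $N$, and the leading coefficient at that degree equals the corresponding value.

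By niceness of $P_N$, each coefficient of $c_{i,\ell}$ as a polynomial in $h$ is either eventually zero in $N$ or a good sequence, and the pattern of which is which does not depend on $N$ for $N$ large. The key step is to check that $c_{i,\ell}(h,N)$ vanishes for large $N$ only at finitely many $h$. Using that $\mathcal{H}$ is a Hardy field, I group the non-trivial coefficients of $c_{i,\ell}(\cdot, N)$ by their growth rate in $\mathcal{H}$: within each class the coefficients are asymptotic to real multiples of a common Hardy function $f$, and cancellation at $h = h_0$ for large $N$ forces a genuine polynomial identity over $\R$ in $h_0$, of degree at most $d$, within each class. The same analysis applies to pairwise differences $q - q'$ among polynomials of $P'_{N,h}$ and to the equality checks between leading coefficients at a common degree. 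Each of the $O_{k,d}(1)$ resulting polynomial conditions excludes at most $O(d)$ values of $h$, yielding the desired finite $Y$ of size $O_{k,d}(1)$.

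For $h \notin Y$, every polynomial in $P'_{N,h}$ and every pairwise difference has a fixed degree independent of $h$ and $N$, a leading coefficient that is a good sequence of the form prescribed by Lemma~\ref{form}, and an equality pattern among leading coefficients at each degree that is stable as $h$ and $N$ vary in the admissible ranges. This is exactly the data defining niceness and type, so $P'_{N,h}$ is nice with a type independent of $h$. The main obstacle is the Hardy-field cancellation argument, where one must cleanly separate the automatic, $h$-independent cancellations already present in $P_N$ (which determine the structural shape of $c_{i,\ell}$ as a polynomial in $h$) from the genuinely $h$-dependent cancellations that populate $Y$; the growth-rate stratification above is precisely the tool that accomplishes this separation.
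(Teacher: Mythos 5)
Your argument hinges on the claim that ``by niceness of $P_N$, each coefficient of $c_{i,\ell}$ as a polynomial in $h$ is either eventually zero in $N$ or a good sequence,'' but niceness (Definition \ref{good sequence}) only controls the \emph{leading} coefficients of the $p_{i,N}$ and of their pairwise differences; the lower-order coefficients $a_{i,j,N}$ are completely arbitrary sequences in $N$. Since your expansion of $c_{i,\ell}(h,N)$ involves all coefficients $a_{i,j,N}$ with $j>\ell$, the coefficients of $c_{i,\ell}(\cdot,N)$ as a polynomial in $h$ will in general contain such uncontrolled sequences. A concrete example: $p_{1,N}(t)=Nt^3+\sin(N)\,t^2$ gives a nice one-element family, yet $c_{1,1}(h,N)=3Nh^2+2\sin(N)h$ has the $h$-coefficient $2\sin(N)$, which is neither eventually zero nor good, and is not asymptotic to any real multiple of a Hardy function. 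This breaks the next step of your proposal as well: the growth-rate stratification and the reduction of cancellation to ``a genuine polynomial identity over $\R$ in $h_0$'' require every coefficient to be comparable to a Hardy function, which you only know for the leading data.

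What saves the statement (and is the content of the paper's proof via Lemma \ref{form}) is a structural fact your write-up does not establish: the leading coefficients of the polynomials of $P'_{N,h}$ and of their pairwise differences never reach down to the uncontrolled coefficients. One shows they can only take the three forms $u_{ij,N}$, $r h\,u_{ii,N}$, or $r h\,u_{ii,N}+u_{ij,N}$ with $1\le r\le d$, built exclusively from the good sequences attached to the original family, because the degree can drop at most one step below the top candidate degree before hitting a term of the form $rh\,u_{ii,N}$, which is eventually nonzero for $h\neq 0$. Only for the third form can degeneration occur, and since $u_{ii,N}$ and $u_{ij,N}$ are asymptotic to Hardy functions $f_1,f_2$, the combination $rhf_1+f_2$ can tend to zero for at most one value of $h$ per triple $(i,j,r)$; excluding these $O_{d,k}(1)$ bad values yields niceness, and a separate case analysis (shifted versus unshifted entries at a common degree) gives that the type depends only on the original type and the choice of $p_N$, not on $h$ — a point your proposal asserts but does not argue. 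Without an analogue of Lemma \ref{form}, your blanket coefficient-control claim is false and the exclusion-set argument does not go through as written.
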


\begin{proof}
  We denote by $u_{ii,N}$ the leading coefficient of $p_{i,N}$, while $u_{ij,N}$ denotes the leading coefficient of $p_{i,N}-p_{j,N}$ for $i\neq j$. These are all good sequences by the definition of a nice family. Using Lemma \ref{form}, we can prove that the leading coefficients of all the polynomials in $P'_{N,h}$ and of their differences can take one of the following forms:\\
 i) they are equal to some $u_{ij,N}$ with $i\neq j$,\\
 ii) they have the form $ru_{ii,N}h$ for some $1\leq r\leq d$ or\\
 iii) they have the form $ru_{ii,N}h+u_{ij,N}$ for some $1\leq r\leq d$.
 
 We prove that these sequences are good for all except $O_{d,k}(1)$ values of $h$.
 For all values of $1\leq i,j\leq k$ ($i\neq j$) and $1\leq r\leq d$, we consider the set $A(i,j,r)$ of all possible sequences of the above three forms (not all of them appear as leading coefficients, but this does not affect our argument), where $h$ is some fixed non-zero integer. There are only finitely many such sets. Note that for $h\neq 0$, the sequences of the first two forms are always good. Now consider a sequence of the form $ru_{ii,N}h+u_{ij,N}$. There exist functions $f_1,f_2\in\mathcal{H}$, not converging to 0, such that $|u_{ii,N}/f_1(N)|=1+o_N(1)$ and $|u_{ij,N}/f_2(N)|=1+o_N(1)$.
 The function $rhf_1(t)+f_2(t)$ is obviously an element of $\mathcal{H}$. In addition, for our fixed $r$, the relation\begin{equation*}
     \lim\limits_{t\to+\infty }rhf_1(t)+f_2(t)=0
 \end{equation*}can hold only for at most one possible value of $h\in\Z$, which we call a "bad value". Then, if $h$ is not a bad value, we have \begin{equation*}
     \Big|\frac{ru_{ii,N}h+u_{ij,N}}{rhf_1(N)+f_2(N)}\Big|=1+o_N(1).
 \end{equation*}Indeed, this follows easily because the functions $f_1$ and $f_2$ are comparable, which also means that all the involved sequences are comparable. Thus, dividing the numerator and denominator of the above fraction by either $f_1(N)$ or $f_2(N)$, we easily get the result. In conclusion, the sequence $ru_{ii,N}h+u_{ij,N}$ is a good sequence for all non-bad values of $h$.
 
  Now, if we take all possible values of the $i,j,r$, we conclude that there are at most $O_{d,k}(1)$ bad values of $h$.

 We have shown that for every non-bad value of $h$, the family $P'_{N,h}$ is a nice polynomial family and, therefore, has a fixed type (independent of $N$). We show that its type does not depend on $h$. Therefore, consider two polynomials $q_1,q_2$ of $P'_{N,h}$ of the same degree. We consider some possible cases:\\
  a) If $q_1$ and $q_2$ have the form $p_{i,N}(t)-p_N(t)$, then whether or not their leading coefficients are equal depends only on the type of the original family and the choice of $p_N$ (and not on $h$).\\
 b) If $q_1$ has the form $p_{i,N}(t+h)-p_N(t)$, while $q_2$ has the form $p_{j,N}(t)-p_N(t)$, then their leading coefficients can be equal in only two possible cases: if the polynomial $p_N$ has degree strictly larger than the degree of both $p_{i,N}$ and $p_{j,N}$ (this depends only on the choice of $p_N$, not on $h$), or if the polynomials $p_{i,N}(t+h)$ and $p_{j,N}(t)$ have the same degree (bigger than or equal to the degree of $p_N$) and equal leading coefficients. In the second case, we must have that $p_{i,N}(t)$ and $p_{j,N}(t)$ have equal leading coefficients, which depends only on the type of the original family and not on $h$.\\
 c) If $q_1$ and $q_2$ both have the form $p_{i,N}(t+h)-p_N(t)$, then the result follows similarly as in the case a).

 The fact that the degrees of the polynomials of the new family and of their differences do not depend on $N$ and $h$ can also be established easily using the preceding arguments. We omit the details.
\end{proof}

\begin{proposition}\label{induction}
If $P_N=\{p_{1,N},...,p_{k,N}\}$ is an ordered polynomial family, then there exists a polynomial $p_N\in P_N$, such that for all, except at most one value of $h\in \Z$, the polynomial family $P'_{N,h}=(p_N,h)^{*} P_N$ has type strictly smaller than the type of $P_N$ and its leading polynomial is the polynomial $p_{1,N}(t+h)-p_N(t)$.
\end{proposition}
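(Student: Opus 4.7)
\textbf{The plan} is to run a PET-style induction: select $p_N\in P_N$ according to the structure of $P_N$ and then verify in each case that the resulting family $P'_{N,h}$ has strictly smaller type and leading polynomial $p_{1,N}(t+h)-p_N(t)$. Let $d$ denote the degree of the leading polynomial $p_{1,N}$ and let $e$ be the minimum degree appearing among polynomials in $P_N$; the argument splits into the cases $e<d$ and $e=d$, with the latter further split according to $w_d$.

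\textbf{The case $e<d$.} Here I take $p_N=p_{k,N}$, which has degree $e$ since the family is ordered. For every $h\ne 0$ the polynomial $p_{1,N}(t+h)-p_N(t)$ retains the leading term of $p_{1,N}$ (as $\deg p_N<d$), so it still has degree $d$ and attains the maximal degree in $P'_{N,h}$, hence can be placed first in the ordering. To see that the type strictly decreases, I check degree by degree that at each $i$ with $e<i\le d$ the degree-$i$ contributions to $P'_{N,h}$ come exclusively from the degree-$i$ polynomials of $P_N$ with their leading coefficients unchanged, so $w_i'=w_i$; at degree $e$, the contributions of $p_N$ itself drop to degree $<e$, while the surviving degree-$e$ members of $P'_{N,h}$ arise from the $p_{j,N}$ of degree $e$ with leading coefficient $a_j\ne a_N$, producing leading coefficients $a_j-a_N$, so $w_e'=w_e-1$. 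This is a strict decrease at the $w_e$-position in the lex order.

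\textbf{The case $e=d$.} Now all polynomials have degree $d$. If $w_d\ge 2$ I take any $p_N$ of degree $d$ whose leading coefficient $a_N$ differs from that of $p_{1,N}$; then $p_{1,N}(t+h)-p_N(t)$ has degree $d$ with leading coefficient $a_1-a_N\ne 0$, and an analogous count gives $w_d'=w_d-1$, a strict decrease at the $w_d$-position. If $w_d=1$ (all polynomials share the same leading coefficient $a$), I take $p_N=p_{k,N}$; then every element of $P'_{N,h}$ has degree at most $d-1$, so the first entry of the type drops strictly from $d$. In this last subcase the coefficient of $t^{d-1}$ in $p_{1,N}(t+h)-p_N(t)$ equals $adh+b_1-b_N$ (with $b_j$ the subleading coefficient of $p_{j,N}$), a linear function of $h$ that vanishes for at most one integer value; for any other $h$ the polynomial $p_{1,N}(t+h)-p_N(t)$ has degree exactly $d-1$, matching the maximal degree of $P'_{N,h}$.

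\textbf{Main obstacle.} The work is not in any single computation but in the careful bookkeeping that at each degree of $P'_{N,h}$ the leading coefficients behave as described above, so that the strict decrease at the critical position of the type is not offset by an increase at an earlier (higher-degree) position. The ``at most one value of $h$'' clause in the statement is needed only in the last subcase, where a single integer can annihilate the subleading coefficient of $p_{1,N}(t+h)-p_N(t)$ and destroy the leading-polynomial claim; in all other cases the construction works for every $h\ne 0$.
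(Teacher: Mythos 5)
Your proposal is correct and follows essentially the same three-case argument as the paper's proof: choose the minimal-degree polynomial $p_{k,N}$ (or, when all degrees coincide and $w_d\ge 2$, a polynomial whose leading coefficient differs from that of $p_{1,N}$, which the paper arranges by a harmless reordering), and verify that the critical entry of the type drops while all higher entries stay fixed, the single excluded value of $h$ occurring only in the equal-degree, equal-leading-coefficient case via the vanishing of $adh+b_1-b_N$. Nothing further is needed.
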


\begin{proof}
We describe the operation that reduces the type. At each step, we choose a polynomial $p_{N}\in P_N$ that has minimal degree in the family. For an $h\in\Z$, apply the van der Corput operation. This forms a polynomial family \begin{equation}\label{P}
    P'_N=\{ p_{1,N}(t+h)-p_{N}(t),...,p_{k,N}(t+h)-p_N(t),\    p_{1,N}(t)-p_{N}(t),...,p_{k,N}(t)-p_N(t)                                    \}
\end{equation} and choose $p_{1,N}(t+h)-p_{N}(t)$ to be the new leading polynomial.
We distinguish between some cases:

a) Assume that the polynomials $p_{1,N}$ and $p_{k,N}$ have distinct degrees. Then, choose $p_N=p_{k,N}$, which by the "ordered" assumption has minimal degree.
We notice that the polynomial $p_{1,N}(t+h)-p_N(t)$ has maximal degree in the polynomial family. We check that the type of the polynomial family is reduced. Indeed, if the degree of $p_{k,N}(t)$ is $d'$, then the number $w_d'$ is reduced, while all the numbers $w_i$ are left unchanged for $i>d'.$
    
b) Suppose the polynomials $p_{1,N}$ and $p_{k,N}$ have the same degree and not all leading coefficients in the family $P_N$ are equal. In particular, we may assume, without loss of generality, that this holds for the polynomials $p_{1,N}$ and $p_{k,N}$. Again, choose $p_{N}=p_{k,N}$. Then, the polynomial $p_{1,N}(t+h)-p_{N}(t)$ has maximal degree in the new polynomial family. In addition, the number $w_{d}$ is reduced, which means that the new family has smaller type than the original.

c) Finally, assume that all polynomials have the same degree and the same leading coefficient. We choose again $p_N=p_{k,N}$. The polynomial $p_{1,N}(t+h)-p_{N}(t)$ has maximal degree equal to $d-1$ in $P'_N$, except possibly for one value of $h\in \Z$ (to see this, we can work similarly as in the proof of Corollary \ref{typecorollary}).  Also, the family $P'_N$ has smaller type than $P_N$, since it has degree at most $d-1$.
\end{proof}

While for a given type $\bW$ there are infinite types smaller than $\bW$, it is straightforward to see that a decreasing sequence of types is eventually constant. Therefore, the type-reducing operation that we did above will eventually terminate to a type of degree 1, namely we will reduce our problem to the linear case, which we have already established.
To summarize all of the above, we have the following:

\begin{corollary}\label{all in one}
Let $P_N$ be a nice polynomial family of degree $d$, with $k$ polynomials and with type $\bW$. Then, there exists a $p_N\in P_N$, such that the family $P'_N=(p_N,h)^{*}P_N$ is nice and has (fixed) type smaller than $\bW$ for all, except at most $O_{d,k}(1)$ values of $h$.

\end{corollary}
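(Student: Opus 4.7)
The plan is to read off this corollary as a direct combination of Proposition~\ref{induction} and Corollary~\ref{typecorollary}, which together already contain all the content. First I would invoke Proposition~\ref{induction} to pick a specific polynomial $p_N \in P_N$ (chosen to have minimal degree in the family, with the precise choice dictated by the case analysis in that proposition) such that for all but at most one value of $h \in \Z$, the family $P'_N = (p_N,h)^{*} P_N$ has type strictly smaller than $\bW$, with leading polynomial $p_{1,N}(t+h) - p_N(t)$.

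Next, for this same polynomial $p_N$, I would apply Corollary~\ref{typecorollary}: this yields a finite set $Y \subset \Z$ of integers with $|Y| = O_{k,d}(1)$ such that for every $h \notin Y$ the family $P'_N$ is nice and its type is independent of $h$ (depending only on $\bW$ and the choice of $p_N$). Note that the hypotheses of Corollary~\ref{typecorollary} are satisfied since $P_N$ is nice by assumption and $p_N$ is an arbitrary member of $P_N$.

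Finally, I would combine the two conclusions by taking the union of $Y$ with the single exceptional value allowed by Proposition~\ref{induction}. This union still has cardinality $O_{d,k}(1)$, and for every $h$ outside it, all three desired properties hold simultaneously: $P'_N$ is nice, its type is a fixed tuple not depending on $h$, and that tuple is strictly smaller than $\bW$. There is no real obstacle here; the only thing to notice is that both previous results apply to the \emph{same} choice of $p_N$, which is immediate since Corollary~\ref{typecorollary} places no restriction on which polynomial of $P_N$ is taken as $p_N$. The corollary is essentially a bookkeeping statement that packages the two preceding results into the form needed for iterating the vdC operation in the PET scheme.
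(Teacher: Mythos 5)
Your proposal is correct and matches the paper's own (implicit) argument: the paper presents Corollary~\ref{all in one} precisely as a summary obtained by combining Proposition~\ref{induction} with Corollary~\ref{typecorollary} for the same minimal-degree choice of $p_N$ and merging the two exceptional sets of $h$. No gaps.
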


\begin{definition}\label{non-degenerate}
We will call a van der Corput operation $(p_N,h)^{*}P_N$ {\em non-degenerate}, if the polynomial $p_N\in P_N$ is such, that the conditions of Corollary \ref{all in one} hold.
\end{definition}
Namely, the polynomial $p_N$ must be chosen, so that the resulting family has type independent of $N,h$, provided that $N$ is sufficiently large and $h$ takes values outside a set of at most $O_{d,k}(1)$ elements (here, this notation refers to the same asymptotic constant appearing in the statement of Corollary \ref{all in one}). In view of the above corollary, we deduce that there always exists a non-degenerate van der Corput operation. We will denote a non-degenerate van der Corput operation simply by $(p_N)^{*}P_N$ to indicate the independence on the parameter $h$.

We are now ready to finish the proof of Proposition \ref{PET}:
\begin{proof}[Proof of the higher degree case]

First of all, we shall explain how we will choose the parameters $t,s$. These depend crucially on how the van der Corput operations are used (and there are possibly many ways in which the successive van der Corput can be carried out), which may lead to ambiguity.

   Let  ${\bf W}=(d,w_d,...,w_{1})$ be the type of the given polynomial family. We say that a triplet $(d',k',\bW')$ can be reached by the triplet $(d,k,\bW)$ if there exists a sequence of non-degenerate van der Corput operations that produces the families $$P_{1,N}=(q_{1,N})^{*}P_N,...,P_{\ell,N}=(q_{\ell,N})^{*} P_{\ell-1,N}, $$ where the family $P_{\ell,N}$ consists of $k'$ polynomials, has degree $d'$ and type $\bW'$.
  
  Observe that the triplets that can be reached by the original triplet $(d,k,\bW)$ are finitely many in number, since there are only finitely many choices (depending on $d,k,\bW$) for each polynomial $q_{i,N}$ at each step. In particular, they all have degrees at most $d$, types strictly smaller than $\bW$ and the number $k'$ can be bounded by a function of $(d,k,\bW)$, since each van der Corput operation at most doubles the number of polynomials in a family and this operation can occur finitely many times as well. We also remark that we have already established our claim for all polynomial families of degree $d=1$ (this will work as the base case of our induction). 
 
  Let $S_{d,k,\bW}$ be the set of triplets that can be possibly reached by $(d,k,\bW)$, which is a finite set. We will use induction by considering that our claim holds for all triplets in $S_{d,k,\bW}$ and we will show that the claim holds for families corresponding to our original family $P_N$ that corresponds to the triplet $(d,k,\bW)$.
  
 Fix such a triplet $(d',k',\bW')$ and define $t(d',k',\bW'),s(d',k',\bW')$ to be the numbers appearing in the statement of Proposition \ref{PET}. Namely, if the nice ordered family $$Q_N=\{q_{1,N},...,q_{k',N}\}$$ has degree $d'$ and type $\bW'$, then   
  \begin{multline}\label{asdfghjkl}
    \sup_{|c_{n,N}| \leq 1}\bignorm{ \underset{0\leq n\leq L_N}{\E}\  c_{n,N} \prod_{i=1}^{k'} T^{\floor{q_{i,N}(n)}}f_{i,N}}_{L^2(\m)}^{2^{t(d',k',\bW')}}\ll_{d',k',\bW'} \\ 
     \frac{1}{M}+
  \sum_{{\bf h}\in     Y^{[[s(d',k',\bW')]]}}^{} \underset{{\bf m}\in [-M,M]^{t(d',k',\bW')}}{\E} \Big| \int \prod_{\underline{\e}\in [[s(d',k',\bW')]]}^{}  T^{\floor{A_{\ue,N}(\bm)}+ h_{\underline{\e}}}(\mathcal{C}^{|\underline{\e}|}f_{1,N})  \ d\m  \Big|+
    o_N(1),
\end{multline}where we are being vague on the dependence of the polynomials $A_{\ue,N}$ on the parameters $(d',k',\bW')$ and the family $Q_N$ in this relation, since this will not concern us temporarily. 
  
  The number $t(d',k',\bW')$ is the number of times we apply the van der Corput inequality in order to bound the left-hand side by the quantity on the right-hand side. Now, we define $$t_0=\max\limits_{(d',k',\bW')\in S_{d,k,\bW}} t(d',k',\bW'),$$ which, of course, is a parameter that depends only on $(d,k,\bW)$. Assume that the number $t_0$ corresponds to a family $Q_N$. Then, it is obvious that $Q_N$ can be reached by the original family $P_N$ in only one step. Indeed, if there was a another family in the sequence of van der Corput operations starting from $P_N$ to $Q_N$, then this family would have a strictly larger parameter $t( \cdot )$ associated to it than $t_0$. 
  
  Assume that the family $Q_N$ has the the triplet $(d',k',\bW')$ associated to it.  Although the parameter $t_0$ is well defined (and depends only on $(d,k,\bW)$), the parameter $s(d',k',\bW')$ may not be, because there may be another family $Q'_N$ which has the same value $t_0$ for the first parameter, but different for the second. In this case, we simply take $Q_N$ to be the one for which the parameter $s(d',k',\bW)$ is also maximized (denote this simply by $s$ from this point onward). Obviously, we have that $s$ depends only on $(d,k,\bW)$.

  For the family $Q_N$ constructed above, we can write $Q_N=(p_N)^{*}P_N$ for some $p_N\in P_N$. Without loss of generality, assume that $p_N=p_{k,N}$ (the case where $p_N=p_{1,N}$ is similar).

   We apply the van der Corput inequality to get \begin{multline}\label{25}
      \bignorm{ \underset{0\leq h\leq L_N}{\E}\  c_{n,N} \prod_{i=1}^{k} T^{\floor{p_{i,N}(n)}}f_{i,N}}_{L^2(\m)}^{2^{t_0+1}} \ll_{t_0}  \\ \frac{1}{M} +
      \underset{|m|\leq M}{\E}\Big|\underset{0\leq n\leq L_N}{\E}\ c_{n+m,N}\overline{c_{n,N}} \int  \prod_{i=1}^{k}\ T^{\floor{p_{i,N}(n+m)}}f_{i,N}\cdot        T^{\floor{p_{i,N}(n)}}\overline{f_{i,N}}\ d\m  \Big|^{2^{t_0 }} +o_N(1).
  \end{multline} 
  We compose with $T^{-\floor{p_{N}(n)}}$ in the above integral, so that \begin{align*}
      &\underset{0\leq n\leq L_N}{\E}\ c_{n+m,N}\overline{c_{n,N}}\int  \prod_{i=1}^{k}\ T^{\floor{p_{i,N}(n+m)}}f_{i,N}\cdot        T^{\floor{p_{i,N}(n)}}\overline{f_{i,N}}\ d  \m =\\
      &\underset{0\leq n\leq L_N}{\E}\ c_{n+m,N}\overline{c_{n,N}}\int \prod_{i=1}^{k}\ T^{\floor{p_{i,N}(n+m)}-\floor{p_N(n)}}f_{i,N}\cdot        T^{\floor{p_{i,N}(n)}-\floor{p_N(n)}{}}\overline{f_{i,N}}\ d\m=\\
      &\underset{0\leq n\leq L_N}{\E} c_{n+m,N}\overline{c_{n,N}}\int \prod_{i=1}^{k}\ T^{\floor{p_{i,N}(n+m)-p_N(n)}+e_{1,n,i,m,N}}f_{i,N}\cdot        T^{\floor{p_{i,N}(n)-p_N(n)}+e_{2,n,i,N}}\overline{f_{i,N}}\ d\m,
  \end{align*}where the numbers $e_{1,n,i,m,N} \text{ and }  e_{2,n,i,N}$ take values in the set $\{0,\pm 1\}$. We use the Cauchy-Schwarz inequality and then use Lemma \ref{errors} to bound the absolute value of the last quantity by a constant (depending only on $k$) multiple of the expression \begin{multline*}
      \sup_{|c_{n,N}|\leq 1}\bignorm{ \underset{0\leq n\leq L_N }{\E} c_{n,N} \big(\prod_{i=1}^{k-1} T^{\floor{p_{i,N}(n+m)-p_N(n)}}f'_{i,N}\cdot        T^{\floor{p_{i,N}(n)-p_N(n)}}\overline{f'_{i,N}}\big)  \\
      T^{\floor{p_{k,N}(n+m)-p_{k,N}(n)}} f'_{k,N}    }_{L^2{(\m)}}+o_N(1)
  \end{multline*}for some 1-bounded functions $f'_{1,N}=f_{1,N}, f'_{2,N},...,f'_{k,N}$. Recall that we chose $p_N=p_{k,N}$.
  The family of polynomials \begin{equation*}
      P'_{N,m}=\{ p_{1,N}(t+m)-p_{k,N}(t),...,p_{k,N}(t+m)-p_{k,N}(t),\    p_{1,N}(t)-p_{k,N}(t),...,p_{k-1,N}(t)-p_{k,N}(t)                                    \}
  \end{equation*}is nice and has (fixed) type $\bW'<\bW $ independent of $m$ for all, except at most $O_{d,k}(1)$ values of $m\in\N$ (it has the same triplet $(d',k',\bW')$ of parameters as the family $Q_N$ above). Let $Q$ be this finite set of "bad" values of $m$ and let \begin{equation*}
      \mathcal{S}(P'_{N,m})=\{u'_{1,m,N},...,u'_{k',m,N}\} 
  \end{equation*} be the leading vector of $P'_{N,m}$, where $k'\leq  2k-1$. For all $m\notin Q$, we use the induction hypothesis to deduce that
  \begin{multline}\label{uiop}
      \sup_{|c_{n,N}|\leq 1}\bignorm{\underset{0\leq n\leq L_N }{\E} c_{n,N} \big(\prod_{i=1}^{k-1} T^{\floor{p_{i,N}(n+m)-p_N(n)}}f'_{i,N}\cdot        T^{\floor{p_{i,N}(n)-p_N(n)}}\overline{f'_{i,N}}\big)\cdot \\  
      \  \ \ \  \ \ \ \ \ \ \ \ \  \ \ \ \  \  \ \ \ \ \ \ \ \  \ \ \  \  \ \ \ \  \ \ \  \ \ \  \  \  \ \ \ \ T^{\floor{p_{k,N}(t+m)-p_{k,N}(t)}} f'_{k,N}    }_{L^2{(\m)}}^{2^{t_0}} 
      \ll_{k,d,\bW'} \\  \frac{1}{M}+\ 
  \sum_{{\bf h}\in [[Y_0]]}^{} \underset{(m_1,...,m_t)\in [-M,M]^t}{\E}
  \Big| \int \prod_{\underline{\e}\in [[s]]}^{}  T^{\floor{\sum_{1\leq j\leq k'} \ p_{\underline{\e},j}(m_1,...,m_t)u'_{j,m,N}}+h_{\underline{\e}}}(\mathcal{C}^{|\underline{\e}|}f_{1,N})  \  d\m    \Big|
  +  o_N(1)
  \end{multline}for a finite set $Y_0$ that depends only on $d', \bW' \text{ and } k'$ (i.e. $d,\bW$ and $k$). We will now set the parameter $t=t(d,k,\bW)$ to be simply $t_0+1$.

  We also observe that our induction imposes that the polynomials \begin{equation*}
A_{\underline{\e},N,m}(m_1,...,m_t)  =    \sum_{1\leq j\leq k'} \ p_{\underline{\e},j}(m_1,...,m_t)u'_{j,m,N}
  \end{equation*}are non-constant and pairwise essentially distinct for any (non-zero) values of the leading vector $\{u'_{1,m,N},..., u'_{k',m,N}\}$ and that all the polynomials $p_{\underline{\e},j}$ are at most linear in each variable. In addition, we claim that \begin{equation}\label{e^c}
      A_{\underline{\e},N,m}(m_1,...,m_t)  +A_{\underline{\e'},N,m}(m_1,...,m_t)=A_{\ue+\ue',N}(m_1,...,m_t)\  \text{whenever } \ \ue+\ue'\in [[s]].
  \end{equation} (we have seen that all of the above are true in the linear case). These are the properties i)-iii) in Proposition \ref{PET}.

  All the $u'_{j,m,N}$ have the form described by Lemma \ref{form}. Therefore, we can write \begin{equation}\label{essential}
      p_{\underline{\e},j}(m_1,...,m_t)u'_{j,m,N}=p'_{1,\underline{\e},\ell}(m,m_1,...,m_t)u_{\ell ,N} +p'_{2,\underline{\e},\ell'}(m_1,...,m_t)u_{\ell',N}.
  \end{equation}In order to describe the form of the new polynomials $p'_{1,\underline{\e},\ell},p'_{2,\underline{\e},\ell'}$, we split into cases depending on the form of $u'_{j,m,N}$ (cf. Lemma \ref{form}):\\
  a) If $u'_{j,m,N}$ is equal to some $u_{\ell,N}$ for $1\leq \ell\leq k$, then we have $p'_{2,\underline{\e},\ell'}=0$ and $$p'_{1,\underline{\e},\ell}(m,m_1,...,m_t)=p_{\underline{\e},j}(m_1,...,m_t)$$ (thus $p'_{1,\underline{\e},\ell}(m_1,...,m_t)$ is constant as a polynomial in $m$).\\
  b) If  $u'_{j,m,N}$ is equal to $dmu_{1,N}$ ($\ell=1$), then we have again $p'_{2,\underline{\e},\ell'}=0$ and \begin{equation*}
      p'_{1,\underline{\e},1}(m,m_1,...,m_t)=dm p_{\underline{\e},j}(m_1,...,m_t).
  \end{equation*}
  c) In the final case that $ u'_{j,m,N}=dmu_{1,N} +u_{\ell',N} $ for some $\ell'\neq 1$, then we have $p'_{2,\underline{\e},\ell'}=p_{\underline{\e},j}(m_1,...,m_t)$ and 
  \begin{equation*}
      p'_{1,\underline{\e},1}(m,m_1,...,m_t)=dm p_{\underline{\e},j}(m_1,...,m_t).
  \end{equation*}

  Therefore, the new polynomials $p_{1,\underline{\e},\ell}$ and $p_{2,\underline{\e},\ell'}$ are at most linear in each of the variables $m_1,...,m_t$, as well as the new variable $m$. By grouping the terms corresponding to the same $u_{\ell,N}$, we can rewrite \begin{equation*}
      \sum_{1\leq r\leq k'}^{} \ p_{\underline{\e},r}(m_1,...,m_t)u'_{r,m,N} =\sum_{1\leq r\leq k}^{}  q_{\underline{\e},r}(m,m_1,....,m_t)u_{r,N} 
      \end{equation*}
     for some new polynomials $q_{\underline{\e},r}$. 
     
      \begin{claim}
        The new polynomials $\sum_{1\leq r\leq k}^{}  q_{\underline{\e},r}(m,m_1,....,m_t)u_{r,N} $ satisfy conditions i), ii), iii) and iv) of Proposition \ref{PET}, for any (non-zero) values of the $u_{r,N}$.
      \end{claim}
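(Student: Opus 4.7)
The plan is to verify conditions i)--iv) in turn, reducing each to the inductive hypothesis applied to the polynomials $p_{\ue,j}(\bm)$ produced from the family $P'_{N,m}$.

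First I would set up notation based on the case analysis of Lemma \ref{form}: for each $1 \leq r \leq k$ let $A_r = \{j : u'_{j,m,N} = u_{r,N}\}$, let $B = \{j : u'_{j,m,N} = dm u_{1,N}\}$, and for each $\ell' \neq 1$ let $C_{\ell'} = \{j : u'_{j,m,N} = dm u_{1,N} + u_{\ell',N}\}$. These sets are disjoint and cover $\{1,\ldots,k'\}$. Writing $C = \bigcup_{\ell' \neq 1} C_{\ell'}$, the substitution \eqref{essential} collapses to
\begin{equation*}
q_{\ue,1}(m,\bm) = \sum_{j \in A_1} p_{\ue,j}(\bm) + dm \sum_{j \in B \cup C} p_{\ue,j}(\bm), \qquad q_{\ue,r}(m,\bm) = \sum_{j \in A_r \cup C_r} p_{\ue,j}(\bm) \text{ for } r \geq 2.
\end{equation*}

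For iii), the key point is that the inductive hypothesis applies for \emph{any} choice of non-zero leading-vector values, not only the specific $u'_{j,m,N}$. By varying these parameters independently, I would extract the formal polynomial identity $p_{\ue,j}(\bm) + p_{\ue',j}(\bm) = p_{\ue+\ue',j}(\bm)$ whenever $\ue + \ue' \in [[s]]$, and then transfer it through the above formulas by linearity to obtain $q_{\ue,r}(m,\bm) + q_{\ue',r}(m,\bm) = q_{\ue+\ue',r}(m,\bm)$. For i) and ii), I would fix some $m$ outside the finite bad set of Corollary \ref{typecorollary}, so that the $u'_{j,m,N}$ are all non-zero; by the inductive hypothesis the polynomials $\sum_r q_{\ue,r}(m,\bm) u_{r,N}$ are non-constant in $\bm$ for $\ue \neq \underline{0}$ and pairwise essentially distinct, and non-constancy in $\bm$ alone is enough to give non-constancy and essential distinctness in the enlarged variable set $(m,\bm)$.

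The main obstacle I expect is verifying iv). Assuming $\sum_r c_r q_{\ue,r}(m,\bm) \equiv 0$ in $(m,\bm)$, I would separate this identity into the coefficients of $m^1$ and $m^0$, obtaining
\begin{equation*}
d c_1 \sum_{j \in B \cup C} p_{\ue,j}(\bm) = 0 \qquad \text{and} \qquad c_1 \sum_{j \in A_1} p_{\ue,j}(\bm) + \sum_{r \geq 2} c_r \sum_{j \in A_r \cup C_r} p_{\ue,j}(\bm) = 0.
\end{equation*}
The inductive iv) applied to the first equation forces either $c_1 = 0$ or $p_{\ue,j} \equiv 0$ for every $j \in B \cup C$. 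A case split then handles both possibilities: if $c_1 = 0$, the second equation combined with inductive iv) yields that any $r \geq 2$ with $c_r \neq 0$ has $p_{\ue,j} \equiv 0$ for all $j \in A_r \cup C_r$, hence $q_{\ue,r} \equiv 0$. If instead $c_1 \neq 0$, the vanishing of the $p_{\ue,j}$ on $B \cup C$ removes the $C_r$ contributions from the second equation, which simplifies to $\sum_r \sum_{j \in A_r} c_r p_{\ue,j}(\bm) = 0$; another application of inductive iv) then forces $p_{\ue,j} \equiv 0$ on $A_1$ (giving $q_{\ue,1} \equiv 0$) and, for each $r \geq 2$ with $c_r \neq 0$, vanishing of $p_{\ue,j}$ on $A_r$ (hence $q_{\ue,r} \equiv 0$ since the $C_r$ piece also vanishes). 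The delicate bookkeeping lies in tracking how the cancellation on $B \cup C$ simultaneously forces $q_{\ue,1}$ to collapse and cleans up the $q_{\ue,r}$ for $r \geq 2$; once that interplay is made explicit the argument closes cleanly.
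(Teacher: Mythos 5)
Your argument is correct and follows essentially the same route as the paper: the same grouping of indices according to the three forms in Lemma \ref{form}, the same transfer of conditions i)--iii) from the inductive hypothesis (fixing a good value of $m$), and for iv) the same idea of isolating the $m^1$ and $m^0$ coefficients and applying the inductive condition iv) twice. The only cosmetic difference is that you verify iv) directly in its disjunctive form (and justify iii) explicitly at the coefficient level), whereas the paper first discards the identically zero $q_{\ue,j}$ and argues linear independence by contradiction, starting from the substitution $m=0$; the substance is identical.
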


\begin{proof}[Proof of the Claim]The fact that they are non-constant is trivial, since otherwise one of  the polynomials \begin{equation*}
     \sum_{1\leq r\leq k'}^{} \ p_{\underline{\e},r}(m_1,...,m_t)u'_{r,m,N}
\end{equation*}would be constant, which is at odds with the induction hypothesis. Assume that condition ii) fails for two $\underline{\e_1}, \underline{\e_2} \in [[s]]$. Regarding these two polynomials as polynomials only in $  (m_1,...,m_t)$,  \eqref{essential} would give that the polynomials \begin{equation*}
          \sum_{1\leq r\leq k'}^{}  p_{\underline{\e_1},r}(m_1,...,m_t)u'_{r,m,N}\  \text{ and }\  \sum_{1\leq r\leq k'}^{} p_{\underline{\e_2},r}(m_1,...,m_t) u'_{r,m,N},
        \end{equation*}are not essentially distinct, which is false by the induction hypothesis. Therefore, we have established both i) and ii).

  Now, we want to prove an analogue of \eqref{e^c} for our new polynomials. But this follows trivially by \eqref{essential} (the new polynomials are just a rewritten form of the $A_{\ue,N}$). This establishes that the new polynomials satisfy condition iii) in the statement of Proposition \ref{PET}.
  
  Finally, we are going to prove that the new polynomials $q_{\ue,j}$ satisfy condition iv) of Proposition \ref{PET}. Fix an $\ue\in [[s]]$. We will assume that all $q_{\ue,j}$ are non-zero and we will show that they are linearly independent (if there are identically zero polynomials among the $q_{\ue,j}$, we proceed similarly by ignoring these polynomials). It suffices to show that if $a_1,...,a_k$ are real numbers, such that \begin{equation*}
      a_1q_{\underline{\e},1}(m,m_1,...,m_t)+...+a_kq_{\ue,k}(m,m_1,...,m_t)
  \end{equation*}is the zero polynomial, then all the numbers $a_i$ are zero. Recalling the form of the $q_{\ue, r}$, this becomes a linear combination of the form \begin{equation}\label{bigpoly}
      a_1P_{1,\ue}(m,m_1,...,m_t)+\sum_{i\in I_1}^{}b_i p_{\ue,i}(m_1,...,m_t)
  \end{equation}for some $I_1\subset \{1,2,...,k'\}$ and $b_i\in\{a_2,...,a_k\}$\footnote{Observe that each one of the numbers $a_2,..,a_k$ appears in the set $\{b_i,i\in I_1\}$ (maybe with multiplicity), because we have assumed that each polynomial $q_{\ue,\ell}, (\ell>1)$ is not the trivial polynomial (otherwise, we ignore it).}. In addition, the polynomial $P_{1,\ue}$ has the form \begin{equation*}
      dm\sum_{i\in I_2}^{}p_{\ue,i} +\sum_{i\in I_3}^{}p_{\ue,i}
  \end{equation*}for some $I_2,I_3\subset \{1,2,...,k'\} $ with $I_1\cap I_2=\emptyset$ and $I_1\cap I_3=\emptyset$. We argue by contradiction. For $m=0$, the polynomial in \eqref{bigpoly} must be identically zero and this easily yields that all the $b_i$ must be zero and that $a_1\sum_{i\in I_3}^{}p_{\ue,i}$ is also the zero polynomial. The first relation implies that $a_2=...=a_k=0$ by the induction hypothesis, while the second implies that either $a_1=0$ (in which case we are done), or $I_3=\emptyset$ (since the $p_{\ue,i}$ are linearly independent by the induction hypothesis). If $I_3=\emptyset$, then \eqref{bigpoly} implies that the polynomial \begin{equation*}
      a_1dm\sum_{i\in I_2}^{}p_{\ue,i}
  \end{equation*}is the zero polynomial. This implies that $a_1=0$ or $I_2=\emptyset.$ However, we cannot have $I_2=I_3=\emptyset$, because that would imply that the polynomial $q_{\ue,1}$ is identically zero, which is absurd (since we assumed that we have already discarded the zero polynomials among the $q_{\ue,i}$). Our claim follows.
        \end{proof}

 Combining all of the above we rewrite \eqref{uiop} as
  \begin{multline*}
      \sup_{|c_{n,N}|\leq 1}\bignorm{ \underset{0\leq n\leq L_N }{\E} c_{n,N} \big(\prod_{i=1}^{k-1} T^{\floor{p_{i,N}(n+m)-p_N(n)}}f_{i,N}\cdot        T^{\floor{p_{i,N}(n)-p_N(n)}}\overline{f_{i,N}}\big)\cdot \\ 
      \ \ \  \  \ \ \ \ \  \ \ \ \ \ \ \ \ \  \ \ \ \ \ \ \ \ \ \ \ \ \  \ \ \ \  \ \ \ \  \ \ \ \ \ \  \ \  \  \ \ \  \  \ T^{\floor{p_{k,N}(t+m)-p_{k,N}(t)}} f_{k,N}    }_{L^2{(\m)}}^{2^{t_0+1}} \ll_{d,k,\bW} \\
       \frac{1}{M}+
  \sum_{{\bf h}\in [[Y_0]]}^{} \underset{ |m_1|,...,|m_t|\leq M}{\E}\
  \Big| \int \prod_{\underline{\e}\in [[s]]}^{}  T^{\floor{\sum_{1\leq r\leq k}^{} q_{\underline{\e},r}(m,m_1,....,m_t)u_{r,N}}+ h_{\underline{\e}}} (\mathcal{C}^{|\underline{\e}|}f_{1,N})     d\m   \Big|+
    o_N(1).
  \end{multline*}
  We use the above bounds for all $-M\leq m\leq M$ in \eqref{25}. The possible error coming from the bad values of the set $Q$ can be absorbed by an $O_{d,k}({1}/ {M})$ term. Finally, we get \begin{multline*}
      \bignorm{ \underset{0\leq h\leq L_N}{\E} c_{n,N} \prod_{i=1}^{k} T^{\floor{p_{i,N}(n)}}f_{i,N}}_{L^2(\m)}^{2^{t}} \ll_{d,k,\bW} \\
     \frac{1}{M} +\ 
  \sum_{{\bf h}\in [[Y_0]]}^{} \underset{|m|,|m_1|,...,|m_t|\leq M}{\E}
  \Big| \int \prod_{\underline{\e}\in [[s]]}^{}  T^{\floor{\sum_{1\leq r\leq k}^{}  q_{\underline{\e},r}(m,m_1,....,m_t)u_{r,N}}+ h_{\underline{\e}}} (\mathcal{C}^{|\underline{\e}|}f_{1,N}) d\m       \Big|+
    o_N(1),
  \end{multline*} which is what we wanted to show.
    \end{proof}

\section{The sub-linear plus polynomial case}\label{sublinearsection}

In this section, we establish a particular case of Proposition \ref{factors}, which we shall also use in the general case in the next section. Let $\mathcal{S}$ denote the subset of $\mathcal{H}$ that contains the functions with sub-linear growth rate and $\mathcal{P}\subseteq \mathcal{H}$ denotes the collection of polynomials with real coefficients. Then, we let $\mathcal{S+P}$ denote the collection of functions that can be written as a sum of a function in $\mathcal{S}$ and a function in $\mathcal{P}$ (or equivalently, linear combinations of functions in $\mathcal{S}$ and $\mathcal{P}$).  

Let $a_1,...,a_k$ be a collection of functions in $\mathcal{S+P}$. Then, we can write $a_i=u_i+p_i$, where $u_i\in\mathcal{S}$ and $p_i$ is a polynomial. We will also define the {\em degree} and {\em type} of the collection $a_1,a_2,...,a_k$ using a similar notion to the degree and type of a polynomial family defined in the previous section. More precisely, since we do not impose that the polynomials $p_1,...,p_k$ are essentially distinct, we choose a maximal subset of the polynomials $p_i$ consisting of non-constant and essentially distinct polynomials and we define the degree and type of the collection $a_1,...,a_k$ to be the degree and type of this new subfamily of polynomials, respectively. Similarly, we define the leading vector of $a_1,...,a_k$ as the leading vector of the maximal subfamily that we defined above. We can always choose this maximal subset to contain the polynomial $p_1$. We define the cardinality of this new maximal subset to be the {\em size} of the collection $a_1,...,a_k$.

\begin{proposition}\label{sublinearseminorm}
Let $M$ be a positive integer and let $a_1,...,a_k$ be a collection of functions in $\mathcal{S+P}$ with degree $d$, type $\bW$ and size $k'\leq k$. Let $(c_1,...,c_{k'})$ be the leading vector of the family $\{a_1,...,a_k\}$. In addition, assume that $a_1(t)\succ \log t$ and $a_1(t)-a_j(t)\succ \log t$ for $j\neq 1$. Then, there exist positive integer $s,t$, a finite set $Y$ of integers and real polynomials $p_{\ue,j}$ in $t$ variables, where $\ue\in [[s]]$ and $1\leq j\leq k$, all depending only on $d,k',\bW$, such that, for any measure preserving system $(X,\m,T)$ and function $f_1\in L^{\infty}(\m)$ bounded by 1, we have \begin{multline}\label{61}
     \sup_{\norm{f_2}_{\infty},...,\norm{f_k}_{\infty}\leq 1 } \sup_{|c_n|\leq 1  }\bignorm{   \underset{1\leq n\leq N}{\E} c_n\ T^{\floor{a_1(n)}}f_1\cdot...\cdot T^{\floor{a_k(n)}}f_k          }_{L^2(\m)}^{2^t}\ll_{d,k,k',\bW} \\
    \frac{1}{M} +\sum_{{\bf h}\in Y^{[[s]]} }^{} \underset{m\in [-M,M]^t}{\E} \nnorm{  \prod_{\ue\in [[s]]}^{} T^{\floor{A_{\ue}(\bm)} +h_{\ue}}f_1 }_{2k+1}+o_N(1)
\end{multline}where \begin{equation*}
    A_{\ue}(\bm)=\sum_{j=1}^{k'} p_{\ue,j}(\bm)c_j.
\end{equation*} are pairwise essentially distinct polynomials.
\end{proposition}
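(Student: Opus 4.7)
The plan is to mirror Examples (a) and (b) at the end of Section \ref{sectionfactors}, combining Lemma \ref{mainlemma}, Taylor expansion on short intervals, Proposition \ref{PET}, and an induction that successively eliminates the sub-linear parts. The induction parameter is a pair consisting of the polynomial type $\bW$ of $(p_1,\dots,p_k)$ and a complexity measure of the sub-linear family $(u_1,\dots,u_k)$; the base case is the pure polynomial setting (empty sub-linear parts), which follows from Proposition \ref{PET} together with the Leibman-type bound relating polynomial correlations of $f_1$ to Host--Kra seminorms of products of shifts of $f_1$.

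First I would apply Lemma \ref{mainlemma} to reduce to a double average $\E_{1\le r\le R}\|\E_{r\le n\le r+L(r)}(\cdots)\|^{2^t}$. Using the growth-rate lemmas from the Appendix, I would choose a sub-linear $L\in\mathcal{H}$ such that on each window $[r,r+L(r)]$ the Taylor expansion $a_i(r+h)=a_i(r)+Q_{i,r}(h)+o_r(1)$ is valid, with $Q_{i,r}$ a polynomial in $h$ whose top-degree coefficient is exactly $c_i$ (the sub-linear part $u_i$ contributes only to the lower-degree coefficients of $Q_{i,r}$ through its derivatives). The negligible remainders are absorbed by Lemma \ref{errors}; composing inside $L^2$ with $T^{-\lfloor a_1(r)\rfloor}$, the constants $\lfloor a_i(r)-a_1(r)\rfloor$ get absorbed into new $1$-bounded sequences of functions $\tilde f_{i,r}$ (again via Lemma \ref{errors}). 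The inner average has now become a variable polynomial average in $h$, to which Proposition \ref{PET} applies, producing
\[
\frac{1}{M}+\sum_{\bh\in Y^{[[s]]}}\underset{\bm\in[-M,M]^t}{\E}\underset{1\le r\le R}{\E}\Bigl|\int\prod_{\ue\in[[s]]}T^{\lfloor A_\ue(\bm,r)\rfloor+h_\ue}\,\mathcal{C}^{|\ue|}f_1\,d\mu\Bigr|+o_N(1),
\]
where $A_\ue(\bm,r)=\sum_j p_{\ue,j}(\bm)\,v_{j,r}$ and $(v_{j,r})_j$ is the full leading vector of the family $\{Q_{i,r}\}$.

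The main obstacle is eliminating the $r$-dependence of $A_\ue(\bm,r)$ to arrive at the stated seminorm bound. The key observation is that $v_{j,r}=c_j+w_{j,r}$, where $c_j$ is the top-degree part (independent of $r$) and $w_{j,r}\in\mathcal{S+P}$ arises from lower-degree Taylor coefficients of the $u_i$ and $p_i$, so the family $\{w_{j,r}\}_j$ has strictly smaller complexity than the original $\{a_i\}$. Writing $A_\ue(\bm,r)=A_\ue(\bm)+B_\ue(\bm,r)$ with $A_\ue(\bm)=\sum_j p_{\ue,j}(\bm)c_j$ and applying Cauchy--Schwarz, one reinterprets the $r$-average as a correlation average of the $\bm$-dependent function $F_{\bm,\bh}=\prod_{\ue}T^{\lfloor A_\ue(\bm)\rfloor+h_\ue}\mathcal{C}^{|\ue|}f_1$ along the smaller-complexity family $\{B_\ue(\bm,r)\}_\ue$. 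The inductive hypothesis (applied for each fixed $\bm$) then yields a Host--Kra seminorm bound on $F_{\bm,\bh}$, which after using $\nnorm{\bar g\otimes g}_{s,T\times T}\le\nnorm{g}_{s+1,T}^2$ and the product structure of $F_{\bm,\bh}$ produces the desired $\nnorm{\prod_{\ue}T^{\lfloor A_\ue(\bm)\rfloor+h_\ue}f_1}_{2k+1}$ bound. The delicate technical points will be verifying that the residual family $\{B_\ue(\bm,r)\}$ still satisfies the logarithmic-separation hypotheses needed to re-invoke the proposition, that its complexity is indeed strictly smaller in a well-ordered sense, and that the Host--Kra order accumulated over all the recursion steps stays bounded by $2k+1$.
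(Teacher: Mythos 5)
Your plan breaks down at the step ``apply Proposition \ref{PET} to the family $\{Q_{i,r}\}$''. The hypotheses of the proposition allow two of the functions to have \emph{identical} polynomial parts (the logarithmic separation is only assumed between $a_1$ and the others), and this is exactly the situation where your Taylor family fails to be admissible for Proposition \ref{PET}: if $p_i=p_j$, the difference $Q_{i,r}-Q_{j,r}$ consists entirely of Taylor coefficients of the sub-linear parts, so either your window length $L$ is small enough that this difference is $o_r(1)$ (then $Q_{i,r},Q_{j,r}$ are not essentially distinct), or it is large enough that the difference is a non-constant polynomial in $h$ whose leading coefficient $\sim(u_i-u_j)^{(m)}(r)\to 0$, which is not a good sequence, so the family is not nice. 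Either way Proposition \ref{PET} does not apply, and this is not a technicality — it is precisely where the sub-linear parts genuinely interact. Relatedly, your decomposition $v_{j,r}=c_j+w_{j,r}$ and the claim that the residual family $\{B_{\ue}(\bm,r)\}$ lies in $\mathcal{S+P}$ with strictly smaller complexity, still satisfies the logarithmic-separation hypotheses, and can absorb the recursion while keeping the seminorm order at $2k+1$ and the parameters $s,t,Y,p_{\ue,j}$ depending only on $d,k',\bW$, is asserted but not substantiated; since the Taylor cutoffs (hence the degree and type of $\{Q_{i,r}\}$) depend on the sub-linear parts, even the required uniformity in $d,k',\bW$ is in doubt.

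The paper takes a different and cleaner route which you should compare with. It chooses $L\prec (u_1'(t))^{-1}$, where $u_1$ has maximal growth among the sub-linear parts, so that on $[r,r+L(r)]$ \emph{all} sub-linear parts collapse to the constants $\floor{u_i(r)}$ (no Taylor expansion of the $u_i$ enters the polynomial family at all); the iterates whose polynomial parts coincide with $p_1$ are grouped into a single iterate with function $F_{r,R}=\prod_{i\le k_0}T^{\floor{u_i(r)}}f_{i,R}$; Proposition \ref{PET} is then applied to the genuinely nice, constant-coefficient family $\{p_i(r+h)\}$, whose type and leading vector are those of the original collection, so the polynomials $A_{\ue}(\bm)$ are $r$-free from the outset and the parameters depend only on $d,k',\bW$. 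The remaining $r$-dependence sits in the unbounded shifts $T^{\floor{u_i(r)}}$ inside $F_{r,R}$, and these are disposed of by a Cauchy--Schwarz passage to $T\times T$ together with the purely sub-linear Proposition \ref{sublinearonly} (proved by a separate induction on the number of sub-linear functions), which is what produces the $\nnorm{\cdot}_{2k+1}$ bound in one shot. Your proposal has no counterpart to this grouping step or to the purely sub-linear estimate, and without them the argument does not close.
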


 Observe that the iterates inside the seminorm in \eqref{61} are real polynomials in several variables. We can take $M\to+\infty$ and expand these seminorms to arrive at an iterated limit of polynomial averages. It is possible to bound these averages by a suitable seminorm of the function $f_1$ using the results in \cite{Leibmanseveral} and get a simpler bound in \eqref{61}. This necessitates that we substitute the $O_{d,k,k',\bW}(1)$ implicit constant by an $O_{a_1,...,a_k}(1)$ constant and this is insufficient for our purposes in the next section, where we will have to apply Proposition \ref{sublinearseminorm} for several collections of functions simultaneously. However, in view of the above discussion, we deduce the following:
 \begin{corollary}\label{sublinearcorollary}
 Let $a_1,...,a_k$ be a collection of functions in $\mathcal{S}+\mathcal{P}$ such that  $a_1(t)\succ \log t$ and $a_1(t)-a_j(t)\succ \log t$ for $j\neq 1$. Then, there exists a positive integer $s$ such that, for any measure preserving system $(X,\m,T)$ and 1-bounded function $f_1\perp Z_s(X)$, we have \begin{equation*}
      \lim\limits_{N\to+\infty} \ \sup_{\norm{f_2}_{\infty},...,\norm{f_k}_{\infty}\leq 1 } \ \sup_{|c_n|\leq 1  }\bignorm{   \underset{1\leq n\leq N}{\E} c_n\ T^{\floor{a_1(n)}}f_1\cdot...\cdot T^{\floor{a_k(n)}}f_k          }_{L^2(\m)}=0.
 \end{equation*}
 \end{corollary}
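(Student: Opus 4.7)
The plan is to apply Proposition \ref{sublinearseminorm} to the given collection $a_1,\ldots,a_k$, obtaining positive integers $s_0,t$, a finite set $Y$, and essentially distinct real polynomials $A_{\ue}(\bm)$ (with $\ue\in[[s_0]]$) such that the $2^t$-th power of the $L^2$-norm in question is bounded by
\[
\frac{1}{M}+\sum_{\bh\in Y^{[[s_0]]}}\underset{\bm\in[-M,M]^t}{\E}\nnorm{\prod_{\ue\in[[s_0]]}T^{\floor{A_{\ue}(\bm)}+h_{\ue}}f_1}_{2k+1}+o_N(1).
\]
Taking $N\to\infty$ first eliminates the $o_N(1)$; what remains is to show that the averaged seminorm vanishes as $M\to\infty$. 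By the power mean inequality it suffices to control $\underset{\bm\in[-M,M]^t}{\E}\nnorm{\prod_{\ue}T^{\floor{A_{\ue}(\bm)}+h_{\ue}}f_1}_{2k+1}^{2^{2k+1}}$.

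Next, I would unfold the Host-Kra seminorm via its inductive definition and use $T$-invariance to write the resulting quantity as an iterated limit of integrals
\[
\lim_{H\to\infty}\underset{\mathbf{r}\in[-H,H]^{2k+1}}{\E}\,\underset{\bm\in[-M,M]^t}{\E}\int\prod_{(\eta,\ue)}T^{\eta\cdot\mathbf{r}+\floor{A_{\ue}(\bm)}+h_{\ue}}\mathcal{C}^{|\eta|}f_1\,d\m,
\]
indexed over $(\eta,\ue)\in\{0,1\}^{2k+1}\times[[s_0]]$. The family of iterate polynomials $\{\eta\cdot\mathbf{r}+A_{\ue}(\bm)\}$, viewed in the joint variables $(\mathbf{r},\bm)$, is essentially distinct: distinct $\eta$'s are separated by the linear $\mathbf{r}$-component, while for fixed $\eta$ property (ii) of Proposition \ref{PET} guarantees that the $A_{\ue}$'s are pairwise essentially distinct. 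So this is a multi-variable polynomial average with essentially distinct iterates in which only $f_1$ (or its conjugate) appears.

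Finally, I would invoke the multi-variable polynomial convergence results of \cite{Leibmanseveral}, adapted to handle floors of real polynomials as indicated in the remark following Proposition \ref{sublinearseminorm}: such an average, as $M,H\to\infty$, is bounded by a power of $\nnorm{f_1}_{s}$ for some integer $s$ depending on the polynomial family $\{A_{\ue}\}$, and thus on $a_1,\ldots,a_k$ through the leading vector. Choosing $s$ to be at least this integer and using the hypothesis $f_1\perp Z_s(X)$, one has $\nnorm{f_1}_s=0$, so every seminorm term vanishes in the limit and the $1/M$ term also tends to zero. The main obstacle is precisely this last step: establishing seminorm control for multivariate averages of \emph{real} polynomial iterates, where the integer-polynomial Leibman machinery must be modified to absorb the fractional parts uniformly across the $\bm$-average; justifying the interchange of the limits $M\to\infty$ and $H\to\infty$ and carefully tracking how $s$ depends on the polynomial family are the remaining technical points to verify.
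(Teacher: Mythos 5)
Your argument is correct and is essentially the paper's own proof: the corollary is deduced from Proposition \ref{sublinearseminorm} by letting $M\to\infty$, expanding the Host--Kra seminorms into iterated averages whose iterates are (floors of) pairwise essentially distinct real polynomials in several variables applied only to $f_1$, and bounding these by a power of $\nnorm{f_1}_s$ via the results of \cite{Leibmanseveral} adapted to real polynomials. The technical points you flag (handling the integer parts of real polynomials and tracking how $s$ depends on the family $\{A_{\ue}\}$, hence on $a_1,\dots,a_k$) are precisely the ones the paper also defers to that adaptation (cf.\ the remark after Proposition \ref{PET} and the use of \cite[Lemma 4.3]{Frajointhardy} in Section \ref{reductionestimates}).
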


  We analyze the conditions imposed on the functions $a_1,...,a_k$ more closely: write each function $a_i$ in the form $a_i(t)=u_i(t)+p_i(t)$, where $u_i\in \mathcal{S}$ and $p_i\in \mathcal{P}$. The condition $a_1(t)\succ \log t $ implies that either $u_1(t)\succ \log t$ or $p_1(t)$ is a non-constant polynomial. Similarly, the second condition implies that either $u_1(t)-u_i(t)\succ \log t $ or $p_1(t)-p_j(t)$ is a non-constant polynomial. 

Furthermore, we can make one more reduction. Writing again $a_i(t)=u_i(t)+p_i(t)$ as above and using the same argument as in Section \ref{sectionfactors} (see the discussion following the statement of Proposition \ref{factors}), we may assume that the function $u_1$ has the largest growth rate among the functions $u_i$.

In order to establish the main result of this section, we will also use the following proposition, which is special case of Proposition \ref{sublinearseminorm}. 
\begin{proposition}\label{sublinearonly}
Let $ a_1,..., a_k$ be sub-linear functions in $\mathcal{H}$ and assume that all the functions $a_1,a_1-a_2,...,a_1-a_k$ dominate $\log t$. Then, for any measure preserving system $(X,\m,T)$ and function $f_1\in L^{\infty}(\m)$ bounded by 1, we have \begin{equation}\label{62}
    \limsup\limits_{N\to+\infty} \sup_{\norm{f_2}_{\infty},...,\norm{f_k}_{\infty}\leq 1 } \sup_{|c_n|\leq 1  }\ \bignorm{   \underset{1\leq n\leq N}{\E} c_n\  T^{\floor{a_1(n)}}f_1\cdot...\cdot T^{\floor{a_k(n)}}f_k          }_{L^2(\m)}\ll_k \  \nnorm{f_1}_{2k}.
\end{equation}
\end{proposition}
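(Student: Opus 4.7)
The proof proceeds by induction on $k$, combining at each stage Lemma~\ref{mainlemma} (to pass from a single to a double averaging scheme), Lemma~\ref{errors} (to discard bounded errors in the iterates), and Proposition~\ref{PET} (to handle the resulting polynomial-iterate averages). The whole scheme is driven by the choice of a sub-linear auxiliary function $L\in\mathcal{H}$ that Taylor-linearises $a_1$ while keeping all other $a_i$ essentially frozen on the scale $[r,r+L(r)]$.

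\textbf{Base case $k=1$.} I apply Lemma~\ref{mainlemma} to bound the squared norm by $\underset{1\le r\le R}{\E}\bignorm{\underset{0\le h\le L(r)}{\E}c_{r+h,R}\,T^{\floor{a_1(r+h)}}f_1}^2$ for a sub-linear $L\in\mathcal{H}$ to be chosen. Since $a_1\succ\log t$, I can pick $L$ with $L(r)\,a_1'(r)\to\infty$ while on $[r,r+L(r)]$ we have $a_1(r+h)=a_1(r)+h\,a_1'(r)+o_r(1)$. After Lemma~\ref{errors} absorbs the $o_r(1)$ error and the constant-in-$h$ shift $T^{\floor{a_1(r)}}$ is pulled into a new $1$-bounded function, the inner expression becomes a linear-polynomial average in $h$. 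The linear case of Proposition~\ref{PET}, followed by Cauchy--Schwarz on the outer average and the definition of $\nnorm{\cdot}_2$, delivers the bound $\nnorm{f_1}_2$.

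\textbf{Inductive step.} Assume the result for $k-1$. By the absorption argument after Proposition~\ref{factors}, I may suppose $a_1$ has maximal growth. Using the Hardy-field technical lemmas of the appendix, I pick a sub-linear $L\in\mathcal{H}$ such that on $[r,r+L(r)]$
\begin{equation*}
a_i(r+h)=a_i(r)+o_r(1) \quad (i\ne 1), \qquad a_1(r+h)=a_1(r)+Q_r(h)+o_r(1),
\end{equation*}
with $Q_r$ a non-constant polynomial whose leading coefficient diverges. The assumption $a_1-a_i\succ\log t$ is precisely what makes this simultaneous calibration possible. After Lemma~\ref{errors} absorbs the errors and the shifts $T^{\floor{a_i(r)}}f_i$ for $i\ne 1$ (constant in $h$) are merged with $f_1$ into a new $1$-bounded function $\tilde f_{1,r}$, the inner average is a polynomial-iterate average to which Proposition~\ref{PET} applies, giving an upper bound of the shape
\begin{equation*}
\frac{1}{M}+\sum_{\mathbf{h}\in Y^{[[s]]}}\underset{\mathbf{m}\in[-M,M]^t}{\E}\Big|\int\prod_{\ue\in[[s]]}T^{\floor{A_{\ue,r}(\mathbf{m})}+h_{\ue}}(\mathcal{C}^{|\ue|}\tilde f_{1,r})\,d\m\Big|+o_R(1).
\end{equation*}
For each fixed $\mathbf{m}$, composing with $T^{-\floor{A_{\underline{1},r}(\mathbf{m})}}$ and using property iii) of Proposition~\ref{PET}, the integrand is recast into an expression of the same shape as the statement, with $\bar f_1\cdot T^{Q(\mathbf{m})}f_1$ playing the role of $f_1$ and a collection of at most $k-1$ new Hardy functions (whose sub-linearity and logarithmic separation hypotheses are preserved because the differences were already controlled). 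Taking $R\to\infty$ and invoking the inductive hypothesis for $k-1$ produces a bound $\ll\nnorm{\bar f_1\cdot T^{Q(\mathbf{m})}f_1}_{2(k-1)}$, uniformly in $\mathbf{m}$. Averaging over $\mathbf{m}$, letting $M\to\infty$, and applying the recursive definition~\eqref{uniformitynorms} of the Host-Kra seminorms together with H\"older's inequality yields the bound $\nnorm{f_1}_{2k}$.

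\textbf{Main obstacle.} The principal difficulty is the simultaneous calibration of $L$: it must be sub-linear, force a suitable derivative combination of $a_1$ against $L$ to diverge so that $Q_r$ is genuinely non-constant, and at the same time satisfy $L(r)\max_{t\in[r,r+L(r)]}|a_i'(t)|\to 0$ for each $i\ne 1$ so that the remaining $a_i$ are essentially frozen. This is exactly the role of the twin assumptions $a_1\succ\log t$ and $a_1-a_i\succ\log t$ and is where the appendix's Hardy-field growth lemmas are indispensable. A secondary technical point is verifying that after Proposition~\ref{PET} the derived polynomial family fits the inductive template; condition iv) of Proposition~\ref{PET} (linear independence of the $p_{\ue,j}$ modulo constants) is what guarantees that the resulting Hardy functions for the $(k-1)$-case satisfy the logarithmic separation needed to close the induction.
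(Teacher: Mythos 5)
Your inductive step rests on a calibration that is impossible under the stated hypotheses. You ask for a sub-linear $L$ such that on $[r,r+L(r)]$ every $a_i$ with $i\neq 1$ is frozen ($L(r)\,a_i'(r)\to 0$) while $a_1$ produces a genuinely non-constant polynomial ($L(r)\,a_1'(r)\to\infty$). The hypothesis $a_1-a_i\succ\log t$ does not rule out $a_i\sim a_1$: take $a_1=2\sqrt t$, $a_2=\sqrt t$, which satisfies all assumptions, yet $a_1'\sim 2a_2'$, so no $L$ can do both jobs. This is exactly the difficulty the paper's proof is organized around: it groups all functions with the same growth as the maximal one (the index $k_0$), writes $a_i=a_{k_0}+(a_i-a_{k_0})$ for $i\le k_0$, freezes only the \emph{differences} $a_i-a_{k_0}$ and linearizes the single function $a_{k_0}$ to first order. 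Because several $f_i$ then carry distinct frozen shifts $\floor{a_i(r)-a_{k_0}(r)}$, they can neither be merged into one function nor discarded; after one van der Corput step in $h$ the correlation $\int F_{r,R}\cdot T^{m+z}F_{r,R}\,d\m$ with $F_{r,R}=\prod_{i\le k_0}T^{\floor{a_i(r)-a_{k_0}(r)}}f_{i,R}$ is handled by Cauchy--Schwarz in $r$ and passage to the product system $T\times T$, and it is there that the induction hypothesis (for the $k_0-1$ functions $a_i-a_{k_0}$) is applied. In your scheme the merging step itself does not parse: the exponent on $f_1$ depends on $h$, so "merging" the constant-in-$h$ factors $T^{\floor{a_i(r)}}f_i$ would make $\tilde f_{1,r}$ depend on $h$; a constant-in-$h$, $1$-bounded factor should simply be dropped from the $L^2$ norm, after which there is nothing left for your $(k-1)$-case appeal to act on.

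A second gap is the use of Proposition \ref{PET}: after Taylor expansion of a sub-linear $a_1$ the coefficients $a_1^{(j)}(r)/j!$ tend to $0$ (they do not diverge, as you assert of the leading coefficient), so they are not good sequences and the family $\{Q_r\}$ is not nice; PET's hypotheses fail. Concretely, with a fixed differencing range $|m|\le M$ the floors $\floor{m\,a_1'(r)}$ vanish for large $r$ and the van der Corput bound degenerates, so it cannot output $\nnorm{f_1}_2$ in your base case either. The paper avoids PET entirely in this proposition: it applies van der Corput with the $r$-dependent window $M_r=\bigfloor{|M/a_1'(r)|}$ and a counting argument (the renormalizing change of variables $h=w\floor{u(r)}+v$ is the alternative fix, used where PET is genuinely needed, in the proof that Proposition \ref{sublinearonly} implies Proposition \ref{sublinearseminorm} and in Section \ref{reductionestimates}). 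Finally, your closing deduction from $\underset{\bm}{\E}\,\nnorm{\bar f_1\cdot T^{Q(\bm)}f_1}_{2(k-1)}$ to $\nnorm{f_1}_{2k}$ only follows from the seminorm recursion when the exponent is essentially a single integer parameter averaged over an interval; the paper guarantees this by expanding only one function and only to first order, whereas a multilinear $Q(\bm)$ in several variables would not close the induction this way.
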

\begin{remark*}
The proof that Proposition \ref{sublinearonly} implies Proposition \ref{sublinearseminorm} corresponds to Step 1 in example b) of Section \ref{sectionfactors}, while the proof of Proposition \ref{sublinearonly} corresponds to step 2 of the same example.
\end{remark*}

\begin{proof}[Proof that Proposition \ref{sublinearonly} implies Proposition \ref{sublinearseminorm}]
First of all, we write each $a_i(t)$ in the form $u_i(t)+p_i(t)$ as we discussed above. Our main tool will be to use Lemma \ref{mainlemma} in order to reduce our problem to studying averages on small intervals, where the sublinear functions $u_i$ will have a constant integer part.

Suppose that not all of the polynomials $p_1(t),...,p_k(t)$ are constant, since the other case follows from Proposition \ref{sublinearonly} (that means the family has degree $\geq 1$). We can assume, without loss of generality, that $p_i(0)=0$ for all $i$ (the constant terms can be absorbed by the functions $u_i$).
Therefore, let $L(t)\in \mathcal{H}$ be a sub-linear function to be chosen later. In addition, we choose functions $f_{2,N},...,f_{k,N}$ so that the average in the left-hand side of \eqref{61} is $1/N$ close to the supremum. We want to bound \begin{equation*}
     \underset{1\leq r\leq R}{\E}\  \sup_{|c_{n,r}|\leq 1  }\ \bignorm{   \underset{r\leq n\leq r+L(r)}{\E}\ c_{n,r} \ T^{\floor{u_1(n)+p_{1}(n) }}f_1\cdot...\cdot T^{\floor{u_k(n)+p_k(n)}}f_{k,R}        }_{L^2(\m)}^{2^t}
\end{equation*}for some integer parameter $t$, which we will choose later to depend only on the quantities $d,k',\bW$ (thus, when applying Lemma \ref{errors} below to remove the error terms in the iterates, we will always have that the implicit constant depends only on $d,k',\bW$).

Recall that we have reduced our problem to the case that the function $u_1$ has the largest growth rate among the functions $u_i$.
Now, we want to choose the sub-linear function $L(t)\in \mathcal{H}$ so that the functions $u_i(n)$ restricted to the interval $[r,r+L(r)]$ become very close to the value $u_i(r)$. To achieve this, it suffices to take $L(t)\in\mathcal{H}$ such that  \begin{equation*}
   1\prec L(t)\prec (u_1'(t))^{-1}. 
\end{equation*}To see that such a function exists, we only need to show that $(u_1'(t))^{-1}\succ 1$ which follows easily from the fact that $u_1(t)\prec t$. Observe that for every $i\in\{1,2,...,k\}$ we must have $L(t)\prec (u_i'(t))^{-1}$, since $u_1$ has maximal growth among the functions $u_i$. For every $n\in [r,r+L(r)]$, we observe that \begin{equation*}
    |u_i(n)-u_i(r)|\leq (n-r)\max_{x\in [r,r+L(r)]}|u_i'(x)|.
\end{equation*}Since $|u_i'(t)|\searrow 0$, we have that for $r$ large enough \begin{equation*}
    |u_i(n)-u_i(r)|\leq L(r)u_i'(r)=o_r(1), \ \ \ \ \ \  n\in [r,r+L(r)].
\end{equation*}Therefore, for $r$ sufficiently large we have \begin{equation*}
    \floor{u_i(n)+p_i(n)}=\floor{u_i(r)}+\floor{p_i(n)}+e_{i,n}, \ \ \ \ \ \ \  n\in[r,r+L(r)],
\end{equation*}where $e_{i,n}\in\{0,\pm 1,\pm 2\}$. Therefore, our original problem reduces to bounding the quantity \begin{equation}\label{don}
      \underset{1\leq r\leq R}{\E}\  \sup_{|c_{n,r}|\leq 1  }\bignorm{   \underset{r\leq n\leq r+L(r)}{\E}\ c_{n,r}\ T^{\floor{u_1(r)}+\floor{p_{1}(n) }+e_{1,n}}f_1\cdot...\cdot T^{\floor{u_k(r)}+\floor{p_k(n)}+e_{k,n}}f_{k,R}        }_{L^2(\m)}^{2^t}.
\end{equation}Using Lemma \ref{errors}, we may reduce to the case that the error terms $e_{i,n}$ in the iterates are all equal to zero.
 
 Let $S$ be the set of those $i\in\{1,...,k\}$ for which the polynomial $p_i(t)$ is equal to the polynomial $p_1(t)$. Reordering, if necessary, we may assume that $S=\{1,...,k_0\}$ for some $k_0\leq k$. Note that the original condition then implies that $u_1(t)-u_i(t)\succ \log t$ for each $2\leq i\leq k_0$.
  We rewrite \eqref{don} as \begin{multline}\label{buf}
     \underset{1\leq r\leq R}{\E}\  \sup_{|c_{n,r}|\leq 1  }\bignorm{   \underset{r\leq n\leq r+L(r)}{\E}\ c_{n,r}\  
   T^{\floor{p_1(n)}} (\prod_{i=1}^{k_0} T^{\floor{u_i(r) }}f_{i,R} )
     \prod_{i=k_0+1}^{k} T^{\floor{u_i(r)}+\floor{p_i(n)}}f_{i,R}        }_{L^2(\m)}^{2^t}=\\
       \underset{1\leq r\leq R}{\E}\  \sup_{|c_{h,r}|\leq 1  }\bignorm{   \underset{0\leq h\leq L(r)}{\E}\ c_{h,r} \
   T^{\floor{p_1(r+h)}} (\prod_{i=1}^{k_0} T^{\floor{u_i(r) }}f_{i,R} )
     \prod_{i=k_0+1}^{k} T^{\floor{u_i(r)}+\floor{p_i(r+h)}}f_{i,R}        }_{L^2(\m)}^{2^t}\leq \\
 \underset{1\leq r\leq R}{\E}\  \sup_{\norm{f_{k_0+1}}_{\infty},...,\norm{f_{k}}_{\infty}\leq 1   }\    \sup_{|c_{h,r}|\leq 1  }\bignorm{   \underset{0\leq h\leq L(r)}{\E}\ c_{h,r} \
   T^{\floor{p_1(r+h)}} (\prod_{i=1}^{k_0} T^{\floor{u_i(r) }}f_{i,R} )
     \prod_{i=k_0+1}^{k} T^{\floor{p_i(r+h)}}f_{i}        }_{L^2(\m)}^{2^t},
       \end{multline}where $f_{1,R}=f_1$. We also write $F_{r,R}:= \prod_{i=1}^{k_0} T^{\floor{u_i(r) }}f_{i,R}$ for brevity.
       
       We can assume that the polynomials $p_i(r+h) $ are non-constant (otherwise, we just ignore the corresponding iterate in the last average). In addition, we may assume that they are pairwise essentially distinct, because if two polynomials are equal, we can combine both of these iterates into a single iterate (this operation does not change the type or leading vector of the given collection of functions).
       Note that under these assumptions the family of polynomials \begin{equation*}
          P_r= \{p_1(r+h),p_{k_0+1}(r+h),...,p_k(r+h) \}
       \end{equation*}is a nice family of polynomials\footnote{There is the possibility that the polynomial $p_{1}(n)$ is constant (and so is the polynomial $p_{1}(r+h)$) or that it does not have maximal degree (which would prevent the use of Proposition \ref{PET}, which was stated for ordered polynomial families). However, since we have assumed that not all of the polynomials $p_i$ are constant, then we can use the same argument as in Section \ref{sectionfactors} after Proposition \ref{factors} (where we reduced our problem to the case that the first function has maximal growth rate) to replace the polynomial $p_1(r+h)$ by $p_1(r+h)-p_i(r+h)$ for a non-constant polynomial $p_i(r+h)$ among $p_{k_0+1}(r+h),...,p_k(r+h)$. } in the variable $h$ (the leading coefficients of the polynomials and their pairwise differences are all constant sequences) and has type and leading vector equal to that of the original collection $\{p_1,...,p_k\}$. Therefore, we can apply Proposition \ref{PET}: there exist positive integers $t_0$ and $s$, a finite set $Y$ of integers and polynomials $p_{\ue,j}$ where $\ue\in [[s]]$ and $1\leq j\leq k$ such that \begin{multline}
       \sup_{\norm{f_{k_0+1}}_{\infty},...,\norm{f_{k}}_{\infty}\leq 1   }\  \sup_{|c_{h,r}|\leq 1  }  \bignorm{   \underset{0\leq h\leq L(r)}{\E}\ c_{h,r}\ 
   T^{\floor{p_1(r+h)}} F_{r,R}
     \prod_{i=k_0+1}^{k} T^{\floor{p_i(r+h)}}f_{i}        }_{L^2(\m)}^{2^{t_0}}\ll_{d,k',\bW}\\
      \frac{1}{M}+
  \sum_{{\bf h}\in     Y^{[[s]]}}^{} \underset{{\bf m}\in [-M,M]^{t_0}}{\E} \Big| \int \prod_{\underline{\e}\in [[s]]}^{}  T^{\floor{A_{\ue}(\bm)}+ h_{\underline{\e}}}(\mathcal{C}^{|\underline{\e}|}F_{r,R})  \ d\m  \Big|+
    o_r(1),
\end{multline}where \begin{equation*}
    A_{\ue}(\bm)=\sum_{1\leq j\leq k'} \ p_{\underline{\e},j}({\bf m})c_{j}
       \end{equation*}and $(c_1,...,c_{k'})$ is the leading vector of the initial family (here we have $k'\leq k-k_0+1$). 

Using this in \eqref{buf} with $t={t_0}$ (which depends only on $d,k',\bW$ as we claimed in the beginning), we deduce that our original average is bounded by $O_{d,k,k',\bW}(1)$ times \begin{equation*}
    \frac{1}{M} +\underset{1\leq r\leq R}{\E} \ \sum_{{\bf h}\in     Y^{[[s]]}}^{} \ \underset{{\bf m}\in [-M,M]^{t_0}}{\E} \Big| \int \prod_{\underline{\e}\in [[s]]}^{}  T^{\floor{A_{\ue}(\bm)}+ h_{\underline{\e}}}(\mathcal{C}^{|\underline{\e}|}F_{r,R})  \ d\m  \Big|+
    o_R(1).
\end{equation*}Using the definition of $F_{r,R}$, we rewrite this as \begin{multline*}
     \frac{1}{M} +\underset{1\leq r\leq R}{\E}\ \sum_{{\bf h}\in     Y^{[[s]]}}^{} \ \underset{{\bf m}\in [-M,M]^{t_0}}{\E} \Big| \int
     T^{\floor{u_1(r)}}\big(\prod_{\underline{\e}\in [[s]]}^{}  T^{\floor{A_{\ue}(\bm)}+ h_{\underline{\e}}}(\mathcal{C}^{|\underline{\e}|}f_{1})\big)\\
     \prod_{i=2}^{k_0} T^{\floor{u_i(r)}}\big(\prod_{\underline{\e}\in [[s]]}^{}  T^{\floor{A_{\ue}(\bm)}+ h_{\underline{\e}}}(\mathcal{C}^{|\underline{\e}|}f_{i,R})\big)
    \ d\m  \Big|+
    o_R(1).
\end{multline*}

Now, we consider two cases:\\

\underline{Case 1}: Firstly, assume that $k_0=1$. Then, the above quantity can be rewritten as \begin{multline*}
    \frac{1}{M} +\underset{1\leq r\leq R}{\E} \sum_{{\bf h}\in     Y^{[[s]]}}^{} \underset{{\bf m}\in [-M,M]^{t_0}}{\E} \Big| \int
     T^{\floor{u_1(r)}}(\prod_{\underline{\e}\in [[s]]}^{}  T^{\floor{A_{\ue}(\bm)}+ h_{\underline{\e}}}(\mathcal{C}^{|\underline{\e}|}f_{1}))
    \ d\m  \Big|+
    o_R(1)=\\
    \frac{1}{M} + \sum_{{\bf h}\in     Y^{[[s]]}}^{} \underset{{\bf m}\in [-M,M]^{t_0}}{\E} \Big| \int
     \prod_{\underline{\e}\in [[s]]}^{}  T^{\floor{A_{\ue}(\bm)}+ h_{\underline{\e}}}(\mathcal{C}^{|\underline{\e}|}f_{1}))
    \ d\m  \Big|+
    o_R(1).
\end{multline*} The result follows immediately, since \begin{equation*}
     \Big| \int
     \prod_{\underline{\e}\in [[s]]}^{}  T^{\floor{A_{\ue}(\bm)}+ h_{\underline{\e}}}(\mathcal{C}^{|\underline{\e}|}f_{1}))
    \ d\m  \Big|\leq \nnorm{
     \prod_{\underline{\e}\in [[s]]}^{}  T^{\floor{A_{\ue}(\bm)}+ h_{\underline{\e}}}(\mathcal{C}^{|\underline{\e}|}f_{1}))
    }_{2k+1}.
\end{equation*}

\underline{Case 2}: Assume that $k_0>1$ and we want to bound \begin{multline}\label{ssa}
    \frac{1}{M} +\underset{1\leq r\leq R}{\E} \sum_{{\bf h}\in     Y^{[[s]]}}^{} \underset{{\bf m}\in [-M,M]^{t_0}}{\E} \Big| \int
     T^{\floor{u_1(r)}}(\prod_{\underline{\e}\in [[s]]}^{}  T^{\floor{A_{\ue}(\bm)}+ h_{\underline{\e}}}(\mathcal{C}^{|\underline{\e}|}f_{1}))\\
     \prod_{i=2}^{k_0} T^{\floor{u_i(r)}}(\prod_{\underline{\e}\in [[s]]}^{}  T^{\floor{A_{\ue}(\bm)}+ h_{\underline{\e}}}(\mathcal{C}^{|\underline{\e}|}f_{i,R}))
    \ d\m  \Big|+
    o_R(1).
\end{multline}Our original hypothesis implies that the functions $u_1-u_i$ (where $2\leq i\leq k_0$) dominate $\log t$. Since $u_i$ was assumed in the beginning to have the biggest growth rate among the functions $u_i$, we must also have $u_1(t)\succ \log t$.

We take the limit as $R\to+\infty$ and rewrite the quantity in \eqref{ssa} as \begin{multline*}
    \frac{1}{M}+\sum_{{\bf h}\in     Y^{[[s]]}}^{} \underset{{\bf m}\in [-M,M]^t}{\E} \limsup\limits_{R\to+\infty} \underset{1\leq r\leq R}{\E}\Big| \int
     T^{\floor{u_1(r)}}(\prod_{\underline{\e}\in [[s]]}^{}  T^{\floor{A_{\ue}(\bm)}+ h_{\underline{\e}}}(\mathcal{C}^{|\underline{\e}|}f_{1}))\\
     \prod_{i=2}^{k_0} T^{\floor{u_i(r)}}(\prod_{\underline{\e}\in [[s]]}^{}  T^{\floor{A_{\ue}(\bm)}+ h_{\underline{\e}}}(\mathcal{C}^{|\underline{\e}|}f_{i,R}))
    \ d\m  \Big|.
\end{multline*}Applying the Cauchy-Schwarz inequality, we deduce that \begin{multline*}
     \underset{1\leq r\leq R}{\E}\Big| \int
     T^{\floor{u_1(r)}}(\prod_{\underline{\e}\in [[s]]}^{}  T^{\floor{A_{\ue}(\bm)}+ h_{\underline{\e}}}(\mathcal{C}^{|\underline{\e}|}f_{1}))
     \prod_{i=2}^{k_0} T^{\floor{u_i(r)}}(\prod_{\underline{\e}\in [[s]]}^{}  T^{\floor{A_{\ue}(\bm)}+ h_{\underline{\e}}}(\mathcal{C}^{|\underline{\e}|}f_{i,R}))
    \ d\m  \Big|\leq\\
    \Big( \underset{1\leq r\leq R}{\E}    \int
     S^{\floor{u_1(r)}}(\prod_{\underline{\e}\in [[s]]}^{}  S^{\floor{A_{\ue}(\bm)}+ h_{\underline{\e}}}(\mathcal{C}^{|\underline{\e}|}F_{1}))\\
     \prod_{i=2}^{k_0} S^{\floor{u_i(r)}}(\prod_{\underline{\e}\in [[s]]}^{}  S^{\floor{A_{\ue}(\bm)}+ h_{\underline{\e}}}(\mathcal{C}^{|\underline{\e}|}F_{i,R}))
    \ d(\m\times \m)                       \Big)^{1/2},
\end{multline*}where $S=T\times T$, $F_1=\overline{f_1}\otimes f_1$ and $F_{i,R}=\overline{f_{i,R}}\otimes f_{i,R}$. A final application of the Cauchy-Schwarz inequality bounds the last quantity by  \begin{multline*}
    \bignorm{\underset{1\leq r\leq R}{\E}   S^{\floor{u_1(r)}}(\prod_{\underline{\e}\in [[s]]}^{}  S^{\floor{A_{\ue}(\bm)}+ h_{\underline{\e}}}(\mathcal{C}^{|\underline{\e}|}F_{1}))
     \prod_{i=2}^{k_0} S^{\floor{u_i(r)}}(\prod_{\underline{\e}\in [[s]]}^{}  S^{\floor{A_{\ue}(\bm)}+ h_{\underline{\e}}}(\mathcal{C}^{|\underline{\e}|}F_{i,R}))  }_{L^2(\m\times \m)}^{1/2}.
\end{multline*}Applying Proposition \ref{sublinearonly}, we deduce that the $\limsup$ of this last average is bounded by $O_{k_0}(1)$ (which is $O_k(1)$) times \begin{equation*}
    \nnorm{\prod_{\underline{\e}\in [[s]]}^{}  S^{\floor{A_{\ue}(\bm)}+ h_{\underline{\e}}}(\mathcal{C}^{|\underline{\e}|}F_{1})}_{2k_0,T\times T}\leq \nnorm{\prod_{\underline{\e}\in [[s]]}^{}  S^{\floor{A_{\ue}(\bm)}+ h_{\underline{\e}}}(\mathcal{C}^{|\underline{\e}|}F_{1})}_{2k,T\times T}.
\end{equation*}Our original problem reduces to bounding \begin{equation*}
    \frac{1}{M}+\sum_{{\bf h}\in     Y^{[[s]]}}^{} \underset{{\bf m}\in [-M,M]^t}{\E} \nnorm{\prod_{\underline{\e}\in [[s]]}^{}  S^{\floor{A_{\ue}(\bm)}+ h_{\underline{\e}}}(\mathcal{C}^{|\underline{\e}|}F_{1})}_{2k,T\times T}^{1/2}+o_R(1),
\end{equation*}which is smaller than \begin{equation*}
      \frac{1}{M}+\sum_{{\bf h}\in     Y^{[[s]]}}^{} \underset{{\bf m}\in [-M,M]^t}{\E} \nnorm{\prod_{\underline{\e}\in [[s]]}^{}  T^{\floor{A_{\ue}(\bm)}+ h_{\underline{\e}}}(\mathcal{C}^{|\underline{\e}|}f_{1})}_{2k+1,T}+o_R(1)
\end{equation*}and the conclusion follows.
\end{proof}

\begin{proof}[Proof of Proposition \ref{sublinearonly}]

Using the arguments after the statement of Proposition \ref{factors}, we may reduce to the case that $a_1(t)$ has maximal growth rate among $a_1,...,a_k$.

We induct on $k$. In the base case of the induction, we want to show that \begin{equation*}
    \limsup\limits_{N\to+\infty} \ \sup_{|c_n|\leq 1} \ \bignorm{\underset{1\leq n\leq N}{\E} c_n T^{\floor{a_1(n)}} f_1   }_{L^2(\m)}\ll \nnorm{f_1}_2.
\end{equation*}Due to Lemma \ref{mainlemma}, it suffices to show that \begin{equation}\label{sngl}
    \limsup\limits_{N\to+\infty} \sup_{|c_{n,N}|\leq 1}\bignorm{\underset{N\leq n\leq N+L(N)}{\E} c_{n,N} T^{\floor{a_1(n)}} f_1   }_{L^2(\m)}\ll \nnorm{f_1}_2
\end{equation}for some suitable sub-linear function $L(t)\in\mathcal{H}$. Since $a_1(t)\succ \log t$, we conclude that \begin{equation*}
    |a'_1(t)|^{-1}\prec |a_1''(t)|^{-1/2}
\end{equation*}by Proposition \ref{growth}. We choose the function $L(t)$ to satisfy \begin{equation*}
     |a'_1(t)|^{-1}\prec L(t) \prec |a_1''(t)|^{-1/2}.
\end{equation*} Therefore, for every $n\in [N,N+L(N)]$, we can write \begin{equation*}
    a_1(n)=a_1(N)+(n-N)a_1'(N)+o_N(1),
\end{equation*}which in turn implies that, for $N$ sufficiently large, we can write \begin{equation*}
    \floor{a_1(n)}=\floor{a_1(N)+(n-N)a_1'(N)}+e_{n,N},
\end{equation*}where $e_{n,N}\in \{0,\pm 1\}$. Substituting this in \eqref{sngl}, we want to prove that \begin{equation*}
    \limsup\limits_{N\to+\infty} \sup_{|c_{n,N}|\leq 1}\bignorm{\underset{N\leq n\leq N+ L(N)}{\E} c_{n,N} T^{\floor{a_1(N)+(n-N)a_1'(N)}+e_{n,N}} f_1   }_{L^2(\m)}\ll \nnorm{f_1}_2.
\end{equation*}
Using Lemma \ref{errors}, we can reduce our problem to \begin{equation*}
      \limsup\limits_{N\to+\infty} \sup_{|c_{h,N}|\leq 1}\bignorm{\underset{0\leq h\leq L(N)}{\E} c_{h,N} T ^{\floor{a_1(N)+ha_1'(N)}} f_1   }_{L^2(\m)}\ll \nnorm{f_1}_2.
\end{equation*}This bound can be proven using the change of variables trick that we have seen in the first example in Section \ref{sectionfactors}. However, we will establish our assertion with a slightly quicker argument below. 

We shall apply the van der Corput inequality. We fix a positive integer $M$ and  choose the quantity $M_N= \floor{|M/a_1'(N)|} $. It is easy to check that $M_N\prec L(N)$, since $L(N)|a'_1(N)|\to+\infty$. Therefore, we can apply the van der Corput inequality to deduce that \begin{multline*}
    \bignorm{\underset{0\leq h\leq L(N)}{\E} c_{h,N} T^{\floor{a_1(N)+ha_1'(N)}} f_1   }_{L^2(\m)}^2\ll \\
    \frac{1}{M_N} +\underset{|m|\leq M_N}{\E}\Big|\underset{0\leq h \leq L(N)}{\E} \overline{c_{h,N}}c_{h+m,N}\int T^{\floor{a_1(N)+ha_1'(N)}}\overline{f_1}\cdot T^{\floor{a_1(N)+(h+m)a_1'(N)}}f_1 \ d\m  \Big| +o_N(1),
\end{multline*}where the implied constant is absolute (and does not depend on $M$). We write \begin{equation*}
    \floor{a_1(N)+(h+m)a_1'(N)}=\floor{a_1(N)+ha_1'(N)}+\floor{ma_1'(N)}+e_{m,h,N},
\end{equation*}where $e_{m,h,N}\in \{0,\pm 1\}$. We rewrite the double average in the middle as \begin{multline*}
    \underset{|m|\leq M_N}{\E}\Big|\underset{0\leq h \leq L(N)}{\E} \overline{c_{h,N}}c_{h+m,N}\int \overline{f_1}\cdot T^{\floor{ma_1'(N)}+e_{m,h,N} }f_1 \ d\m  \Big| \leq \\
    \sum_{z\in \{0,\pm 1\}}^{} \underset{|m|\leq M_N}{\E} \Big|\int \overline{f_1}\cdot T^{\floor{ma_1'(N)} +z}f_1\ d\m \Big|.
\end{multline*}However, note that $|ma_1'(N)|\leq M_N|a_1'(N)|\leq M$.
Thus, for any $z\in \{0,\pm 1\}$, we have \begin{equation*}
    \underset{|m|\leq M_N}{\E} \Big|\int \overline{f_1} \cdot T^{\floor{ma_1'(N)} +z}f_1\ d\m\Big|=\frac{2M+1}{2M_N+1}\underset{|m'|\leq M}{\E}\  p_N(m') \Big|\int \overline{f_1}\cdot T^{m' +z}f_1\ d\m\Big|,
\end{equation*}where $p_N(m')=\#\{ m\in \N\colon \floor{ma_1'(N)}=m' \}$. Since $a_1'(N)\to 0$, we can easily see that for $N$ large enough, we must have \begin{equation*}
    p_N(m')\leq \Big|\frac{1}{a_1'(N)}\Big|.
\end{equation*}Therefore, we have \begin{multline*}
    \frac{2M+1}{2M_N+1}\underset{|m'|\leq M}{\E}\  p_N(m') \Big|\int \overline{f_1}\cdot T^{m' +z}f_1\ d\m\Big|\leq \frac{(2M+1)}{(2M_N+1)|a_1'(N)|}\ \underset{|m'|\leq M}{\E}\  \Big|\int \overline{f_1}\cdot T^{m' +z}f_1\ d\m\Big|\ll\\
    \underset{|m'|\leq M}{\E}\  \Big|\int \overline{f_1}\cdot T^{m' +z}f_1\ d\m\Big|.
    \end{multline*}
Thus, the square of our original average is $\ll$ \begin{equation*}
\sum_{z\in\{0,\pm 1\}}  \underset{|m'|\leq M}{\E}\  \Big|\int \overline{f_1}\cdot T^{m' +z}f_1\ d\m\Big|+o_N(1)
\end{equation*}for some implied constant that does not depend on the original integer $M$. Therefore, we take first $N\to +\infty$ and then $M\to+\infty$ and use the Cauchy-Schwarz inequality to easily reach the conclusion. This establishes the base case  of the induction.

Now assume the claim has been established for all positive integers less than or equal to $ k-1$ (for some $k\geq 2$). We prove that it holds for $k$ as well.
Since we have assumed that $a_1$ has maximal growth rate, we may reorder the given functions so that we have $a_1(t)\gg \dots \gg a_k(t)$.
Let $k_0\leq k$ be the largest integer, such that the function $a_{k_0}$ has the same growth rate as $a_1(t)$. This means that all the functions $a_1,...,a_{k_0}$ have the same growth rate.
We rewrite our average in \eqref{62} as \begin{multline}
    \sup_{\norm{f_2}_{\infty},...,\norm{f_k}_{\infty}\leq 1 }\ \sup_{|c_n|\leq 1  }\bignorm{ \underset{1\leq n\leq N}{\E} c_n \
    \prod_{i=1}^{k_0} T^{\floor{(a_i(n) -a_{k_0}(n)) +a_{k_0}(n)}}f_i\cdot 
   \prod_{i=k_0+1}^{k} T^{\floor{a_i(n)}}f_i          }_{L^2(\m)}=\\
    \sup_{\norm{f_2}_{\infty},...,\norm{f_k}_{\infty}\leq 1 }\ \sup_{|c_n|\leq 1  }\bignorm{ \underset{1\leq n\leq N}{\E} c_n \
   T^{\floor{a_{k_0}(n)}}( \prod_{i=1}^{k_0} T^{\floor{(a_i(n) -a_{k_0}(n))} +e_{i,n}}f_i)\ 
   \prod_{i=k_0+1}^{k} T^{\floor{a_i(n)}}f_i          }_{L^2(\m)}
\end{multline}for some $e_{i,n}\in\{0,\pm 1\}$. Using Lemma \ref{errors}, we may reduce our problem to the case that all the error terms $e_{i,n}$ are zero. Note that the function $a_{k_0}(n)$ dominates each one of the functions $a_1-a_{k_0},...,a_{k_0-1}-a_{k_0}$, as well as the functions $a_i, i\geq k_0$. 
Now, we choose sequences of functions $f_{2,N},...,f_{k,N}$ so that the above average is $1/N$ close to the supremum (we also write $f_{1,N}=f_1$). In addition, we invoke Lemma \ref{mainlemma}
to deduce that it is sufficient to show that \begin{multline}
    \limsup\limits_{R\to+\infty}\ \underset{1\leq r\leq R}{\E}\ \sup_{|c_{n,r}|\leq 1}\  \bignorm{ \underset{r\leq n\leq r+L(r)}{\E} c_{n,r}\ T^{\floor{a_{k_0}(n)}}( \prod_{i=1}^{k_0} T^{\floor{(a_i(n) -a_{k_0}(n))}}f_{i,R})\\
   \prod_{i=k_0+1}^{k} T^{\floor{a_i(n)}}f_{i,R}        }_{L^2(\m)}
   \ll_k \nnorm{f_1}_{2k}
\end{multline}for a sub-linear function $L(t)\in\mathcal{H}$ that we shall choose momentarily. Namely, we choose the function $L\in\mathcal{H}$ to satisfy \begin{equation*}
     |a_{k_0}'(t)|^{-1} \prec L(t)\prec |a_{k_0}''(t)|^{-1/2}
 \end{equation*}and \begin{equation*}
     L(t)\prec (\psi'(t))^{-1}
 \end{equation*}for all the functions $\psi$ of the set $\mathcal{A}=\{a_1-a_{k_0},...,a_{k_0-1}-a_{k_0},a_{k_0+1},...,a_k\}$. To see that such a function exists, we only need to prove that for any function $\psi\in\mathcal{A}$, we have \begin{equation*}
      (a_{k_0}'(t))^{-1}\prec (\psi'(t))^{-1}
 \end{equation*}and \begin{equation*}
       (a_{k_0}'(t))^{-1} \prec |a_{k_0}''(t)|^{-1/2}.
 \end{equation*}The first relation follows easily from the fact that $a_{k_0}$ dominates all functions in $\mathcal{A}$ and L' Hospital's rule. The second relation follows from Proposition \ref{growth}, since $\log t\prec a_{k_0}(t)\prec t$.
 
 Using similar approximations as in the proof of Proposition \ref{sublinearseminorm}, we deduce that for $r$ sufficiently large, we can write \begin{equation*}
     \floor{\psi(n)}=\floor{\psi(r)}+e_{\psi,n} \ \ \text{    for    }\ \  n\in [r,r+L(r)]
 \end{equation*}for every $\psi\in\mathcal{A}$, where $e_{\psi,n}\in \{0,\pm 1\}$. In addition, we can write \begin{equation*}
     \floor{a_{k_0}(n)}=\floor{a_{k_0}(r)+(n-r)a'_{k_0}(r)}+e_{a_{k_0},n}\ \  \text{    for    } \ \ \ n\in [r,r+L(r)],
 \end{equation*}where $e_{a_{k_0},n}\in \{0,\pm 1\}$. Using the argument Lemma \ref{errors} once more to remove the error terms, our original problem reduces to showing \begin{multline}
     \limsup\limits_{R\to+\infty}  \sup_{\norm{f_2}_{\infty},...,\norm{f_k}_{\infty}\leq 1 }\underset{1\leq r\leq R}{\E}  \ \sup_{|c_{h,r}|\leq 1}\bignorm{\underset{0\leq h\leq L(r)}{\E}       c_{h,r}T^{\floor{a_{k_0}(r)+ha'_{k_0}(r) }}( \prod_{i=1}^{k_0} T^{\floor{(a_i(r) -a_{k_0}(r))}}f_{i})\\
   \prod_{i=k_0+1}^{k} T^{\floor{a_i(r)}}f_{i}        }_{L^2(\m)}\ll_k 
   \nnorm{f_1}_{2k}.
 \end{multline}Since the functions $f_i$ are bounded by 1, the last relation follows if we prove that \begin{multline*}
      \limsup\limits_{R\to+\infty}  \sup_{\norm{f_2}_{\infty},...,\norm{f_k}_{\infty}\leq 1 } \underset{1\leq r\leq R}{\E} \ \sup_{|c_{h,r}|\leq 1}\bignorm{\underset{0\leq h\leq L(r)}{\E}       c_{h,r}T^{\floor{a_{k_0}(r)+ha'_{k_0}(r) }}( \prod_{i=1}^{k_0} T^{\floor{(a_i(r) -a_{k_0}(r))}}f_{i})   }_{L^2(\m)}\\
      \ll_k 
   \nnorm{f_1}_{2k}.
 \end{multline*}We choose functions $f_{2,R},...,f_{k_0,R}$ so that the corresponding average is $1/R$ close to the supremum. 
Write $F_{r,R}:= \prod_{i=1}^{k_0} T^{\floor{(a_i(r) -a_{k_0}(r))}}f_{i,R}$. We also fix a positive integer $M$. Repeating the same argument as in the base case, we can show that \begin{multline}\label{label}
    \sup_{|c_{h,r}|\leq 1}\bignorm{\underset{0\leq h\leq L(r)}{\E}       c_{h,r}\ T^{\floor{a_{k_0}(r)+ha'_{k_0}(r) }}F_{r,R}   }_{L^2(\m)}^2\ll\\
    \frac{1}{M}+ \sum_{z\in \{0,\pm 1\} }^{} \underset{|m|\leq M}{\E}\  \Big|\int F_{r,R}\ 
    \cdot T^{m+z}F_{r,R}\ d \m\Big|+o_r(1).
\end{multline}Therefore, we have \begin{multline}\label{sw}
   \sup_{\norm{f_2}_{\infty},...,\norm{f_k}_{\infty}\leq 1 } \underset{1\leq r\leq R}{\E}   \ \sup_{|c_{h,r}|\leq 1}\bignorm{\underset{0\leq h\leq L(r)}{\E}       c_{h,r}\ T^{\floor{a_{k_0}(r)+ha'_{k_0}(r) }}( \prod_{i=1}^{k_0} T^{\floor{(a_i(r) -a_{k_0}(r))}}f_{i})   }_{L^2(\m)}^2\ll \\
    \frac{1}{M}+\underset{1\leq r\leq R}{\E} \sum_{z\in \{0,\pm 1\} }^{} \underset{|m|\leq M}{\E}\  \Big|\int F_{r,R}
    \cdot T^{m+z}F_{r,R}\ d \m\Big|+    O_R(1)
\end{multline}and we want to bound this last quantity by $O_k(1)$ times $\nnorm{f_1}_{2k}^2$.

For a fixed $m\in [-M,M]$ and $z\in\{0,\pm 1\}$, we apply the Cauchy-Schwarz inequality to get \begin{multline*}
    \underset{1\leq r\leq R}{\E}\Big|\int F_{r,R} \cdot T^{m+z}F_{r,R}\ d \m\Big|\leq \Big(    \underset{1\leq r\leq R}{\E}\Big|\int F_{r,R} \cdot T^{m+z}F_{r,R}\ d \m\Big|^2     \Big)^{1/2}=\\
    \Big(  \int \underset{1\leq r\leq R}{\E} (\overline{F_{r,R}}\otimes F_{r,R}) \   (T\times T)^{m+z} (F_{r,R}\otimes \overline{F_{r,R}}) \ d (\m\times \m)    \Big)^{1/2}=\\
    \Big(\int \underset{1\leq r\leq R}{\E} \ \prod_{i=1}^{k_0}  (T\times T)^{\floor{a_i(r)-a_{k_0}(r)}}\big(( \overline{f_{i,R}}\otimes f_{i,R}) \cdot (T\times T)^{m+z} (f_{i,R} \otimes \overline{f_{i,R}}) \big)            \ d(\m\times \m)      \Big)^{1/2}\leq \\
    \bignorm{ \underset{1\leq r\leq R}{\E} \ \prod_{i=1}^{k_0-1}  (T\times T)^{\floor{a_i(r)-a_{k_0}(r)}}\big( (\overline{f_{i,R}}\otimes f_{i,R}) \cdot (T\times T)^{m+z} (f_{i,R} \otimes \overline{f_{i,R}}) \big)   }_{L^2(\m\times \m)}^{1/2}
    \end{multline*}where $f_{1,R}=f_1$. Note that the functions $a_1-a_{k_0},...,a_{k-1}-a_{k_0}$ satisfy the hypotheses of Proposition \ref{sublinearonly}. Therefore, we can apply the induction hypothesis (for $k_0-1<k$) to conclude that \begin{multline*}
         \bignorm{ \underset{1\leq r\leq R}{\E} \ \prod_{i=1}^{k_0-1}  (T\times T)^{\floor{a_i(r)-a_{k_0}(r)}}\big( (\overline{f_{i,R}}\otimes f_{i,R}) \cdot (T\times T)^{m+z} (f_{i,R} \otimes \overline{f_{i,R}}) \big)   }_{L^2(\m\times \m)}^{1/2} \ll_{k_0}\\
         \nnorm{(\overline{f_{1}}\otimes f_{1}) \cdot (T\times T)^{m+z} (f_{1} \otimes \overline{f_{1}})}_{2k_0-2,T\times T}^{1/2}
    \end{multline*}and the last quantity is smaller than $ \nnorm{\overline{f_1} \cdot T^{m+z} f_1}_{2k_0-1,T}$. Putting this in \eqref{sw}, we get \begin{multline*}
        \sup_{\norm{f_2}_{\infty},...,\norm{f_k}_{\infty}\leq 1 }  \underset{1\leq r\leq R}{\E}  \ \sup_{|c_{h,r}|\leq 1}\bignorm{\underset{0\leq h\leq L(r)}{\E}       c_{h,r}\ T^{\floor{a_{k_0}(r)+ha'_{k_0}(r) }}( \prod_{i=1}^{k_0} T^{\floor{(a_i(r) -a_{k_0}(r))}}f_{i})   }_{L^2(\m)}^2\ll_k\\
         \frac{1}{M}+\sum_{z\in \{0,\pm 1\}}^{}\ \underset{|m|\leq M}{\E}\nnorm{\overline{f_1} \cdot T^{m+z} f_1}_{2k_0-1,T}   +o_R(1)\leq\\  \frac{1}{M}+\sum_{z\in \{0,\pm 1\}}^{}\ \underset{|m|\leq M}{\E}\nnorm{\overline{f_1} \cdot T^{m+z} f_1}_{2k-1,T}   +o_R(1),
    \end{multline*}since $k_0\leq k$. Taking $R\to+\infty$ and then $M\to+\infty$, we get that it suffices to show that \begin{equation*}
     \limsup\limits_{M\to+\infty}   \underset{|m|\leq M}{\E}\nnorm{\overline{f_1} \cdot T^{m+z} f_1}_{2k-1} \leq \nnorm{f_1}_{2k}^2
    \end{equation*}for any $z\in \{0,\pm 1\}$. This follows easily by raising to the $2^{2k-1}$-th power and using the power mean inequality, as well as 
    the definition of the Host-Kra seminorms.
\end{proof}

\section{The general case of Proposition \ref{factors} }\label{reductionestimates}

In this section we aim to prove the general case of Proposition \ref{factors}. We maintain the notation of Proposition \ref{factors} and we also assume that at least on of the functions $a_1,...,a_k$ has super-linear growth. We also consider the set of functions \begin{equation*}
    S=\{a_1(t),a_1(t)-a_2(t),...,a_1(t)-a_k(t)  \}
\end{equation*} Functions in $S$ dominate $\log t$ by our hypothesis. Finally, we assume that not every one of the involved functions has the form $p(t)+g(t)$, where $p\in\R[t]$ and $g\in \mathcal{H}$ is sub-fractional, since this case was covered in the previous section (it follows from Corollary \ref{sublinearcorollary}). In particular, we assume that this holds for the function $a_1$.

We will use the following decomposition result from \cite{Richter}.

\begin{lemma}\cite[Lemma A.3]{Richter}\label{decomposition}
Let $a_1,...,a_k\in\mathcal{H}$ have polynomial growth. Then, there exist a natural number $m$, functions $g_1,...,g_m\in \mathcal{H}$, real numbers $c_{i,j}$, where $1\leq i\leq k$ and $1\leq j\leq m$, and real polynomials $p_1,...,p_k$ such that: \begin{enumerate}
    \item $g_1\prec g_2\prec...\prec g_m$,
    \item $t^{l_i}\prec g_i(t)\prec t^{l_i+1}$ for some $l_i\in \Z^{+}$ (i.e. they are strongly non-polynomial) and
    \item for all $i\in \{1,2,...,k\}$ we have \begin{equation*}
        a_i(t)=\sum_{j=1}^{m} c_{i,j}g_j(t)+p_i(t)+o_t(1).
    \end{equation*}
\end{enumerate}
\end{lemma}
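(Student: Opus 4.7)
The plan is to proceed by induction on $k$, iteratively extracting ``leading terms'' from each function. The essential tool throughout is the following consequence of the Hardy field property: for any $f\in\mathcal{H}$ of polynomial growth and any non-negative integer $d$, the ratio $f(t)/t^d$ has a well-defined limit in $[-\infty,+\infty]$; moreover, any two nonzero elements of $\mathcal{H}$ are comparable under $\prec$, so ``growth rates'' are totally ordered.

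For the base case $k=1$, given $a_1\in\mathcal{H}$ of polynomial growth, I execute the following loop. Let $D$ be the smallest integer with $a_1(t)/t^{D+1}\to 0$ and let $L:=\lim_{t\to\infty}a_1(t)/t^D$. If $L$ is finite and nonzero, I replace $a_1$ by $a_1-L\,t^D$ and record the monomial $L\,t^D$ as a contribution to $p_1$; the new function has strictly smaller integer degree bound, so the loop makes progress. If $L=\pm\infty$, then $t^D\prec a_1\prec t^{D+1}$, meaning $a_1$ is itself strongly non-polynomial, and I set $m:=1$, $g_1:=a_1$, $l_1:=D$, and stop. If the current function ever becomes $o_t(1)$, absorb it into the error and set $m:=0$. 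The loop terminates in at most $D_0+1$ steps, where $D_0$ is the initial degree bound.

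For the inductive step, assume the lemma for $k-1$ functions and apply it to $a_1,\dots,a_{k-1}$, producing $g_1\prec\dots\prec g_{m'}$, polynomials $p_1,\dots,p_{k-1}$, and coefficients $c_{i,j}$. Apply the single-function procedure to $a_k$ to strip off its polynomial part, obtaining $a_k=p_k+b_k+o_t(1)$ where $b_k$ is either $o_t(1)$ or strongly non-polynomial. If $b_k=o_t(1)$, set all $c_{k,j}=0$ and finish. Otherwise, reduce $b_k$ against the existing basis: take the largest $j\leq m'$ for which $\lim_{t\to\infty} b_k(t)/g_j(t)$ is a finite nonzero real $c$, set $c_{k,j}:=c$, replace $b_k$ by $b_k-c\,g_j$ (which has strictly smaller growth than $g_j$), and iterate. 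If at some stage the remainder is still strongly non-polynomial but has a growth rate not proportional to any existing $g_j$, insert it as a new basis element in the correct position of the $\prec$-ordering. The procedure terminates because every intermediate remainder lies in the finite-dimensional $\mathbb{R}$-subspace of $\mathcal{H}$ spanned by $\{a_1,\dots,a_k,\,g_1,\dots,g_{m'},\,1,t,\dots,t^{D_0}\}$ modulo functions tending to $0$, and each step strictly reduces the growth rate of the remainder.

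The main obstacle will be managing the interleaving of two operations — subtracting scalar multiples of existing $g_j$'s, and inserting new $g$'s into the ordering — while maintaining strong non-polynomiality for every inserted basis element. The verification at each stage hinges on Hardy field comparability: for any remainder $b\in\mathcal{H}$ that is neither $o_t(1)$ nor a scalar multiple of an existing $g_j$, the limits $\lim b(t)/t^d$ exist in $[-\infty,+\infty]$ for each $d\in\mathbb{Z}^+$, and there is a \emph{unique} integer $l\in\mathbb{Z}^+$ with $t^l\prec b\prec t^{l+1}$, which both certifies strong non-polynomiality and determines the placement of $b$ in the $\prec$-ordering. The finite-dimensionality of the ambient span ensures the bookkeeping cannot go on indefinitely, so only finitely many insertions and subtractions occur, and the total collection of $g_j$'s at termination satisfies all conditions of the lemma.
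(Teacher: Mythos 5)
Note first that the paper does not prove this lemma at all: it quotes Lemma A.3 of \cite{Richter} and refers to that appendix for the proof, which is the same kind of greedy, inductive extraction of leading terms that you propose, so your overall route is the natural one. The problem is a concrete flaw in your case analysis during the reduction of $b_k$ against the existing basis. You assert that any remainder $b$ which is neither $o_t(1)$ nor asymptotic to a scalar multiple of an existing $g_j$ admits a (unique) $l\in\Z^{+}$ with $t^{l}\prec b\prec t^{l+1}$, so that it can be inserted as a new, strongly non-polynomial basis element. That is false: the remainder can be asymptotic to a monomial. Take $a_1(t)=t^{3/2}+t$ and $a_2(t)=t^{3/2}$. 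Your single-function procedure gives $g_1=a_1$ (since $\lim_{t\to\infty}a_1(t)/t=+\infty$, the loop stops immediately), and reducing $a_2$ against $g_1$ subtracts $1\cdot g_1$ and leaves the remainder $-t$, which is neither $o_t(1)$, nor proportional to $g_1$, nor strongly non-polynomial; as written, your loop has no branch for this case, and inserting it among the $g_j$ would violate condition (2) of the lemma.

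The repair is exactly the move you already use in the base case, but it must be interleaved into the reduction as a third operation: whenever the current remainder satisfies $b(t)\sim c\,t^{d}$ for some integer $d\ge 0$ and $c\neq 0$, strip $c\,t^{d}$ into $p_k$ and continue. The correct trichotomy for a remainder that is not $o_t(1)$ is therefore: asymptotic to a multiple of some existing $g_j$ (subtract it), asymptotic to a multiple of some $t^{d}$ (move it into the polynomial part), or of a growth rate distinct from all $g_j$'s and all integer powers, in which case it is genuinely strongly non-polynomial and may be inserted into the $\prec$-ordering. With that branch added, everything else you wrote goes through, and termination is in fact easier than your finite-dimensionality count: each existing $g_j$ and each monomial degree can be subtracted at most once (after subtraction the remainder is strictly dominated by it, and the growth of the remainder only decreases), and an insertion ends the reduction of $a_k$, so the procedure stops after at most $m'+D_0+2$ steps.
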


Note that the functions $g_j$ do not necessarily belong in the set of linear combinations of the $a_1,...,a_k$. The proof of this lemma can be found in the appendix of \cite{Richter}. As an example, if we have the pair $\{t+t^{3/2}, t^2+t^{5/2}\}$, then the functions in the above decomposition are $\{g_1,g_2,p_1,p_2\}=\{t^{3/2},t^{5/2},t,t^2\}$.

Returning to our original problem, we split the given family of functions into two sets \begin{equation*}
    J_1=\{a_i\colon a_i(t)\ll t^{\delta} \text{ for all } \delta>0\} \text{ and } J_2=\{a_i\colon \exists\ \delta>0 \text{ with } \ a_i(t)\gg t^{\delta} \}.
\end{equation*}
We do the same for the set $S$ of differences:\begin{equation*}
     S_1=\{f\in S\colon f(t)\ll t^{\delta} \text{ for all } \delta>0\} \text{ and } S_2=\{f\in S\colon \exists\ \delta>0 \text{ with } \ f(t)\gg t^{\delta} \}.
\end{equation*}Observe that the function $a_1$ belongs to the sets $J_2$ and $S_2$ due to our assumption in the beginning of this section.

We will see that the slow-growing functions in sets $J_1$ and $S_1$ will be approximately equal to a constant, when we consider averages on small intervals. For the remaining functions, we will use the Taylor expansion to approximate them.
We split the proof into several steps. Steps 1 through 4 of this proof correspond to step 1 in example a) of section \ref{sectionfactors}, while steps 5 and 6 of the proof correspond to step 2 of the same example. The remaining two steps correspond to step 3 of example a). In Step 8, we will also use the results of the special case of the previous section.

\subsection{Step 1: Introducing a double averaging}

Let $L(t)\in\mathcal{H}$ be a sub-linear function to be specified later. 
We can consider a priori functions that satisfy $L(t)\prec t^{1-\e}$ for some $\e>0$ (i.e. we exclude functions like $t/\log t$ ). Invoking Lemma \ref{mainlemma}, we see that it is sufficient to prove that\begin{equation}\label{xef}
  \limsup\limits_{R\to\infty}  \underset{1\leq r\leq R}{\E} \sup_{|c_{r,n}|\leq 1} \bignorm{\underset{r\leq n\leq r+L(r)}{\E} c_{r,n}\ T^{\floor{a_1(n)}} f_1\cdot ...\cdot T^{\floor{a_k(n)}}f_{k,R}   }_{L^2(\m)}^{2^t}=  0
\end{equation}for any sequences of 1-bounded functions $f_{2,R},...,f_{k,R}$ and some positive integer parameter $t$, which will depend only on the original functions $a_1,...,a_k$. Therefore, when applying Lemma \ref{errors}, we can always assume that the implicit constant (which depends on the exponent $2^t$) is an $O_{a_1,...,a_k}(1)$ constant.

 We observe that \eqref{xef} follows if we show that\begin{equation}\label{R}
   \sup_{||f_2||_{\infty}\leq 1,...,||f_k||_{\infty} \leq 1}  \underset{1\leq r\leq R}{\E} \sup_{|c_{r,n}|\leq 1} \bignorm{\underset{r\leq n\leq r+L(r)}{\E} c_{r,n}\ T^{\floor{a_1(n)}} f_1\cdot   ...\cdot T^{\floor{a_k(n)}}f_{k}   }_{L^2(\m)}^{2^t}
\end{equation}goes to 0, as $R\to+\infty$.

\subsection{Step 2: Eliminating the small functions of $J_1$}

While in example a) of Section 4 we used the Taylor expansion right at the beginning, it is more convenient to reverse our steps a bit in the proof.

Assume that the function $a_i$ belongs to the set $J_1$ (namely, it is a sub-fractional function). Then, for any $n\in [r,r+L(r)]$, we have\begin{equation*}
    |a_i(n)-a_i(r)|=|n-r||a'_i(\xi)|
\end{equation*} for some $\xi \in [r,n]$. Since $|a'_i(t)|\searrow 0$, we get \begin{equation*}
    |a_i(n)-a_i(r)|\leq |L(r)||a'_i(r)|,
\end{equation*}which is $o_r(1)$. Note that we already assumed that we will eventually choose $L\in\mathcal{H}$ such that $L(t)\ll t^{1-\e}$, which makes the previous statements valid  (see the discussion at the end of the Appendix). Thus, if $r$ is sufficiently large and $n\in [r,r+L(r)]$, we can write $\floor{a(n)}=\floor{a(r)}+\e_{r,n}$, where $\e_{r,n}\in \{0,1\}$. Using the argument in Lemma \ref{errors}, we absorb the error terms $\e_{r,n}$ in the supremum outside of the averages in \eqref{R}. 

The iterate corresponding to the function $f_{i}$ has now become constant and we can ignore it. In conclusion, we have reduced our problem to the case that the set $J_1$ is empty.

\subsection{Step 3: Concatenating the functions of the set $S_1$}

Assume that the function $a_1-a_i$ belongs to $S_1$. Then, mimicking the arguments of the previous step, we can write $a_i=a_1+(a_i-a_1)$ where the function $a_i-a_1$ is asymptotically a constant in the interval $[r,r+L(r)]$. Then, we can combine the product of all such terms
\begin{equation*}
    T^{\floor{a_1(n)}}f_1 \prod_{a_1-a_i\in S_1} T^{\floor{a_i(n)}}f_{i}
\end{equation*}into one iterate $T^{\floor{a_1(n)}} \tilde{f}_{r}$ (we use again the argument in Lemma \ref{errors} to remove the error terms), where  \begin{equation}\label{fapeiro}
   \tilde{f}_r= f_1\cdot  T^{\floor{\theta_1(r)}}h_{1}\cdot...
    \cdot T^{\floor{\theta_{\ell}(r)}}h_{\ell},
    \end{equation}where $h_1,...,h_{\ell}$ are functions in $L^{\infty}(\m)$ and the functions $\theta_1,...,\theta_{\ell} \in \mathcal{H}$ are sub-linear functions that satisfy \begin{equation*}
        \log t\prec \theta_{i}(t)\prec t^{\delta} 
    \end{equation*}for all $\delta>0$. In addition, the assumption that the pairwise differences of the functions $a_1,...,a_k$ dominate $\log t$ implies that \begin{equation*}
        \log t\prec \theta_{i}(t)-\theta_j(t) \
    \end{equation*}for $i\neq j$.

Now the original problem reduces to the following: If all the functions $a_1,...,a_k$ are such that the sets $J_1$ and  $S_1$ are empty, then show that the averages \begin{multline}
     \sup_{||f_2||_{\infty},...,||f_k||_{\infty} \leq 1}\sup_{ ||h_1||_{\infty},...,||h_{\ell}||_{\infty}\leq 1}\\
     \underset{1\leq r \leq R}{\E}\ \sup_{|c_{r,n}|\leq 1} 
     \bignorm{\underset{{r\leq n\leq r+L(r)}}{\E}\ c_{r,n}\  T^{\floor{a_1(n)}} \tilde{f}_{r}\cdot ...\cdot T^{\floor{a_k(n)}}f_{k}   }_{L^2(\m)}^{2^t}
\end{multline} go to 0 as $R\to+\infty$, where the function $\tilde{f}_r$ is the function in \eqref{fapeiro}.

We can repeat the same argument of this step to reduce to the case where $a_i(t)-a_j(t)\gg t^{\delta}$ for some $\delta>0$. Indeed, if the difference $a_i-a_j$ is sub-fractional, we can combine the iterates corresponding to these two functions into a single iterate of the form $T^{\floor{a_i(n)}}g_r$ for some function $g_r$. In order to replace $g_r$ by a function that does not depend on $r$, we move the supremum of the $f_2,...,f_k$ inside the outer average. In conclusion, it suffices to show that \begin{multline}\label{no logs}
    \sup_{ ||h_1||_{\infty},...,||h_{\ell}||_{\infty}\leq 1}\ 
     \underset{1\leq r \leq R}{\E} \ \sup_{||f_2||_{\infty},...,||f_k||_{\infty}\leq 1}\ \sup_{|c_{r,n}|\leq 1} 
     \bignorm{\underset{{r\leq n\leq r+L(r)}}{\E}c_{r,n}\  T^{\floor{a_1(n)}} \tilde{f}_{r}\cdot ...\cdot T^{\floor{a_k(n)}}f_{k}   }_{L^2(\m)}^{2^t}
\end{multline}goes to 0 as $R\to+\infty$, where $\tilde{f}_r$ is the function in \eqref{fapeiro} and all differences $a_i-a_j$ dominate some fractional power\footnote{Since our functions $a_1,...,a_k$ dominate a fractional power, we can now use the fact that the classes $S(a_i,k)$ (defined and studied in the Appendix) can be well defined in order to approximate all of them by polynomials.}.
Recall that the functions $\theta_i$ satisfy \begin{equation*}
     \log t\prec \theta_{i}(t)\prec t^{\delta} \ \text{ for every }\ \delta>0
\end{equation*}and \begin{equation*}
      \log t\prec \theta_{i}(t)-\theta_j(t).
\end{equation*}

\subsection{Step 4: Approximating by polynomials}

In this step, we will use the Taylor expansion to replace the functions $a_i$ by polynomials in the intervals $[r,r+L(r)]$.
First of all, we can use Lemma \ref{decomposition} in order to write \begin{equation}\label{expansionform}
        a_i(t)=\sum_{j=1}^{m}c_{i,j}g_j(t)+q_i(t)+o_t(1),
\end{equation}where $g_1\prec g_2\prec...\prec g_m$ are strongly non-polynomial functions and $q_i(t)$ are real polynomials. We immediately conclude that the function $g_m$ cannot be sub-fractional. Indeed, if that was the case, then all the functions $a_i$ would be a sum of a polynomial plus a sub-fractional function, which is at odds with our initial assumption.

The $o_t(1)$ terms can be eliminated by using an argument similar to the proof of Lemma \ref{errors}. In addition, we may assume that $c_{1,m}\neq 0$ (and thus $g_m$ exists in the expansion of $a_1$). This can be proven by an argument similar to the one in the beginning of Section \ref{sectionfactors} (the same reasoning we used to reduce our problem to the case that $a_1$ has maximal growth rate). Of course, by assuming  this new property, we abandon the assumption that $a_1$ has maximal growth rate.

We define \begin{equation*}
    \F=\{g_1,...,g_m\}
\end{equation*} and let $\A=\{g_1,...,g_l\}\subseteq\F$ be the set of functions that satisfy $g_i(t)\ll t^{\delta}$ for all $\delta>0$ (i.e the sub-fractional functions). We have that $g_m\not\in \A$.

By the reductions in steps 2 and 3, we have that $a_i(t)\gg t^{\delta_i}$ for some $\delta_i>0$ and a similar relation holds for the differences $a_i-a_j$. Therefore, we have the following property:

\begin{equation}\label{Papeiro}\tag{{\bf P}}
\text{ If}\  i_1\neq i_2, \text{ we have either}\  c_{i_1,j}\neq c_{i_2,j}\ \text{ for some}\  j>l, \text{ or}\  q_{i_1}(t)-q_{i_2}(t)\ \text{is non-constant.}
    \end{equation}

Now every function $g\in\A$ satisfies \begin{equation*}
    \max_{n\in [r,r+L(r)]}|g(n)-g(r)|=o_r(1)
\end{equation*}by the arguments in the preceding steps. We can use the argument in Lemma \ref{errors} to remove the error term $o_r(1)$ and then substitute each function $g\in \A$ in the interval $[r,r+L(r)]$ by a constant (namely, the value of the function $g$ at $r$). These constants can be absorbed by the supremum of the $f_2,...,f_k$ and the use of Lemma \ref{errors}. Therefore, we may assume that all functions $g_1,...,g_m$ dominate some fractional power $t^{\delta}$ (equivalently $\A=\emptyset$) and that property \eqref{Papeiro} above holds with $l=0$.

Since the functions $g_1,...,g_m$ dominate some fractional power, the classes \begin{equation*}
    S(g_i,n)=\{f\in\mathcal{H}, (g_i^{(n)}(t))^{-1/n}\preceq f(t)\prec (g_i^{(n+1)}(t))^{-1/(n+1)} \}
\end{equation*}are well defined for $n$ large enough. We remind the reader that these classes and their properties are all studied in the Appendix and we will use them freely from this point onward.

 Let $d$ be a natural number and  for every function $g\in \F$, we consider the natural number $k_g$, such that the function $|g_m^{(d)}(t)|^{-\frac{1}{d}}$ belongs to the class $S(g,k_g)$. This class always exists,  if we pick our number $d$ to be sufficiently large. We immediately deduce that $k_g\leq d$ for every $g\in \F$, while $k_{g_m}=d$. 
 
 Let $q$ be a positive real number (but not an integer), such that $t^q$ dominates all functions $g_1,...,g_m$ and the polynomials $q_1,...,q_k$. In particular, this implies that, for all $1\leq i\leq m$, all derivatives of $g_i$ of order bigger than $q$ go to 0 (as $t\to +\infty$). This is a consequence of Proposition \ref{prop:basic}. We make the additional assumption that our integers $k_g$ are very large compared to $q$, which can be attained if we take our initial number $d$ to be sufficiently large. The inequality $k_g\geq 10q$ will suffice for our purposes.

\begin{definition}
We say that two functions $f\ll g$ of $\mathcal{H}$ have the property $\mathcal{Q}$, if they have the same growth rate, or if the ratio \begin{equation*}
    \frac{g(t)}{f(t)}
\end{equation*}dominates some fractional power $ t^{\delta},\  \delta>0$.
\end{definition}

 We consider two possible cases:\\
a) Assume that for every $g\in \F\setminus\{g_m\}$, the functions $|g_m^{(d)}(t)|^{-\frac{1}{d}}$ and $|g^{(k_g)}(t)|^{-\frac{1}{k_g}}$ have the property\footnote{An example of functions that fall in this case is the pair $(t^{3/2},t\log t)$, if we consider their second derivatives. We can easily check that the ratio of the second derivatives of these two functions {raised to the $-\frac{1}{2}$-th power} grows like the function $t^{1/4}$.} $\mathcal{Q}$. Then, our selection will be the classes $S(g,k_g)$ as they stand. Furthermore, we choose $L(t)\in\mathcal{H}$ to be any function that belongs to the intersection of the classes $S(g_i,k_{g_i})$ (which is non-empty by definition).
In this case, we call the function $g_m$ our "special" function. Note that \begin{equation*}
    |g^{(k_g)}(t)|^{-\frac{1}{k_g}}\preceq |g_m^{(d)}(t)|^{-\frac{1}{d}}
\end{equation*}for $g\neq g_m$ in this case.\\
b) Assume that the above case does not hold\footnote{An example of functions that fall in this second case is the pair $(t\log t,t\log\log t)$, if we again consider their second derivatives. A simple computation yields that the growth rate of the ratio of the involved functions grows like the function $\sqrt{\log t}$ and, thus, they fail property $\mathcal{Q}$.}. Then, among all the functions $|g^{(k_g)}(t)|^{-\frac{1}{k_g}}$ for which the property $\mathcal{Q}$ fails (in relation to $|g_m^{(d)}(t)|^{-\frac{1}{d}}$), we choose a function $g$ for which $|g^{(k_g)}(t)|^{-\frac{1}{k_g}}$ has minimal growth rate. Then, we choose a function $L\in\mathcal{H}$ with the following properties:

i) If a function $\tilde{g}$ is such, that $|\tilde{g}^{(k_{\tilde{g}})}(t)|^{-\frac{1}{k_{\tilde{g}}}}$ fails to satisfy property $\mathcal{Q}$ in relation to $|g_m^{(d)}(t)|^{-\frac{1}{d}}$ and has different growth rate than $g$, then  we have \begin{equation*}
 |{(\tilde{g})}^{(k_{\tilde{g}}-1)}(t)|^{-\frac{1}{k_{\tilde{g}}-1}}   \prec L(t)\prec |(\tilde{g})^{(k_{\tilde{g}})}(t)|^{-\frac{1}{k_{\tilde{g}}}}.
\end{equation*}
Namely, we have $L(t)\in S(\tilde{g},k_{\tilde{g}}-1)$.

ii) If the function $\tilde{g}$ has the same growth rate as $g$, then we have $k_g=k_{\tilde{g}}$ and the classes $S(g,k_g)$ and
$S(\tilde{g},k_{\tilde{g}})$ coincide. In this case, we leave the integer $k_{\tilde{g}}$ as is and we will have $L(t)\in S(\tilde{g},k_{\tilde{g}})$.

iii) The third case is when the function $\tilde{g}$ satisfies property $\mathcal{Q}$ in relation to $|g_m^{(d)}(t)|^{-\frac{1}{d}}$. Then, we leave the the integer $S(\tilde{g},k_{\tilde{g}})$ as is and take $L(t)\in S(\tilde{g},k_{\tilde{g}})$.

The existence of such a function $L(t)$ follows by our minimality assumption on $|g^{(k_g)}(t)|^{-\frac{1}{k_g}}$. In this case, $g$ is our "special" function.

We denote by $k'_g$ the new integers that appear after the above procedure.

\begin{claim}
    For the choice we have made above, the function $|z^{(k'_z)}(t)|^{-\frac{1}{k'_z}}$ satisfies property $(\mathcal{Q})$ in relation to our special function, for any $z\in \F$. 
\end{claim}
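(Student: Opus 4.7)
The strategy is to split the argument according to the two cases (a) and (b) in the construction of the integers $k'_g$.

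Case (a) is immediate: we have $k'_z=k_z$ for every $z\in\F$ and the special function is $g_m$, so for $z=g_m$ the claim is trivial, and for $z\neq g_m$ it is precisely the defining hypothesis of case~(a).

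For case (b), let $g$ denote the special function. The plan is to exploit the fact that $L(t)\in S(g,k_g)\cap S(z,k'_z)$ for every $z\in\F$, together with the appendix-type lemmas linking $|f_1^{(n_1)}|^{-1/n_1}$ and $|f_2^{(n_2)}|^{-1/n_2}$ whenever $L$ lies in both classes. When $z$ has the same growth rate as $g$, the two classes coincide and $\mathcal{Q}$ follows immediately. When $z$ satisfies $\mathcal{Q}$ versus $g_m$, we have $k'_z=k_z$ and I would propagate property $\mathcal{Q}$ from $g_m$ to $g$ by noting that both $|g^{(k_g)}|^{-1/k_g}$ and $|z^{(k_z)}|^{-1/k_z}$ are lower endpoints of classes containing $L$, and comparing through the fractional-power separation between the $g_m$-benchmark and the neighbouring classes.

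The core subcase is when $z$ fails $\mathcal{Q}$ versus $g_m$ and has a growth rate different from $g$, so that $k'_z=k_z-1$. The minimality in the choice of $g$ gives
\[
|g^{(k_g)}(t)|^{-1/k_g}\preceq |z^{(k_z)}(t)|^{-1/k_z}.
\]
The decisive input is a computation, which should be among the appendix estimates on Hardy-field derivatives, stating that for $z\in\mathcal{H}$ dominating a fractional power,
\[
\frac{|z^{(k_z-1)}(t)|^{-1/(k_z-1)}}{|z^{(k_z)}(t)|^{-1/k_z}}\prec t^{-\delta'}
\]
for some $\delta'>0$. Combining this drop with the constraint $L\in S(z,k_z-1)\cap S(g,k_g)$, which forces $|z^{(k_z-1)}|^{-1/(k_z-1)}$ and $|g^{(k_g)}|^{-1/k_g}$ into a bounded window around $L$, the one-step decrease converts the original failure of $\mathcal{Q}$ at index $k_z$ into a valid instance of $\mathcal{Q}$ at index $k_z-1$, now relative to $g$ instead of $g_m$.

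The main obstacle is this last subcase: the minimality of $g$ yields only a one-sided inequality, and one has to verify that the fractional-power drop from $k_z$ to $k_z-1$ is large enough to pull $|z^{(k_z-1)}|^{-1/(k_z-1)}$ into the regime where $\mathcal{Q}$ against $|g^{(k_g)}|^{-1/k_g}$ is visible, while the class membership of $L$ prevents the drop from being so large as to break the ordering needed by property~$\mathcal{Q}$. Pinning down this quantitative balance by combining the two inclusions $L\in S(z,k_z-1)$ and $L\in S(g,k_g)$ with the appendix ratio estimates is where the bulk of the work will lie.
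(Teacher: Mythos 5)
Your route is the same as the paper's: case (a) is definitional, equal growth rates are trivial, the subcase where $z$ satisfied $\mathcal{Q}$ against $|g_m^{(d)}(t)|^{-1/d}$ transfers because the special benchmark is within a sub-fractional factor of the $g_m$-benchmark, and the core subcase is handled exactly as in the paper by minimality together with the fractional-power separation between consecutive classes, i.e.\ inequality \eqref{20000}. The residual ``quantitative balance'' you flag as the bulk of the work is not actually needed: since both $|g^{(k_g)}(t)|^{-1/k_g}$ and $|z^{(k_z)}(t)|^{-1/k_z}$ fail $\mathcal{Q}$ against $|g_m^{(d)}(t)|^{-1/d}$, they differ from each other only by a sub-fractional factor, so the one-index drop given by \eqref{20000} already forces the ratio of $|z^{(k_z-1)}(t)|^{-1/(k_z-1)}$ to $|g^{(k_g)}(t)|^{-1/k_g}$ to dominate a fractional power, which is all that property $\mathcal{Q}$ requires no matter which side ends up larger.
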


\begin{proof}
  
  If we are in case a) above, the functions $ |g_m^{(d)}(t)|^{-\frac{1}{d}}$ and $|g^{(k_g)}(t)|^{-\frac{1}{k_g}}$ have the same growth rate or their ratio dominates a fractional power (for any $g\in \F$) and we are done.
  
  In the second case, we have a special function $g$ ($k_g=k'_g$). We consider functions $z\neq g$ such that $|g^{(k_g)}(t)|^{-\frac{1}{k_g}}$
and $|z^{(k'_z)}(t)|^{-\frac{1}{k'_z}}$ have different growth rates (because otherwise the claim is trivial). Then there are two possibilities:

$\bullet$ If the original function $|g_m^{(d)}(t)|^{-\frac{1}{d}}$ and $|z^{(k_z)}(t)|^{-\frac{1}{k_z}}$ had a ratio dominating a fractional power, then the claim follows (in this case, we must have $k'_z=k_z$).

$\bullet$ If the original function $|z^{(k_z)}(t)|^{-\frac{1}{k_z}}$ failed property $\mathcal{Q}$ in relation to $|g_m^{(d)}(t)|^{-\frac{1}{d}}$, then we have \begin{equation*}
|g^{(k_g)}(t)|^{-\frac{1}{k_g}}  \prec  |z^{(k_z)}(t)|^{-\frac{1}{k_z}} \ \ \text{          (due to minimality) }
\end{equation*}and thus $L(t)\in S(z,k_z-1)$. We easily see that the functions $|g^{(k_g)}(t)|^{-\frac{1}{k_g}}$ and $|z^{(k_z-1)}(t)|^{-\frac{1}{k_z-1}}$ differ by a fractional power. Indeed, we have a "gain" of some power $t^{\delta}$ when passing from $S(z,k_z-1)$ to $S(z,k_z)$ due to \eqref{20000}. Therefore, if the functions $|z^{(k_z)}(t)|^{-\frac{1}{k_z}}$ and $|g^{(k_g)}(t)|^{-\frac{1}{k_g}}$ were "close", then $|z^{(k_z-1)}(t)|^{-\frac{1}{k_z-1}}$ and $|g^{(k_g)}(t)|^{-\frac{1}{k_g}}$ differ by a fractional power.
\end{proof}

For convenience, we will use the same notation $S(g,k_g)$ for the new classes that have been chosen after the above operation (that is we replace $k'_g$ by $k_g$).
\begin{remark*}
The above proof also implies that the growth rate of $ |g^{(k_g)}(t)|^{-\frac{1}{k_g}}$ is maximized when $g$ is the special function.
\end{remark*}

 We denote by ${\tilde{g}}$ the special function given by our above arguments. For any function $g\in \F$, we use the Taylor expansion around the point $r$ to obtain
\begin{equation}\label{expansion}
    g(r+h)=g(r) +\cdots+\frac{g^{(k_g)}(r)h^{k_g}}{k_g!} + \frac{g^{(k_g+1)}(\xi_m)h^{k_g+1}}{(k_g+1)!} \ \text{ for some } \xi_m\in [r,r+m],
\end{equation}for all $0\leq h\leq L(r)$. We observe that the last term is $o_r(1)$ while the second to last term in the above expansion diverges when $h=L(r)$ (see the discussion after the proof of Proposition \ref{growth}). Therefore, we have \begin{equation*}
    g(r+h) =p_{r,g}(h) +o_r(1)
\end{equation*}where $p_{r,g}$ is a polynomial.

\subsection{Step 5: The change of variables}

In this step, we do a change of variables trick. Our purpose is to rewrite the above polynomials in such a way, that the leading coefficients are good sequences in order to be able to apply Proposition \ref{PET}. All the work we did in the previous step (namely, making sure that our functions satisfied Property $\mathcal{Q}$) will ensure that the leading coefficients of our polynomials will be good sequences that either converge to a (non-zero) real number, or their growth rate is larger than some fractional power.
A similar trick is also used in \cite{FraHardy1}.

Assume that ${\tilde{g}}$ is our special function with the polynomial expansion 
\begin{equation*}
     {\tilde{g}}(r+h)={\tilde{g}}(r) +\cdots +\frac{{\tilde{g}}^{(k_{{\tilde{g}}})}(r)h^{k_{{\tilde{g}}}}}{k_{{\tilde{g}}}!} + o_r(1).
\end{equation*}
Every $0\leq h \leq L(r)$ can be written as \begin{equation*}
    h=w \bigfloor{ \Big|   \frac{k_{{\tilde{g}}}!}{{\tilde{g}}^{(k_{{\tilde{g}}})}(r)} \Big|^{\frac{1}{k_{{\tilde{g}}}}}} +v
\end{equation*}
for some integers $w,v$, where \begin{equation*}
    0\leq w\leq \frac{L(r)}{\bigfloor{ \Big|   \frac{k_{{\tilde{g}}}!}{{\tilde{g}}^{(k_{{\tilde{g}}})}(r)} \Big|^{\frac{1}{k_{{\tilde{g}}}}}}} = D_r  
\end{equation*}
and \begin{equation*}
    0\leq v\leq \bigfloor{ \Big|   \frac{k_{{\tilde{g}}}!}{{\tilde{g}}^{(k_{{\tilde{g}}})}(r)} \Big|^{\frac{1}{k_{{\tilde{g}}}}}}-1.
\end{equation*}Note that $D_r \succ 1$, because $L(t)\in S({\tilde{g}},k_{{\tilde{g}}})$.
We denote by $u(r)$ the function inside the integer part above, namely, we define \begin{equation*}
    u(r):=\Big|   \frac{k_{{\tilde{g}}}!}{{\tilde{g}}^{(k_{{\tilde{g}}})}(r)} \Big|^{\frac{1}{k_{{\tilde{g}}}}},
\end{equation*}which is a (sub-linear) function in $\mathcal{H}$. In addition, since we have chosen the numbers $k_g$ to be sufficiently large, we can ensure that the function $u$ dominates some fractional power (this follows by statement ii) of Lemma \ref{basic}).

We observe that (recall that $\tilde{f}_r$ is given by \eqref{fapeiro})\begin{multline}\label{compl}
   \sup_{\norm{f_2}_{\infty},...,\norm{f_k}_{\infty}\leq 1}    \bignorm{\underset{0\leq h\leq L(r)}{\E}\ c_{h,r}\  T^{\floor{a_1(r+h)}} \tilde{f}_{r}\cdot ...\cdot T^{\floor{a_k(r+h)}}f_k  }_{L^2(\m)}^{2^t}\leq  \\
\sup_{\norm{f_2}_{\infty},...,\norm{f_k}_{\infty}\leq 1}   \  \underset{1\leq v\leq \floor{u(r)}-1}{\E}\ \sup_{|c_{h,r,v}|\leq 1}  \bignorm{  
  \underset{h\equiv v (mod \ \floor{u(r)})}{\E}\ c_{h,r,v}\  T^{\floor{a_1(r+h)}} \tilde{f}_{r}\cdot 
  ...\cdot T^{\floor{a_k(r+h)}}f_k                                       }_{L^2(\m)}^{2^t},
\end{multline}where the above bound follows by applying the H\"{o}lder and triangle inequalities.
We will bound the innermost average in the norm by a quantity that does not depend on $v$. 

Fix a $v$ as above. For every $h\equiv v  (mod\ \floor{u(r)})$, we can write each of the polynomials $p_{g,r}(h)$ in the previous step as a new polynomial $\tilde{p}_{r,v,g}(w)$ in the new variable $w$. We are only interested in the leading coefficients of the new polynomials. Using \eqref{expansion}, we see that it is equal to \begin{equation}\label{c_g}
    c_{g}(r)=  \frac{g^{(k_g)}(r)}{k_g!}\cdot \floor{u(r)}^{k_g}=  \frac{g^{(k_g)}(r)}{k_g!}\cdot \bigfloor{ \Big|   \frac{k_{{\tilde{g}}}!}{{\tilde{g}}^{(k_{{\tilde{g}}})}(r)} \Big|^{\frac{1}{k_{{\tilde{g}}}}}}^{{k_g}}.
\end{equation}

Now assume that $g\in \F$. The function $c_g(r)$ is not a function in the Hardy field $\mathcal{H}$,
 but we will prove that it is a good sequence (see Definition \ref{good sequence}). Therefore, we seek to approximate it by a function in $\mathcal{H}$. To achieve this, we can define the function $d_g(t)\in \mathcal {H}$ by removing the floor function: \begin{equation}\label{d_g}
    d_g(t) =  \frac{g^{(k_g)}(t)}{k_g!}\cdot  \Big|   \frac{k_{{\tilde{g}}}!}{{\tilde{g}}^{(k_{{\tilde{g}}})}(t)} \Big|^{\frac{k_g}{k_{{\tilde{g}}}}}.
\end{equation}It is obvious that $c_g(r)/d_g(r)\to 1$. However, we have something stronger:

\begin{claim}
    For all $g\in \F$, we have \begin{equation*}
        |c_g(r) -d_g(r)|=o_r(1).
    \end{equation*}
\end{claim}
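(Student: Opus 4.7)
The plan is to bound $|c_g(r)-d_g(r)|$ pointwise in terms of $|g^{(k_g)}(r)|\,u(r)^{k_g-1}$, and then to show this quantity tends to $0$ by combining the strict upper bound coming from $L\in S(g,k_g)$ with the fact that the integer $k_{\tilde g}$ has been taken large.

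First I will write $\lfloor u(r)\rfloor=u(r)-\{u(r)\}$ with $\{u(r)\}\in[0,1)$; since $u$ dominates a fractional power, $u(r)\to\infty$, and the binomial expansion gives
\[
\bigl|\lfloor u(r)\rfloor^{k_g}-u(r)^{k_g}\bigr|\ \le\ k_g\,u(r)^{k_g-1}+O_{k_g}\!\bigl(u(r)^{k_g-2}\bigr),
\]
so that $|c_g(r)-d_g(r)|\ll_{k_g}|g^{(k_g)}(r)|\,u(r)^{k_g-1}$; it suffices to prove the right-hand side is $o_r(1)$. Next I will use $u\preceq L$ together with $L\in S(g,k_g)$, which forces the strict domination $L(r)\prec |g^{(k_g+1)}(r)|^{-1/(k_g+1)}$ and hence $u(r)^{k_g+1}\,|g^{(k_g+1)}(r)|=o_r(1)$. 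The appendix lemmas on Hardy-field functions of polynomial growth supply an asymptotic relation of the form $|g^{(k+1)}(r)|\sim c_g\,|g^{(k)}(r)|/r$ for a non-zero constant $c_g$; substituting gives $|g^{(k_g)}(r)|\,u(r)^{k_g+1}=o(r)$, and dividing by $u(r)^{2}$ produces
\[
|g^{(k_g)}(r)|\,u(r)^{k_g-1}\ =\ o\!\bigl(r/u(r)^{2}\bigr).
\]

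Finally, the integer $k_{\tilde g}$ was chosen sufficiently large (recall the convention $k_g\geq 10q$ set before the definition of $u$) so that $u(r)=|k_{\tilde g}!/\tilde g^{(k_{\tilde g})}(r)|^{1/k_{\tilde g}}\succ r^{1/2+\varepsilon}$ for some $\varepsilon>0$; this follows from statement~(ii) of Lemma~\ref{basic} in the appendix, since $\tilde g$ is strongly non-polynomial of degree at most~$q$ and $|\tilde g^{(k_{\tilde g})}(r)|^{-1/k_{\tilde g}}$ approaches $r$ as $k_{\tilde g}\to\infty$. Consequently $r/u(r)^{2}\to 0$, the bound from the first step is $o_r(1)$, and the claim follows. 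The hard part will be extracting the Hardy-field identity $|g^{(k+1)}|\sim c_g|g^{(k)}|/r$ and the quantitative bound $u\succ r^{1/2+\varepsilon}$ uniformly over $g\in\mathcal F$ from the appendix machinery, while making sure that in Case~(b) all exponents appearing are the \emph{updated} integers $k'_g$ produced by the reduction $k_z\mapsto k_z-1$ for non-special functions.
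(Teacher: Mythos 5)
Your proof is correct and is essentially the paper's own argument: both reduce $|c_g(r)-d_g(r)|$ to the bound $\ll_{k_g} |g^{(k_g)}(r)|\,u(r)^{k_g-1}$ via the elementary estimate on $\lfloor u(r)\rfloor^{k_g}-u(r)^{k_g}$, and then kill this term using $u \preceq L \in S(g,k_g)$ together with Proposition \ref{prop:basic} (namely $g^{(k_g+1)}(t)\sim g^{(k_g)}(t)/t$ and $g^{(k)}(t)\prec t^{q-k}$) and the assumption that the exponents are large compared to $q$. The only cosmetic difference is that you close the estimate through the lower bound $u(r)\succ r^{1/2+\varepsilon}$ coming from the largeness of $k_{\tilde g}$ (a fact that really follows from Proposition \ref{prop:basic} rather than the statement of Lemma \ref{basic}(ii), though the computation is the same), whereas the paper closes by verifying $|g^{(k_g+1)}(t)|^{-1/(k_g+1)}\prec |g^{(k_g)}(t)|^{-1/(k_g-1)}$ using the largeness of $k_g$; both routes are valid.
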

\begin{proof}
 We will use the inequality \begin{equation*}
      |a^{c}-b^{c}|\leq c|a-b||a|^{c-1},          
  \end{equation*}which holds when $|b|\leq |a|$ and $c\in \N$. An application of this inequality reduces the problem to showing that \begin{equation}\label{zxcvbnm}
 |{\tilde{g}}^{(k_{{\tilde{g}}})}(t)|^{-\frac{1}{k_{{\tilde{g}}}}}\prec |g^{(k_g)}(t)|^{-\frac{1}{k_g-1}}.
  \end{equation}
  Since $L(t)\in S({\tilde{g}},k_{{\tilde{g}}})$, it is sufficient to show that \begin{equation*}
      L(t)\prec |g^{(k_g)}(t)|^{-\frac{1}{k_g-1}}
  \end{equation*}and now using the fact that $L(t)\in S(g,{k_g})$, our conclusion follows if we prove that \begin{equation*}
     |g^{(k_g+1)}(t)|^{-\frac{1}{k_g+1}} \prec |g^{(k_g)}(t)|^{-\frac{1}{k_g-1}}.
  \end{equation*}
  Substituting $g^{(k_g+1)}(t)\sim g^{(k_g)}(t)/t$ in the above equation (we use Proposition \ref{prop:basic} and the fact that the numbers $k_g$ are assumed to be large enough), this reduces to \begin{align}
      g^{(k_g)}(t)&\prec t^{\frac{1-k_g}{2}}.
  \end{align} However, recall that we have chosen a non-integer
  $q$, such that $g(t)\ll t^q$ for all $g\in \F$ and we have also chosen $k_g \geq 10q-1$. Applying Proposition \ref{prop:basic}, we have $g^{(k_g)}(t)\prec t^{q-k_g}$ and now the claim easily follows. 
\end{proof}

\begin{claim}
   a) We have that the function $d_g(t)$ in \eqref{d_g} is a sub-linear function that either satisfies  $ t^{\e}\prec d_g(t)$ for some $\e>0$ or converges to a non-zero constant\footnote{ Thus, the leading coefficients $c_g(r)$ in \eqref{c_g} are good sequences.}.\\
   b) We have the growth relation $d_g(t)\prec ({\tilde{g}}^{(k_{{\tilde{g}}})}(t))^{-\frac{1}{k_{{\tilde{g}}}}} $ and, thus, $d_g$ has sub-linear growth.
\end{claim}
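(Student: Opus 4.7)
\medskip

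The plan is to dispatch part b) first, and then derive part a) as an almost immediate consequence using Property $\mathcal{Q}$ from Step 4. For b), observe that after cancelling the constant factors $k_g!$ and $(k_{\tilde g}!)^{k_g/k_{\tilde g}}$, the inequality $d_g(t)\prec |\tilde g^{(k_{\tilde g})}(t)|^{-1/k_{\tilde g}}$ is algebraically equivalent to
\[
 |g^{(k_g)}(t)|\prec |\tilde g^{(k_{\tilde g})}(t)|^{(k_g-1)/k_{\tilde g}},
\]
which in turn, after raising to the $-1/(k_g-1)$ power, is the relation $|\tilde g^{(k_{\tilde g})}(t)|^{-1/k_{\tilde g}}\prec |g^{(k_g)}(t)|^{-1/(k_g-1)}$. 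But this is exactly the auxiliary inequality \eqref{zxcvbnm} already established in the proof of the preceding Claim, so b) needs no new work.

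Once b) is in hand, sub-linearity of $d_g$ is immediate: by the defining inclusion $L\in S(\tilde g,k_{\tilde g})$, the function $|\tilde g^{(k_{\tilde g})}(t)|^{-1/k_{\tilde g}}$ is dominated by $L(t)$, and $L$ is sub-linear by construction. Thus $d_g\prec L\prec t$.

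For the dichotomy in a), the key input is the Claim in Step 4 asserting that for every $g\in\mathcal F$ the function $|g^{(k_g)}(t)|^{-1/k_g}$ satisfies Property $\mathcal{Q}$ in relation to $|\tilde g^{(k_{\tilde g})}(t)|^{-1/k_{\tilde g}}$, together with the remark that the latter has maximal growth rate among these expressions. I would split into the two sub-cases of Property $\mathcal{Q}$. If the two functions have the same growth rate, there exists a non-zero constant $C$ with $|g^{(k_g)}(t)|^{-1/k_g}/|\tilde g^{(k_{\tilde g})}(t)|^{-1/k_{\tilde g}}\to C$; raising to the $-k_g$ power and inserting into the defining formula of $d_g$ makes the powers of $|\tilde g^{(k_{\tilde g})}(t)|$ cancel exactly, so $|d_g(t)|$ converges to a non-zero constant. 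If instead the ratio dominates some fractional power $t^{\delta}$, then $|g^{(k_g)}(t)|/|\tilde g^{(k_{\tilde g})}(t)|^{k_g/k_{\tilde g}}\succ t^{k_g\delta}$, and the same cancellation in the formula for $d_g$ yields $|d_g(t)|\succ t^{k_g\delta}$, giving the other alternative in a).

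There is no serious obstacle here; the statement is essentially bookkeeping once one recognises that b) is a restatement of \eqref{zxcvbnm} and that a) is a direct translation of the Property $\mathcal{Q}$ dichotomy into statements about $d_g$. The only mild care needed is to track signs and constants: $d_g$ inherits its sign from $g^{(k_g)}$, but since our claims are about $|d_g|$ modulo the equivalences $\prec$ and $\sim$, no additional argument is required. The length is dominated by carefully writing out the algebraic manipulations that turn each hypothesis into the desired form.
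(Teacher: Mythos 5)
Your proposal is correct and follows essentially the same route as the paper: part b) is obtained by the same algebraic reduction to the inequality \eqref{zxcvbnm} established in the preceding claim, and part a) is read off from the Property $\mathcal{Q}$ dichotomy applied to $|g^{(k_g)}(t)|^{-1/k_g}$ versus the special function's expression, exactly as the paper does. You merely spell out the exponent manipulations and the sub-linearity step that the paper leaves as a "simple computation."
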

\begin{proof}
  Property ($\mathcal{Q}$) implies that $d_g(t)$ converges to a non-zero constant, or dominates a fractional power $t^{\delta}$. For the second part, we observe that a simple computation shows that this is equivalent to \eqref{zxcvbnm}, which has already been established.
 \end{proof}

\begin{claim}
 If $g,h$ are distinct functions in the set $\{g_1,...,g_m\}$ such that $d_g(t) \sim d_h(t)$, then $k_g\neq k_h$.
\end{claim}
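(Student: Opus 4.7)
The strategy is to argue by contradiction. Suppose that $g$ and $h$ are distinct members of $\{g_1,\ldots,g_m\}$ satisfying $d_g \sim d_h$ and $k_g = k_h$. Write $k$ for this common value. Substituting $k_g = k_h = k$ into the definition \eqref{d_g}, the factor $|k_{\tilde{g}}!/\tilde{g}^{(k_{\tilde{g}})}(t)|^{k/k_{\tilde{g}}}$ is identical for both, so it cancels in the ratio and we are left with
\[
\frac{d_g(t)}{d_h(t)} = \frac{g^{(k)}(t)}{h^{(k)}(t)}\cdot\frac{h!}{g!}\ \text{(up to the factorial constants)}\ =\ C\cdot\frac{g^{(k)}(t)}{h^{(k)}(t)}
\]
for a non-zero constant $C$. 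By Lemma \ref{decomposition}, the functions $g_1,\ldots,g_m$ satisfy $g_1\prec g_2\prec\cdots\prec g_m$, so without loss of generality $g\prec h$. The whole proof then reduces to showing that $g\prec h$ forces $g^{(k)}\prec h^{(k)}$, which would yield $d_g\prec d_h$ in contradiction with the assumed $d_g\sim d_h$.

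To derive this derivative comparison, I would appeal to the asymptotic machinery for Hardy field functions collected in the Appendix, specifically Proposition \ref{prop:basic}, which gives the relation $g^{(j+1)}(t)\sim g^{(j)}(t)/t$ for strongly non-polynomial $g$ in the range of derivatives we need. This was already exploited earlier in Step 5 (in the proof of the first Claim, where $g^{(k_g+1)}(t)\sim g^{(k_g)}(t)/t$ was used to estimate $|c_g(r)-d_g(r)|$), and its hypotheses are guaranteed by the fact that the integers $k_g$ were chosen large enough compared to $q$ in Step 4 (the requirement $k_g\geq 10q$ is precisely what ensures one can iterate this relation). Iterating $k$ times yields $g^{(k)}(t)\sim g(t)/t^k$ and $h^{(k)}(t)\sim h(t)/t^k$, hence
\[
\frac{g^{(k)}(t)}{h^{(k)}(t)} \sim \frac{g(t)}{h(t)} \to 0,
\]
which is the required comparison.

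The main technical point to verify is really only the applicability of Proposition \ref{prop:basic} at every iteration, which is automatic from the setup of Step 4 and the strongly non-polynomial assumption on the $g_i$ from Lemma \ref{decomposition}. An alternative, more hands-on route would be to iterate L'Hopital's rule inside the Hardy field: every ratio of Hardy field functions possesses a limit in $[-\infty,+\infty]$, so if $\lim g^{(k)}/h^{(k)}$ were a nonzero value $L$, applying L'Hopital $k$ times (valid because each $h^{(j)}$ is a non-trivial Hardy germ of polynomial growth and the hypotheses of L'Hopital hold at each step) would force $\lim g/h = L\neq 0$, contradicting $g\prec h$; hence $\lim g^{(k)}/h^{(k)} = 0$. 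Either way, the claim follows and the contradiction closes the argument.
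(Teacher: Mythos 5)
Your reduction is the right one and, in its second half, coincides with the paper's own proof: since $k_g=k_h=k$, the factor $\lvert k_{\tilde g}!/\tilde g^{(k_{\tilde g})}(t)\rvert^{k/k_{\tilde g}}$ in \eqref{d_g} is common to $d_g$ and $d_h$, so $d_g\sim d_h$ forces $g^{(k)}\sim h^{(k)}$, and the paper then applies L'Hospital's rule $k$ times to conclude $g\sim h$, contradicting $g_1\prec\cdots\prec g_m$. Your ``alternative, more hands-on route'' is exactly this argument, and it is sound: at each of the $k$ steps the denominator $h^{(j)}$ either tends to $\pm\infty$ (for $j$ below the integer $l_h$ with $t^{l_h}\prec h\prec t^{l_h+1}$) or both $g^{(j)}$ and $h^{(j)}$ tend to $0$, and in a Hardy field the limit of every ratio exists, so L'Hospital is applicable throughout and gives $\lim g^{(k)}/h^{(k)}=\lim g/h=0$, contradicting $g^{(k)}\sim h^{(k)}$. (Minor slip: with $k_g=k_h$ the factorial ratio is $k_h!/k_g!=1$, not ``$h!/g!$''.)

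Your primary route, however, contains a genuine error. You claim that Proposition \ref{prop:basic} lets you iterate $g^{(j+1)}(t)\sim g^{(j)}(t)/t$ from $j=0$ up to $j=k-1$, yielding $g^{(k)}(t)\sim g(t)/t^k$. Proposition \ref{prop:basic} only gives $f'(t)\sim f(t)/t$ when $t^{\delta}\prec f(t)$ or $f(t)\prec t^{-\delta}$ for some $\delta>0$; it can fail when $f$ is slowly varying, and intermediate derivatives of the $g_i$ can be exactly of that type. For instance $g(t)=t\log t$ (one of the paper's own examples) has $g'(t)\sim\log t$, and then $g''(t)=1/t\prec g'(t)/t$, so $g''(t)\not\sim g(t)/t^2$ (indeed $g''=1/t$ while $g/t^2=\log t/t$). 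The condition $k_g\ge 10q$ guarantees the relation $g^{(j+1)}\sim g^{(j)}/t$ only for \emph{large} $j$ (where $g^{(j)}\prec t^{-\delta}$); it does not repair the low- and intermediate-order steps of the iteration, which is where it breaks. So the asserted asymptotic $g^{(k)}\sim g/t^k$ is false in general, and with it the displayed conclusion $g^{(k)}/h^{(k)}\sim g/h$. Since your L'Hospital fallback closes the argument correctly, the claim is proved, but the iteration of Proposition \ref{prop:basic} should be dropped rather than presented as the main step.
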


\begin{proof}  
   Assume that we have both $k_g=k_h$ and $d_g\sim d_h$. This implies that \begin{equation*}
      g^{(k_g)}(t)\sim h^{(k_h)}(t)
  \end{equation*}and L'Hospital' rule implies that $g\sim h$. Since $g,h$ have distinct growth rates, this last relation cannot hold and we arrive at a contradiction.
\end{proof}

We have seen that the functions $g_1,...,g_m$ admit a polynomial expansion and, after the change of variables above, their leading coefficients become sub-linear good sequences. Now, we look how the leading coefficients of the polynomials $q_1,...,q_k$ in \eqref{expansionform} transform after the above change of variables. Note that $q_i(r+h)$ is also a polynomial $q_{i,r}(h)$ in the variable $h$. Writing again \begin{equation*}
    h=w\floor{u(r)}+v
\end{equation*}as above, we see that $q_i(r+h)=q_{i,r,v}(w)$ where $q_{i,r,v}$ is a real polynomial.
It is straightforward to check that the leading coefficients of the $q_{i,r,v}$ have the form $c\floor{u(r)}^{\theta}$, where $c\in \R^{*}$ and $\theta\in \N^{+}$. These are good sequences, since they are asymptotically equal to \begin{equation*}
    c \Big|   \frac{k_{{\tilde{g}}}!}{{\tilde{g}}^{(k_{{\tilde{g}}})}(r)} \Big|^{\frac{\theta}{k_{{\tilde{g}}}}},
\end{equation*}which is a function in $\mathcal{H}$ (and its limit is obviously non-zero).

Now, we recall \eqref{expansionform}. When restricted to the interval $[r,r+L(r)]$, every one of our original functions $a_i$, where $1\leq i\leq k$ can be written as a sum of polynomials, whose leading coefficients are good sequences, plus an $o_r(1)$ term. We can eliminate the error terms $o_r(1)$ by using the argument in Lemma \ref{errors} once again. 
In particular, any one of these good sequences (denote $a_r$) satisfies one of the following:\\
a) there exists a sub-linear function $\phi\in \mathcal{H}$, such that $a_r=\phi(r)+o_r(1)\prec u(r)$ and $\phi(t)\gg t^{\delta}$ for some $\delta>0$, \\
b) they have the form  $c\floor{u(r)}^{\theta}$, where $c\in\R$ and $\theta$ is a positive integer or\\
c) they converge to a non-zero real number.

 We denote the polynomial corresponding to $a_i$ as $P_{i,r,v}$ and we observe that its degree is independent of $r$. In view of Property \eqref{Papeiro}, we deduce that the leading coefficient of $P_{i,r,v}-P_{j,r,v}$ is either the leading coefficient of the polynomial $q_{i,r,v}(t)-q_{j,r,v}(t)$ (which in this case must be a non-constant polynomial), or it is equal to the leading coefficient of \begin{equation}\label{RJ}
    R_{ij.r.v}(w)= \sum_{n=1}^{m} \big(c_{i,n}-c_{j,n}\big)\tilde{p}_{r,g_j,v}(w)
 \end{equation}or it is a combination of these two coefficients. In the first case, it has the form b) above and is a good sequence. In the second case, it is a linear combination of sequences of the form $a)$ or $c)$. That is, there are functions $g_{i_1},...,g_{i_{\lambda}}$, where $i_1,...,i_{\lambda}\in\{1,2,...,m\}$ such that the leading coefficients of the polynomials $\tilde{p}_{r,g_{i_j},v}$ are all sequences of the form $a)$ or $c)$ and the leading coefficient of the polynomial $R_{ij,r,v}$ in \eqref{RJ} is equal to the leading coefficient of \begin{equation}\label{arx}
     \sum_{\alpha=1}^{\lambda}(c_{i,i_{\alpha}}-c_{j,i_{\alpha}}     )\tilde{p}_{r,g_{i_{\alpha}},v    }.
 \end{equation} We will use Claim 5: if any two of the polynomials $\tilde{p}_{r,g_{i_{\alpha}},v    }$ have the same degree, then their leading coefficients are sequences with distinct growth rates. Therefore, the leading coefficient of  $R_{ij,r,v}$ is a linear combination of good sequences with pairwise distinct growth rates and it is straightforward to see that it is itself a good sequence. Finally, we observe that the final case cannot happen (namely, a combination of these two coefficients). That is because the degree of the polynomial $q_{i,r,v}(t)-q_{j,r,v}(t)$, which is equal to the degree of $q_i-q_j$, is very small compared to the degree of the polynomial in \eqref{arx}, because we chose the degrees $k_g$ of the polynomials in the Taylor expansions to be very large compared to the degrees of the polynomials $q_1,...,q_k$.

 Our original problem reduces to the following (recall \eqref{compl}): for every measure-preserving system $(X,\m,T)$ and function $f_1\in L^{\infty}(\m)$ with $f_1 \perp Z_{\tilde{s}}(X)$ for some $\tilde{s}\in \N$, there exists a positive integer $t=t(a_1,...,a_k)$ such that:  
  \begin{multline}\label{semifinalreduction}
      \lim\limits_{R\to+\infty} \ \sup_{ ||h_1||_{\infty}\leq 1,...,||h_{\ell}||_{\infty}\leq 1}\
      \underset{1\leq r\leq R}{\E}\
      \  \underset{0\leq v\leq \floor{u(r)}-1}{\E}\\
      \sup_{||f_2||_{\infty}\leq 1,...,||f_k||_{\infty} \leq 1}\ \sup_{|c_{w,r,v}|\leq 1}\ \bignorm{ \ 
  \underset{0\leq w\leq D_r}{\E} \ c_{w,r,v}\  T^{\floor{ P_{1,r,v}(w) }} \tilde{f}_{r}\cdot ...\cdot T^{\floor{P_{k,r,v}(w) } }f_k                                       }_{L^2(\m)}^{2^t}=0,
  \end{multline}where \begin{equation}\label{f1}
   \tilde{f}_r= f_1\cdot  T^{\floor{\theta_1(r)}}h_{1}\cdot...
    \cdot T^{\floor{\theta_{\ell}(r)}}h_{\ell}
    \end{equation}for functions $\theta_1,...,\theta_{\ell}\in\mathcal{H} $ that satisfy \begin{align*}
       & \log t \prec \theta_i(t)\prec t^{\delta}\\
         \log t \prec & \ \theta_i(t)-\theta_j(t)\prec t^{\delta} \ \text{ for } \ i\neq j
    \end{align*}for all $\delta>0$.
    
    Observe that \begin{multline*}
  \underset{0\leq v\leq \floor{u(r)}-1}{\E}\  \sup_{||f_2||_{\infty}\leq 1,...,||f_k||_{\infty} \leq 1}\ \sup_{|c_{w,r,v}|\leq 1}\ \bignorm{ 
  \underset{0\leq w\leq D_r}{\E} \ c_{w,r,v}\  T^{\floor{ P_{1,r,v}(w) }} \tilde{f}_{r}\cdot ...\cdot T^{\floor{P_{k,r,v}(w) } }f_k                                       }_{L^2(\m)}^{2^t}\leq \\
  \max_{0\leq v\leq \floor{u(r)}-1} \sup_{||f_2||_{\infty}\leq 1,...,||f_k||_{\infty} \leq 1}\ \sup_{|c_{w,r,v}|\leq 1}\ \bignorm{ 
  \underset{0\leq w\leq D_r}{\E} \ c_{w,r,v}\  T^{\floor{ P_{1,r,v}(w) }} \tilde{f}_{r}\cdot ...\cdot T^{\floor{P_{k,r,v}(w) } }f_k                                       }_{L^2(\m)}^{2^t}.
    \end{multline*}For each $r\in \N$, let $v_r$ be the value of $v$ for which the above max is attained. Then, the polynomial family \begin{equation*}
      \mathcal{P}_r  =\{P_{1,r,v_r},...,P_{k,r,v_r}\}
    \end{equation*}is a nice polynomial family. Indeed, the degrees of its elements are fixed integers and the leading coefficients of the polynomials and of their differences are good sequences irrespective of the value of $v_r$, as we discussed previously. Therefore, under the above assumptions, we reduce our problem to
        \begin{multline}\label{finalreduction}
      \lim\limits_{R\to+\infty}\ \sup_{ ||h_1||_{\infty}\leq 1,...,||h_{\ell}||_{\infty}\leq 1}\
      \underset{1\leq r\leq R}{\E}\\ 
      \sup_{||f_2||_{\infty},...,||f_k||_{\infty} \leq 1}\ \sup_{|c_{w,r}|\leq 1}\  \bignorm{  
  \underset{0\leq w\leq D_r}{\E} \ c_{w,r}\  T^{\floor{ P_{1,r,v_r}(w) }} \tilde{f}_{r}\cdot ...\cdot T^{\floor{P_{k,r,v_r}(w) } }f_k                                       }_{L^2(\m)}^{2^t}=0.
    \end{multline}
  We also choose functions $h_{1,R},...,h_{\ell,R}\in L^{\infty}(\m)$ so that the corresponding average is $1/R$ close to the supremum of the $h_1,...,h_{\ell}$. Namely, we want to prove \eqref{finalreduction} where $f_r$ is now the function \begin{equation*}
      f_1\cdot  T^{\floor{\theta_1(r)}}h_{1,R}\cdot...
    \cdot T^{\floor{\theta_{\ell}(r)}}h_{\ell,R}.
  \end{equation*}
  
  \subsection{Step 6: Applying the polynomial bounds}

Now, we apply Proposition \ref{PET} for the inner average in the above relation. We have established that its hypotheses are satisfied. The degree and the type of the polynomial family all depend on the initial functions $a_1,...,a_k$. Therefore, all asymptotic bounds are assumed to depend only on $a_1,...,a_k$ and we omit the indices.

Let us denote the leading vector of the family $\mathcal{P}_r$ by $(u_{1,r},...,u_{k,r})$ and recall again here that each $u_{i,r}$ satisfies one of the following:\\
a) there exists a sub-linear function $\phi_i(r)\prec u(r)$ that dominates some fractional power, such that $u_{i,r}=\phi_i(r)+o_r(1)$,\\
b) they have the form  $c\floor{u(r)}^{\theta}$, where $c\in\R$ and $\theta$ is a positive integer or \\
c) they converge to a non-zero real number.

Fix a positive integer $M$. There exist integers $s,t$, a finite set $Y$ of integers and polynomials $p_{\ue,i}$ (all depending only on the original functions $a_1,...,a_k$), where $\ue\in [[s]]$ and $1\leq i\leq k$ such that \begin{multline}\label{afterpet}
 \sup_{||f_2||_{\infty}\leq 1,...,||f_k||_{\infty} \leq 1}\ \sup_{|c_{w,r}|\leq 1}\    \bignorm{  
  \underset{0\leq w\leq D_r}{\E} \ c_{w,r}\  T^{\floor{ P_{1,r,v_r}(w) }} \tilde{f}_{r}\cdot ...\cdot T^{\floor{P_{k,r,v_r}(w) } }f_k                                       }_{L^2(\m)}^{2^t}\ll\\ 
  \frac{1}{M}+
  \sum_{{\bf h}\in     Y^{[[s]]}}^{} \underset{{\bf m}\in [-M,M]^t}{\E} \Big| \int \prod_{\underline{\e}\in [[s]]}^{}  T^{\floor{A_{\ue,r}(\bm)}+h_{\underline{\e}}}(\mathcal{C}^{|\underline{\e}|}\tilde{f}_{r})  \ d\m  \Big|+
    o_r(1),
  \end{multline}
where \begin{equation}\label{A_e}
    A_{\ue,r}(\bm)=\sum_{1\leq j\leq k} \ p_{\underline{\e},j}({\bf m})u_{j,r}.
\end{equation}The polynomials $A_{\ue}$ are essentially distinct for any value of the $u_{j,r}$ and satisfy \begin{equation*}
    A_{\ue,r}(\bm)+A_{\ue^c,r}(\bm)=A_{\underline{1},r}(\bm).
\end{equation*}In addition, for an $\ue\in [[s]]$, we have that the non-zero polynomials among the $p_{\ue,j}$ are linearly independent.

Applying the bounds of \eqref{afterpet} to \eqref{finalreduction}, we deduce that our original average is bounded by the quantity  \begin{multline}\label{1940}
    \frac{1}{M}+ \sum_{{\bf h}\in     Y^{[[s]]}}^{} \underset{{\bf m}\in [-M,M]^t}{\E}\   \underset{1\leq r\leq R}{\E} \Big| \int \prod_{\underline{\e}\in [[s]]}^{}  T^{\floor{A_{\ue,r}(\bm)}+h_{\underline{\e}}}(\mathcal{C}^{|\underline{\e}|}\tilde{f}_{r})  \ d\m  \Big|+
    o_R(1)= \\
     \frac{1}{M}+ \sum_{{\bf h}\in     Y^{[[s]]}}^{} \underset{{\bf m}\in [-M,M]^t}{\E}\   \underset{1\leq r\leq R}{\E}
     \Big| \int \prod_{\underline{\e}\in [[s]]}^{}\prod_{0\leq i\leq \ell} T^{\floor{A_{\ue,r}(\bm)}+ \floor{\theta_{i}(r)}  +h_{\underline{\e}}}(\mathcal{C}^{|\underline{\e}|}h_{i,R})  \ d\m  \Big|+
    o_R(1),
\end{multline}where we set $\theta_0(r)\equiv 0$ and $h_{0,R}\equiv f_1$ for convenience in notation. We may assume without loss of generality that $0\equiv \theta_0(r)\ll \theta_1(r)\ll...\ll\theta_{\ell}(r)$. Then, we compose with $T^{-\floor{\theta_{\ell}(r)}}$ inside the above integral and combine the integer parts to obtain that the aforementioned integral is equal to \begin{equation*}
    \int \prod_{\underline{\e}\in [[s]]}^{}\prod_{0\leq i\leq \ell} T^{\floor{A_{\ue,r}(\bm)}+ \floor{\theta_{i}(r)-\theta_{\ell}(r)}+h_{i,r}  +h_{\underline{\e}}}(\mathcal{C}^{|\underline{\e}|}h_{i,R})  \ d\m  ,
\end{equation*}where $h_{i,r}\in \{0,\pm 1\}$. Putting this in \eqref{1940}, we want to bound 
\begin{multline*}
     \frac{1}{M}+ \sum_{{\bf h}\in     Y^{[[s]]}}^{} \underset{{\bf m}\in [-M,M]^t}{\E}\   \underset{1\leq r\leq R}{\E}
     \Big| \int \prod_{\underline{\e}\in [[s]]}^{}\prod_{0\leq i\leq \ell} T^{\floor{A_{\ue,r}(\bm)}+ \floor{\theta_{i}(r)-\theta_{\ell}(r)} +h_{i,r} +h_{\underline{\e}}}(\mathcal{C}^{|\underline{\e}|}h_{i,R})  \ d\m  \Big|+
    o_R(1).
\end{multline*}Using the argument present in Lemma \ref{errors}, we deduce that the last quantity is smaller than a constant multiple of \begin{multline*}\frac{1}{M}+ \sum_{{\bf h}\in     Y^{[[s]]}}^{} \underset{{\bf m}\in [-M,M]^t}{\E}\  \ \sup_{\norm{h_1}_{\infty},...,\norm{h_{\ell}}_{\infty} \leq 1   } \\\underset{1\leq r\leq R}{\E}
   \Big| \int \prod_{\underline{\e}\in [[s]]}^{}\prod_{0\leq i\leq \ell} T^{\floor{A_{\ue,r}(\bm)} + \floor{\theta_{i}(r)-\theta_{\ell}(r)} +h_{\underline{\e}}}(\mathcal{C}^{|\underline{\e}|}h_{i})  \ d\m  \Big|+
    o_R(1).
\end{multline*}We choose again sequences of functions in place of the $h_1,...,h_{\ell}$, so that the corresponding quantity is $1/R$ close to the supremum and we denote them again      $h_{1,R},...,h_{\ell,R}$ for convenience. Note that this final quantity is essentially has the same form as the one in \eqref{1940}, but the function $\theta_0$ corresponding to $f_1$ now has maximal growth rate among the $\theta_i$. Therefore, our original problem reduces to finding a bound for \begin{multline}\label{1973}
    \frac{1}{M}+ \sum_{{\bf h}\in     Y^{[[s]]}}^{} \underset{{\bf m}\in [-M,M]^t}{\E}\   \underset{1\leq r\leq R}{\E}
     \Big| \int \prod_{\underline{\e}\in [[s]]}^{}\prod_{0\leq i\leq \ell} T^{\floor{A_{\ue,r}(\bm)}+ \floor{\theta_{i}(r)}  +h_{\underline{\e}}}(\mathcal{C}^{|\underline{\e}|}h_{i,R})  \ d\m  \Big|+
    o_R(1)
\end{multline}under the assumption that $\theta_0(t)\gg \theta_{i}(t)\succ\log t$ for every $1\leq i\leq l-1$, $\theta_{\ell}\equiv 0$ and $\theta_i(t)-\theta_j(t)\succ \log t$ for all $ i\neq j$.

We write \begin{equation*}
    B_{\bm,\bh}(r):= 
     \Big| \int \prod_{\underline{\e}\in [[s]]}^{}\prod_{0\leq i\leq \ell} T^{\floor{A_{\ue,r}(\bm)}+ \floor{\theta_{i}(r)}  +h_{\underline{\e}}}(\mathcal{C}^{|\underline{\e}|}h_{i,R})  \ d\m  \Big|.
\end{equation*}Taking the limit as $R\to+\infty$, our goal is to show that the quantity \begin{equation*}
    \frac{1}{M}+  \sum_{{\bf h}\in     Y^{[[s]]}}^{} \underset{{\bf m}\in [-M,M]^t}{\E}  \big( \limsup\limits_{R\to+\infty}  \underset{1\leq r\leq R}{\E}  B_{\bm,\bh}(r)  \big).
\end{equation*}goes to 0, as $M$ goes to infinity.

\subsection{Step 7: Another change of variables trick}

Before we proceed with the final details of the proof, we will make a final trick to reduce our problem to a statement, where the results of Section \ref{sublinearsection} can be applied. We will use a lemma very similar to \cite[Lemma 5.1]{FraHardy1}, which can also be proven similarly by a standard partial summation argument. 

\begin{lemma}\label{floor}
Let $(V_R(n))_{n,R\in\N}$ be a 1-bounded, two-parameter sequence of vectors in a normed space and let $a\in\mathcal{H}$ satisfy the growth condition $t^{\delta}\prec a(t)\prec t$. Then, we have \begin{equation*}
   \limsup\limits_{R\to +\infty} \bignorm{\underset{1\leq n\leq R}{\E} V_R(\floor{a(n)}{})  }\ll_a  \limsup\limits_{R\to +\infty} \bignorm{ \underset{1\leq n\leq R}{\E} V_R(n) }.
\end{equation*}
\end{lemma}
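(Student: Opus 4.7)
The plan is to use Abel (partial) summation. Since $a \in \mathcal{H}$ satisfies $t^{\delta} \prec a(t) \prec t$ for some $\delta > 0$, the function $a$ is eventually strictly increasing, and by our closure assumptions on $\mathcal{H}$ its compositional inverse $b = a^{-1}$ lies in $\mathcal{H}$. For each $R$, set $M_R := \floor{a(R)}$, so that $M_R \to \infty$ and $M_R/R \to 0$ as $R \to \infty$. Grouping the sum over $n \in [1,R]$ according to the value $k = \floor{a(n)}$ gives
\begin{equation*}
    \sum_{n=1}^{R} V_R(\floor{a(n)}) = \sum_{k=0}^{M_R} c_R(k)\, V_R(k), \qquad c_R(k) := \#\{ n \in [1,R] : \floor{a(n)} = k \},
\end{equation*}
where by monotonicity of $a$ we have $c_R(k) = \floor{b(k+1)} - \floor{b(k)}$ (of order $b'(k)$) for $k < M_R$, $c_R(M_R) = O(b'(M_R))$, and $\sum_k c_R(k) = R$.

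The next step is Abel summation. Writing $T_R(j) := \sum_{k=0}^{j} V_R(k)$ and using $V_R(k) = T_R(k) - T_R(k-1)$ (with $T_R(-1) := 0$), one obtains
\begin{equation*}
    \sum_{k=0}^{M_R} c_R(k)\, V_R(k) = c_R(M_R)\, T_R(M_R) - \sum_{k=0}^{M_R - 1} \bigl(c_R(k+1) - c_R(k)\bigr)\, T_R(k),
\end{equation*}
with all weights $c_R(k+1) - c_R(k) \geq 0$, since $b'$ is eventually non-decreasing (this is standard Hardy-field calculus, as $a' \to 0$ forces $b'(a(t)) = 1/a'(t) \to \infty$). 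Setting $A := \limsup_R \norm{\underset{n \leq R}{\E} V_R(n)}$, the hypothesis gives $\norm{T_R(R)} \leq (A + o(1))R$; in the applications of this lemma in the paper the sequence $V_R = V$ does not actually depend on $R$, and then the very definition of $\limsup$ yields the uniform bound $\norm{T(k)} \leq (A + \epsilon)\, k$ for all $k \geq K_0(\epsilon)$. The contribution to the Abel sum from $k < K_0$ is $O_{\epsilon}(1)$, hence negligible after dividing by $R$; for $k \geq K_0$ a short telescoping calculation yields
\begin{equation*}
    \sum_{k=0}^{M_R - 1}\bigl(c_R(k+1) - c_R(k)\bigr)\, k = M_R\, c_R(M_R) - R + c_R(0).
\end{equation*}

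Combining these gives
\begin{equation*}
    \bignorm{\sum_{n=1}^{R} V_R(\floor{a(n)})} \leq 2(A + \epsilon)\, M_R\, c_R(M_R) + o(R).
\end{equation*}
Since $c_R(M_R)$ is comparable to $b'(M_R)$ and $R$ is comparable to $b(M_R)$, the key ratio $M_R\, c_R(M_R)/R$ is comparable to $a(R)/(R\, a'(R))$. By the Hardy-field calculus in the Appendix --- specifically the fact that $a'(t) \sim a(t)/t$ up to bounded factors for $a \in \mathcal{H}$ with $t^{\delta} \prec a(t) \prec t$ --- this ratio is bounded by a constant $C_a$ depending only on $a$. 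Letting $\epsilon \to 0$ then gives $\limsup_R \norm{\underset{n \leq R}{\E} V_R(\floor{a(n)})} \ll_a A$, as claimed. The main obstacle I anticipate is controlling the intermediate partial sums $T_R(k)$ for $k \in [1, M_R]$ when $V_R$ genuinely varies with $R$: the hypothesis directly controls $T_R$ only at the single index $k = R$, so a general argument seems to require either that $V_R$ is essentially independent of $R$ (which covers the applications in the paper, in the spirit of \cite[Lemma 5.1]{FraHardy1}), or a careful diagonal extraction that leverages the $\limsup$ structure together with the $1$-boundedness of the $V_R(n)$.
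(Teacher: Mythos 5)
Your route --- grouping by the level sets of $\floor{a(n)}$, passing to $b=a^{-1}\in\mathcal{H}$, and summing by parts --- is exactly what the paper intends: the paper offers no proof of Lemma \ref{floor} beyond the citation of \cite[Lemma 5.1]{FraHardy1} and the phrase ``standard partial summation argument'', and your telescoping identity, the comparisons $c_R(M_R)\approx b'(M_R)$, $R\approx b(M_R)$, and the use of $a'(t)\sim a(t)/t$ (Proposition \ref{prop:basic}, which is where $t^{\delta}\prec a(t)$ enters) are the standard ingredients. One technical slip: the Abel weights $c_R(k+1)-c_R(k)$ are \emph{not} all nonnegative, since $c_R(k)=\floor{b(k+1)}-\floor{b(k)}$ oscillates by $O(1)$ around $b(k+1)-b(k)$; convexity of $b$ gives monotonicity of the unrounded increments only. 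If you try to absorb these oscillations after summation by parts you pay $O(M_R^2)=O(a(R)^2)$, which is not $o(R)$ (take $a(t)=t^{9/10}$). The correct repair is to replace $c_R(k)$ by $b(k+1)-b(k)$ \emph{before} Abel summation: by $1$-boundedness of $V$ this changes the sum by $O(M_R)=o(R)$, and then the weights are genuinely non-decreasing.

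The obstacle you flag at the end is real, and no argument can close it: for a genuinely two-parameter $1$-bounded family the stated inequality is false. Take $V_R(n)=u$ for $n\leq R/2$ and $V_R(n)=-u$ for $n>R/2$, with $u$ a fixed unit vector; then the right-hand side limsup is $0$, while for large $R$ one has $\floor{a(n)}\leq a(R)<R/2$ for all $n\leq R$, so the left-hand side tends to $1$. Thus the lemma has to be understood either for a single sequence $V$ (as in \cite[Lemma 5.1]{FraHardy1}) or in a finitary form that is uniform in $V$, e.g.\ $\sup_{|c_n|\leq 1}\norm{\E_{1\leq n\leq R}\,c_n V(\floor{a(n)})}\ll_a \sup_{|c_m|\leq 1}\norm{\E_{1\leq m\leq \floor{a(R)}}\,c_m V(m)}+o_R(1)$, which your partial summation does prove (with the supremum over $1$-bounded weights, the partial sums become monotone in the range, so no control of intermediate partial sums beyond the scale $\floor{a(R)}$ is needed). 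Be careful, however, with your assertion that the paper only applies the lemma with $V_R$ independent of $R$: in Step 7 the functions $H_{i,R}$ (chosen near a supremum) and the weights $c_{r,\bm}$ do vary with $R$ and $r$. What legitimizes the application is not $R$-independence but the fact that the subsequent bound (Proposition \ref{sublinearseminorm}) is uniform over all $1$-bounded choices of the $H_i$ and of the weights, so the uniform weighted form above can be applied at scale $\floor{u(R)}$ for each fixed $R$ and the suprema taken afterwards; your proposed dichotomy (``$V_R$ essentially independent of $R$ or a diagonal extraction'') does not quite capture this, and the diagonal-extraction alternative would not work in general, precisely because of the counterexample.
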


Our main objective is the following: since the sequences $u_{j,r}$ of the leading vector can have the form $c\floor{u(r)}^{k}$, which are tough to handle, we want to use the above lemma to replace these terms with the terms $cr^k$, which are just polynomials. In order to facilitate this, we need to write the entire integral $B_{\bm,\bh}(r)$ as a function of $\floor{u(r)}$. Note that $u(r)$ satisfies the growth condition in the statement of Lemma \ref{floor}. 
We consider three cases:\\
i) If the sequence $u_{j,r}$ has the form $c\floor{u(r)}^q$, for $c\in\R$ and $q\in \N^{*}$, then it is already written as a function of $\floor{u(r)}$.\\
ii) If the sequence $u_{j,r}$ converges to a non-zero real number $a_j$, then, we have $u_{j,r}-a_j=o_r(1)$ and the constant function $a_j$ is already written as a function of $\floor{u(r)}$.\\
iii) Finally, assume the sequence $u_{j,r}$ satisfies the remaining possible condition, namely that there exists a function $\phi_j\in\mathcal{H}$ satisfying the growth condition \begin{equation*}
    t^{\delta}\prec \phi_j(t)\prec u(t)
\end{equation*}for some $\delta>0$ and such that $$u_{j,r}=\phi_j(r)+o_r(1).$$ Let us assume that $\phi_j(t)$ is eventually positive (in the other case, we work with the number $-u_{j,r}$).
We write $\phi_j(t)=\Phi_j(u(t))$, where $\Phi_j= \phi_j\circ u^{-1}$, which is well defined and thus a function in $\mathcal{H}$ \footnote{Note that $u(t)$ is a positive function by its definition and therefore, goes to $+\infty$. Consequently, $u^{-1}$ also goes to $+\infty$.  }. We also have that $\Phi_j(t)\prec t $ (this follows easily from the fact that $\phi_j(t)\prec u(t)$) and we can easily see that $\Phi_j(t)$ also dominates some fractional power. In addition, we have \begin{equation*}
    |\Phi_j(u(t))-\Phi_j(\floor{u(t)})|\leq \sup_{x\in\R, \floor{u(t)}\leq x\leq u(t)}|\Phi'_j(t)|=o_t(1),
\end{equation*}since $\Phi'_j(t)\ll \Phi_j(t)/t\prec 1$.

  In all three cases above, we have the following: there exists a function $w_j\in\mathcal{H}$, such that \begin{equation}\label{wdefinition}
      |u_{j,r}-w_j(\floor{u(r)})|=o_r(1)
  \end{equation}and the function $w_j$ is either a monomial, or a constant function or a sub-linear (but not a sub-fractional) function. We write \begin{equation}\label{A'}
      \tilde{A}_{\ue,r }(\bm)=\sum_{1\leq j\leq k} \ p_{\underline{\e},j}({\bf m})w_{j}(\floor{u(r)})
  \end{equation}and observe that $|A_{\ue,r}(\bm)-\tilde{A}_{\ue,r}(\bm)|=o_r(1)$, for any fixed value of $\bm$.  Therefore, for $r$ large enough, we have \begin{equation}\label{asdf1}
      \floor{A_{\ue,r}(\bm)}=\floor{\tilde{A}_{\ue,r}(\bm)}+h'_{r,\ue,\bm},
  \end{equation}where $h'_{r,\ue,\bm}\in\{0,\pm 1\}$.
  
  We do the same for the function $\theta_i$. Indeed, we can use the same arguments as above to deduce that $|\theta_i(t)-\psi_i(\floor{u(t)})|=o_t(1)$, where $\psi_i(t)\in\mathcal{H}$ is the function $\theta_i\circ u^{-1}$ In addition, since $u$ dominates some fractional power, we have that $u^{-1}$ has polynomial growth and, therefore, we easily get $t^{\e}\succ \psi_i(t)\succ \log t$ for all $\e>0$, that is $\psi_i$ is a (sub-fractional) function. Finally, for $r$ large enough, we can write \begin{equation}\label{asdf2}
      \floor{\theta_i(r)}=\floor{\psi_i(u(r))}+h''_{i,r},
  \end{equation}where $h''_{i,r}\in \{0,\pm 1\}$.

     In view of the above, we have \begin{multline*}
     \ \ \  \sum_{{\bf h}\in     Y^{[[s]]}}^{} \underset{{\bf m}\in [-M,M]^t}{\E}  \  \underset{1\leq r\leq R}{\E}\  B_{\bm,\bh}(r)  = \\
      \sum_{{\bf h}\in     Y^{[[s]]}}^{} \underset{{\bf m}\in [-M,M]^t}{\E}  \  \underset{1\leq r\leq R}{\E}  \Big| \int \prod_{\underline{\e}\in [[s]]}^{}\prod_{0\leq i\leq \ell} T^{\floor{\tilde{A}_{\ue,r}(\bm)}+h'_{r,\ue,\bm}+ \floor{\psi_{i}(\floor{u(r)})}+h'' _{i,r} +h_{\underline{\e}}}(\mathcal{C}^{|\underline{\e}|}h_{i,R})  \ d\m  \Big| +o_R(1)  \leq \\
       \ \ \ \  \sum_{{\bf h}\in     Y^{[[s]]}}^{} \underset{{\bf m}\in [-M,M]^t}{\E}  \ \Big( \underset{1\leq r\leq R}{\E} \Big| \int \prod_{\underline{\e}\in [[s]]}^{}\prod_{0\leq i\leq \ell} T^{\floor{\tilde{A}_{\ue,r}(\bm)}+h'_{r,\ue,\bm}+ \floor{\psi_{i}(\floor{u(r)})}+h'' _{i,r} +h_{\underline{\e}}}(\mathcal{C}^{|\underline{\e}|}h_{i,R})  \ d\m  \Big|^2     \big)^{1/2}\\
       +o_R(1), 
  \end{multline*}where we applied the Cauchy-Schwarz inequality (the $o_R(1)$ term on the second line exists to account for small values of $r$ for which \eqref{asdf1},\eqref{asdf2} may not hold with error terms in the set $\{0,\pm 1\}$). Thus, we want to bound \begin{multline}\label{the average}
      \frac{1}{M}+ \sum_{{\bf h}\in     Y^{[[s]]}}^{} \underset{{\bf m}\in [-M,M]^t}{\E}  \\ \Big( \underset{1\leq r\leq R}{\E} \Big| \int \prod_{\underline{\e}\in [[s]]}^{}\prod_{0\leq i\leq \ell} T^{\floor{\tilde{A}_{\ue,r}(\bm)}+h'_{r,\ue,\bm}+ \floor{\psi_{i}(\floor{u(r)})}+h'' _{i,r} +h_{\underline{\e}}}(\mathcal{C}^{|\underline{\e}|}h_{i,R})  \ d\m  \Big|^2     \big)^{1/2}
      +o_R(1),
  \end{multline}where $h_{0,R}=f_1$. 
  \begin{claim}
      Proposition \ref{factors} holds in the case when all the functions $w_j$ (defined in \eqref{wdefinition}) are constant and $\ell=0$.
  \end{claim}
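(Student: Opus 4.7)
The plan is to exploit the two simplifying hypotheses to reduce the bound on the expression in \eqref{the average} to a polynomial multi-correlation integral involving only $f_1$, which can then be controlled by a Host--Kra seminorm.

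First, I would simplify the integrand. Since every $w_j$ is constant---say $w_j \equiv c_j$, necessarily non-zero because it is the limit of the non-zero leading-coefficient sequence $u_{j,r}$---the polynomial $\tilde{A}_{\ue,r}(\bm)$ defined in \eqref{A'} no longer depends on $r$. I will denote this common value by $\tilde{A}_{\ue}(\bm) := \sum_{j} c_j\, p_{\ue,j}(\bm)$. Property iv) of Proposition \ref{PET} asserts that, for each fixed $\ue$, the non-zero members of $\{p_{\ue,j}\}_j$ are linearly independent, so no non-trivial combination of them with the non-zero coefficients $c_j$ can vanish. Combined with properties i)--iii) of Proposition \ref{PET}, this shows that each $\tilde{A}_{\ue}$ with $\ue \neq \underline{0}$ is a non-constant polynomial, that the family $\{\tilde{A}_{\ue}\}_{\ue \in [[s]]}$ is pairwise essentially distinct, and that $\tilde{A}_{\ue}(\bm) + \tilde{A}_{\ue'}(\bm) = \tilde{A}_{\ue+\ue'}(\bm)$ whenever $\ue + \ue' \in [[s]]$. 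Moreover, the hypothesis $\ell = 0$ collapses the product over $i$ in \eqref{the average} to a single factor corresponding to $h_{0,R} = f_1$.

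Next, I would eliminate the remaining $r$-dependence. The floor errors $h'_{r,\ue,\bm} \in \{0, \pm 1\}$ from \eqref{asdf1} take only finitely many values, so a partitioning argument of the Lemma \ref{errors} type lets me enlarge $Y$ to a finite set $\tilde{Y}$ absorbing all such shifts. After this adjustment and the Cauchy--Schwarz bound carried out in Step 7, the proof reduces to showing that
\begin{equation*}
\lim_{M \to \infty} \Big( \frac{1}{M} + \sum_{\bh \in \tilde{Y}^{[[s]]}} \underset{\bm \in [-M,M]^t}{\E} \Big| \int \prod_{\ue \in [[s]]} T^{\floor{\tilde{A}_{\ue}(\bm)} + h_{\ue}}(\mathcal{C}^{|\ue|} f_1) \, d\m \Big| \Big) = 0,
\end{equation*}
whenever $f_1 \perp Z_{\tilde s}(X)$ for a sufficiently large $\tilde s$ to be specified at the end.

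Finally, I would control each inner integral by a Host--Kra seminorm of $f_1$. Fixing $\bh$, the parallelepiped identity for the $\tilde{A}_{\ue}$ is exactly the algebraic input that makes the standard Gowers--Cauchy--Schwarz induction work: successive applications of the Cauchy--Schwarz inequality, mimicking the inductive definition \eqref{uniformitynorms}, bound the absolute value of the integral by a fixed power of $\nnorm{f_1}_{s_0}$ for some $s_0$ depending only on $s$, with constants independent of $\bm$. The floor-function shifts contribute only bounded integer perturbations, absorbed by one more application of Lemma \ref{errors}, while the fractional parts of the $\tilde{A}_{\ue}(\bm)$ are tamed by equidistribution on $[-M, M]^t$. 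Choosing $\tilde s \geq s_0$ then forces $\nnorm{f_1}_{s_0} = 0$, so the averaged integral tends to $0$ as $M \to \infty$, completing the proof. The main obstacle is precisely this last step: because the $\tilde{A}_{\ue}$ have real coefficients and the iterates are modified by floor functions, the classical integer-polynomial seminorm theory does not apply verbatim. I plan to run the Cauchy--Schwarz induction directly (in the spirit of the proof of Proposition \ref{PET}), relying only on the cube identity $\tilde{A}_{\ue} + \tilde{A}_{\ue^c} = \tilde{A}_{\underline{1}}$ and the essential distinctness of the $\tilde{A}_{\ue}$, which are all that is really needed to pair up terms across $\ue$ and $\ue^c$ and reduce the complexity at each step.
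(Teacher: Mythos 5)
Your reductions up to the last step coincide with the paper's: with the $w_j$ constant and $\ell=0$ the polynomials $\tilde{A}_{\ue}$ lose their $r$-dependence, the floor errors $h'_{r,\ue,\bm}\in\{0,\pm1\}$ are absorbed by enlarging $Y$ to a finite set $\tilde{Y}$, and the claim reduces to showing that $\underset{\bm\in[-M,M]^t}{\E}\big|\int\prod_{\ue\in[[s]]}T^{\floor{\tilde{A}_{\ue}(\bm)}+h_{\ue}}(\mathcal{C}^{|\ue|}f_1)\,d\m\big|$ tends to $0$ as $M\to\infty$ whenever a suitable seminorm of $f_1$ vanishes. This part is fine.

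The gap is in your final step. You propose to ``control each inner integral by a Host--Kra seminorm of $f_1$ \ldots with constants independent of $\bm$.'' For a \emph{fixed} $\bm$ the shifts $\floor{\tilde{A}_{\ue}(\bm)}+h_{\ue}$ are fixed integers, and a single correlation integral $\int\prod_{\ue}T^{n_{\ue}}(\mathcal{C}^{|\ue|}f_1)\,d\m$ admits no bound by $\nnorm{f_1}_{s_0}$: vanishing of the seminorm only controls \emph{averages} of such correlations, not individual ones (e.g.\ nothing prevents $\int f_1\cdot T^{n}\overline{f_1}\,d\m$ from being large for a particular $n$ even when $\nnorm{f_1}_{s_0}=0$). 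So the decay must come from the average over $\bm$, and here your plan is underdeveloped in two respects. First, the absolute value inside the $\bm$-average blocks a direct Cauchy--Schwarz induction; the paper removes it by squaring and applying Cauchy--Schwarz once more, which replaces $f_1$ by $F_1=\overline{f_1}\otimes f_1$ and $T$ by $S=T\times T$ and turns the quantity into a genuine multiparameter ergodic average $\underset{\bm}{\E}\int\prod_{\ue}S^{\floor{\tilde{A}_{\ue}(\bm)}+h_{\ue}}(\mathcal{C}^{|\ue|}F_1)\,d(\m\times\m)$. Second, for that average the cube identity and essential distinctness alone do not yield a one-Cauchy--Schwarz-per-variable argument: the $\tilde{A}_{\ue}$ are multilinear real polynomials (with terms like $m_1m_2$), the iterates carry floor functions, and controlling such averages by a Host--Kra seminorm requires a PET/van der Corput-type argument for several-variable real polynomials. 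The paper does not redo this; it invokes \cite[Lemma 4.3]{Frajointhardy} to conclude the limit vanishes once $\nnorm{S^{h_{\underline{1}}}F_1}_{\tilde{s},T\times T}=0$, and then transfers the hypothesis to $f_1$ via $\nnorm{F_1}_{\tilde{s},T\times T}\le\nnorm{f_1}_{\tilde{s}+1,T}^2$. Either cite such a result or supply the full multiparameter induction; as written, the crucial seminorm estimate is asserted rather than proved, and the per-$\bm$ formulation of it is false.
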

  
   \begin{proof}[Proof of the claim]
    This means that the polynomials $\tilde{A}_{\ue,r}(\bm)$ are actually independent of $r$ and we write them as $\tilde{A}_{\ue}(\bm)$. In addition, there are no functions $\psi_i$ in the iterates of the above quantity. Finally, the error terms $h''_{i,r}$ do not exist in this case. Our problem reduces to finding a bound for \begin{equation}\label{bnm}
      \frac{1}{M}+ \sum_{{\bf h}\in     Y^{[[s]]}}^{} \underset{{\bf m}\in [-M,M]^t}{\E}  \big( \underset{1\leq r\leq R}{\E}\   \Big| \int \prod_{\underline{\e}\in [[s]]}^{} T^{\floor{\tilde{A}_{\ue}(\bm)}+h'_{r,\ue,\bm} +h_{\underline{\e}}}(\mathcal{C}^{|\underline{\e}|}f_1)  \ d\m  \Big|^2     \big)^{1/2}\\
      +o_R(1),
  \end{equation}where $h'_{r,\ue,\bm}\in\{0,\pm 1\}$. Note that  \begin{multline*}
      \underset{1\leq r\leq R}{\E}  \Big| \int \prod_{\underline{\e}\in [[s]]}^{} T^{\floor{\tilde{A}_{\ue}(\bm)}+h'_{r,\ue,\bm} +h_{\underline{\e}}}(\mathcal{C}^{|\underline{\e}|}f_1)  \ d\m  \Big|^2\leq \\
      \sum_{h'_{\underline{\e}}\in \{0,\pm 1\},\e\in [[s]] } \Big| \int \prod_{\underline{\e}\in [[s]]}^{}T^{\floor{\tilde{A}_{\ue}(\bm)}+h'_{\ue} +h_{\underline{\e}}}(\mathcal{C}^{|\underline{\e}|}f_1)  \ d\m  \Big|^2,
  \end{multline*}which implies that the quantity in \eqref{bnm} is smaller than $O(1)$ times \begin{equation*}
       \frac{1}{M}+ \sum_{{\bf h}\in     \tilde{Y}^{[[s]]}}^{} \underset{{\bf m}\in [-M,M]^t}{\E}    \Big| \int \prod_{\underline{\e}\in [[s]]}^{} T^{\floor{\tilde{A}_{\ue}(\bm)} +h_{\underline{\e}}}(\mathcal{C}^{|\underline{\e}|}f_1)  \ d\m  \Big|\\
      +o_R(1)
  \end{equation*}for some new, larger finite set $\tilde{Y}$. The statement follows if we prove that \begin{equation*}
      \lim\limits_{M\to+\infty}\underset{{\bf m}\in [-M,M]^t}{\E}   \Big| \int \prod_{\underline{\e}\in [[s]]}^{} T^{\floor{\tilde{A}_{\ue}(\bm)} +h_{\underline{\e}}}(\mathcal{C}^{|\underline{\e}|}f_1)  \ d\m  \Big|   =0
  \end{equation*}for any $h_{\ue}\in \Z$. Note that the polynomials $\tilde{A}_{\ue}(\bm)$ are essentially distinct due to the statement of Proposition \ref{PET}. Squaring and applying the Cauchy-Schwarz inequality, we want to prove that \begin{equation*}
       \lim\limits_{M\to+\infty}\underset{{\bf m}\in [-M,M]^t}{\E}   \Big| \int \prod_{\underline{\e}\in [[s]]}^{} T^{\floor{\tilde{A}_{\ue}(\bm)} +h_{\underline{\e}}}(\mathcal{C}^{|\underline{\e}|}f_1)  \ d\m  \Big|^2   =0,
  \end{equation*}which can be rewritten as \begin{equation*}
      \lim\limits_{M\to+\infty}\underset{{\bf m}\in [-M,M]^t}{\E}    \int \prod_{\underline{\e}\in [[s]]}^{} S^{\floor{\tilde{A}_{\ue}(\bm)} +h_{\underline{\e}}}(\mathcal{C}^{|\underline{\e}|}F_1)  \ d(\m\times\m)  =0,
  \end{equation*}where $S=T\times T$ and $F_1=\overline{f_1}\otimes f_1$. This is an average where the iterates are real polynomials and using \cite[Lemma 4.3]{Frajointhardy}, we can prove that this last relation holds, provided that $\nnorm{ S^{h_{\underline{1} }}F_1 }_{\tilde{s},T\times T}=0$, for some positive integer $\tilde{s}$ that depends only on the polynomials $A_{\ue}$ (which depend on the original Hardy field functions $a_1,...,a_k$). However, since $\nnorm{F_1}_{\tilde{s},T\times T}\leq \nnorm{f_1}_{\tilde{s}+1,T}^2$, we get that the statement holds if the function $f_1$ satisfies $\nnorm{f_1}_{\tilde{s}+1,T}=0$. This completes the proof of our claim.
  \end{proof}

  From now on, we assume that either at least one of the functions $w_j$ is non-constant, or that $\ell\geq 1$ and we want to bound the quantity in \eqref{the average}.
   Writing $H_{i,R}=\overline{h_{i,R}}\otimes h_{i,R}$ and $S=T\times T$, we observe that \begin{align*}
    &\underset{1\leq r\leq R}{\E}  \Big| \int \prod_{\underline{\e}\in [[s]]}^{}\prod_{0\leq i\leq \ell} T^{\floor{\tilde{A}_{\ue,r}(\bm)}+h'_{r,\ue,\bm}+ \floor{\psi_{i}(\floor{u(r)})}+h'' _{i,r} +h_{\underline{\e}}}(\mathcal{C}^{|\underline{\e}|}h_{i,R})  \ d\m  \Big|^2 = \\
    &\underset{1\leq r\leq R}{\E}  \int \prod_{\underline{\e}\in [[s]]}^{}\prod_{0\leq i\leq \ell} S^{\floor{\tilde{A}_{\ue,r}(\bm)}+h'_{r,\ue,\bm}+ \floor{\psi_{i}(\floor{u(r)})}+h'' _{i,r} +h_{\underline{\e}}}(\mathcal{C}^{|\underline{\e}|}H_{i,R})  \ d(\m\times \m)\leq \\
    \bignorm{    &\underset{1\leq r\leq R}{\E}  \prod_{\underline{\e}\in [[s]]}^{}\prod_{0\leq i\leq \ell} S^{\floor{\tilde{A}_{\ue,r}(\bm)}+h'_{r,\ue,\bm}+ \floor{\psi_{i}(\floor{u(r)})}+h'' _{i,r} +h_{\underline{\e}}}(\mathcal{C}^{|\underline{\e}|}H_{i,R})  }_{L^2(\m\times \m)}
     \end{align*}due to the Cauchy-Schwarz inequality.
     Invoking\footnote{Note that all the error terms depending on $r$ in the iterates take values on finite sets.} Lemma \ref{errors}, we have \begin{multline*}
          \bignorm{    \underset{1\leq r\leq R}{\E}  \prod_{\underline{\e}\in [[s]]}^{}\prod_{0\leq i\leq \ell} S^{\floor{\tilde{A}_{\ue,r}(\bm)}+h'_{r,\ue,\bm}+ \floor{\psi_{i}(\floor{u(r)})}+h'' _{i,r} +h_{\underline{\e}}}(\mathcal{C}^{|\underline{\e}|}H_{i,R})  }_{L^2(\m\times \m)}\ll_{s,\ell} \\
           \sup_{|c_{r,\bm,\bh}|\leq 1}\sup_{{\norm{H_{i}}_{\infty}\leq 1} } \bignorm {  \underset{1\leq r\leq R}{\E}  c_{r,\bm,\bh} \prod_{\underline{\e}\in [[s]]}^{}\prod_{0\leq i\leq \ell} S^{\floor{\tilde{A}_{\ue,r}(\bm)}+\floor{\psi_{i}(\floor{u(r)})}+h_{\e}}(\mathcal{C}^{|\underline{\e}|}H_{i})  }_{L^2(\m\times \m)},
     \end{multline*}where $ H_{0}=\overline{f_1}\otimes f_1$ and $\bh=(h_{\ue},\ue\in[[s]])$. Note that since both $s,\ell$ depend on the original Hardy field functions $a_1,...,a_k$, the implicit constant in the last bound depends only on $a_1,...,a_k$ (which we omit from the subscripts).

     Putting everything together, we get that \begin{multline*}
        \frac{1}{M}+ \sum_{{\bf h}\in     Y^{[[s]]}}^{} \underset{{\bf m}\in [-M,M]^t}{\E}  \  \underset{1\leq r\leq R}{\E}\  B_{\bm,\bh}(r) +o_R(1) \\ \ll \frac{1}{M}+
          \sum_{{\bf h}\in     Y^{[[s]]}}^{} \underset{{\bf m}\in [-M,M]^t}{\E}  \sup_{|c_{r,\bm,\bh}|\leq 1}\ \sup_{\underset{1\leq i\leq \ell}{\norm{H_{i}}_{\infty}\leq 1}}\\ \bignorm {  \underset{1\leq r\leq R}{\E}  c_{r,\bm,\bh} \prod_{\underline{\e}\in [[s]]}^{}\prod_{0\leq i\leq \ell} S^{\floor{\tilde{A}_{\ue,r}(\bm)}+\floor{\psi_{i}(\floor{u(r)})}+h_{\e}}(\mathcal{C}^{|\underline{\e}|}H_{i})  }_{L^2(\m\times \m)}^{1/2}+o_R(1).
          \end{multline*}
          Now, we choose functions $H_{1,R},...,H_{\ell,R}$ so that the above average (over $R$) is $1/R$ close to the supremum. Then, we take the limit as $R\to+\infty$ and apply Lemma \ref{floor}
 to deduce that the limsup of this last quantity is bounded by $O_{u}(1)$ times (which is, of course, $O_{a_1,...,a_k}(1)$)\begin{multline*}
              \sum_{{\bf h}\in     Y^{[[s]]}}^{}\underset{{\bf m}\in [-M,M]^t}{\E} \limsup\limits_{R\to+\infty}
            \sup_{|c_{r,\bm,\bh}|\leq 1}\\ \bignorm {  \underset{1\leq r\leq R}{\E}  c_{r,\bm,\bh} \prod_{\underline{\e}\in [[s]]}^{}\prod_{0\leq i\leq \ell} S^{\floor{\widehat{A}_{\ue,r}(\bm)}+\floor{\psi_{i}(r)}+h_{\ue}}(\mathcal{C}^{|\underline{\e}|}H_{i,R})  }_{L^2(\m\times \m)}^{1/2},
 \end{multline*}where  we define (recall \eqref{A'}) \begin{equation*}
     \widehat{A}_{\ue,r}(\bm):= \sum_{1\leq j\leq k} \ p_{\underline{\e},j}({\bf m})w_{j}(r).
 \end{equation*}and $H_{0,R}=\overline{f_1}\otimes f_1$.
  Finally, we can combine the integer parts in the iterates of the above quantity
 (using again Lemma \ref{errors} to remove the error terms). In conclusion, our original average is bounded by $O(1)$ times\begin{multline}\label{Step7average}
     \frac{1}{M} +  \underset{{\bf m}\in [-M,M]^t}{\E} \Big( \limsup\limits_{R\to+\infty}\sup_{|c_{r,\bm}|\leq 1}
            \bignorm {  \underset{1\leq r\leq R}{\E} c_{r,\bm} \prod_{\underline{\e}\in [[s]]}^{}\prod_{0\leq i\leq \ell} S^{\floor{\widehat{A}_{\ue,r}(\bm)+\psi_{i}(r)}}(\mathcal{C}^{|\underline{\e}|}H_{i,R})  }_{L^2(\m\times \m)}^{1/2}\Big)\leq\\
           \ \ \ \frac{1}{M}+\Big(\  \underset{{\bf m}\in [-M,M]^t}{\E} \limsup\limits_{R\to+\infty}\ \sup_{|c_{r,\bm}| \leq 1}\ 
          \bignorm {  \underset{1\leq r\leq R}{\E} c_{r,\bm} \prod_{\underline{\e}\in [[s]]}^{}\prod_{0\leq i\leq \ell} S^{\floor{\widehat{A}_{\ue,r}(\bm)+\psi_{i}(r)} }(\mathcal{C}^{|\underline{\e}|}H_{i,R})  }_{L^2(\m\times \m)}   \Big)^{1/2}
 \end{multline}by the Cauchy-Schwarz inequality. Note that all implied asymptotic constants above did not depend on either $M$ or $R$.

  \subsection{Finishing the proof}
  
We describe the final step here. Our main observation is that $\widehat{A}_{\ue,r}(\bm)+\psi_i(r)$, when viewed as a function of $r$, is a sum of sub-linear functions that dominate the function $\log r$ and monomials (possibly of degree 0). Our goal is to use the bounds in Proposition \ref{sublinearseminorm} to deduce our result. However, it is not immediately obvious that in our case a linear combination of functions of the above form dominates the logarithmic function $\log r$ (the statement in general is false and a counterexample is given by the pair $(\log^2 t+\log t,\log^2 t)$). We shall establish that this is true for all $\bm\in \Z^t$ outside a negligible set. We recall here that for every large enough $r$ (large enough for $w_j(r)$ to be non-zero), the $\widehat{A}_{\ue,r}(\bm)$ are pairwise essentially distinct polynomials in the variable $\bm$ and in addition satisfy \begin{equation*}
      \widehat{A}_{\ue,r}(\bm)+\widehat{A}_{\ue^c,r}(\bm)=\widehat{A}_{\underline{1},r}(\bm).
  \end{equation*}

  We will use the following lemma:
  \begin{lemma}\label{rootdensity}
  Let $p\in \R^t({\bf x})$ be a non-zero real polynomial of degree $d$. Then, the set of integer solutions of the equation \begin{equation*}
     p({\bf m})=0
 \end{equation*}in $[-M,M]^t$ has $O_{d}(  M^{t-1})$ elements.
 \end{lemma}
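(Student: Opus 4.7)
The plan is to prove this by induction on the number of variables $t$, which is a standard Schwartz--Zippel style argument. The base case $t=1$ is immediate: any non-zero real polynomial of degree $d$ in one variable has at most $d$ roots, which gives $O_d(1) = O_d(M^0)$ integer solutions in $[-M,M]$.

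For the inductive step, assume the result holds for polynomials in $t-1$ variables of any degree $d' \leq d$. Given a non-zero polynomial $p(x_1,\ldots,x_t)$ of degree $d$, I would write it as a polynomial in the last variable $x_t$:
\begin{equation*}
p(x_1,\ldots,x_t) = \sum_{j=0}^{k} q_j(x_1,\ldots,x_{t-1})\, x_t^j,
\end{equation*}
where $k\leq d$ and at least one $q_j$ is a non-zero polynomial in $t-1$ variables of degree at most $d$. Pick such an index $j_0$.

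I would then split the count of integer solutions $\bm = (m_1,\ldots,m_t) \in [-M,M]^t$ according to whether $q_{j_0}(m_1,\ldots,m_{t-1})$ vanishes. If $q_{j_0}(m_1,\ldots,m_{t-1}) \neq 0$, then the polynomial $p(m_1,\ldots,m_{t-1}, x_t)$ is a non-zero polynomial in $x_t$ of degree at most $d$, hence has at most $d$ integer roots; the total contribution from this case is at most $d(2M+1)^{t-1} = O_d(M^{t-1})$. If instead $q_{j_0}(m_1,\ldots,m_{t-1}) = 0$, then the inductive hypothesis applied to $q_{j_0}$ (a non-zero polynomial of degree at most $d$ in $t-1$ variables) bounds the number of such $(m_1,\ldots,m_{t-1}) \in [-M,M]^{t-1}$ by $O_d(M^{t-2})$. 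For each such tuple, $m_t$ may take at most $2M+1$ values, so the total contribution from this case is $O_d(M^{t-2}) \cdot (2M+1) = O_d(M^{t-1})$.

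Adding the two contributions yields the claimed bound $O_d(M^{t-1})$. There is no real obstacle here; the only mild subtlety is to check that $j_0$ can be chosen with $q_{j_0}$ non-zero, which is guaranteed by the assumption that $p$ itself is non-zero. The degree bound on $q_{j_0}$ ensures that the implicit constants in the induction depend only on $d$ (and $t$, but since $t$ is fixed once the polynomial is given, this is absorbed into the $O_d$ notation).
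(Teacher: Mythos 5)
Your proof is correct and follows essentially the same route as the paper: induction on the number of variables, expanding $p$ in the last variable, bounding the tuples where the chosen non-zero coefficient polynomial vanishes by the inductive hypothesis, and using the at-most-$d$-roots bound in $m_t$ otherwise. The only cosmetic difference is that you make the two-case split explicit (and allow the constant coefficient $q_0$ as the chosen non-zero one), which if anything is slightly cleaner than the paper's phrasing.
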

  
  \begin{proof}
  For $t=1$ it is obvious, since the polynomial has at most $d$ roots. Assume we have proven the result for $t-1$. We can write $p(\bm)$ in the form \begin{equation*}
      p(m_1,...,m_t)=a_{d'}(m_1,...,m_{t-1}) m_t^{d'}+\cdots+ a_1(m_1,...,m_{t-1} )m_t+a_0(m_1,....,m_{t-1}           )   
  \end{equation*}for some $d'\leq d$. At least one of the polynomials $a_i(m_1,...,m_{t-1})$ with $1\leq i\leq d'$ is not identically zero and thus has at most $O_{d,t}(M^{t-2})$ zeroes in $[-M,M]^{t-1}$. If $(x_1,...,x_{t-1})$ is not one of these zeroes, then $p(x_1,...,x_{t-1}, m_t )$ is non-trivial as a polynomial in the variable $m_t$. Therefore, it is satisfied by no more than $d$ values of $m_t$. Summing over all tuples $(m_1,...,m_{t-1})\in [-M,M]^{t-1}$, we get the result.
 \end{proof}

  \begin{corollary}\label{linearcombhardy}
  Let $a_1\ll...\ll a_k$ be functions in $\mathcal{H}$ and let $p_1(\bm),...,p_k(\bm)\in  \R^t({\bf x})$ be non-zero linearly independent polynomials. Then, for all $\bm \in \Z^t$ outside a set of density 0, we have that \begin{equation}\label{add}
      p_1(\bm)a_1+\cdots+p_k(\bm)a_k\sim a_k.
  \end{equation}
  \end{corollary}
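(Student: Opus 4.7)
The plan is to group the functions by asymptotic growth, reduce the assertion to the non-vanishing of a single polynomial in $\bm$, and then invoke Lemma \ref{rootdensity}. Let $I\subseteq\{1,\dots,k\}$ be the set of indices $i$ for which $a_i$ has the same growth rate as $a_k$ (note $k\in I$). For $i\in I$, since $\mathcal{H}$ is a Hardy field we have $\lim_{t\to+\infty} a_i(t)/a_k(t)=c_i$ for some non-zero real $c_i$, while for $i\notin I$ we have $a_i\prec a_k$ so $a_i/a_k\to 0$.

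The idea is then to consider, for each fixed $\bm\in\Z^t$, the ratio
\begin{equation*}
\frac{p_1(\bm)a_1(t)+\cdots+p_k(\bm)a_k(t)}{a_k(t)} \;=\; \sum_{i\in I} c_i\, p_i(\bm)\;+\;o_t(1),
\end{equation*}
where the error term depends on $\bm$ (through the fixed coefficients $p_i(\bm)$) but tends to $0$ as $t\to+\infty$. Thus $\sum_i p_i(\bm)a_i \sim a_k$ holds precisely when the polynomial
\begin{equation*}
Q(\bm) := \sum_{i\in I} c_i\, p_i(\bm)
\end{equation*}
is non-zero at $\bm$.

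The key input is that $Q$ is not the zero polynomial. Indeed, since $\{p_1,\dots,p_k\}$ is a linearly independent collection and all the $c_i$ (for $i\in I$) are non-zero, any non-trivial linear combination of the $p_i$ (including $Q$) is a non-zero polynomial. The main obstacle — and it is a mild one — is just making this linear independence / grouping argument clean, but the setup above handles it immediately.

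Given that $Q$ is a non-zero polynomial, Lemma \ref{rootdensity} shows that the number of integer zeros of $Q$ in $[-M,M]^t$ is $O(M^{t-1})$, which is negligible compared to the $(2M+1)^t$ total lattice points. Hence the set $\{\bm\in\Z^t\colon Q(\bm)=0\}$ has density zero, and outside this set the desired asymptotic $p_1(\bm)a_1+\cdots+p_k(\bm)a_k\sim a_k$ holds.
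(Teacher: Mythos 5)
Your proof is correct and follows essentially the same route as the paper: group the $a_i$ with the same growth rate as $a_k$, observe that the normalized sum converges to the polynomial $\sum_{i\in I}c_ip_i(\bm)$, which is non-zero by linear independence of the $p_i$ and non-vanishing of the $c_i$, and then apply Lemma \ref{rootdensity} to see the zero set has density zero. No gaps to report.
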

  
  \begin{proof}
  Let $a_{k_0},...,a_k$ be the functions among the $a_i$ that have the same growth rate as $a_k$. Then, for $k_0\leq j\leq k$, we can write $a_j(t)=c_ja_k(t)+b_j(t)$, where $c_j\in \R^{*}$ and $b_j(t)\prec a_k(t)$. Then, the function in \eqref{add} has the same growth rate as the function \begin{equation*}
      \big(c_{k_0}p_{k_0}(\bm)+\cdots+c_k p_k(\bm)\big)a_k(t)
  \end{equation*}unless of course $c_{k_0}p_{k_0}(\bm)+\cdots +c_k p_k(\bm)=0$. However, the linear independence hypothesis implies that this polynomial is non-zero, and thus the set of of $\bm\in \Z^t$ for which this last relation holds has density 0 in $\Z^t$ by Lemma \ref{rootdensity}. The conclusion follows.
  \end{proof}
  
  We use this corollary to prove the following:
  
  \begin{claim}
      For all $\bm\in \Z^{t}$ outside a set $\Lambda$ of density 0, we have that the functions (in the variable $r$) \begin{equation*}
          \widehat{A}_{\ue,r}(\bm)+\psi_i(r)=\sum_{1\leq j\leq k} \ p_{\underline{\e},j}({\bf m})w_{j}(r)+\psi_i(r)
      \end{equation*}are a sum of a sub-linear function and a real polynomial. In addition, we have that they either dominate the function $\log r$, or they are a constant function.
  \end{claim}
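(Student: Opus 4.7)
The plan is to split each function $\widehat{A}_{\ue,r}(\bm)+\psi_i(r)$ into a polynomial in $r$ plus a sub-linear Hardy field function, and then combine condition iv) of Proposition \ref{PET} with Corollary \ref{linearcombhardy} and Lemma \ref{rootdensity} to rule out pathological $\bm$'s. Partition $\{1,\dots,k\}=J_{\mathrm{pol}}\sqcup J_{\mathrm{sl}}\sqcup J_{\mathrm{const}}$ according to whether $w_j$ is a non-constant monomial, a non-constant sub-linear (non-sub-fractional) function, or constant. Writing
\[
\widehat{A}_{\ue,r}(\bm)+\psi_i(r)=P_{\ue,i}(\bm;r)+S_{\ue,i}(\bm;r),
\]
with $P_{\ue,i}(\bm;r):=\sum_{j\in J_{\mathrm{pol}}\cup J_{\mathrm{const}}}p_{\ue,j}(\bm)w_j(r)$ a real polynomial in $r$ and $S_{\ue,i}(\bm;r):=\sum_{j\in J_{\mathrm{sl}}}p_{\ue,j}(\bm)w_j(r)+\psi_i(r)$ a sub-linear element of $\mathcal{H}$ (each summand being sub-linear, $\psi_i$ in particular sub-fractional, and $\mathcal{H}$ being a vector space), the decomposition part of the claim is immediate.

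For the growth assertion, consider first the case where $P_{\ue,i}(\bm;r)$ is non-constant as a polynomial in $r$: then it dominates $r^\delta$ for some $\delta>0$ and thus $\log r$, and the sub-linear $S_{\ue,i}$ cannot change this. If instead $P_{\ue,i}(\bm;r)$ is constant in $r$, I examine $S_{\ue,i}(\bm;r)$. When every $p_{\ue,j}$ with $j\in J_{\mathrm{sl}}$ is the zero polynomial, $S_{\ue,i}$ reduces to $\psi_i(r)$, which dominates $\log r$ for $i\neq\ell$ and is identically zero for $i=\ell$. Otherwise, condition iv) of Proposition \ref{PET} asserts that the non-zero polynomials among $\{p_{\ue,j}\}_{j\in J_{\mathrm{sl}}}$ are linearly independent, so Corollary \ref{linearcombhardy} applied to those $w_j$'s with non-zero polynomial coefficient (ordered by growth) yields, outside a density-zero subset of $\Z^t$, that $\sum_{j\in J_{\mathrm{sl}}}p_{\ue,j}(\bm)w_j(r)$ is asymptotic to a non-zero scalar multiple of the largest such $w_j$. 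Because every $w_j$ with $j\in J_{\mathrm{sl}}$ dominates some fractional power $r^{\delta_j}$, while $\psi_i$ is sub-fractional, the additional term $\psi_i$ does not affect the leading growth, and we obtain $S_{\ue,i}(\bm;r)\succ\log r$.

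Taking the union of these density-zero exceptional sets over the finitely many pairs $(\ue,i)\in [[s]]\times\{0,1,\dots,\ell\}$ still gives a density-zero set $\Lambda$, completing the proof. The main subtlety is handling coincidences in growth rate among the $w_j$'s when applying Corollary \ref{linearcombhardy}: $w_j$'s sharing the same growth class must first be grouped as $w_j=\lambda_j\omega+(\text{lower order})$, and one must verify that the resulting coefficient polynomial $q_\omega(\bm)=\sum_{j:w_j\sim\omega}\lambda_j p_{\ue,j}(\bm)$ is still non-zero whenever at least one constituent $p_{\ue,j}$ is non-zero, which is precisely what the linear independence of the non-zero $p_{\ue,j}$'s in condition iv) of Proposition \ref{PET} ensures.
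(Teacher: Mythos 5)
Your proposal is correct and follows essentially the same route as the paper: decompose $\widehat{A}_{\ue,r}(\bm)+\psi_i(r)$ into its polynomial (monomial/constant $w_j$) and sub-linear parts, use condition iv) of Proposition \ref{PET} together with Corollary \ref{linearcombhardy} (hence Lemma \ref{rootdensity}) to ensure the sub-linear part is comparable to the largest $w_j$ outside a density-zero set, and use that those $w_j$ dominate fractional powers while $\psi_i$ is sub-fractional. The only differences are presentational — the paper fixes $\Lambda$ once by applying Corollary \ref{linearcombhardy} to all subcollections of the $w_j$, and the same-growth-rate grouping you flag at the end is already handled inside the proof of Corollary \ref{linearcombhardy} — so no extra argument is needed there.
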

  
  \begin{proof}[Proof of the claim]

   We use Corollary \ref{linearcombhardy} to find a set $\Lambda\subset \Z^t$ of density zero, so that for $\bm\notin \Lambda$, we have that for any $\ue\in [[s]] $ and any subcollection $J$ of the indices $j\in \{1,2,...,k\}$, we have that \begin{equation*}
       \sum_{j\in J} p_{\ue,j}(\bm) w_j(r)\sim w_{\max(J)}(r),
   \end{equation*}where $w_{\max(J)}$ denotes a function in the collection $\{w_j,j\in J\}$ that has maximal growth rate. We show that this set $\Lambda$ is sufficient for the statement of the claim to hold.

   We split the $w_j$ into two sets: the set $S_1$ consists of those functions that are monomials, while $S_2$ contains the rest (namely the sub-linear functions). Reordering, if necessary, we may assume that $S_1=\{w_1,...,w_{k_0}\}$ while $S_2=\{w_{k_0+1},...,w_{k}\}$. We write \begin{equation}\label{splitting}
       \widehat{A}_{\ue,r}(\bm)=\sum_{j=1}^{k_0}p_{\ue,j}(\bm)w_j(r)+\sum_{j=k_0+1}^{k}p_{\ue,j}(\bm)w_j(r).
   \end{equation}For a fixed $\bm\notin \Lambda$, the first summand is a polynomial in the variable $r$ (possibly constant), while the second is a sub-linear function of $r$. Since the sub-linear functions $w_j$ with  $ k_0+1\leq j\leq k $ dominate some fractional power, we deduce that $\widehat{A}_{\ue,r}(m)$ is either a constant function \footnote{ This is the case when $p_{\ue,j}(\bm)\equiv 0$ for $j\geq    k_0+1$ and the monomials $w_j$ are constant polynomials in the variable $r$.}, or the sum of a polynomial and a sub-linear function that dominates some fractional power, since \begin{equation*}
       \sum_{j=k_0+1}^{k}p_{\ue,j}(\bm)w_j(r)\sim w_{\max\{k_0+1,...,k\}}(r)
   \end{equation*}where $w_{\max(S)}$ for $S\subseteq\{1,...,k\}$ is defined above and this is a sub-linear (but not sub-fractional) function.
   
   In addition, if $\psi_i(t)\neq\psi_{\ell}(t)$ (recall that $\psi_{\ell}(t)\equiv 0$), we can use the same argument to show that \begin{equation*}
       \widehat{A}_{\ue,r}(\bm)+\psi_i(r)
   \end{equation*} is a sum of a sub-linear function that dominates $\log r$ and a polynomial (we use the fact that $\psi_i$ and $w_j$ (for any $j$) have distinct growth rates, since the $\psi_i$ is a sub-fractional function.)
  \end{proof}
  
  Let $\Lambda\subset \Z^{t}$ be the zero density set given by the above claim. 
  Now, we isolate the iterate $S^{\floor{\widehat{A}_{\underline{1},r}(\bm)+\psi_0(t) }}(\mathcal{C}^{|\underline{1}|}H_{0})$ in \eqref{Step7average} and we also assume that $\bm \notin \Lambda$. The above proof implies that the Hardy field function involved in this iterate is a sum of a sub-linear function (that dominates the logarithm) and a polynomial. In order to apply the results of Section \ref{sublinearsection}, we have to show that the differences of this function with the rest of the functions in the iterates satisfies the same condition. That is, for every $(\ue,i)\neq (\underline{1},0)$, we have to show that the function \begin{equation*}
     \big( \widehat{A}_{\underline{1},r}(\bm)+\psi_0(r) \big)- \big( A_{\ue,r}(\bm)+\psi_i(r)          \big)
  \end{equation*}is a sub-linear function plus a polynomial, or is bounded. Rewrite the above as \begin{equation*}
      \widehat{A}_{\ue^c,r}(\bm)+(\psi_0(r)-\psi_i(r)).
  \end{equation*} 
  
  If $i\neq 0$, then we use the fact that $\psi_0-\psi_i\succ \log t$ and the argument of the previous proof to establish that \begin{equation*}
       \big( \widehat{A}_{\underline{1},r}(\bm)+\psi_0(r) \big)- \big( \widehat{A}_{\ue,r}(\bm)+\psi_i(r)          \big)\succ \log r
  \end{equation*}for all $\bm$ outside a zero density set (which we attach to the set $\Lambda$) and that this function is the sum of a polynomial and a sub-linear function.
  
  If $i=0$, then the above difference is equal to $\widehat{A}_{\ue^c,r}(\bm)$ which is either the sum of a polynomial and a sub-linear function (that dominates $\log r$), or a constant function of $r$. We use this characterization to split $[[s]]$ into two subsets: $A_2$ contains those $\ue\in[[s]]$, for which $\widehat{A}_{\ue^c,r}(\bm)$ satisfies the first condition, while the set $A_1$ contains the rest. Note that if $\ue\in A_1$, then the difference \begin{equation*}
       \big( \widehat{A}_{\underline{1},r}(\bm)+\psi_0(r) \big)- \big( \widehat{A}_{\ue,r}(\bm)+\psi_0(r)          \big)
  \end{equation*}is a (non-constant) polynomial in the variable $\bm$ and we denote it by $c_{\ue}(\bm)$. Thus, we can write \begin{equation*}
      \big( \widehat{A}_{\ue,r}(\bm)+\psi_0(r)          \big)= \big( \widehat{A}_{\underline{1},r}(\bm)+\psi_0(r) \big) -c_{\ue}(\bm).
  \end{equation*}Note that the polynomials $c_{\ue}(\bm)$ are essentially distinct, since the $\widehat{A}_{\ue,r}$ are essentially distinct.

  In view of the above, we rewrite the quantity in \eqref{Step7average} as \begin{multline}\label{kos}
      \frac{1}{M}+\Big(\  \underset{{\bf m}\in [-M,M]^t}{\E} \limsup\limits_{R\to+\infty}\ \sup_{|c_{r,\bm}| \leq 1}\ 
          \bignorm {  \underset{1\leq r\leq R}{\E} c_{r,\bm} \prod_{\underline{\e}\in A_1}S^{\floor{\widehat{A}_{\underline{1},r}(\bm)+\psi_0(r)  -c_{\ue}(\bm)}          }(\mathcal{C}^{|\ue|} H_0)\\
          \prod_{\underline{\e}\in A_2} S^{\floor{\widehat{A}_{{\ue},r}(\bm)+\psi_0(r)}}(\mathcal{C}^{|\ue|} H_0)\prod_{1\leq i\leq \ell} \prod_{\ue\in[[s]]}
         S^{\floor{\widehat{A}_{\ue,r}(\bm)+\psi_{i}(r)} }(\mathcal{C}^{|\underline{\e}|}H_{i,R})  }_{L^2(\m\times \m)}   \Big)^{1/2}.
  \end{multline}
  Note that \begin{equation*}
      \floor{\widehat{A}_{\underline{1},r}(\bm)+\psi_0(r)  -c_{\ue}(\bm)}=\floor{\widehat{A}_{\underline{1},r}(\bm)+\psi_0(r)}+\floor{-c_{\ue}(\bm)}+h_{\ue,r,\bm},
  \end{equation*}
  where $h_{\ue,r,\bm}\in \{0,\pm 1\}$. Thus, we rewrite \eqref{kos} as \begin{multline}
        \frac{1}{M}+\Big(\  \underset{{\bf m}\in [-M,M]^t}{\E} \limsup\limits_{R\to+\infty}\ \sup_{|c_{r,\bm}| \leq 1}\ 
          \bignorm {  \underset{1\leq r\leq R}{\E} c_{r,\bm}\ S^{\floor{\widehat{A}_{\underline{1},r}(\bm)+\psi_0(r)}         }
          (\prod_{\underline{\e}\in A_1} \mathcal{C}^{|\ue|} S^{\floor{-c_{\ue}(\bm)} +h_{\ue,r,\bm}}H_0)\\
          \prod_{\underline{\e}\in A_2} S^{\floor{\widehat{A}_{\underline{\e},r}(\bm)+\psi_0(r)}}(\mathcal{C}^{|\ue|} H_0)\prod_{1\leq i\leq \ell} \prod_{\ue\in[[s]]}
         S^{\floor{\widehat{A}_{\ue,r}(\bm)+\psi_{i}(r)} }(\mathcal{C}^{|\underline{\e}|}H_{i,R})  }_{L^2(\m\times \m)}   \Big)^{1/2}.
  \end{multline}Since $h_{\ue,r,\bm}$ take values in $\{0,\pm 1\}$, we can use the argument in Lemma \ref{errors} to deduce that \begin{multline*}
       \bignorm {  \underset{1\leq r\leq R}{\E} c_{r,\bm}\ S^{\floor{\widehat{A}_{\underline{1},r}(\bm)+\psi_0(r)}         }
          (\prod_{\underline{\e}\in A_1} \mathcal{C}^{|\ue|} S^{\floor{-c_{\ue}(\bm)} +h_{\ue,r,\bm}}H_0)\\
          \prod_{\underline{\e}\in A_2} S^{\floor{\widehat{A}_{\underline{\e},r}(\bm)+\psi_0(r)}}(\mathcal{C}^{|\ue|} H_0)\prod_{1\leq i\leq \ell} \prod_{\ue\in[[s]]}
         S^{\floor{\widehat{A}_{\ue,r}(\bm)+\psi_{i}(r)} }(\mathcal{C}^{|\underline{\e}|}H_{i,R})  }_{L^2(\m\times \m)} \leq \\
         \sum_{\underset{\ue\in A_1}{h_{\ue,\bm}\in\{0,\pm 1\}}}\ \sup_{|c'_{r,\bm}|\leq 1}\ \bignorm {  \underset{1\leq r\leq R}{\E} c'_{r,\bm}\ S^{\floor{\widehat{A}_{\underline{1},r}(\bm)+\psi_0(r)}         }
          (\prod_{\underline{\e}\in A_1} \mathcal{C}^{|\ue|} S^{\floor{-c_{\ue}(\bm)} +h_{\ue,\bm}}H_0)\\
          \prod_{\underline{\e}\in A_2} S^{\floor{\widehat{A}_{\underline{\e},r}(\bm)+\psi_0(r)}}(\mathcal{C}^{|\ue|} H_0)\prod_{1\leq i\leq \ell} \prod_{\ue\in[[s]]}
         S^{\floor{\widehat{A}_{\ue,r}(\bm)+\psi_{i}(r)} }(\mathcal{C}^{|\underline{\e}|}H_{i,R})  }_{L^2(\m\times \m)}.
  \end{multline*} Thus, our problem reduces to showing that \begin{multline}\label{final}
      \frac{1}{M}+\Big(\  \underset{{\bf m}\in [-M,M]^t}{\E} \limsup\limits_{R\to+\infty}\ \sup_{|c_{r,\bm}| \leq 1}\ 
          \bignorm {  \underset{1\leq r\leq R}{\E} c_{r,\bm}\ S^{\floor{\widehat{A}_{\underline{1},r}(\bm)+\psi_0(r)}         }
          (\prod_{\underline{\e}\in A_1} \mathcal{C}^{|\ue|} S^{\floor{-c_{\ue}(\bm)} +h_{\ue,\bm}}H_0)\\
          \prod_{\underline{\e}\in A_2} S^{\floor{\widehat{A}_{\underline{\e},r}(\bm)+\psi_0(r)}}(\mathcal{C}^{|\ue|} H_0)\prod_{1\leq i\leq \ell} \prod_{\ue\in[[s]]}
         S^{\floor{\widehat{A}_{\ue,r}(\bm)+\psi_{i}(r)} }(\mathcal{C}^{|\underline{\e}|}H_{i,R})  }_{L^2(\m\times \m)}   \Big)^{1/2}
  \end{multline}goes to $0$ as $M\to+\infty$ (that is, our error terms in the iterates do not depend on $r$ now).
  
  In order to be able to apply Proposition \ref{sublinearseminorm}, we need to check that the degree, type and size (as defined in the beginning of Section \ref{sublinearsection}) of the given collection of functions in the iterates is constant, as $\bm$ ranges over $\Z^t$ (so that we can use bounds that are uniform in the variable $\bm$). Recall \eqref{splitting}: the "polynomial component" of $\widehat{A}_{\ue,r}(\bm) +\psi_i(r)$ is \begin{equation*}
      \sum_{j=1}^{k_0}p_{\ue,j}(\bm)w_j(r),
  \end{equation*}where the functions $w_j(r)$ are polynomials. The conclusion follows easily: indeed, for any two real polynomials $p_1(\bm)$ and $p_2(\bm)$ we must have that they are either equal for all $\bm$, or the set of integer solutions of $p_{1}(\bm)=p_{2}(\bm)$ has density zero. Comparing coefficients, it is straightforward to see that outside a set $\Lambda'$ of density zero, the degree, type and size of the collection of functions in the iterates in \eqref{final} is independent of $\bm$ for any $\bm \notin \Lambda$ (and they all depend only on the initial Hardy field functions $a_1,...,a_k$). In addition, the elements of the leading vector of this collection are polynomials in $\bm$ (we are not concerned with their actual form here). Therefore, we write the leading vector as $(u_1(\bm),...,u_{s_0}(\bm))$, where $s_0\leq s$ is the size of the given collection of functions, which does not depend on $\bm$ outside our "negligible" set. Furthermore, for $\bm$ outside a set of density zero (which we attach to the set $\Lambda'$), we have that all the numbers $u_1(\bm),...,u_{s_0}(\bm)$ are non-zero, and thus we can now apply Proposition \ref{sublinearseminorm} for all $\bm$ outside a negligible subset of $\Z^t$.

  Write ${\bf h}_{\bm}:=(h_{\ue,\bm},\ue\in A_1)$ and $$F_{\bm,{\bf h}_{\bm}}:=\prod_{\underline{\e}\in A_1} \mathcal{C}^{|\ue|} S^{\floor{-c_{\ue}(\bm)} +h_{\ue,\bm}}H_0.$$
  Now, for any $\bm\notin \Lambda\cup \Lambda'$ we apply Proposition \ref{sublinearseminorm} (note we can have at most $2^s(\ell+1)$ different Hardy field functions in the iterates) to deduce that there exist positive integers $t',s'$, a finite set $\widetilde{Y}$ and polynomials $p'_{\ue,j}$, where $\ue\in [[s']]$ and $1\leq j\leq s_0$ (which depend only on the original functions $a_1,...,a_k$), such that \begin{multline*}
      \limsup\limits_{R\to+\infty}\ \sup_{|c_{r,\bm}| \leq 1}\ 
          \bignorm {  \underset{1\leq r\leq R}{\E} c_{r,\bm} \ S^{\floor{\widehat{A}_{\underline{1},r}(\bm)+\psi_0(r)}         }
          (\prod_{\underline{\e}\in A_1} \mathcal{C}^{|\ue|} S^{\floor{-c_{\ue}(\bm)} +h_{\ue,\bm}}H_0)\\
          \prod_{\underline{\e}\in A_2} S^{\floor{\widehat{A}_{\ue,r}(\bm)+\psi_0(r)}}(\mathcal{C}^{|\ue|} H_0)\prod_{1\leq i\leq \ell} \prod_{\ue\in[[s]]}
         S^{\floor{\widehat{A}_{\ue,r}(\bm)+\psi_{i}(r)} }(\mathcal{C}^{|\underline{\e}|}H_{i,R})  }_{L^2(\m\times \m)}^{2^{t'}}   \ll_{a_1,...,a_k} \\
       \frac{1}{M}+  \sum_{{\bf h}\in \widetilde{Y}^{[[s']]}}^{}\underset{\bm'\in [-M,M]^{t'}}{\E} \nnorm{\prod_{\ue'\in [[s']]} S^{\floor{A_{\ue'}(\bm',\bm)} +h_{\ue'}}F_{\bm,{\bf h}_{\bm}}}_{2^{s+1}(\ell+1),S}.
  \end{multline*}Here, we have defined \begin{equation*}
      A_{\ue'}(\bm',\bm)=\sum_{j=1}^{s_0}p'_{\ue,j}(\bm')u_j(\bm).
  \end{equation*}
  Therefore, since the set $\Lambda\cup \Lambda'$ has density zero, we use the H\"{o}lder inequality to get that the quantity in \eqref{final} is $\ll_{a_1,...,a_k}$ 
  \begin{equation*}
   \underset{{\bf m}\in [-M,M]^t}{\E}\big(\sum_{{\bf h}\in \tilde{Y}^{[[s']]}}^{}\underset{\bm'\in [-M,M]^{t'}}{\E} \nnorm{\prod_{\ue'\in [[s']]} S^{\floor{A_{\ue'}(\bm',\bm)} +h_{\ue'}}F_{\bm,{\bf h}_{\bm}}}_{2^{s+1}(\ell+1),S}^{1/2^{t'}} \big)^{1/2}   +o_M(1).
  \end{equation*}

  Now, we take the limit as $M\to+\infty$ and use the power mean inequality to bound the $\limsup$ of the above quantity by $O_{a_1,...,a_k}(1)$ times a power of \begin{equation*}
    \limsup\limits_{M\to+\infty}  \sum_{{\bf h}\in \tilde{Y}^{[[s']]}}^{}\underset{\bm'\in [-M,M]^{t'}}{\E}\ \underset{\bm\in [-M,M]^{t}}{\E} \nnorm{\prod_{\ue'\in [[s']]} S^{\floor{A_{\ue'}(\bm',\bm)} +h_{\ue'}}F_{\bm,{\bf h}_{\bm}}}_{2^{s+1}(\ell+1),S}.
  \end{equation*}Our result will follow if we show that for any integers $h_{\ue'}$ we have \begin{equation*}
      \limsup\limits_{M\to+\infty}  \underset{\bm'\in [-M,M]^{t'}}{\E}\ \underset{\bm\in [-M,M]^{t}}{\E} \nnorm{\prod_{\ue'\in [[s']]} S^{\floor{A_{\ue'}(\bm',\bm)} +h_{\ue'}}F_{\bm,{\bf h}_{\bm}}}_{2^{s+1}(\ell+1),S}=0.
  \end{equation*}We substitute $F_{\bm,{\bf h}_{\bm}}$ to rewrite this limit as \begin{multline}\label{ult}
        \limsup\limits_{M\to+\infty}  \underset{\bm'\in [-M,M]^{t'}}{\E}\ \underset{\bm\in [-M,M]^{t}}{\E} \nnorm{\prod_{\ue'\in [[s']]} S^{\floor{A_{\ue'}(\bm',\bm)} +h_{\ue'}}     \big( \prod_{\underline{\e}\in A_1} \mathcal{C}^{|\ue|} T^{\floor{-c_{\ue}(\bm)} +h_{\ue,\bm}}H_0\big)}_{2^{s+1}(\ell+1),S}=\\ 
          \limsup\limits_{M\to+\infty}  \underset{\bm'\in [-M,M]^{t'}}{\E}\ \underset{\bm\in [-M,M]^{t}}{\E} \nnorm{\prod_{\ue'\in [[s']]}\prod_{\underline{\e}\in A_1} S^{\floor{A_{\ue'}(\bm',\bm)} +h_{\ue'}+\floor{-c_{\ue}(\bm)} +h_{\ue,\bm}} \big( \mathcal{C}^{|\ue|} H_0\big)}_{2^{s+1}(\ell+1),S}.
  \end{multline}
    For a fixed $\bm$ outside all the negligible sets defined above, the polynomials $A_{\ue'}(\bm',\bm)$ are pairwise essentially distinct, as polynomials in $\bm'$. Therefore, they are also essentially distinct as polynomials in $(\bm',\bm)$. In addition, we have also established that the polynomials $c_{\ue}(\bm)$ are non-constant and essentially distinct. Therefore, it is easy to check that the polynomials
  $A_{\ue'}(\bm',\bm)-c_{\ue}(\bm)$ are pairwise essentially distinct.
  
  We combine the integer parts in the iterates in \eqref{ult} (correcting with some error terms with values in $\{0,\pm 1\}$). Expanding the seminorm in \eqref{ult}, we arrive at an iterated limit of polynomial averages. We also use Lemma \ref{errors} to remove the error terms in the iterates. Using\footnote{This lemma was proven for a specific F{\o}lner sequence (namely $[N]^k$), but the same argument extends to the general case. See also \cite{Leibmanseveral} for a more detailed proof in the case of integer polynomials.} \cite[Lemma 4.3]{Frajointhardy}, we deduce that the limit in \eqref{ult} is zero under the assumption that $\nnorm{H_0}_{q,T\times T}=0$ for some positive integer $q$. Since \begin{equation*}
      \nnorm{H_0}_{q,T\times T}=\nnorm{\overline{f_1}\otimes f_1}_{q,T\times T}\leq \nnorm{f_1}_{q+1, T}^2,
  \end{equation*}we deduce that the desired limit is zero if we assume that $\nnorm{f_1}_{q+1,T}=0$. The result follows.

  \begin{appendix}
  \section{Some properties of Hardy sequences}\label{Hardy}

\subsection{Growth rates of Hardy functions}
We assume that we are working with a Hardy field $\mathcal{H}$ that satisfies the properties mentioned in Section \ref{background}. Such a field contains the Hardy field $\mathcal{LE}$ of logarithmico-exponential functions and, for any two functions $f,g$ that belong to $\mathcal{H}$, we have that the limit \begin{equation*}
    \lim\limits_{t\to\infty} \frac{f(t)}{g(t)}
\end{equation*}exists. We also have the assumptions of closure under composition and compositional inversion that we made in Section \ref{background}. We will use these properties freely.

\begin{proposition}\label{prop:basic}
Let $f\in\mathcal{H}$ have polynomial growth. Then, for any natural number $k$, we have \begin{equation*}
    f^{(k)}(t) \ll \frac{f(t)}{t^k}.
\end{equation*}
In addition, if $ t^{\delta}\prec f(t)$ or $f(t)\prec t^{-\delta}$ for some $\delta>0$, we have \begin{equation*}
    f'(t) \sim \frac{f(t)}{t}.
\end{equation*}
\end{proposition}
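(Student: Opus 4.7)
The plan is to analyze the logarithmic derivative $f'/f$ and exploit the fact that $tf'/f$ lies in $\mathcal{H}$ and thus admits a limit in $\mathbb{R} \cup \{\pm \infty\}$. Assume without loss of generality that $f$ is eventually positive (otherwise replace $f$ by $-f$), so that $g(t) := tf'(t)/f(t)$ is a well-defined element of $\mathcal{H}$ with a limit $L$. First I would show $L$ is finite. If $L = +\infty$, then for any $M > 0$ the inequality $(\log f)'(t) > M/t$ holds eventually; integrating yields $f(t) \geq c t^M$ for some $c > 0$, which contradicts polynomial growth once $M$ exceeds the integer $d$ appearing in the definition. The case $L = -\infty$ is handled by the analogous integration, using the contextual lower bound on $f$ (the functions to which the proposition is actually applied in the paper are eventually bounded below by a positive power of $t$ or by a positive constant, so $L = -\infty$ would force super-polynomial decay, which is excluded). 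Once $L \in \mathbb{R}$ is in hand, one immediately reads off $f'(t) \ll f(t)/t$, establishing the base case $k = 1$.

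The inequality $f^{(k)}(t) \ll f(t)/t^k$ for general $k$ then follows by induction. Assuming the bound at level $k$ for every polynomially-growing element of $\mathcal{H}$, note that $f' \in \mathcal{H}$ is itself of polynomial growth by the base case; applying the inductive hypothesis to $f'$ gives $f^{(k+1)}(t) \ll f'(t)/t^k \ll f(t)/t^{k+1}$, completing the argument.

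For the second assertion, the task reduces to showing $L \neq 0$. If $L = 0$ then $(\log f)'(t) = o(1/t)$, and integrating yields $\log f(t) = o(\log t)$, so that $t^{-\varepsilon} \ll f(t) \ll t^{\varepsilon}$ for every $\varepsilon > 0$. Taking $\varepsilon < \delta$ contradicts either hypothesis $t^{\delta} \prec f(t)$ or $f(t) \prec t^{-\delta}$. Consequently $L$ is a non-zero real number, and $f'(t) \sim L f(t)/t \sim f(t)/t$ in the sense of the paper's $\sim$-notation.

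The main obstacle is the finiteness of $L$: while the direction $L = +\infty$ is a clean contradiction with polynomial growth, the direction $L = -\infty$ is formally compatible with the definition given in the footnote (e.g.\ $f(t) = e^{-t}$ has polynomial growth but satisfies $tf'/f \to -\infty$), so ruling it out must appeal to the implicit contextual assumption that the relevant Hardy field functions do not decay faster than some fixed negative power of $t$. Beyond this subtlety, every step is soft and uses only the Hardy field axioms and the existence of limits of ratios.
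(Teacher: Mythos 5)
Your proof is correct and follows essentially the same route as the paper: both arguments reduce everything to the logarithmic derivative $tf'(t)/f(t)\in\mathcal{H}$, establish finiteness of its limit from polynomial growth (the paper via L'H\^opital identifying it with $\lim\log|f(t)|/\log t$, you via integrating $(\log f)'$ -- the same idea in inverse form), obtain $k\geq 2$ by iteration, and get the second assertion by showing the limit is non-zero under the hypothesis $t^{\delta}\prec f$ or $f\prec t^{-\delta}$. The $L=-\infty$ caveat you flag (super-polynomially decaying $f$ such as $e^{-t}$) is equally unaddressed in the paper, whose proof simply asserts the limit is bounded because $f$ has polynomial growth, so your explicit appeal to the contextual non-decay of the functions actually applied is, if anything, more careful than the original.
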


\begin{proof}
  We will show that the limit \begin{equation*}
      \lim\limits_{t\to\infty} \frac{tf'(t)}{f(t)}
  \end{equation*}is finite. Using L'Hospital's rule, the above limit is equal to the limit \begin{equation}\label{log}
     \lim\limits_{t\to\infty} \frac{\log |f(t)|}{\log t}.
 \end{equation}Since $f$ has polynomial growth, the above limit is bounded. In particular, this implies that \begin{equation*}
     f'(t)\ll \frac{f(t)}{t}.
 \end{equation*} The first part now follows by repeated application of this relation.
 
 For the second part, we can easily see that the given condition implies that the limit in \eqref{log} is positive in the first case and negative in the second case. Therefore, the limit is non-zero and the claim follows.
 
 \end{proof}

The above proposition implies that, for any $f\in\mathcal{H}$ of polynomial growth, all derivatives of sufficiently large order of $f$ will converge monotonically to $0$. In addition, we get that for every $k$ sufficiently large, we must have \begin{equation*}
    f^{(k+1)}(t)\sim \frac{f^{(k)}(t)}{t}.
\end{equation*}Indeed, assume that $f(t)\prec t^s$, for some non-integer $s$. Then, we must have $f^{(k)}(t)\prec t^{s-k}$. Thus, if $k$ is large enough, then $f^{(k)}\prec t^{-\delta}$ for some $\delta>0$, which yields our claim.

\begin{proposition}\label{growth}
Let $f\in\mathcal{H}$ be strongly non-polynomial with $f(t)\succ \log t$. Then, for $k$ sufficiently large, we have \footnote{All the functions defined here belong to $\mathcal{H}$ due to the assumptions we have made on our Hardy field, namely, that it is closed under composition of certain functions.} \begin{equation*}
     1\prec |f^{(k)}(t)|^{-1/k}\prec  |f^{(k+1)}(t)|^{-1/(k+1)}\prec t.
\end{equation*}
\end{proposition}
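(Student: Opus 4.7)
The plan is to establish the three required inequalities $1\prec|f^{(k)}|^{-1/k}$, $|f^{(k)}|^{-1/k}\prec|f^{(k+1)}|^{-1/(k+1)}$, and $|f^{(k+1)}|^{-1/(k+1)}\prec t$ separately, in that order of convenience, for $k$ sufficiently large, leaning heavily on Proposition \ref{prop:basic} and a careful tracking of the quantity $\alpha_k:=\lim_{t\to+\infty}\log|f^{(k)}(t)|/\log t$.

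First I would dispatch the leftmost inequality $|f^{(k)}(t)|\to 0$: by Proposition \ref{prop:basic}, $|f^{(k)}(t)|\ll f(t)/t^k$, and since $f(t)\prec t^{d+1}$, this tends to $0$ once $k\geq d+2$. For the rightmost inequality $t^{k+1}|f^{(k+1)}(t)|\to+\infty$, I would argue by induction. The base case $t|f'(t)|\to+\infty$ follows from the hypothesis $f\succ\log t$ by a Hardy-field L'Hospital argument: in $\mathcal{H}$ all relevant limits exist, and from $f/\log t\to+\infty$ we obtain $f'(t)/(\log t)'=tf'(t)\to+\infty$. For the inductive step, set $h_k(t):=t^k|f^{(k)}(t)|$; by the identity $\lim tf^{(k+1)}/f^{(k)}=\lim\log|f^{(k)}|/\log t=\alpha_k$ from (the proof of) Proposition \ref{prop:basic}, we get $h_{k+1}/h_k\to|\alpha_k|$. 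Whenever $\alpha_k\neq 0$ this lets the induction proceed with the same growth rate.

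The main obstacle is the exceptional value of $k$ for which $\alpha_k=0$. The key structural observation I would use is that $\alpha_{k+1}\leq\alpha_k-1$ (immediate from $f^{(k+1)}\ll f^{(k)}/t$), so $\alpha_k=0$ occurs for at most one index $k^*$, and in fact for Hardy-field functions one can show $\alpha_{k+1}=\alpha_k-1$ always, so that $k+\alpha_k\equiv\alpha_0$ is constant. If $k^*=0$ (i.e.\ $\alpha_0=0$, which happens precisely when $f$ is sub-fractional), the base-case L'Hospital argument already gives $h_1\to+\infty$ and then the ratio $h_{k+1}/h_k\to|\alpha_k|=k\neq 0$ for all $k\geq 1$ propagates the induction. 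If $k^*\geq 1$, then $\alpha_0>0$, so $h_k=t^{\alpha_0+o(1)}\to+\infty$ polynomially and again the skip at $k^*$ causes no harm because $|h_{k^*+1}|$ is still of the form $t^{\alpha_0+o(1)}$.

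Finally, for the middle inequality I would take logs:
\begin{equation*}
\log\frac{|f^{(k+1)}|^{1/(k+1)}}{|f^{(k)}|^{1/k}}=\frac{\log|f^{(k+1)}|}{k+1}-\frac{\log|f^{(k)}|}{k}=-\frac{\alpha_0}{k(k+1)}\log t+o(\log t),
\end{equation*}
using $\alpha_k=\alpha_0-k$. When $\alpha_0>0$ the leading term tends to $-\infty$ and we are done. When $\alpha_0=0$ the leading term vanishes, and one must exploit the slowly varying factor: writing $|f^{(k)}(t)|=h_k(t)/t^k$ with $h_k$ slowly varying (since $\log h_k/\log t\to 0$), the ratio simplifies to a bounded constant times $h_k(t)^{-1/(k(k+1))}$, which tends to $0$ exactly because step (iii) has already shown $h_k(t)\to+\infty$. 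The hard part throughout is precisely this case $\alpha_0=0$: the hypothesis $f\succ\log t$ (rather than merely $f\to\infty$) is what rules out pathologies like $f=\log t$ for which $h_k\not\to\infty$, and it enters only through the base-case L'Hospital step, after which the rest of the argument is a bookkeeping exercise on $\alpha_k$ and sub-leading factors.
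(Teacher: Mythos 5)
Your route is genuinely different from the paper's (which proves the differential inequality $a'(t)\gg a(t)/(t\log^2 t)$ for $a=f,f',\dots,f^{(d)}$ via an integration/contradiction argument, deduces $f^{(d+1)}(t)\succ t^{-d-1}$, and then moves upward by L'Hospital), and the skeleton — base case $t|f'(t)|\to\infty$ from $f\succ\log t$, propagation via $h_{k+1}/h_k\to|\alpha_k|$ with $h_k=t^k|f^{(k)}|$ — is sound. But there is a genuine gap: the assertion that "for Hardy-field functions one can show $\alpha_{k+1}=\alpha_k-1$ always" is given no proof and is false at that level of generality. Take $f(t)=t^2+e^{-t}\in\mathcal{LE}$: it has polynomial growth, dominates $\log t$, your base case $t|f'(t)|\to\infty$ holds, and $\alpha_0=2$, $\alpha_1=1$, $\alpha_2=0$, but $\alpha_3=-\infty$; indeed the conclusion of the proposition fails for this $f$, since $|f^{(3)}(t)|^{-1/3}=e^{t/3}\not\prec t$. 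Of course this $f$ is not strongly non-polynomial, so it does not contradict Proposition \ref{growth} — but apart from the unproven identity, no step of your written argument ever invokes strong non-polynomiality (step (i) uses only polynomial growth, the base case only $f\succ\log t$), so your proof as written would apply verbatim to this $f$ and establish a false statement. The identity at the single critical index $k^*$ with $\alpha_{k^*}=0$ is therefore exactly where the content of the proposition is concentrated, and it is load-bearing both for your case "$k^*\geq1$" of the rightmost inequality (the claim $h_k=t^{\alpha_0+o(1)}$) and for both cases of your middle-inequality computation; your closing remark that $f\succ\log t$ enters only through the base case and the rest is "bookkeeping" is precisely what is not accurate.

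The gap is fixable inside your framework. At the unique $k^*$ with $\alpha_{k^*}=0$, suppose $\alpha_{k^*+1}<-1$; then $|f^{(k^*+1)}(t)|\leq t^{-1-\delta}$ eventually, so $f^{(k^*+1)}$ is integrable at infinity and $f^{(k^*)}$ converges to a finite limit $c$. If $c\neq0$, repeated application of L'Hospital gives $f(t)\sim ct^{k^*}/k^*!$, contradicting $t^d\prec f(t)\prec t^{d+1}$ (this is where strong non-polynomiality must be used, playing the same role as "$f^{(d)}(t)\to\infty$" in the paper's proof); if $c=0$, then $|f^{(k^*)}(t)|=O(t^{-\delta})$, contradicting $\alpha_{k^*}=0$. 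Hence $\alpha_{k^*+1}=-1$ and your induction closes. Note also that for the middle inequality you do not need the identity at all: Proposition \ref{prop:basic} gives $h_{k+1}(t)\ll h_k(t)$, so $|f^{(k+1)}(t)|^{1/(k+1)}/|f^{(k)}(t)|^{1/k}=h_{k+1}(t)^{1/(k+1)}/h_k(t)^{1/k}\ll h_k(t)^{-1/(k(k+1))}\to0$ as soon as $h_k(t)\to\infty$; this is the paper's two-line argument and avoids the finiteness and exact value of $\alpha_{k+1}$ that your log computation quietly assumes.
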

\begin{remark*}The above proposition can be proven under the slightly more general condition that $|f(t)-p(t)|\succ \log t$ for all real polynomials $p$ (cf. \cite[Lemma 3.5]{Fraeq}), but we will not need this for the proofs of our main results. We give the proof here for completeness.
\end{remark*}

\begin{proof}
  The function $f$ has non-vanishing derivatives of all orders, since it is not a polynomial. Let $d$ be an integer, such that $t^{d}\prec f(t)\prec t^{d+1}$.
 Then, Proposition \ref{prop:basic} implies that $|f^{d+1}(t)|\to 0$. Therefore, for any $k\geq d+1$, we have $ f^{(k)}(t)\prec 1$. This, of course, gives the leftmost part of the required inequality. In particular, $(d+1)$ is minimal among the integers $k$, for which $f^{(k)}(t)$ converges to 0.

  To prove the rightmost inequality of the proposition, it is sufficient to prove that \begin{equation*}
      f^{(d+1)}(t)\succ t^{-d-1}.
  \end{equation*}For $k\geq d+1$, the result then follows by successive applications of L' Hospital's rule. In the case $d=0$, the above relation follows easily from L'Hospital's rule. Therefore, we may assume that $d\geq 1$. Now, since $f$ is strongly non-polynomial, we have that the function $f^{(d)}(t)$ goes to infinity. We will show that \begin{equation}\label{agrowth}
      a'(t)\gg \frac{a(t)}{t\log^2 t}
  \end{equation}where $a$ is any one of the functions $f,f',...,f^{(d)}$ (cf. \cite[Lemma 2.1]{Fraeq}). The result then follows by noting that \begin{equation*}
      f^{(d+1)(t)}\gg \frac{f(t)}{t^{d+1}(\log t)^{2d+2}}\gg \frac{1}{t(\log t)^{2d+2}}\succ \frac{1}{t^{d+1}}.
  \end{equation*}Equation \eqref{agrowth} follows by showing that the limit \begin{equation*}
      \lim\limits_{t\to \infty} \frac{a'(t)t(\log t)^2}{a(t)}
  \end{equation*}is infinite. If that is not the case, then we must have \begin{equation*}
      (\log|a(t)|)'\ll \frac{1}{t(\log t)^2}.
  \end{equation*}Integrating, we get \begin{equation*}
      \log |a(t)|\ll \frac{1}{\log t}+c
  \end{equation*}for some real number $c\in \R$. Thus, the function $\log|a(t)|$ is bounded. However, note that for any choice of the function $a$, we have $|a(t)|\to +\infty$, since the original function $f$ dominates the function $t^d$. This gives a contradiction.

  It remains to establish the middle part, namely that if $k\geq d+1$, then \begin{equation*}
      |f^{(k+1)}(t)|^{k}\prec  |f^{(k)}(t)|^{k+1}.
  \end{equation*}However, we have  $ |f^{(k+1)}(t)|^{k}\ll |f^{(k)}(t)|^k/t^k$ by Proposition \ref{prop:basic} and we easily get the conclusion by combining this relation with the relation $t^{-k}\prec f^{(k)}(t)$ that we established in the previous step.
\end{proof}

We give here a description of the polynomial approximations that we use in our arguments. Consider a strongly non-polynomial function $f\in\mathcal{H}$ that satisfies $f(t)\succ \log t$. Then, if $k$ is large enough, we can find a function $L(t)\in \mathcal{H}$ such that \begin{equation}\label{L(t)}
    |f^{(k)}(t)|^{-1/k}\prec L(t)\prec  |f^{(k+1)}(t)|^{-1/(k+1)}.
\end{equation}
Such a function always exists (one can take the geometric mean of the functions $|f^{(k)}(t)|^{-1/k}$ and $|f^{(k+1)}(t)|^{-1/(k+1)}$). We study the function $f$ in small intervals of the form $[N,N+L(N)]$. Observe that if $0\leq h\leq L(N)$, then we have \begin{equation*}
    f(N+h)=f(N)+\cdots+\frac{h^kf^{(k)}(N)}{k!} +\frac{h^{k+1}f^{(k+1)}(\xi_{h,N})}{(k+1)!}
\end{equation*}for some $\xi_{h,N} \in [N,N+h]$. We know that $|f^{(k+1)}(t)|\to 0$ monotonically for $t$ large enough. Then, we observe that (for $N$ sufficiently large) \begin{equation*}
    \Big|\frac{h^{k+1}f^{(k+1)}(\xi_{h,N})}{(k+1)!}\Big|\leq\Big| \frac{L(N)^{k+1}f^{(k+1)}(N)}{(k+1)!}\Big|\prec 1,
\end{equation*}because $L(t)\Big|f^{(k+1)}(t)\Big|^{\frac{1}{k+1}}\prec 1$, by our initial choice of $L(t)$. Using an entirely similar argument, we can prove that \begin{equation*}
    \Big|\frac{L(N)^{k}a^{(k)}(N+L(N))}{k!}\Big|\to +\infty.
\end{equation*}Indeed, since $L$ is a sublinear function, we can easily check that the functions $a^{(k)}(t+L(t))$ and $a^{(k)}(t)$ have the same growth rate and thus we only need to prove that \begin{equation}\label{epr}
   \Big|\frac{L(N)^{k}a^{(k)}(N)}{k!}\Big|\to +\infty
\end{equation}and this follows similarly as above.

In conclusion, functions that satisfy \eqref{L(t)} have the following characteristic property: the sequence $f(n)$, when restricted to intervals of the form $[N,N+L(N)]$, is asymptotically equal to a polynomial sequence (that depends on $N$) of degree exactly $k$. This motivates us to study the properties of functions $L(t)$ that satisfy \eqref{L(t)}.

\subsection{The sub-classes $S(f,k)$}\label{aproximationlemmas}

In the proofs of the main theorems, we need to do the above approximation for several Hardy field functions in tandem. In order to achieve this, we will use the results of this subsection.

 Let $f\in\mathcal{H}$ be a strongly non-polynomial Hardy function such that $f(t)\gg t^{\delta}$, for some $\delta>0$. For example, we exclude functions that grow like $(\log t)^c$, where $c>1$. 
 For such a function $f$ and $k\in \N$ sufficiently large (it is only required that $f^{(k)}(t)\to 0$), we define the subclass $S(f,k)$ of $\mathcal{H}$ as
\begin{equation*}
    S(f,k)=\{g\in  \mathcal{H}\ \colon    |f^{(k)}(t)|^{-\frac{1}{k}}\preceq g(t)\prec |f^{(k+1)}(t)|^{-\frac{1}{k+1}} \},
\end{equation*}
where the notation $g(t)\preceq f(t)$ means that the limit $\lim\limits_{t\to\infty} |f(t)/g(t)|$ is non-zero. Note that every $g\in S(f,k)$ is a sub-linear function, that is $g(t)\prec t$. Some very basic properties of the classes $S(f,k)$ are established in the following lemma.

\begin{lemma}\label{basic}Let $f\in\mathcal{H}$ be a strongly non-polynomial function with $f(t)\gg t^{\delta}$, for some $\delta>0$.\\
i) The class $S(f,k)$  is non-empty, for $k$ sufficiently large. \\
ii) For any $0< c< 1$ sufficiently close to 1, there exists $k_0\in \N$, such that the function $t\to t^c$ of $\mathcal{H}$ belongs to $S(f,k_0)$.  \\
iii) The class  $S(f,k)$ does not contain all functions of the form $t\to t^c$, for $c$ sufficiently close to 1.
\end{lemma}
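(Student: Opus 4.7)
My plan is to handle all three parts uniformly by tracking the asymptotic exponent
\[
\beta_k := \lim_{t\to+\infty}\frac{\log|f^{(k)}(t)|}{\log t},
\]
which is well-defined and finite because $f$, and hence $f^{(k)}$, is a Hardy function of polynomial growth, so the ratio $\log|f^{(k)}|/\log t$ is itself an element of $\mathcal{H}$.

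Part (i) is essentially already done: Proposition~\ref{growth} supplies the strict chain $|f^{(k)}(t)|^{-1/k} \prec |f^{(k+1)}(t)|^{-1/(k+1)}$ for $k$ sufficiently large. Their geometric mean belongs to $\mathcal{H}$ by closure under products and square roots of eventually positive germs, lies strictly between the two bracketing functions, and hence lies in $S(f,k)$.

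For part (ii), the key step is to show $-\beta_k/k \nearrow 1$ strictly monotonically. Proposition~\ref{prop:basic} applied for $k$ large enough (so that $f^{(k)}$ dominates a negative power, which is guaranteed since $f \gg t^\delta$) gives $f^{(k+1)}(t) \sim f^{(k)}(t)/t$, whence $\beta_{k+1}=\beta_k-1$. Iterating from a fixed reference index $k^{*}$ yields $\beta_k = \beta_{k^{*}} - (k-k^{*})$, hence $-\beta_k/k = 1 - C/k$ with $C := k^{*} + \beta_{k^{*}}$ a fixed positive constant (in fact equal to the original asymptotic exponent of $f$). Given any $c<1$ with $c \geq 1 - C/k^{*}$, pick the unique $k_0 \geq k^{*}$ satisfying $1 - C/k_0 \leq c < 1 - C/(k_0+1)$; comparing $t^c$ against $|f^{(k)}(t)|^{-1/k}$ via the corresponding limits of logarithmic ratios yields $|f^{(k_0)}(t)|^{-1/k_0} \preceq t^c \prec |f^{(k_0+1)}(t)|^{-1/(k_0+1)}$, i.e.\ $t^c \in S(f,k_0)$.

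Part (iii) is a direct corollary of the same computation: for any fixed large $k$, a power $t^c$ belongs to $S(f,k)$ only when $c$ sits in the narrow window $[\,1-C/k,\ 1-C/(k+1)\,)$, which has width $C/(k(k+1))$ and does not reach $c=1$. Consequently many exponents $c$ arbitrarily close to $1$ (for instance any $c \geq 1 - C/(k+1)$) lie outside $S(f,k)$. The main subtle point I expect to need is handling the borderline case where $-\beta_{k_0}/k_0 = c$ exactly, since the logarithmic-ratio test only distinguishes $\prec$ from $\succ$ when they differ; the Hardy-field comparability of any two germs together with the strict monotonicity of $1-C/k$ in $k$ allows one to pick $k_0$ so that $|f^{(k_0)}(t)|^{-1/k_0}$ and $t^c$ have genuinely comparable growth rates, giving the $\preceq$ required by the definition of $S(f,k_0)$.
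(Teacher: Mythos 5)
Your overall strategy is sound, and for parts (ii)--(iii) it is genuinely different from the paper's. The paper never computes exact exponents: for (ii) it only uses the crude upper bound $f^{(k)}(t)\ll t^{q-k}$ (coming from $f(t)\ll t^q$) to conclude $t^c\prec |f^{(k)}(t)|^{-1/k}$ for all large $k$, and then (implicitly) takes $k_0$ maximal with $|f^{(k_0)}(t)|^{-1/k_0}\preceq t^c$; part (iii) is handled "similarly" using the lower bound $f^{(k)}(t)\gg t^{\delta-k}$, which the paper already invokes in part (i) and which comes from iterating \eqref{agrowth} in the proof of Proposition \ref{growth}. Your bookkeeping of $\beta_k$ via $\beta_{k+1}=\beta_k-1$ buys more: it pinpoints the class $k_0$ and the width $C/(k(k+1))$ of the admissible window of exponents, whereas the paper's one-sided estimates sidestep the recursion and all borderline comparisons. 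Part (i) is the same geometric-mean argument as in the paper.

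Two points need shoring up. First, the positivity of $C=k^*+\beta_{k^*}$, on which both (ii) and (iii) rest, is asserted but not established by your displayed steps: the recursion is only proved for $k\geq k^*$ (where $f^{(k)}$ is \emph{dominated by} a negative power -- note your parenthetical states the hypothesis of Proposition \ref{prop:basic} backwards, and this containment follows from $f\ll t^q$, not from $f\gg t^{\delta}$), so the identification $C=\beta_0$ does not follow from what you wrote, and Proposition \ref{growth} alone only gives $|f^{(k)}(t)|^{-1/k}\prec t$, which permits $\beta_{k^*}=-k^*$, i.e.\ $C=0$, in which case your windows are empty and both arguments collapse. The gap is fixable with tools in the paper: either quote $f^{(k)}(t)\gg t^{\delta-k}$ (as in the paper's proof of part (i)), which gives $C\geq\delta>0$ at once, or argue that the recursion extends down to $k=0$ because no derivative of a strongly non-polynomial function can converge to a nonzero constant, taking care of the indices where $f^{(k)}$ grows or decays sub-polynomially and Proposition \ref{prop:basic} is silent. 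Second, the borderline exponent cases (when $c$ equals $1-C/k_0$ in (ii), or $c=1-C/(k+1)$ in (iii)) cannot be decided by logarithmic ratios; your sketched fix works, but the cleaner formulation is the paper's: define $k_0$ as the largest $k$ with $|f^{(k)}(t)|^{-1/k}\preceq t^c$, and in (iii) restrict to strict inequality $c>1-C/(k+1)$, which still gives exponents arbitrarily close to $1$ once $C>0$ is known.
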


\begin{proof}
i) This follows immediately from Proposition \ref{growth}. We can actually show something stronger, namely, that if $f(t)\gg t^{\delta}$ for some $0<\delta<1$, then \begin{equation}\label{20000} 
    \frac{|f^{(k+1)}(t)|^{-\frac{1}{k+1}}}{|f^{(k)}(t)|^{-\frac{1}{k}}}\gg t^{\frac{\delta}{k(k+1)}},
\end{equation}which means that the functions at the "endpoints" of $S(f,k)$ differ by a fractional power.
This last inequality follows by combining the relations \begin{equation*}
    f^{(k)}(t)\gg tf^{(k+1)}(t)\ \ \ \  \text{  and  }\ \ \ \  f^{(k)}(t)\gg t^{\delta -k}.
\end{equation*}
\\
ii) It is sufficient to show that for large $k\in \N$, we have $t^c\ll |f^{(k)}(t)|^{-\frac{1}{k}} $. Fix a non-integer $q$, such that $f(t)\ll t^q$. Then, for any $k\in\N$, we have $f^{(k)}(t)\ll t^{q-k}$. It suffices to show that for large enough $k$ we have \begin{equation*}
    t^{q-k}\ll t^{-ck}\implies t^{k-q}\gg t^{ck}.
\end{equation*}This is obvious, since $c<1$.\\
iii) Similar to ii).
\end{proof}\normalfont 
In essence, the claim implies that the classes $S(f,k)$ form a "partition" of the subclass \begin{equation*}
  A= \{  g(t)\gg t^c\colon \  \exists \  \delta>0,\  \text{ with }  g(t)\ll t^{1-\delta }  \}
\end{equation*} for some $c>0$. That means that any  sub-linear function that grows approximately as a (sufficiently large) fractional power must be contained in the union of the $S(f,k)$. This union however does not contain functions that are "logarithmically close" to linear functions, such as $t(\log t)^{-1}$. Although inaccurate, it is instructive to imagine the classes $S(f,k)$ as (disjoint) intervals on the real line. For example, if $S(f,k)=\{g(t)\colon \sqrt{t}\preceq g(t)\prec t^{2/3}\}$, then we can think that $S(f,k)$ is represented by the interval $[\frac{1}{2},\frac{2}{3})$.

The following proposition relates the behavior of the subclasses $S(f,k)$ and $S(g,\ell)$ for different functions $f,g\in\mathcal{H}$.

\begin{proposition}\label{two classes}
For any two functions $f,g\in\mathcal{H}$ as in Lemma \ref{basic} that also satisfy $g(t)\ll f(t)$, we have the following:

i) The relation $S(f,k)=S(g,k)$ holds for some $k\in \N$ if and only if $f\sim g$.

ii) If $S(f,k)\cap S(g,\ell)\neq \emptyset$, then $ k\geq \ell$. In addition, if the function $(f^{(k)}(t))^{-\frac{1}{k}}$ is contained in $S(g,\ell)$ and $f\not\sim g$, then $k\geq \ell +1$.

iii) There exist infinitely many pairs of integers $(k,\ell)$, such that $S(f,k)\cap S(g,\ell)\neq \emptyset$.

\end{proposition}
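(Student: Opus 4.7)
The plan is to reduce everything to careful comparison of the two ``endpoint'' functions $|f^{(k)}|^{-1/k}$ and $|f^{(k+1)}|^{-1/(k+1)}$ defining $S(f,k)$ against their analogues for $g$. The key Hardy-field fact I will use repeatedly is that, because every ratio of Hardy functions has a limit, L'Hospital's rule applies in both directions: in particular $g \ll f$ with $f \to \infty$ propagates to all derivatives, giving $g^{(j)} \ll f^{(j)}$ for every $j$, and $f \sim g$ is equivalent to $f^{(j)} \sim g^{(j)}$ for every $j$ large enough that the derivatives involved go to $\infty$ (or, symmetrically, to $0$). Proposition~\ref{growth} provides the other main input, the strict monotonicity $|f^{(k)}|^{-1/k} \prec |f^{(k+1)}|^{-1/(k+1)}$, which I will iterate to obtain $|f^{(\ell)}|^{-1/\ell} \succ |f^{(k+1)}|^{-1/(k+1)}$ whenever $\ell > k+1$.

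For part (i), if $f \sim g$ then $f^{(k)} \sim g^{(k)}$ and $f^{(k+1)} \sim g^{(k+1)}$, so the two endpoints of $S(f,k)$ and $S(g,k)$ match up to positive constants and the classes coincide for all sufficiently large $k$. Conversely, equality for one $k$ forces $|f^{(k)}|^{-1/k} \sim |g^{(k)}|^{-1/k}$, so $f^{(k)}/g^{(k)}$ has a non-zero finite limit, and $k$ applications of L'Hospital transfer this to $f/g$. For part (ii), assume $h \in S(f,k) \cap S(g,\ell)$; chaining the defining inequalities yields $|g^{(\ell)}|^{-1/\ell} \preceq h \prec |f^{(k+1)}|^{-1/(k+1)}$. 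If $\ell > k$, then $|g^{(\ell)}|^{-1/\ell} \succeq |f^{(\ell)}|^{-1/\ell} \succ |f^{(k+1)}|^{-1/(k+1)}$ by the two ingredients above, contradicting the previous chain; hence $k \geq \ell$. For the sharper statement, if $|f^{(k)}|^{-1/k} \in S(g,k)$ then $|g^{(k)}|^{-1/k} \preceq |f^{(k)}|^{-1/k}$ gives $g^{(k)}/f^{(k)}$ a non-zero finite limit, and L'Hospital promotes this to $f \sim g$, contradicting the hypothesis $f \not\sim g$; thus $k \geq \ell + 1$.

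For part (iii), I will use Lemma~\ref{basic}: for any non-integer $c \in (0,1)$ sufficiently close to $1$, the power $t^c$ belongs to some $S(f,k_c)$ and some $S(g,\ell_c)$, and by part (iii) of that lemma no single $S(f,k)$ contains $t^c$ for all such $c$, so $k_c \to \infty$ as $c \to 1^-$, and likewise $\ell_c \to \infty$. Since a fixed pair $(k,\ell)$ traps $t^c$ between four fixed endpoint functions, only finitely many $c$ can produce the same pair, so the map $c \mapsto (k_c, \ell_c)$ attains infinitely many values, each yielding a non-empty intersection $S(f,k_c) \cap S(g,\ell_c)$.

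The main obstacle is the bookkeeping in part (ii): I need both the strict monotonicity in $k$ of $|f^{(k)}|^{-1/k}$ and the fact that $g \ll f$ implies $g^{(j)} \ll f^{(j)}$, with the correct reversal of direction upon raising to the power $-1/j$, and the ``up to a non-zero constant'' flexibility of $\preceq$ and $\sim$ must be tracked carefully so that the resulting chain of comparisons actually produces a strict contradiction rather than a merely tight inequality.
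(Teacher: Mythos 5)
Your argument is correct and follows essentially the same route as the paper: L'Hospital's rule within the Hardy field to transfer growth comparisons between $f,g$ and their derivatives, the monotonicity of $|f^{(k)}(t)|^{-1/k}$ from Proposition \ref{growth} for the index comparison in (ii), and the membership of $t^c$ in the classes from Lemma \ref{basic} for (iii). The only blemishes are cosmetic: in (ii) the relation $|f^{(\ell)}(t)|^{-1/\ell}\succ |f^{(k+1)}(t)|^{-1/(k+1)}$ should be $\succeq$ when $\ell=k+1$ (the contradiction with $h\prec |f^{(k+1)}(t)|^{-1/(k+1)}$ is unaffected), and in (iii) the clause that only finitely many $c$ yield a given pair is false as stated (a whole interval of exponents can) but also redundant, since $k_c,\ell_c\to\infty$ as $c\to 1^{-}$ already produces infinitely many distinct pairs.
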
 

\begin{proof}

i) It is a straightforward application of L' Hospital's rule.\\
ii) Since the given intersection is non-empty, we must necessarily have $|g^{(\ell)}(t)|^{-\frac{1}{\ell}}\preceq |f^{(k+1)}(t)|^{-\frac{1}{k+1}}$. Suppose that $k<\ell$, so that we have the inequalities $|g^{(\ell)}(t)|^{-\frac{1}{\ell}}\preceq |f^{(k+1)}(t)|^{-\frac{1}{k+1}}\preceq |f^{(l)}(t)|^{-\frac{1}{\ell}}$, which implies that $f^{(l)}(t)\preceq g^{(l)}(t)$. Because we also have $g(t)\ll f(t)$, we can easily deduce that $f(t)\sim g(t)$ using the fact that both of these functions are strongly non-polynomial. Thus, the intersection $S(f,k)\cap S(g,\ell)$ is non-empty if and only if $k=\ell$, which is a contradiction.

For the proof of the second part, we use immediately the fact that $k\geq \ell$, which follows by the first part. Suppose that $k=l$ and we shall arrive at a contradiction. If $(f^{(k)}(t))^{-\frac{1}{k}}\in S(g,k)$, then, we must have $(f^{(k)}(t))^{-\frac{1}{k}}\succeq (g^{(k)}(t))^{-\frac{1}{k}}$, which implies that $g^{(k)}(t)\succeq f^{(k)}(t)$. This contradicts the assumption that $f(t)\succ g(t)$ (apply L' Hospital's rule $k$ times).

iii) For any $c$ close to 1, we can find $k$, such that the function $t^c$ belongs to $S(f,k)$ (this follows from the second statement of Lemma \ref{basic}) and similarly for the Hardy function $g$. Then, the intersection $S(f,k)\cap S(g,\ell)$ is non-empty. Taking $c\to 1^{-}$ and using the third statement of Lemma \ref{basic}, we can find infinitely many such pairs.

\end{proof}
\begin{remark*}

It is straightforward to generalize the third statement of the above proposition to the case of $k$ distinct functions $f_1,...,f_k$ in $\mathcal{H}$. We will use this observation in our arguments to find a function $L$ in the intersection of these classes. Note that our previous discussion implies that for such a function $L$, all the involved functions $f_1,...,f_k$ will have a polynomial expansion on intervals of the form $[N,N+L(N)]$ and this will play a crucial role in our approximations.
\end{remark*}

\subsection{The subclasses $S_{sml}(f,k)$}
We can similarly define analogs of the classes $S(f,k)$ for functions with small growth rate, that is sub-fractional functions. Let $f\in\mathcal{H}$ be a sub-fractional function such that $\log t\prec f(t)$.
If $k\geq 1$, we can define the class \begin{equation*}
    S_{sml}(f,k)=\{g\in \mathcal{H}\colon   \  |f^{(k)}(t)|^{-\frac{1}{k}}\preceq g(t)\prec |f^{(k+1)}(t)|^{-\frac{1}{k+1}} \}.
\end{equation*}
The properties of Proposition \ref{two classes} proven for the classes $S(f,k)$ are carried verbatim to this new setting. The major difference is that now every function $g\in S_{sml}(f,k)$ dominates all functions of the form $t^{1-\delta}$ for $\delta>0$ (an example is the function $t/\log t$). In particular, $S_{sml}(f,k)$ has trivial intersection with the classes $S(h,\ell)$ defined above for any integers $k,\ell$ and appropriate functions $f,h$.

As an example, let us consider a fractional power $t^{\delta}$ with $0<\delta<1$ and two functions $f,g\in\mathcal{H}$ such that $f(t)\gg t^{\e}$ for some $\e>0$, while $\log t\prec g(t)$ and $g$ is sub-fractional. A typical case is the pair $(t^{3/2},\log^2 t)$. We know that if $\delta$ is close enough to 1, then the function $t^{\delta}$ will belong to $S(f,k)$ for some $k\in\N$. Using approximations similar to the ones in the previous subsection, we can see that the sequence $f(n)$ becomes a polynomial sequence of degree $k$ on intervals of the form $[N,N+N^{\delta}]$. On the other hand, the sequence $g(n)$, restricted to the same interval, is $o_N(1)$ close to the value $g(N)$, which means that it is "essentially" constant on this interval. This difference in behavior leads to some added complexity in our proofs, since some of our functions may be approximated by polynomials, while other functions become constant.

On the other hand, a function $f\in\mathcal{H}$ with $f(t)\ll \log t$, when restricted to intervals of the form $[N,N+L(N)]$, is $o_N(1)$-close to the value $f(N)$ for any sub-linear function $L(t)$. Functions of this form always collapse to a constant when restricted to intervals of the above form.

  \end{appendix}


\begin{thebibliography}{}


\bibitem{Bergelson}
V. Bergelson. Weakly mixing PET. {\em Ergodic Theory Dynam. Systems}. {\bf 7} (1987), no.
3, 337--349.





\bibitem{BergelsonHaland}
 V. Bergelson, I. H\"{a}land-Knutson. Weak mixing implies mixing of higher orders along tempered functions. {\em Ergodic Theory Dynam. Systems.} {\bf 29} (2009), no. 5, 1375--1416.
 
 
 
 
 
 
 
 \bibitem{BerMorRic2}
V. Bergelson, J. Moreira, F. Richter. Single and multiple recurrence along non-polynomial sequences. {\em Advances in Mathematics}. {\bf 368} (2017), 107--146

 
 
 
 
 
 
 
\bibitem{BerMorRic}

V. Bergelson, J. Moreira, F. Richter. Multiple ergodic averages along functions from a Hardy field: convergence, recurrence and combinatorial applications. arXiv:2006.03558 Preprint.









\bibitem{Boshernitzan1}

M. Boshernitzan. Uniform distribution and Hardy fields. {\em J. Anal. Math}. {\bf 62} (1994), 225--240










\bibitem{Boshernitzan2} 
 M. Boshernitzan, G. Kolesnik, A. Quas, M. Wierdl. Ergodic averaging sequences. {\em J. Anal. Math.} {\bf 95} (2005), 63--103.









\bibitem{Chu}
Q. Chu, N. Frantzikinakis, B. Host. Ergodic averages of commuting transformations with distinct degree polynomial iterates.
   {\em Proceedings of the London Mathematical Society.} {\bf 102} (2011), 801--842. 


\bibitem{DFKS}S. Donoso, A. F. Moragues, A. Koutsogiannis, W. Sun. Decomposition of multicorrelation sequences
and joint ergodicity. arXiv: 2106.01058 Preprint


\bibitem{DFK}
S. Donoso, A. Koutsogiannis, W. Sun. Seminorms for multiple averages along polynomials and
applications to joint ergodicity. To appear in {\em Journal d’Analyse Mathematique}. 
 
 
 \bibitem{Fraeq}
 N. Frantzikinakis. Equidistribution of sparse sequences on nilmanifolds. {\em J. Analyse Math}. {\bf 109} (2009), 353--395.
 
 
 
 
 
 
 
 
 
 
\bibitem{FraHardy1}
N. Frantzikinakis. Multiple recurrence and convergence for Hardy sequences of polynomial growth. {\em J. Anal. Math.} {\bf 112} (2010), 79--135.












\bibitem{FraHardy2}
N. Frantzikinakis. A multidimensional Szemer\'{e}di theorem for Hardy sequences of different growth. {\em Trans. Amer. Math. Soc}. {\bf 367} (2015), 5653--5692.


 


\bibitem{Fraopen}
N. Frantzikinakis. Some open problems on multiple ergodic averages.  {\em Bulletin of the Hellenic Mathematical Society}. {\bf 60} (2016), 41--90. 




\bibitem{Frajoint}
 N. Frantzikinakis. Joint ergodicity of sequences. arXiv:2102.09967 Preprint.








\bibitem{Frajointhardy}
 N. Frantzikinakis. Joint ergodicity of fractional powers of primes. arXiv:2107.02669 Preprint.
 











\bibitem{Wierdl}

 N. Frantzikinakis, M. Wierdl. A Hardy field extension of Szemer\'{e}di’s theorem. {\em Adv. Math}. {\bf 222} (2009), 1--43.













\bibitem{Fu1}
 H. Furstenberg. Ergodic behavior of diagonal measures and a theorem of Szemer\'{e}di on arithmetic progressions. {\em J. Analyse Math.} {\bf 71} (1977), 204--256.















\bibitem{Hardy 1}
G. H. Hardy. Properties of Logarithmico-Exponential Functions. {\em Proc. London Math. Soc. (2)} {\bf 10} (1912), 54--90













\bibitem{Hardy 2}

  G. H. Hardy. {\em Orders of Infinity}. The “Infinit\"{a}rcalc\"{u}l” of Paul du Bois-Reymond. Reprint of the 1910 edition. Cambridge Tracts in Math. and Math. Phys., 12, Hafner Publishing Co. New York, 1971.








\bibitem{Host}
B. Host. Ergodic seminorms for commuting transformations and applications. {\em Studia Math.} {\bf 195}
(2009), no. 1, 31--49.





\bibitem{Host-Kra1}
B. Host, B. Kra. Non-conventional ergodic averages and nilmanifolds. {\em Annals of Mathematics} {\bf 161} (2005), no. 2, 397--488. 































\bibitem{Host-Kra structures}

B. Host, B. Kra. {\em Nilpotent Structures in Ergodic Theory}.  American Mathematical Society. {\bf 236} (2018), Mathematical Surveys and Monographs. 978-1-4704-4780-9.





























\bibitem{Koutsogianniscorrelation}
D. Karageorgos, A. Koutsogiannis. Integer part independent polynomial averages and applications
along primes. {\em Studia Mathematica} {\bf 249} (2019), 233--257.




\bibitem{Kovanski}

A.G. Khovanskii, {\em Fewnomials}, Translations of Mathematical Monographs, Volume 88, {\em American Mathematical Society}, 1991.














\bibitem{Koutsogiannistempered}
A. Koutsogiannis. Multiple ergodic averages for tempered functions. {\em Discrete Contin. Dyn. Syst}.
{\bf 41} (2021), 1177--1205.


















\bibitem{Koutsogiannisvariable}

A. Koutsogiannis. Multiple ergodic averages for variable polynomials. (2021) arXiv:2101.00534 Preprint.









\bibitem{Leibmanseveral}
 A. Leibman. Convergence of multiple ergodic averages along polynomials of several variables. {\em Isr.J. Math.} {\bf 146} (2005), 303--316.
















\bibitem{Richter} F. K. Richter. Uniform distribution in nilmanifolds along functions from a Hardy field. arXiv:2006.02028 Preprint.




\end{thebibliography}
\end{document}